\newcommand{\R}{\mathbb{R}}
\newtheorem{theorem}{Theorem}[section]
\newtheorem{lemma}[theorem]{Lemma}
\newtheorem{proposition}[theorem]{Proposition}
\newtheorem{remark}[theorem]{Remark}
\numberwithin{equation}{section}
\newcommand{\dis}{\displaystyle}
\begin{document}

\title[Compressible fluid limit of the Landau equation]{Compressible fluid limit for smooth solutions to the Landau equation}

\author[R.-J. Duan]{Renjun Duan}
\address[R.-J. Duan]{Department of Mathematics, The Chinese University of Hong Kong, Shatin, Hong Kong,
        People's Republic of China}
\email{rjduan@math.cuhk.edu.hk}

\author[D.-C. Yang]{Dongcheng Yang}
\address[D.-C. Yang]{Department of Mathematics, The Chinese University of Hong Kong, Shatin, Hong Kong,
        People's Republic of China}
\email{dcyang@math.cuhk.edu.hk}

\author[H.-J. Yu]{Hongjun Yu}
\address[H.-J. Yu]{School of Mathematical Sciences,
         South China Normal University, Guangzhou 510631, People's Republic of China}
\email{yuhj2002@sina.com}

\begin{abstract}
Although the compressible fluid limit of the Boltzmann equation with cutoff has been well investigated in \cite{Caflisch} and \cite{Guo-Jang-Jiang-2010}, it still remains largely open to obtain analogous results in case of the angular non-cutoff or even in the grazing limit which gives the Landau equation, essentially due to the velocity diffusion effect of collision operator such that $L^\infty$ estimates are hard to obtain without using Sobolev embeddings. In the paper, we are concerned with the compressible Euler and acoustic limits of the Landau equation for Coulomb potentials in the whole space. Specifically, over any finite time interval where the full compressible Euler system admits a smooth solution around constant states, we construct a unique solution in a high-order weighted Sobolev space for the Landau equation with suitable initial data and also show the uniform estimates independent of the small Knudsen number $\varepsilon>0$, yielding the $O(\varepsilon)$ convergence of the Landau solution to the local Maxwellian whose fluid quantities are the given Euler solution. Moreover, the acoustic limit for smooth solutions to the Landau equation in an optimal scaling is also established. For the proof, by using the macro-micro decomposition around local Maxwellians together with techniques for viscous compressible fluid and properties of Burnett functions, we design an $\varepsilon$-dependent energy functional  to capture the dissipation in the compressible fluid limit with feature that only the highest order derivatives are most singular.  
\end{abstract}


\subjclass[2020]{35Q84, 35Q20; 35B40, 35B45}

\keywords{Landau equation, smooth solutions, compressible Euler limit, acoustic limit, macro-micro decomposition, Burnett functions, energy estimates}



\maketitle

\tableofcontents
\thispagestyle{empty}

\section{Introduction}
In plasma physics the spatially inhomogeneous Landau equation is a fundamental mathematical model at the kinetic level. It is used to describe the time evolution of the unknown density distribution function $F=F
(t,x,v)\geq0$ of particles in plasma with space position $x=(x_{1},x_{2},x_{3})\in\mathbb{R}^{3}$ and velocity $v=(v_{1},v_{2},v_{3})\in\mathbb{R}^{3}$
at time $t\geq0$, reading as 
\begin{equation}
\label{1.1}
\partial_{t}F+v\cdot\nabla_{x}F=\frac{1}{\varepsilon}Q(F,F).
\end{equation}
Here,  the dimensionless parameter $\varepsilon>0$ is small and reciprocal to logarithm of the Debye shielding length, and it plays the same role as the Knudsen number in the Boltzmann theory, cf.~\cite{Cercignani, Sone}. We have omitted the explicit dependence of the solution $F(t,x,v)$ on $\varepsilon$ for brevity whenever there is no confusion. The Landau collision operator $Q(\cdot,\cdot)$ is a bilinear integro-differential operator acting only on velocity variables of the form
\begin{equation*}
Q(F_{1},F_{2})(v)=\nabla_{v}\cdot\int_{\mathbb{R}^{3}}\Phi(v-v_{*})\left\{F_{1}(v_{*})\nabla_{v}F_{2}(v)-\nabla_{v_{*}}F_{1}(v_{*})F_{2}(v)\right\}\,dv_{*},
\end{equation*}
where for the Landau collision kernel $\Phi(\xi)=[\Phi_{ij}(\xi)]$ with $\xi=v-v_\ast$ (cf. \cite{Guo-2002,H}), we consider only the case of physically most realistic Coulomb interactions throughout the paper, namely,
\begin{equation}
\label{1.3}
\Phi_{ij}(\xi)=\frac{1}{|\xi|}\big(\delta_{ij}-\frac{\xi_i\xi_j}{|\xi|^{2}}\big),\quad 1\leq i,j\leq 3,
\end{equation}
with $\delta_{ij}$ being the Kronecker delta.

On the other hand, the hydrodynamic  description for the motion of plasmas at the fluid level is also given by the compressible Euler system
\begin{equation}
\label{1.4}
  \left\{
\begin{array}{rl}
\partial_{t}\bar{\rho}+\nabla_{x}\cdot(\bar{\rho}\bar{u})=0,
\\
\partial_{t}(\bar{\rho}\bar{u})+\nabla_{x}\cdot(\bar{\rho}\bar{u}\otimes\bar{u})
+\nabla_{x}\bar{p}=0,
\\
\partial_{t}\big[\bar{\rho}(\bar{e}+\frac{1}{2}|\bar{u}|^{2})\big]+
\nabla_{x}\cdot\big[\bar{\rho}\bar{u}(\bar{e}+\frac{1}{2}|\bar{u}|^{2})\big]
+\nabla_{x}\cdot(\bar{p}\bar{u})=0,
\end{array} \right.
\end{equation}
with the equation of state  $\bar{p}=R\bar{\rho}\bar{\theta}$.
These are the local conservation laws of
mass, momentum, and energy. Here $\bar{e}=\bar{e}(t,x)>0$ is the internal energy which is related to the temperature $\bar{\theta}=\bar{\theta}(t,x)$ by $\bar{e}=\frac{3}{2}R\bar{\theta}=\bar{\theta}$ with the gas constant $R=\frac{2}{3}$ taken for convenience,
and $\bar{\rho}=\bar{\rho}(t,x)$, $\bar{u}=\bar{u}(t,x)$ are the mass density and bulk velocity, respectively.

It is well known that the compressible Euler system \eqref{1.4} can be formally derived from the Landau equation \eqref{1.1} through the 
first order approximation to the famous Hilbert expansion  when the Knudsen number $\varepsilon>0$ is close to zero, which is similar to the case of the Boltzmann equation, see  \cite{Golse,Grad,SR}. The rigorous mathematical justification of establishing such limit in a general setting, particularly for general initial data in three dimensions, is still a challenging subject in kinetic theory, although it has been extensively studied for the Boltzmann equation with cutoff, see for instance \cite{Caflisch,Guo-Jang-Jiang,Guo-Jang-Jiang-2010,Lachowicz,Nishida,Ukai-Asano} and the references therein.

In what follows we review some specific results on the compressible Euler limit of the Boltzmann equation with cutoff so as to make a comparison with the Landau case later. Note that there are few analogous results on the issue for the non-cutoff Boltzmann equation. First of all,  based on the truncated Hilbert expansion, Caflisch \cite{Caflisch}  showed that, if the compressible Euler system has a smooth solution which exists up to a finite time, then there exist corresponding
solutions to the Boltzmann equation with a zero initial condition for the remainder term
in the same time interval such that the Boltzmann solution converges to a smooth local Maxwellian whose fluid dynamical parameters  satisfy the compressible Euler system as $\varepsilon\to 0$.
Due to such zero initial condition, the obtained Boltzmann solution may be negative. The result was extended later by Lachowicz \cite{Lachowicz} to the case allowing for the initial layer. Following the Caflisch's strategy together with a new
$L^{2}-L^{\infty}$ approach developed in \cite{Guo-2008}, Guo-Jang-Jiang \cite{Guo-Jang-Jiang,Guo-Jang-Jiang-2010} removed the restriction with zero initial condition so that the positivity of the Boltzmann solution can be  guaranteed. Recently, Guo-Huang-Wang \cite{Guo-Huang-Wang}  and Jiang-Luo-Tang \cite{Jiang-Luo,Jiang-Luo-1} made significant progress on the topic by making use of the $L^{2}-L^{\infty}$ interplay technique to generalize the results of \cite{Caflisch,Guo-Jang-Jiang,Guo-Jang-Jiang-2010} to the half-space problem with different types of boundary conditions including specular reflection, diffuse reflection and even the mixed Maxwell reflection. We also mention another recent progress Guo-Xiao \cite{GX21} for an application of the $L^{2}-L^{\infty}$ approach to the global Hilbert expansion to the relativistic Vlasov-Maxwell-Boltzmann system. 

The compressible Euler limit of the Boltzmann equation is also studied in other contexts. Based on the abstract Cauchy-Kovalevskaya  theorem and the spectral analysis of the semigroup generated by the linearized Boltzmann equation,
Nishida \cite{Nishida} constructed the local solution of the cutoff Boltzmann equation in the analytic framework and proved that the solution tends to the solution of Euler system as $\varepsilon\to 0$.
Later, Ukai-Asano \cite{Ukai-Asano} improved the Nishida's result by using the classical contraction mapping principle on a
space with a time-dependent analytic norm and also considered the case with the initial layer.
Moreover, in the special setting of one space dimension, regarding the hydrodynamic limit of the Boltzmann equation with cutoff to the compressible Euler system which admits specific solutions of basic wave patterns, such as rarefaction waves, contact discontinuities and shock waves, there have been extensive studies by \cite{Huang-2013,Huang-2010,Wang-Wang-2021,Xin-Zeng,Yu-2005} and the references therein.

As is well known, the linearization of  compressible Euler equations around constant states gives the acoustic system, see \eqref{1.12} to be specified later. For inviscid compressible fluids, the acoustic system could be the simplest one describing essentially the wave propagation, which can be formally derived from the Boltzmann equation. Under the Grad's angular cutoff assumption, Bardos-Golse-Levermore \cite{Bardos-2000} proved the convergence in the acoustic limit from the DiPerna-Lions \cite{Diperna-1989} renormalized solutions of Boltzmann equation with a restriction on the size of fluctuations. The restriction was relaxed later by Golse-Levermore \cite{Golse-2002} and Jiang-Levermore-Masmoudi
\cite{Jiang-Masmoudi}, and finally removed by 
Guo-Jang-Jiang \cite{Guo-Jang-Jiang-2010}
via the Hilbert expansion with the help of $L^{2}-L^{\infty}$ interplay approach as mentioned before. Concerning the limit to other fluid equations from the Boltzmann equation with cutoff, see \cite{Bardos,Bardos-1991,Duan-Liu,EGKM,EGM,Golse-2004,Guo-CPAM,Guo-Jang,JK,Jiang-Masmoudi-1,Liu-2014,Masmoudi-2003} and the references cited therein. We also mention a recent work \cite{Rachid} on the incompressible fluid limit from the Landau equation.

However, despite a lot of great progress mentioned above, the hydrodynamic limit to the compressible Euler or acoustic system  for the Landau equation has remained largely open and the similar situation  occurs to the Boltzmann equation in the non-cutoff case. The main reason is that those long-range collision operators expose the velocity diffusion property so that 
the strategy in \cite{Caflisch,Guo-Jang-Jiang-2010,Nishida}  cannot be directly adopted. In particular, a trouble arises from the action of the transport operator on local Maxwellians, inducing large velocity growth in $L^2$ framework, cf.~\cite{Guo-Jang-Jiang-2010}. To overcome it, a robust idea is to combine  $L^2$ estimates together with the velocity-weighted $L^\infty$ estimates which strongly reply on the Grad's splitting of the linearized Boltzmann operator with cutoff. For either the non-cutoff Boltzmann or Landau equation, it seems still very hard to obtain any $L^\infty$ estimate uniform in the fluid limit $\varepsilon\to 0$ without using the Sobolev imbedding, although one may observe several recent important results \cite{AMSY20,GHJO,Kim} on the global existence of low-regularity solutions  in $L^\infty$ space around global Maxwellians for $\varepsilon=1$. Therefore one has to develop some new ideas to deal with the problem whenever the direct $L^\infty$ estimates are not available.

In the end we further mention some early and recent results on the global existence and large-time behavior for the Landau equation \eqref{1.1} with $\varepsilon=1$, for instance, global existence of weak solutions by Lions \cite{Lions} and Villani \cite{Vi98,Vi96}, grazing collision limit of the non-cutoff Boltzmann to  Landau equation by Desvillettes \cite{Des} and Alexandre-Villani \cite{AV04}, spectrum analysis by Degond-Lemou \cite{Degond-Lemou}, global existence of classical solutions near global Maxwellians by Guo \cite{Guo-2002} for the torus and  Hsiao-Yu \cite{Hsiao-Yu} for the whole space, large time behavior of classical solutions near global Maxwellians by Strain-Guo \cite{Strain-Guo-2006,Strain-Guo-2008}
and see also \cite{CaMi,CTW,DLSS}, global classical solutions near vacuum by Luk
\cite{Luk}, regularity of solutions by Golse-Imbert-Mouhot-Vasseur \cite{GIMV} and Henderson-Snelson \cite{HeSn}. See also more recent works
\cite{Duan-Yang-Yu,Duan-Yang-Yu-1,Duan-Yu1} by the authors of this paper for the one-dimensional Landau equation around local Maxwellians with rarefaction wave and contact wave.

In this paper, we will construct a unique solution in a high-order weighted Sobolev space for the Landau equation with suitable initial data over any finite time interval where the full compressible Euler system admits a smooth solution around constant states. We will obtain the uniform estimates independent of the small Knudsen number $\varepsilon>0$, which yields the $O(\varepsilon)$ convergence of the Landau solution to the local Maxwellian whose fluid quantities are the given Euler solution. 
In the meantime, we also will establish the acoustic limit for smooth solutions to the Landau equation in optimal scaling. We believe that the same results hold for the non-cutoff Boltzmann equation even with soft potentials. For the proof, a key point is to use the macro-micro decomposition around local Maxwellians for designing the $\varepsilon$-dependent temporal energy functional and its corresponding energy dissipation functional such that the uniform estimates can be obtained under the smallness assumption. More details will be specified later on.

The rest of this paper is organized as follows. In Section \ref{sec.2}, we present the macro-micro
decomposition for the Landau equation in order to study the compressible fluid limit for smooth solutions to the Landau equation.
In Section \ref{sec.mr}, we state two main results of this paper, namely,  Theorem \ref{thm1.1} and Theorem \ref{thm1.2} for the compressible Euler limit and the acoustic limit, respectively. In the meantime, we also list key points for the strategy of the proof through the paper for conveniences of readers. To proceed the proof of main results, we first prepare in Section \ref{seca.2}  some basic estimates. Section \ref{seca.4} is the main part of the proof for establishing the a priori estimates on both the fluid part and the kinetic part.
The proofs of Theorems \ref{thm1.1} and \ref{thm1.2} are shortly given in Section \ref{sec.3}.
In the appendix Section \ref{seca.5}, we give the details of deriving an identity \eqref{4.10} for completeness.

\medskip
\noindent{\it Notations.} 
Throughout this paper, generic positive constants are denoted  by either $c$ or $C$ varying from line to line and they are independent of other small parameters such as the Knudsen number and the amplitude of Euler solutions.
$\langle \cdot , \cdot \rangle$ denotes the standard $L^{2}$ inner product in $\mathbb{R}_{v}^{3}$
with its corresponding $L^{2}$ norm $|\cdot|_2$. $( \cdot , \cdot )$ denotes the $L^{2}$ inner product in either
$\mathbb{R}^{3}_{x}$ or $\mathbb{R}^{3}_{x}\times \mathbb{R}_{v}^{3}$  with its corresponding $L^{2}$ norm $\|\cdot\|$. We
use the standard notation $H^{k}(\mathbb{R}^{3}_{x})$ to denote
the Sobolev space $W^{k,2}(\mathbb{R}^{3}_{x})$ with its corresponding norm $\|\cdot\|_{H^{k}}$,
and also use $\|\cdot\|_{L^{p}}$ to denote the $L_{x}^{p}$-norm with $p\in[1,+\infty]$.
The norm of $\nabla^{k}_x f$ means the sum of the norms of functions $\partial^{\alpha}f$ with $|\alpha|=k$.
Let  $\alpha$ and $\beta$ be multi indices $\alpha=(\alpha_{1},\alpha_{2},\alpha_{3})$ and $\beta=(\beta_{1},\beta_{2},\beta_{3})$,
respectively. As in \cite{Guo-2002}, it is convenient to denote
\begin{align*}
\partial_{\beta}^{\alpha}=\partial_{x_{1}}^{\alpha_{1}}\partial_{x_{2}}^{\alpha_{2}}
\partial_{x_{3}}^{\alpha_{3}}
\partial_{v_{1}}^{\beta_{1}}\partial_{v_{2}}^{\beta_{2}}\partial_{v_{3}}^{\beta_{3}}.
\end{align*}
If each component of $\beta$ is not greater than the corresponding one  of
$\overline{\beta}$, we use the standard notation
$\beta\leq\overline{\beta}$. And $\beta<\overline{\beta}$ means that
$\beta\leq\overline{\beta}$ and $|\beta|<|\overline{\beta}|$.
The constant $C^{\bar\beta}_{\beta}$ is the usual  binomial coefficient.

\medskip
\noindent{\bf Added:} After this work was submitted, we noticed a recent preprint \cite{OWX1} which deals with the compressible fluid limit of the relativistic Landau equation via the method of Hilbert expansion. More recently, the same authors posted another preprint \cite{OWX2} for further treating the same problem in the case of the relativistic Vlasov-Maxwell-Landau system. We remark that an essentially different difficulty occurs to the non-relativistic case under consideration of the current work because the action of the classical transport operator on local Maxwellians induces the cubic large velocity growth which is out of control via the time-velocity weighted high-order energy method,  see \cite{Duan-Yang-Yu-1} and references therein.

\section{Macro-micro decomposition}\label{sec.2}

In this section we will present the macro-micro 
decomposition for the Landau equation \eqref{1.1} in order to study the compressible fluid limit for smooth solutions to the Landau equation.

We first recall a basic property of
the Landau collision operator. It is well known that the Landau collision operator admits five collision invariants:
$$
\psi_{0}(v)=1, \quad \psi_{i}(v)=v_{i}~(i=1,2,3),\quad \psi_{4}(v)=\frac{1}{2}|v|^{2},
$$
namely, it holds that
\begin{equation}
\label{2.1}
\int_{\mathbb{R}^{3}}\psi_{i}(v)Q(F,F)\,dv=0,\quad \mbox{for $i=0,1,2,3,4$.}
\end{equation}

The macro-micro decomposition of the solution with respect to the local Maxwellian was initiated by Liu-Yu \cite{Liu-Yu} and developed by Liu-Yang-Yu \cite{Liu-Yang-Yu} in the context of Boltzmann theory, and it can be analogously carried over to the Landau equation. Indeed, associated with a solution
$F(t,x,v)$ to the Landau equation \eqref{1.1}, we introduce five macroscopic (fluid) quantities:
the mass density $\rho(t,x)>0$, momentum $\rho(t,x)u(t,x)$, and
energy density $e(t,x)+\frac 12|u(t,x)|^2$, given as
\begin{align}
\label{2.2}
\left\{
\begin{array}{rl}
\rho(t,x)&\equiv\int_{\mathbb{R}^{3}}\psi_{0}(v)F(t,x,v)\,dv,
\\
\rho(t,x) u_{i}(t,x)&\equiv\int_{\mathbb{R}^{3}}\psi_{i}(v)F(t,x,v)\,dv, \quad \mbox{for $i=1,2,3$,}
\\
\rho(t,x)\big[e(t,x)+\frac{1}{2}|u(t,x)|^{2}\big]&\equiv\int_{\mathbb{R}^{3}}\psi_{4}(v)F(t,x,v)\,dv,
\end{array} \right.
\end{align}
and the corresponding local Maxwellian:
\begin{equation}
	\label{2.3}
	M\equiv M_{[\rho,u,\theta]}(t,x,v):=\frac{\rho(t,x)}{\sqrt{(2\pi R\theta(t,x))^{3}}}\exp\big\{-\frac{|v-u(t,x)|^{2}}{2R\theta(t,x)}\big\}.
\end{equation}
Here $e(t,x)>0$ is the internal energy which is related to the temperature $\theta(t,x)$ by
$e=\frac{3}{2}R\theta=\theta$, and $u(t,x)=(u_{1},u_{2},u_{3})(t,x)$ is the bulk velocity.

Note that the $L^{2}$ inner product in $v\in\mathbb{R}^{3}$ is denoted by
$$
\langle h,g\rangle =\int_{\mathbb{R}^{3}}h(v)g(v)\,d v.
$$
Then the macroscopic space is spanned by the following five orthonormal basis functions:
\begin{align}
	\label{2.4}
	\left\{
	\begin{array}{rl}
		&\chi_{0}(v)=\frac{1}{\sqrt{\rho}}M,
		\\
		&\chi_{i}(v)=\frac{v_{i}-u_{i}}{\sqrt{R\rho\theta}}M, \quad i=1,2,3,
		\\
		&\chi_{4}(v)=\frac{1}{\sqrt{6\rho}}(\frac{|v-u|^{2}}{R\theta}-3)M,
		\\
		&\langle \chi_{i},\frac{\chi_{j}}{M}\rangle=\delta_{ij},
		\quad \mbox{for ~~$i,j=0,1,2,3,4$}.
	\end{array} \right.
\end{align}
In view of  the orthonormal basis above, we define
the macroscopic projection  $P_{0}$ and 
microscopic projection $P_{1}$ as
\begin{equation}
	\label{2.5}
	P_{0}h\equiv\sum_{i=0}^{4}\langle h,\frac{\chi_{i}}{M}\rangle\chi_{i},\quad P_{1}h\equiv h-P_{0}h,
\end{equation}
respectively, where the operators  $P_{0}$ and  $P_{1}$  are  orthogonal projections, that is
$$
P_{0}P_{0}=P_{0},\quad
P_{1}P_{1}=P_{1},\quad
P_{1}P_{0}=P_{0}P_{1}=0.
$$
A function $h(v)$ is called microscopic or non-fluid if
\begin{equation}
	\label{2.6}
	\langle h(v),\psi_{i}(v)\rangle=0, \quad \mbox{for $i=0,1,2,3,4$}.
\end{equation}

Using the notations above,
the solution $F=F(t,x,v)$ of the Landau equation \eqref{1.1} can be decomposed into the
macroscopic (fluid) part, i.e., the local Maxwellian $M=M(t,x,v)$ defined in \eqref{2.3}, and the microscopic (non-fluid) part, i.e. $G=G(t,x,v)$:
\begin{equation}
	\label{2.7}
	F=M+G, \quad P_{0}F=M, \quad P_{1}F=G.
\end{equation}
Then the Landau equation \eqref{1.1} can be rewritten as 
\begin{equation}
	\label{2.8}
	\partial_{t}(M+G)+v\cdot\nabla_{x}(M+G)=\frac{1}{\varepsilon}L_{M}G+\frac{1}{\varepsilon}Q(G,G),
\end{equation}
due to the fact that $Q(M,M)=0$.
Here $L_{M}$ is the
linearized Landau collision operator with respect to the local Maxwellian $M$, given by
\begin{equation}
	\label{2.9}
	L_{M}h:=Q(h,M)+Q(M,h),
\end{equation}
and its null space $\mathcal{N}$ is spanned by the macroscopic variables $\chi_{i}(v)~(i=0,1,2,3,4)$.

Let us now decompose the Landau equation into the  macroscopic system and microscopic system.
Multiplying the Landau equation \eqref{2.8} by the collision invariants $\psi_{i}(v)$ $(i=0,1,2,3,4)$
and integrating the resulting equations with respect to $v$ over $\mathbb{R}^{3}$,
one has the following macroscopic (fluid) system:
\begin{align}\label{2.10}
	\left\{
	\begin{array}{rl}
		&\partial_{t}\rho+\nabla_{x}\cdot(\rho u)=0,
		\\
		&\partial_{t}(\rho u)+\nabla_{x}\cdot(\rho u\otimes u)+\nabla_{x}p=-\int_{\mathbb{R}^{3}} v\otimes v\cdot\nabla_{x}G\,dv,
		\\
		&\partial_{t}[\rho(\theta+\frac{1}{2}|u|^{2})]+\nabla_{x}\cdot[\rho u(\theta+\frac{1}{2}|u|^{2})+pu]
		=-\int_{\mathbb{R}^{3}} \frac{1}{2}|v|^{2} v\cdot\nabla_{x}G\,dv,
	\end{array} \right.
\end{align}
with the pressure $p=R\rho\theta=\frac{2}{3}\rho\theta$.
Here we have used \eqref{2.1}, \eqref{2.2} and the fact that $\partial_tG$ is microscopic by \eqref{2.6}.

Since the projection of collision terms under $P_{0}$ is zero, it holds that $P_{1}L_{M}G=L_{M}G$ and $P_{1}Q(G,G)=Q(G,G)$.
It is also clear to see $P_{1}\partial_tM=0$ and  $P_{1}\partial_tG=\partial_tG$ in terms of \eqref{2.6} and \eqref{2.5}.
With these facts,  one has the following microscopic (non-fluid) system:
\begin{align}
\label{2.11}
\partial_{t}G+P_{1}(v\cdot\nabla_{x}G)+P_{1}(v\cdot\nabla_{x}M)
=\frac{1}{\varepsilon}L_{M}G+\frac{1}{\varepsilon}Q(G,G),
\end{align}
by applying the microscopic operator $P_{1}$ to the Landau equation \eqref{2.8}. Since $L_{M}$
is invertible on $\mathcal{N}^\perp$, we can rewrite \eqref{2.11} to present $G$ as
\begin{equation}
\label{2.12}
G=\varepsilon L^{-1}_{M}[P_{1}(v\cdot\nabla_{x}M)]+L^{-1}_{M}\Theta,
\end{equation}
with
\begin{equation}
\label{2.13}
\Theta:=\varepsilon \partial_{t}G+\varepsilon P_{1}(v\cdot\nabla_{x}G)-Q(G,G).
\end{equation}
Plugging \eqref{2.12} into \eqref{2.10} and using the following two identities:
\begin{align*}
-\int_{\mathbb{R}^{3}} v_{i} v\cdot\nabla_{x}
L^{-1}_{M}[P_{1}(v\cdot\nabla_{x}M)]\,dv
&\equiv\sum^{3}_{j=1}\partial_{x_{j}}[\mu(\theta)D_{ij}],\quad i=1,2,3,
\\
-\int_{\mathbb{R}^{3}} \frac{1}{2}|v|^{2} v\cdot\nabla_{x}
L^{-1}_{M}[P_{1}(v\cdot\nabla_{x}M)]\,dv
&\equiv\sum^{3}_{j=1}\partial_{x_{j}}(\kappa(\theta)\partial_{x_{j}}\theta)+
\sum^{3}_{i,j=1}\partial_{x_{j}}[\mu(\theta) u_{i}D_{ij}],
\end{align*}
with the viscous stress tensor $D=[D_{ij}]_{1\leq i,j\leq 3}$ given by
\begin{equation}\label{def.vst}
D_{ij}=\partial_{x_{j}}u_{i}+\partial_{x_{i}}u_{j}-\frac{2}{3}\delta_{ij}\nabla_{x}\cdot u,
\end{equation}
we further obtain the compressible Navier-Stokes-type equations:
\begin{align}
\label{2.14}
\left\{
\begin{array}{rl}
\partial_{t}\rho+\nabla_{x}\cdot(\rho u)&=0,
\\
\partial_{t}(\rho u_{i})+\nabla_{x}\cdot(\rho u_{i}u)+\partial_{x_{i}}p
&\dis=\varepsilon\sum^{3}_{j=1}\partial_{x_{j}}[\mu(\theta)D_{ij}]\\
&\quad-\int_{\mathbb{R}^{3}} v_{i}(v\cdot\nabla_{x}L^{-1}_{M}\Theta)\,dv, \  i=1,2,3,
\\
\partial_{t}[\rho(\theta+\frac{1}{2}|u|^{2})]+\nabla_{x}\cdot[\rho u(\theta+\frac{1}{2}|u|^{2})+pu]
&\dis=\varepsilon\sum^{3}_{j=1}\partial_{x_{j}}(\kappa(\theta)\partial_{x_{j}}\theta)
+\varepsilon
\sum^{3}_{i,j=1}\partial_{x_{j}}[\mu(\theta) u_{i}D_{ij}]\\
&\quad-\int_{\mathbb{R}^{3}} \frac{1}{2}|v|^{2} v\cdot\nabla_{x}L^{-1}_{M}\Theta\,dv.
\end{array} \right.
\end{align}
Here $\mu(\theta)>0$ and $\kappa(\theta)>0$ are the viscosity coefficient and heat conductivity coefficient
respectively, and they are smooth functions depending only on the temperature $\theta$, represented by
\begin{align*}
	\mu(\theta)=&- R\theta\int_{\mathbb{R}^{3}}\hat{B}_{ij}(\frac{v-u}{\sqrt{R\theta}})
	B_{ij}(\frac{v-u}{\sqrt{R\theta}})\,dv>0,\quad i\neq j,
	\nonumber\\
	\kappa(\theta)=&-R^{2}\theta\int_{\mathbb{R}^{3}}\hat{A}_{j}(\frac{v-u}{\sqrt{R\theta}})
	A_{j}(\frac{v-u}{\sqrt{R\theta}})\,dv>0,
\end{align*}
where $\hat{A}_{j}(\cdot)$ and $\hat{B}_{ij}(\cdot)$ are Burnett functions, cf. \cite{Bardos,Duan-Yu1,Guo-CPAM,Ukai-Yang}, defined by
\begin{equation}
\label{5.1}
\hat{A}_{j}(v)=\frac{|v|^{2}-5}{2}v_{j}\quad \mbox{and} \quad \hat{B}_{ij}(v)=v_{i}v_{j}-\frac{1}{3}\delta_{ij}|v|^{2}, \quad \mbox{for} \quad i,j=1,2,3.
\end{equation}
And $A_{j}(\cdot)$ and $B_{ij}(\cdot)$ satisfy $P_{0}A_{j}(\cdot)=0$ and $P_{0}B_{ij}(\cdot)=0$, given by
\begin{equation}
\label{5.2}
A_{j}(v)=L^{-1}_{M}[\hat{A}_{j}(v)M]\quad
\mbox{and} \quad B_{ij}(v)=L^{-1}_{M}[\hat{B}_{ij}(v)M].
\end{equation}
Some elementary properties of the Burnett functions are summarized in the following lemma, cf.~ \cite{Bardos,Duan-Yu1,Guo-CPAM,Ukai-Yang}.
\begin{lemma}\label{lem7.2}
The Burnett functions have the following properties:
\begin{itemize}
\item{$-\langle \hat{A}_{i}, A_{i}\rangle$ ~~is positive and independent of i;}
\item{$\langle \hat{A}_{i}, A_{j}\rangle=0$ ~~for ~any ~$i\neq j$;\quad $\langle
\hat{A}_{i}, B_{jk}\rangle=0$~~for ~any ~i,~j,~k;}
\item{$\langle\hat{B}_{ij},B_{kj}\rangle=\langle\hat{B}_{kl},B_{ij}\rangle=\langle\hat{B}_{ji},B_{kj}\rangle$,~~
which is independent of ~i,~j, for fixed~~k,~l;}
\item{$-\langle \hat{B}_{ij}, B_{ij}\rangle$ ~~is positive and independent of i,~j when $i\neq j$;}	\item{$\langle \hat{B}_{ii}, B_{jj}\rangle$ ~~is positive and independent of i,~j when $i\neq j$;}
\item{$-\langle \hat{B}_{ii}, B_{ii}\rangle$ ~~is positive and independent of i;}
\item{$\langle \hat{B}_{ij}, B_{kl}\rangle=0$ ~~unless~either~$(i,j)=(k,l)$~or~$(l,k)$,~or~i=j~and~k=l;}	\item{$\langle \hat{B}_{ii}, B_{ii}\rangle-\langle \hat{B}_{ii}, B_{jj}\rangle=2\langle \hat{B}_{ij},
B_{ij}\rangle$ ~~holds for any~ $i\neq j$.}
\end{itemize}
\end{lemma}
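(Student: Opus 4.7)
The plan is to deduce all eight bullets from two standard structural properties of the linearized Landau operator $L_M$: first, \emph{coercivity}, namely that $-L_M$ is a non-negative self-adjoint operator on $L^2(\mathbb{R}^3_v;M^{-1}\,dv)$ with null space exactly $\mathcal{N}=\mathrm{span}\{\chi_0,\ldots,\chi_4\}$; and second, \emph{$O(3)$-equivariance}, obtained after translating $v\mapsto v+u$ (which reduces to the case $u=0$ and leaves every inner product appearing in the lemma invariant), at which point $M$ is radial and $L_M$ commutes with the action $f(v)\mapsto f(Rv)$ for every $R\in O(3)$. Consequently $L_M^{-1}$ on $\mathcal{N}^\perp$ inherits the same equivariance, and the unknowns acquire the tensor structure $A_j(v)=a(|v|^2)v_j$ and $B_{ij}(v)=b(|v|^2)\bigl(v_iv_j-\tfrac{1}{3}\delta_{ij}|v|^2\bigr)$ for scalar radial profiles $a,b$.

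The vanishing claims (second and seventh bullets) then follow from coordinate reflections $R_\ell : v_\ell\mapsto -v_\ell$: each of the sources $\hat{A}_iM,\hat{B}_{ij}M$ picks up a sign counting how many of its indices equal $\ell$, and by equivariance $A_i,B_{ij}$ transform the same way. Choosing $\ell=i$ gives $\langle\hat{A}_i,A_j\rangle=0$ for $i\ne j$; a three-index parity check disposes of $\langle\hat{A}_i,B_{jk}\rangle$ in every index pattern; and a four-index check shows that nonvanishing of $\langle\hat{B}_{ij},B_{kl}\rangle$ requires every coordinate value to appear an even number of times in $(i,j,k,l)$, leaving exactly the three patterns in the seventh bullet. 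The index-independence claims in the first, fourth, fifth, and sixth bullets follow from permuting coordinates, again an $O(3)$-symmetry preserving $M$, while self-adjointness of $L_M^{-1}$ on $\mathcal{N}^\perp$ combined with the symmetries $\hat{B}_{ij}=\hat{B}_{ji}$, $B_{ij}=B_{ji}$ yields the third bullet directly. Schur's lemma applied to the irreducible symmetric-traceless-tensor representation delivers the universal identity
\begin{equation*}
\langle \hat{B}_{ij},B_{kl}\rangle = \lambda\left[\tfrac{1}{2}(\delta_{ik}\delta_{jl}+\delta_{il}\delta_{jk})-\tfrac{1}{3}\delta_{ij}\delta_{kl}\right]
\end{equation*}
for a single scalar $\lambda$, from which the eighth bullet reduces to the arithmetic $\lambda\cdot\tfrac{2}{3}-\lambda\cdot(-\tfrac{1}{3})=2\cdot\lambda\cdot\tfrac{1}{2}$. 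Alternatively, bullet eight follows from the tracelessness $\sum_k\hat{B}_{kk}=0$ (hence $\sum_k B_{kk}=0$) combined with the index symmetries already in hand.

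Positivity (first, fourth, fifth, sixth bullets) then follows from coercivity. The constant $5$ in $\hat{A}_j$ and the trace subtraction in $\hat{B}_{ij}$ are precisely what make $\hat{A}_jM,\hat{B}_{ij}M\in\mathcal{N}^\perp$, so $A_j,B_{ij}\in\mathcal{N}^\perp$ as well, and
\begin{equation*}
-\langle\hat{A}_i,A_i\rangle = \langle -L_M A_i, A_i\rangle_{L^2(M^{-1}dv)} > 0, \qquad -\langle\hat{B}_{ij},B_{ij}\rangle > 0 \ \ (i\ne j),
\end{equation*}
forcing $\lambda<0$ in the universal identity and thus $\langle\hat{B}_{ii},B_{jj}\rangle=-\lambda/3>0$ and $-\langle\hat{B}_{ii},B_{ii}\rangle=-2\lambda/3>0$. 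The main obstacle I anticipate is expository rather than conceptual: invoking Schur's lemma on an infinite-dimensional weighted $L^2$ requires some care to confirm that the $O(3)$-action is bounded, preserves $\mathcal{N}^\perp$, and commutes with $L_M^{-1}$. A more pedestrian derivation via an explicit $45^\circ$ rotation in the $(i,j)$-plane yields the eighth bullet by a single direct computation, after which the rest of the lemma reduces to elementary parity and permutation checks.
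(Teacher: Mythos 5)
Your argument is correct and is essentially the standard proof of these identities found in the references the paper cites for this lemma (\cite{Bardos,Guo-CPAM,Ukai-Yang}); the paper itself offers no proof, so there is no in-paper route to compare against. The key ingredients you use — translation/scaling to the normalized Maxwellian, $O(3)$-equivariance of $Q$ and hence of $L_M^{-1}$ on $\mathcal N^\perp$ giving $A_j(v)=a(|v|)v_j$ and $B_{ij}(v)=b(|v|)\bigl(v_iv_j-\tfrac13\delta_{ij}|v|^2\bigr)$, self-adjointness for the third bullet, and strict coercivity of $-L_M$ on $\mathcal N^\perp$ for the sign statements — are exactly right, and the universal form $\langle\hat B_{ij},B_{kl}\rangle=\lambda\bigl[\tfrac12(\delta_{ik}\delta_{jl}+\delta_{il}\delta_{jk})-\tfrac13\delta_{ij}\delta_{kl}\bigr]$ is confirmed by the elementary fourth-moment computation $\int v_iv_jv_kv_l\,b(|v|)\,dv=\tfrac{J}{15}(\delta_{ij}\delta_{kl}+\delta_{ik}\delta_{jl}+\delta_{il}\delta_{jk})$, so no infinite-dimensional Schur argument is actually needed. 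One small caveat: your ``alternative'' derivation of the eighth bullet from tracelessness plus the index symmetries is incomplete, since $\sum_k\hat B_{kk}=0$ only yields $\langle\hat B_{ii},B_{ii}\rangle=-2\langle\hat B_{ii},B_{jj}\rangle$ and gives no relation between the diagonal--diagonal and off-diagonal--off-diagonal entries; closing that gap genuinely requires a rotation mixing the two (your $45^\circ$ rotation) or the isotropic tensor form, so keep one of those as the actual argument.
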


To make a conclusion, we have decomposed the Landau equation \eqref{1.1} 
as the coupling of the viscous compressible fluid-type system \eqref{2.14} and the microscopic equation \eqref{2.11}, which is similar to the case of
the Boltzmann equation, cf. \cite{Liu-Yang-Yu}. In this way, 
one advantage is that the viscosity and heat conductivity coefficients can be expressed explicitly so that the energy analysis in the context of the viscous compressible fluid can be applied to capture the dissipation of the fluid part.  The other advantage is that
the nonlinear term $Q(G,G)$ in \eqref{2.11}
depends only on the microscopic part $G$ so that
the trilinear estimate is easily obtained without treating the fluid part, which is essentially different from the one in \cite{Guo-2002} with respect to a given global  Maxwellian.

As pointed out in \cite{Liu-Yang-Yu}, when the Knudsen number $\varepsilon$ and the microscopic
part $G$ are set to be zero, the system \eqref{2.14} becomes the compressible Euler system.
If only the microscopic part $G$ is set to be zero,
the system \eqref{2.14} becomes the compressible Navier-Stokes system with the parameter $\varepsilon$. 
These fluid systems can also be derived  from the Boltzmann (Landau) equation through the Hilbert and Chapman-Enskog expansions, cf. \cite{Chapman}.
This means that the macro-micro decomposition  \eqref{2.7} in some sense can be viewed
as a unification of the classical Hilbert and Chapman-Enskog expansions up to the second order approximation.
Therefore, this decomposition gives a good framework for deriving rigorously  the compressible fluid system from collisional kinetic equations.

\section{Main results}\label{sec.mr}

In this section, we employ the macro-micro decomposition introduced in
the previous section to	establish our main results on the compressible fluid limit for smooth solutions to the Landau equation.

\subsection{Compressible Euler limit}
The first goal of the present paper is to
establish the compressible Euler limit of the Landau equation. Precisely, we will construct the solution $F^\varepsilon(t,x,v)$ of the Landau equation \eqref{1.1} which converges to a local Maxwellian 
\begin{equation}
\label{1.5}
\overline{M}\equiv M_{[\bar{\rho},\bar{u},\bar{\theta}]}(t,x,v):
=\frac{\bar{\rho}(t,x)}{\sqrt{(2\pi R\overline{\theta}(t,x))^{3}}}\exp\big\{-\frac{|v-\overline{u}(t,x)|^2}{2R\overline{\theta}(t,x)}\big\},
\end{equation}
as the Knudsen number $\varepsilon>0$ tends to zero, 
where fluid parameters $(\bar{\rho},\bar{u},\bar{\theta})(t,x)$
satisfy the compressible Euler system \eqref{1.4}.

\subsubsection{Smooth solutions for Euler system}
To solve \eqref{1.4}, we supplement it with prescribed initial data
\begin{equation}
\label{1.6a}
(\bar{\rho},\bar{u},\bar{\theta})(0,x)=(\bar{\rho}_{0},\bar{u}_{0},\bar{\theta}_{0})(x).
\end{equation}
Then we have the following existence result.

\begin{proposition}\label{prop.1.1}
Let $\tau>0$ be a fixed finite time, then there is a sufficiently small constant $\eta_\tau>0$ and a constant $C_\tau>0$ such that if the initial data 
$(\bar{\rho}_{0},\bar{u}_{0},\bar{\theta}_{0})(x)$ around the constant state $(1,0,3/2)$ satisfies
\begin{equation}
\label{1.6}
\eta_0:=\|(\bar{\rho}_{0}(x)-1,\bar{u}_{0}(x),\bar{\theta}_{0}(x)-\frac{3}{2})\|_{H^{k}}
\leq \eta_{\tau}
\end{equation}
for $k\geq N+2$ with integer $N\geq 3$ as in \eqref{3.2}, then the Cauchy problem on the compressible Euler system \eqref{1.4} and \eqref{1.6a} admits a unique smooth solution $(\bar{\rho},\bar{u},\bar{\theta})(t,x)$
over $[0,\tau]\times \R^3$ 
such that 
$$
\inf_{t\in[0,\tau],\,x\in \R^{3}}\bar{\rho}(t,x)>0,\quad
\inf_{t\in[0,\tau],\,x\in \R^{3}}\bar{\theta}(t,x)>0,
$$
and the following estimate holds true:
\begin{equation}
\label{1.7}
\sup_{t\in[0,\tau]}\|(\bar{\rho}(t,x)-1,\bar{u}(t,x),\bar{\theta}(t,x)-\frac{3}{2})\|_{H^{k}}\leq C_{\tau}\eta_{0}.
\end{equation}
\end{proposition}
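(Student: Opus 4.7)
The plan is to recast the Euler system \eqref{1.4} as a quasilinear symmetric hyperbolic system around the constant state $(1,0,3/2)$ and to combine the classical local existence theory for such systems with an energy bootstrap to extend the short-time solution to the prescribed interval $[0,\tau]$. The statement is essentially classical and is included only to fix the quantitative control on $(\bar\rho,\bar u,\bar\theta)$ that will feed into the Landau analysis; no novelty is expected in its proof.

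First, I would set $\widetilde{U}=(\bar\rho-1,\bar u,\bar\theta-\tfrac{3}{2})^{\top}$ and rewrite \eqref{1.4} in the quasilinear form
\[
A_0(\widetilde U)\,\partial_t\widetilde U+\sum_{j=1}^{3}A_j(\widetilde U)\,\partial_{x_j}\widetilde U=0,
\]
where $A_0$ is a positive-definite symmetrizer (for instance, the Hessian of the physical entropy, or equivalently after normalizing the pressure variable) and each $A_j$ is symmetric on a neighbourhood of the origin. All coefficients are smooth functions of $\widetilde U$. Since $k\geq N+2\geq 5>\tfrac{3}{2}+1$, the standard quasilinear symmetric hyperbolic theory yields a unique maximal solution $\widetilde U\in C([0,T^{*});H^{k}(\R^{3}))\cap C^{1}([0,T^{*});H^{k-1}(\R^{3}))$ on which $\bar\rho$ and $\bar\theta$ stay strictly positive by continuity from their initial lower bounds.

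Next, I would perform $H^{k}$ energy estimates by applying $\partial^{\alpha}$ with $|\alpha|\leq k$ to the symmetrized system, contracting with $A_0(\widetilde U)\,\partial^{\alpha}\widetilde U$, integrating by parts, and controlling the commutator terms by the usual Moser-type product inequalities in $H^{k}$. Using the Sobolev embedding $H^{k-1}(\R^{3})\hookrightarrow L^{\infty}(\R^{3})$, this gives a differential inequality of the form
\[
\frac{d}{dt}\|\widetilde U(t)\|_{H^{k}}^{2}\leq C\,\|\widetilde U(t)\|_{H^{k}}\,\|\widetilde U(t)\|_{H^{k}}^{2},
\]
so long as $\|\widetilde U(t)\|_{H^{k}}$ stays in a fixed small ball where $A_0$ is uniformly positive. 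Gronwall then yields $\|\widetilde U(t)\|_{H^{k}}\leq \eta_{0}/(1-Ct\eta_{0})$.

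Finally, given $\tau>0$ I would pick $\eta_{\tau}>0$ small enough that $C\tau\eta_{\tau}\leq \tfrac{1}{2}$ and that the resulting $L^{\infty}$ ball around the origin keeps $\bar\rho,\bar\theta$ away from zero. Then $\|\widetilde U(t)\|_{H^{k}}\leq 2\eta_{0}$ on $[0,\tau]\cap[0,T^{*})$, and the standard continuation criterion (blow-up of the $H^{k}$ norm at $T^{*}$ if $T^{*}<\infty$) forces $T^{*}>\tau$, giving \eqref{1.7} with $C_{\tau}=2$. The only conceptual point worth flagging is that the smallness threshold $\eta_{\tau}$ must deteriorate as $\tau$ increases, reflecting the well-known shock-formation phenomenon for compressible Euler with generic smooth data; the bootstrap just postpones shock formation past the fixed time $\tau$, which is all that is needed here.
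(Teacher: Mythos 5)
Your argument is correct and is exactly the route the paper takes: the authors omit the proof and refer to \cite[Lemmas 3.1 and 3.2]{Guo-Jang-Jiang-2010}, which is precisely the classical symmetrization of the Euler system, the Kato--Majda local existence theory, and the Moser-type $H^{k}$ energy estimate yielding $y'\leq Cy^{2}$ and hence a lifespan of order $1/\eta_{0}$, so that shrinking $\eta_{\tau}$ pushes the existence time past $\tau$ with the uniform bound \eqref{1.7}. No gaps; the only cosmetic caveat is that $C_{\tau}$ may pick up the equivalence constant between the symmetrized energy and the $H^{k}$ norm rather than being exactly $2$.
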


The proof of Proposition \ref{prop.1.1} can be obtained by a straightforward modification of the arguments as in \cite[Lemma 3.1 and Lemma 3.2]{Guo-Jang-Jiang-2010}, so we omit the details for brevity. We remark that for any given $\tau>0$ we always let $\eta_\tau>0$ be small enough such that  $C_\tau\eta_{0}>0$ can be sufficiently small since 
the smallness of $C_\tau \eta_{0}$ is crucially used to close the a priori assumptions \eqref{3.1}.

\subsubsection{Reformulated system}
Let's now define the macroscopic perturbation around the smooth solution $(\bar{\rho},\bar{u},\bar{\theta})(t,x)$  by
\begin{equation}
\label{2.15}
\left\{
\begin{array}{rl}
\widetilde{\rho}(t,x)&={\rho}(t,x)-\bar{\rho}(t,x),
\\
\widetilde{u}(t,x)&={u}(t,x)-\bar{u}(t,x),
\\
\widetilde{\theta}(t,x)&={\theta}(t,x)-\overline{\theta}(t,x),
\end{array} \right.
\end{equation}
where $(\rho,u,\theta)(t,x)$ satisfies \eqref{2.14}, defined by \eqref{2.2}.
Subtracting \eqref{1.4} from system \eqref{2.14}, one obtains the system for the perturbation $(\widetilde{\rho},\widetilde{u},\widetilde{\theta})(t,x)$ as follows
\begin{align}
\label{2.23}
\left\{
\begin{array}{rl}
&\partial_{t}\widetilde{\rho}
+u\cdot\nabla_{x}\widetilde{\rho}+\bar{\rho}\nabla_{x}\cdot\widetilde{u}
+\widetilde{u}\cdot\nabla_{x}\bar{\rho}
+\widetilde{\rho}\nabla_{x}\cdot u=0,
\\
&\partial_{t}\widetilde{u}_{i}+u\cdot\nabla_{x}\widetilde{u}_{i}+\frac{2\bar{\theta}}{3\bar{\rho}}\partial_{x_i}\widetilde{\rho}
+\frac{2}{3}\partial_{x_i}\widetilde{\theta}
+\widetilde{u}\cdot\nabla_{x}\bar{u}_{i}+\frac{2}{3}(\frac{\theta}{\rho}-\frac{\bar{\theta}}{\bar{\rho}})\partial_{x_i}\rho
\\
&\qquad\qquad\qquad=\varepsilon\frac{1}{\rho}\sum^{3}_{j=1}\partial_{x_j}[\mu(\theta)D_{ij}]
-\frac{1}{\rho}\int_{\mathbb{R}^{3}} v_{i}v\cdot\nabla_{x}L^{-1}_{M}\Theta\,dv, \quad i=1,2,3,
\\
&\partial_{t}\widetilde{\theta}+u\cdot\nabla_{x}\widetilde{\theta}+\frac{2}{3}\bar{\theta}\nabla_{x}\cdot \widetilde{u}
+\widetilde{u}\cdot\nabla_{x}\bar{\theta}+\frac{2}{3}\widetilde{\theta}\nabla_{x}\cdot u\\
&\qquad\qquad\qquad=\varepsilon\frac{1}{\rho}\sum^{3}_{j=1}\partial_{x_j}(\kappa(\theta)\partial_{x_j}\theta)
+\varepsilon\frac{1}{\rho}\sum^{3}_{i,j=1}\mu(\theta) \partial_{x_j}u_{i}D_{ij}\\
&\qquad\qquad\qquad\qquad-\frac{1}{\rho}\int_{\mathbb{R}^{3}} \frac{1}{2}|v|^{2} v\cdot\nabla_{x}L^{-1}_{M}\Theta\,dv+\frac{1}{\rho}u\cdot\int_{\mathbb{R}^{3}} v\otimes v\cdot\nabla_{x}L^{-1}_{M}\Theta\,dv.
\end{array} \right.
\end{align}
Throughout the paper we fix a normalized global Maxwellian with the fluid constant state $(1,0,3/2)$
\begin{equation}
\label{1.8}
\mu\equiv M_{[1,0,\frac{3}{2}]}(v):=(2\pi)^{-\frac{3}{2}}\exp\big(-\frac{|v|^{2}}{2}\big)
\end{equation}
as the reference equilibrium state. Then we define the microscopic perturbation $f(t,x,v)$  by
\begin{equation}
\label{3.8A}
\sqrt{\mu}f(t,x,v)=G(t,x,v)-\overline{G}(t,x,v),
\end{equation}
where the function $\overline{G}(t,x,v)$ is defined as
\begin{align}
\label{2.16}
\overline{G}(t,x,v)\equiv\varepsilon L_{M}^{-1}P_{1}\big\{v\cdot(\frac{|v-u|^{2}
\nabla_{x}\bar{\theta}}{2R\theta^{2}}+\frac{(v-u)\cdot\nabla_{x}\bar{u}}{R\theta})M\big\},
\end{align}
which corresponds to the first-order correction term in the Chapman-Enskog expansion, see \eqref{2.12} and \eqref{2.14}.
Note that the similar correction function $\overline{G}(t,x,v)$ was first introduced by Liu-Yang-Yu-Zhao \cite{Liu-Yang-Yu-Zhao} for the stability of the rarefaction wave to the one-dimensional Boltzmann equation with cutoff. Some detailed comments on introducing the subtraction $G-\overline{G}$ in our case
will be given later on.

To derive the equation of the microscopic perturbation $f(t,x,v)$ in \eqref{3.8A}, by the properties of $P_{1}$ in \eqref{2.5} and the definition of
$\overline{G}$, we have from a direct computation that
\begin{align}
\label{3.18AA}
P_{1}(v\cdot\nabla_{x}M)
&=P_{1}\big\{v\cdot(\frac{|v-u|^{2}
\nabla_{x}\theta}{2R\theta^{2}}
+\frac{(v-u)\cdot\nabla_{x}u}{R\theta})M\big\}
\nonumber\\
&=P_{1}\big\{v\cdot(\frac{|v-u|^{2}\nabla_{x}\widetilde{\theta}}{2R\theta^{2}}
+\frac{(v-u)\cdot\nabla_{x}\widetilde{u}}{R\theta})M\big\}
+\frac{1}{\varepsilon}L_{M}\overline{G}.
\end{align}
With this and the fact that $L_{M}G=L_{M}(\overline{G}+\sqrt{\mu}f)$, we can rewrite \eqref{2.11} as
\begin{equation}
\label{2.18}
\partial_{t}G+P_{1}(v\cdot\nabla_{x}G)
+P_{1}\big\{v\cdot(\frac{|v-u|^{2}\nabla_{x}\widetilde{\theta}}{2R\theta^{2}}
+\frac{(v-u)\cdot\nabla_{x}\widetilde{u}}{R\theta})M\big\}
=\frac{1}{\varepsilon}L_{M}(\sqrt{\mu}f)+\frac{1}{\varepsilon}Q(G,G).
\end{equation}
Inspired by \cite{Guo-2002}, we denote
\begin{equation}
\label{2.19}
\Gamma(h,g):=\frac{1}{\sqrt{\mu}}Q(\sqrt{\mu}h,\sqrt{\mu}g),
\quad \mathcal{L}h:=\Gamma(h,\sqrt{\mu})+\Gamma(\sqrt{\mu},h),
\end{equation}
which together with \eqref{2.9} immediately gives rise to
\begin{align}
\label{2.20}
\frac{1}{\sqrt{\mu}}L_{M}(\sqrt{\mu}f)&=
\frac{1}{\sqrt{\mu}}\{Q(M,\sqrt{\mu}f)+Q(\sqrt{\mu}f,M)\}
\nonumber\\
&=\mathcal{L}f+\Gamma(\frac{M-\mu}{\sqrt{\mu}},f)
+\Gamma(f,\frac{M-\mu}{\sqrt{\mu}}).
\end{align}
With these and the fact  that $G=\overline{G}+\sqrt{\mu}f$, \eqref{2.18} implies that $f$ satisfies
\begin{align}
\label{2.21}
\partial_{t}f+v\cdot\nabla_{x}f=&\frac{1}{\varepsilon}\mathcal{L}f+\frac{1}{\varepsilon}\Gamma(\frac{M-\mu}{\sqrt{\mu}},f)
+\frac{1}{\varepsilon}\Gamma(f,\frac{M-\mu}{\sqrt{\mu}})
+\frac{1}{\varepsilon}\Gamma(\frac{G}{\sqrt{\mu}},\frac{G}{\sqrt{\mu}})
\nonumber\\
&+\frac{P_{0}[v\cdot\nabla_{x}(\sqrt{\mu}f)]}{\sqrt{\mu}}
-\frac{P_{1}(v\cdot\nabla_{x}\overline{G})}{\sqrt{\mu}}-\frac{\partial_{t}\overline{G}}{\sqrt{\mu}}
\nonumber\\
&-\frac{1}{\sqrt{\mu}}P_{1}\big\{v\cdot(\frac{|v-u|^{2}
\nabla_{x}\widetilde{\theta}}{2R\theta^{2}}+\frac{(v-u)\cdot\nabla_{x}\widetilde{u}}{R\theta})M\big\}.
\end{align}

\begin{remark}\label{rm3.2}
It is well known that the linearized operator $\mathcal{L}$ in \eqref{2.21} is self-adjoint and non-positive definite, and its null space $\ker\mathcal{L}$ is spanned by the basis $\{\sqrt{\mu},v\sqrt{\mu},|v|^{2}\sqrt{\mu}\}$, cf. \cite{Guo-2002}.
\end{remark}

\begin{remark}\label{rm3.3}
It should be pointed out that the unknown $f(t,x,v)$ in \eqref{2.21}  is purely
microscopic since $G$ and $\overline{G}$ are purely microscopic, namely $f(t,x,v)\in (\ker\mathcal{L})^{\perp}$. This is essentially different from the one in \cite{Guo-2002}
since $f(t,x,v)$  used in \cite{Guo-2002} involves the macroscopic part. More comments will be given in Section \ref{sec.3.3A}.
\end{remark}

Motivated by the fundamental work Guo \cite{Guo-2002}, we introduce a velocity weight function as
\begin{equation}
\label{2.24}
w\equiv w(v):=\langle v\rangle^{-1},\quad
\langle v\rangle=\sqrt{1+|v|^{2}}.
\end{equation}
Such a weight function is designed to deal with the velocity-derivatives of the free streaming term $v\cdot\nabla_{x}f$, see \eqref{4.78b}.
With \eqref{2.24}, for $\ell\in \mathbb{R}$ we denote the weighted $L^{2}$ norms as
\begin{equation*}
|f|_{2,\ell}^{2}\equiv\int_{\mathbb{R}^{3}}w^{2\ell}|f|^{2}\,dv, \quad  \|f\|_{2,\ell}^{2}\equiv\int_{\mathbb{R}^{3}}|f|_{2,\ell}^{2}\,dx.
\end{equation*}
Corresponding to the reference global Maxwellian $\mu$ in \eqref{1.8},
the Landau collision frequency is given by
\begin{equation}
\label{2.25}
\sigma^{ij}(v):=\Phi_{ij}\ast \mu=\int_{{\mathbb R}^3}\Phi_{ij}(v-v_{\ast})\mu(v_{\ast})\,dv_{\ast},
\end{equation}
where $[\sigma^{ij}(v)]_{1\leq i,j\leq 3}$ is a positive-definite self-adjoint matrix.
In terms of linearization of the nonlinear Landau operator around $\mu$ (cf.~\cite{Guo-2002}),
with \eqref{2.25}, we define the weighted dissipation norms as
\begin{equation*}
|f|_{\sigma,\ell}^{2}\equiv\sum^{3}_{i,j=1}\int_{\mathbb{R}^{3}}w^{2\ell}\{\sigma^{ij}\partial_{v_i}f\partial_{v_j}f+
\sigma^{ij}\frac{v_{i}}{2}\frac{v_{j}}{2}|f|^{2}\}\,dv, \quad  \|f\|_{\sigma,\ell}^{2}\equiv\int_{\mathbb{R}^{3}}|f|_{\sigma,\ell}^{2}\,dx,
\end{equation*}
where $|f|_{\sigma}=|f|_{\sigma,0}$ and $\|f\|_{\sigma}=\|f\|_{\sigma,0}$.
Moreover, we have from \cite[Corollary 1, p.399]{Guo-2002} that
\begin{equation}
\label{2.26}
|f|_{\sigma}\approx |\langle v\rangle^{-\frac{1}{2}}f|_2+\Big|\langle v\rangle^{-\frac{3}{2}}\nabla_vf\cdot\frac{v}{|v|}\Big|_2+\Big|\langle v\rangle^{-\frac{1}{2}}\nabla_vf\times\frac{v}{|v|}\Big|_2.
\end{equation}
The notation  $A\approx B$ means that there exists $C>1$
such that $C^{-1}B\leq A\leq CB$.

In order to prove the uniform-in-$\varepsilon$ existence of smooth solutions for the Landau equation, a key point is to
establish uniform energy estimates in the high order Sobolev space. For this, we define the instant  energy functional $\mathcal{E}_N(t)$ as
\begin{align}
\label{3.2}
\mathcal{E}_N(t):=&\sum_{|\alpha|\leq N-1}\{\|\partial^{\alpha}(\widetilde{\rho},\widetilde{u},\widetilde{\theta})(t)\|^{2}
+\|\partial^{\alpha}f(t)\|^{2}\}+\sum_{|\alpha|+|\beta|\leq N,|\beta|\geq1}\|\partial^{\alpha}_{\beta}f(t)\|_{2,|\beta|}^{2}
\nonumber\\
&+\varepsilon^{2}\sum_{|\alpha|=N}\{\|\partial^{\alpha}(\widetilde{\rho},\widetilde{u},\widetilde{\theta})(t)\|^{2}
+\|\partial^{\alpha}f(t)\|^{2}\}.
\end{align}
Correspondingly, the dissipation energy functional $\mathcal{D}_N(t)$ is given by
\begin{align}
\label{3.8}
\mathcal{D}_N(t)&:=\varepsilon\sum_{1\leq|\alpha|\leq N}\|\partial^{\alpha}(\widetilde{\rho},\widetilde{u},\widetilde{\theta})(t)\|^{2}
+\varepsilon\sum_{|\alpha|=N}\|\partial^{\alpha}f(t)\|_{\sigma}^{2}
\nonumber\\
&\quad+\frac{1}{\varepsilon}\sum_{|\alpha|\leq N-1}\|\partial^{\alpha}f(t)\|_{\sigma}^{2}
+\frac{1}{\varepsilon}\sum_{|\alpha|+|\beta|\leq N,|\beta|\geq1}\|\partial^{\alpha}_{\beta}f(t)\|_{\sigma,|\beta|}^{2}.
\end{align}
Throughout this article, we assume $N\geq 3$. A crucial
feature of the above instant energy $\mathcal{E}_N(t)$ is that the highest $N$th-order space derivatives
are much more singular with respect to $\varepsilon$ than those derivatives of order up to $N-1$, and
it occurs similarly to the dissipation rate $\mathcal{D}_N(t)$ for the non-fluid component.

\subsubsection{Main result}
With the above preparations, our first result
on the compressible Euler limit can be stated as follows.

\begin{theorem}\label{thm1.1}
Let $\tau>0$ be given and $(\bar{\rho},\bar{u},\bar{\theta})(t,x)$  
be the smooth solution of the compressible Euler system \eqref{1.4} and \eqref{1.6a} given in Proposition \ref{prop.1.1}.
Construct the  local Maxwellian $M_{[\bar{\rho},\bar{u},\bar{\theta}]}(t,x,v)$
from $(\bar{\rho},\bar{u},\bar{\theta})(t,x)$  
as in \eqref{1.5}. Then there exists a small constant $\varepsilon_{0}>0$
such that for each $\varepsilon\in(0,\varepsilon_{0})$,  the Cauchy problem on the Landau equation \eqref{1.1} with non-negative initial data
\begin{align}
\label{1.10}
F^{\varepsilon}(0,x,v)\equiv M_{[\bar{\rho},\bar{u},\bar{\theta}]}(0,x,v)
\end{align}
admits a unique smooth solution $F^{\varepsilon}(t,x,v)\geq 0$ over $[0,\tau]\times \R^3\times \R^3$, satisfying the estimate
\begin{align}
\label{3.21A}
\mathcal{E}_N(t)+\frac{1}{2}\int^{t}_{0}\mathcal{D}_N(s)\,ds\leq \frac{1}{2}\varepsilon^{2},
\end{align}
for any $0\leq t\leq \tau$. In particular, there exists a constant $C_\tau>0$ independent of $\varepsilon$ such that
\begin{align}
\label{1.11}
\sup_{t\in[0,\tau]}\|\frac{F^{\varepsilon}(t,x,v)-M_{[\bar{\rho},\bar{u},\bar{\theta}]}(t,x,v)}{\sqrt{\mu}}
\|_{L_{x}^{2}L_{v}^{2}}+
\sup_{t\in[0,\tau]}\|\frac{F^{\varepsilon}(t,x,v)-M_{[\bar{\rho},\bar{u},\bar{\theta}]}(t,x,v)}{\sqrt{\mu}}
\|_{L_{x}^{\infty}L_{v}^{2}}\leq C_{\tau}\varepsilon.
\end{align}
\end{theorem}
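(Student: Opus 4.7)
The plan is to prove Theorem \ref{thm1.1} by a standard continuity argument on the reformulated system \eqref{2.23}--\eqref{2.21}. First I would set up local well-posedness for the coupled system in the norm defined by $\mathcal{E}_N(t)$, with initial condition (from \eqref{1.10}) $G(0,x,v)=0$, equivalently $\sqrt{\mu}\,f(0,x,v)=-\overline{G}(0,x,v)$ and $(\widetilde{\rho},\widetilde{u},\widetilde{\theta})(0,x)=0$; since $\overline{G}(0)$ carries an explicit factor $\varepsilon$ by \eqref{2.16}, this yields $\mathcal{E}_N(0)\leq C_\tau\varepsilon^2$. The heart of the matter is to establish the uniform a priori bound
\begin{equation*}
\mathcal{E}_N(t)+\int_0^t\mathcal{D}_N(s)\,ds\leq C_\tau\,\mathcal{E}_N(0)+C_\tau\varepsilon^2+C_\tau\int_0^t\mathcal{E}_N(s)^{1/2}\mathcal{D}_N(s)\,ds,
\end{equation*}
under the a priori assumption $\mathcal{E}_N(t)\leq\varepsilon^2$, after which Gronwall plus absorbing the cubic term closes \eqref{3.21A}, extends the local solution to $[0,\tau]$, and gives \eqref{1.11} by Sobolev embedding applied to the decomposition $F^\varepsilon-\overline{M}=(M-\overline{M})+\overline{G}+\sqrt{\mu}f$.

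The a priori estimates split into two coupled pieces. For the fluid part, system \eqref{2.23} is treated as a perturbed compressible Navier--Stokes-type system with viscosity $O(\varepsilon)$ and a right-hand side involving $L_M^{-1}\Theta$; by differentiating $\partial^\alpha$ for $|\alpha|\leq N-1$, multiplying by the symmetrizer dictated by the Euler structure, and exploiting the positivity of $\mu(\theta),\kappa(\theta)$ ensured by Lemma \ref{lem7.2}, one obtains the $H^{N-1}$ dissipation of $(\widetilde\rho,\widetilde u,\widetilde\theta)$ at rate $\varepsilon$, while the $H^N$ estimate appears only after multiplying by the prefactor $\varepsilon^2$ in \eqref{3.2} so that the corresponding dissipation in \eqref{3.8} still carries only a factor $\varepsilon$. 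The forcing from $L_M^{-1}\Theta$ is controlled by the microscopic energy $\|f\|_\sigma$ after using the representation \eqref{2.13} and the coercivity of $\mathcal{L}$.

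For the microscopic part, I would apply $\partial_\beta^\alpha$ to \eqref{2.21} and test against $w^{2|\beta|}\partial_\beta^\alpha f$. The coercivity of $-\mathcal{L}$ on $(\ker\mathcal{L})^\perp$ produces the gain $\frac{1}{\varepsilon}\|\partial_\beta^\alpha f\|_{\sigma,|\beta|}^2$ in \eqref{3.8}. The velocity weight $w=\langle v\rangle^{-1}$ is exactly what is needed to compensate the polynomial growth in $v$ generated when $\partial_\beta$ hits the streaming term $v\cdot\nabla_xf$; the commutator $[\partial_\beta,v\cdot\nabla_x]$ then enters with one less $v$-derivative and can be absorbed inductively. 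The fluid source $P_0[v\cdot\nabla_x(\sqrt{\mu}f)]/\sqrt{\mu}$ is controlled by the $H^{N-1}$ dissipation of the fluid perturbation, the trilinear $\Gamma(G/\sqrt{\mu},G/\sqrt{\mu})/\varepsilon$ gains two factors of $\varepsilon$ from $G=\overline{G}+\sqrt\mu f$ under the a priori $\mathcal{E}_N\leq\varepsilon^2$, and the linear terms $\Gamma((M-\mu)/\sqrt\mu,\cdot)/\varepsilon$ are small because $M-\mu$ is of size $\|(\widetilde\rho,\widetilde u,\widetilde\theta)\|+C_\tau\eta_0$ which is chosen small.

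The main obstacle, and the reason for the subtraction \eqref{3.8A}, is the transport action on local Maxwellians: in \eqref{2.21} the inhomogeneities $\partial_t\overline{G}/\sqrt\mu$ and $P_1(v\cdot\nabla_x\overline{G})/\sqrt\mu$ contain up to $(N{+}1)$th-order spatial derivatives of the Euler quantities together with polynomial factors in $v$, and must be absorbed into the dissipation. Since $\overline{G}$ is of size $\varepsilon$ in its defining formula \eqref{2.16} and since taking $\partial_t$ on fluid quantities costs no extra singularity (via \eqref{1.4} $\partial_t$ reduces to $\nabla_x$), these inhomogeneities contribute $O(\varepsilon)$ in a norm compatible with $\sqrt{\mathcal{D}_N}$, provided the $\varepsilon^2$-weighted top-order piece of $\mathcal{E}_N$ in \eqref{3.2} is available to absorb the one unfavorable top-order fluid derivative. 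Balancing these contributions is precisely why the highest-order derivatives must be weighted differently from the lower-order ones. This resolves the cubic velocity growth issue noted in the Added paragraph, since the subtracted unknown $f=(G-\overline{G})/\sqrt\mu$ satisfies an equation in which such growth has already been cancelled at the Chapman--Enskog level.
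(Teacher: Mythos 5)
Your overall framework coincides with the paper's: macro--micro decomposition, subtraction of the Chapman--Enskog corrector $\overline{G}$, a purely microscopic unknown $f\in(\ker\mathcal{L})^{\perp}$ so that full coercivity of $-\mathcal{L}$ is available, $\varepsilon^{2}$-weighting of the top-order derivatives, and closure by a continuity argument. However, there is a genuine gap at the highest order $|\alpha|=N$. You propose to obtain all microscopic estimates by applying $\partial^{\alpha}_{\beta}$ to \eqref{2.21} and testing against $w^{2|\beta|}\partial^{\alpha}_{\beta}f$. For $|\alpha|=N$, $\beta=0$ this fails: the term $\big(\partial^{\alpha}\{P_{0}[v\cdot\nabla_{x}(\sqrt{\mu}f)]/\sqrt{\mu}\},\partial^{\alpha}f\big)$ involves $\nabla_{x}\partial^{\alpha}f$, i.e.\ $(N{+}1)$ spatial derivatives of $f$, which are not controlled by $\mathcal{E}_N$ or $\mathcal{D}_N$ (and, contrary to your claim, this term is not bounded by the fluid dissipation — it loses a derivative in $f$, not in $(\widetilde{\rho},\widetilde{u},\widetilde{\theta})$). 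The transport term has no sign or divergence structure in the $f$-formulation, so the loss cannot be integrated away. The paper circumvents this by performing the $N$-th order estimate on the original equation rewritten for $F/\sqrt{\mu}$ (see \eqref{4.46}), where $(v\cdot\nabla_{x}\partial^{\alpha}(F/\sqrt{\mu}),\partial^{\alpha}(F/\sqrt{\mu}))$ vanishes by integration by parts; the price is the delicate singular inner products $\frac{1}{\varepsilon}(\mathcal{L}\partial^{\alpha}f,\partial^{\alpha}M/\sqrt{\mu})$ and $\frac{1}{\varepsilon}(\Gamma((M-\mu)/\sqrt{\mu},\partial^{\alpha}f),\partial^{\alpha}F/\sqrt{\mu})$, handled by splitting $\partial^{\alpha}M$ into a kernel part plus $(M-\mu)$-weighted remainders and by moving one derivative onto the Euler background (see \eqref{4.48}--\eqref{4.60b}), together with the lower bound $\|\partial^{\alpha}F/\sqrt{\mu}\|^{2}\geq c(\|\partial^{\alpha}(\widetilde{\rho},\widetilde{u},\widetilde{\theta})\|^{2}+\|\partial^{\alpha}f\|^{2})-C(\eta_{0}+\varepsilon^{1/2})$ of Lemma \ref{lem.NFlbdd}. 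Your proposal contains no substitute for this step.

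Two smaller points. First, your master inequality carries a bare $+C_{\tau}\varepsilon^{2}$ on the right, which cannot yield the conclusion $\mathcal{E}_N(t)+\frac{1}{2}\int_{0}^{t}\mathcal{D}_N\leq\frac{1}{2}\varepsilon^{2}$ and hence does not strictly improve the a priori assumption \eqref{3.1}; in the paper every inhomogeneous contribution is of the form $C(1+t)(\eta_{0}+\varepsilon^{1/2})\varepsilon^{2}$, where the prefactor is made small by the near-constant-state hypothesis on the Euler data and by interpolation against \eqref{3.1} — this structural smallness is essential, not cosmetic. Second, your statement that the forcing $L_{M}^{-1}\Theta$ in the fluid equations is controlled ``by the coercivity of $\mathcal{L}$'' skips the actual mechanism: one must pass to the Burnett representation \eqref{4.13}--\eqref{4.14} to absorb the polynomial velocity growth, and integrate by parts in time to treat the $\varepsilon\partial_{t}G$ part of $\Theta$, which generates the correction functionals $E(t)$ and $E_{1}(t)$ that must themselves be shown to be $O(\varepsilon^{2})$ perturbations of the energy.
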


\begin{remark}	
We point out that the energy inequality \eqref{3.21A} holds true at $t=0$ under the choice of the initial data
\eqref{1.10}. For this, one can claim that the initial data \eqref{1.10} automatically satisfies
\begin{align}
\label{3.5}
\mathcal{E}_N(t)\mid_{t=0}\leq C\eta^{2}_{0}\varepsilon^{2}.
\end{align}
In fact, we have $M_{[\rho,u,\theta]}(0,x,v)=M_{[\bar{\rho},\bar{u},\bar{\theta}]}(0,x,v)$ and $G(0,x,v)=0$
in terms of the decomposition $F(t,x,v)=M_{[\rho,u,\theta]}(t,x,v)+G(t,x,v)$ and \eqref{1.10}. 
This implies that $(\rho,u,\theta)(0,x)=(\bar{\rho},\bar{u},\bar{\theta})(0,x)$ and $\overline{G}(0,x,v)+\sqrt{\mu}f(0,x,v)=0$.
Then it holds that
\begin{align*}
(\widetilde{\rho},\widetilde{u},\widetilde{\theta})(0,x)
=({\rho}-\overline{\rho},u-\overline{u},\theta-\overline{\theta})(0,x)=0 \quad \mbox{and} \quad
f(0,x,v)=-\frac{\overline{G}(0,x,v)}{\sqrt{\mu}}.
\end{align*}
With these and Lemma \ref{lem5.4}, it is direct to see that \eqref{3.5} holds. Therefore,  \eqref{3.21A} for $t=0$ follows by letting $\eta_0>0$ be small enough.
\end{remark}

\begin{remark}
As pointed out in the aforementioned relevant literature, all those known results \cite{Caflisch,Lachowicz,Nishida,Ukai-Asano,Guo-Jang-Jiang,Guo-Jang-Jiang-2010,Guo-Huang-Wang,Jiang-Luo,Jiang-Luo-1} on the hydrodynamic limit from the Boltzmann equation to the compressible Euler system treat only the angular cutoff case and it still remains largely open to obtain the similar results for  both the non-cutoff Boltzmann equation and the Landau equation, essentially due to  the effect of grazing singularity of collision operator. To the best of our knowledge, Theorem \ref{thm1.1} seems the first result concerning the hydrodynamic limit for smooth solutions of the Landau equation to the ones of the compressible Euler system in the whole space over any finite time interval. Moreover, one may expect that
similar result should also hold for the non-cutoff Boltzmann equation.
\end{remark}

\begin{remark}
It should be pointed out that all the results \cite{Guo-Huang-Wang,Guo-Jang-Jiang,Guo-Jang-Jiang-2010,Jiang-Luo,Jiang-Luo-1} are based on
the Hilbert expansion and an $L^{2}-L^{\infty}$  interplay method developed  by Guo \cite{Guo-2008} for the Boltzmann equation with cutoff in a general bounded domain. Recently Kim-Guo-Hwang \cite{Kim} also developed an  $L^{2}-L^{\infty}$ approach to the Landau equation around global Maxwellians in the torus domain, where initial data are required to be small in  $L_{x,v}^{\infty}$ but additionally belong to
$H^1_{x,v}$, and this result was improved later by
Guo-Hwan-Jang-Ouyang \cite{GHJO} in a general bounded domain. However, these methods seem  difficult to be directly carried over to our problem
on the compressible Euler limit of the Landau equation in the whole space.
\end{remark}

\begin{remark}
Our analysis tool is mainly the combination of techniques for viscous compressible fluids, properties of Burnett functions and  the elaborate energy approach based on the macro-micro decomposition of the solution
for the Landau equation with respect to the local Maxwellian that was initiated by
Liu-Yu \cite{Liu-Yu} and developed by Liu-Yang-Yu \cite{Liu-Yang-Yu} in the Boltzmann theory. Thus, the idea we will adopt for the proof is different from the approach used by Caflisch in \cite{Caflisch} via the  truncated Hilbert expansion
and by Guo-Jang-Jiang in \cite{Guo-Jang-Jiang-2010}
via the Hilbert expansion and the $L^{2}-L^{\infty}$ interplay estimates. 
\end{remark}


\subsection{Acoustic limit}
The second goal of the present paper is to
establish  the acoustic limit of the Landau equation. The acoustic system is the linearization around the uniform equilibrium
$\rho=R\theta=1$ and $u=0$ for the compressible Euler system.  After a suitable choice of units to be consistent with notations in \cite{Guo-Jang-Jiang-2010}, the fluid fluctuations
 $(\varrho,\varphi,\vartheta)=(\varrho,\varphi,\vartheta)(t,x)$ satisfy
\begin{align}
\label{1.12}
\left\{
\begin{array}{rl}
\partial_{t}\varrho+\nabla_{x}\cdot\varphi=0,
\\
\partial_{t}\varphi+\nabla_{x}(\varrho+\vartheta)=0,
\\
\partial_{t}\vartheta+\frac{2}{3}\nabla_{x}\cdot\varphi=0.
\end{array} \right.
\end{align}
To solve \eqref{1.12}, we supplement it with prescribed initial data
\begin{align}
\label{1.13}
(\varrho,\varphi,\vartheta)(0,x)=(\varrho_{0},\varphi_{0},\vartheta_{0})(x)\in H^{s}(\R^{3}),
\quad \mbox{for}~~s\geq0.
\end{align}
It is well known that the Cauchy problem
on the acoustic system \eqref{1.12}-\eqref{1.13}
admit a unique global-in-time classical solution 
$(\varrho,\varphi,\vartheta)(t,x)\in C([0,+\infty);H^{s}(\mathbb{R}^3))$. Moreover, the solution satisfies
\begin{align}\notag
\|(\varrho,\varphi,\sqrt{\frac{3}{2}}\vartheta)(t)\|^2_{H^{s}}=
\|(\varrho_0,\varphi_0,\sqrt{\frac{3}{2}}\vartheta_0)\|^2_{H^{s}}, \quad \mbox{for~all}~t\geq 0.
\end{align}

On the other hand, the acoustic system \eqref{1.12} can also be formally derived from
the Landau equation \eqref{1.1} by letting
\begin{align}
\label{1.15}
F^{\varepsilon}(t,x,v)=\mu+\delta\sqrt{\mu}\ \mathbf{f}^{\varepsilon}(t,x,v),
\end{align}
where $\mu$ is the global Maxwellian given by \eqref{1.8}.
The fluctuation amplitude $\delta$ is a function of $\varepsilon$ satisfying
\begin{align}
\label{1.16}
\delta\to 0,\quad \mbox{as}\quad \varepsilon\to 0.
\end{align}
For instance, one may take
$$
\delta=\varepsilon^{\sigma}, \quad \mbox{for~any}~~\sigma>0.
$$
With the above scalings, $\mathbf{f}^{\varepsilon}=\mathbf{f}^{\varepsilon}(t,x,v)$ formally converges to
\begin{align}
\label{1.17}
\mathbf{f}=\big\{\varrho+v\cdot\varphi+(\frac{|v|^2-3}{2})\vartheta   \big\}\sqrt{\mu},\quad \mbox{as}\quad\varepsilon\to 0,
\end{align}
where $(\varrho,\varphi,\vartheta)$ is the solution of the acoustic system \eqref{1.12}. For a detailed formal derivation, see \cite{Bardos,Golse-2002}.

As in \cite{Guo-Jang-Jiang-2010}, to get
the optimal scaling, we use $\delta$  to denote the fluctuation amplitude and assume that
in addition to \eqref{1.16},
\begin{align}
\label{1.18}
\frac{\varepsilon}{\delta}\to 0,\quad \mbox{as}\quad\varepsilon\to 0.
\end{align}

We now state the second result on the  acoustic limit.
\begin{theorem}\label{thm1.2}
Let $\tau>0$ be given and
$(\varrho_{0},\varphi_{0},\vartheta_{0})(x)\in H^{N+2}(\R^{3})$ with $N\geq3$ be  initial data for the acoustic system \eqref{1.12}. Construct  the local Maxwellian
$$
\mu^{\delta}(0,x,v):=\frac{1+\delta\varrho_0(x)}{\sqrt{[2\pi(1+\delta\vartheta_0(x))]^{3}}}\exp\big\{-\frac{|v-\delta\varphi_0(x)|^2}{2(1+\delta\vartheta_0(x))}\big\}
$$	
in terms of the initial datum $1+\delta\varrho_{0}$, $\delta\varphi_{0}$ and $1+\delta\vartheta_{0}$. Then there exist  small constants $\varepsilon_{0}>0$ and $\delta_{0}>0$ such that for any $\varepsilon\in(0,\varepsilon_{0})$ and any $\delta\in(0,\delta_{0})$ satisfying the restrictions \eqref{1.16} and \eqref{1.18},
the Cauchy problem on the Landau equation \eqref{1.1} with non-negative initial data
\begin{align}\notag
F^{\varepsilon}(0,x,v)\equiv\mu^{\delta}(0,x,v)
\end{align}
admits a unique smooth solution $F^{\varepsilon}(t,x,v)\geq 0$ over $[0,\tau]\times \R^3\times\R^3$.
Moreover, let $F^{\varepsilon}(t,x,v)=\mu+\delta\sqrt{\mu}\ \mathbf{f}^{\varepsilon}(t,x,v)$ in terms of \eqref{1.15}, then the following convergence estimate holds:
\begin{align}
\label{1.21}
\sup_{t\in[0,\tau]}\|\mathbf{f}^{\varepsilon}(t,x,v)-\mathbf{f}(t,x,v)\|_{L_{x}^{2}L_{v}^{2}}+
\sup_{t\in[0,\tau]}\|\mathbf{f}^{\varepsilon}(t,x,v)-\mathbf{f}(t,x,v)\|_{L_{x}^{\infty}L_{v}^{2}}\leq C_{\tau}(\frac{\varepsilon}{\delta}+\delta),
\end{align}
where 
$\mathbf{f}(t,x,v)$ is defined in \eqref{1.17}  and the constant $C_\tau>0$ is independent of $\varepsilon$ and $\delta$.
\end{theorem}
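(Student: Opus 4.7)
The plan is to deduce Theorem \ref{thm1.2} from Theorem \ref{thm1.1} applied to Euler initial data of amplitude $O(\delta)$, combined with the linearization of the Euler system about the constant state and a Taylor expansion of the local Maxwellian about $\mu$. First we set
$$
\bar\rho_0 = 1 + \delta\varrho_0,\qquad \bar u_0 = \delta\varphi_0,\qquad \bar\theta_0 = \tfrac{3}{2} + \tfrac{3}{2}\delta\vartheta_0,
$$
which gives $M_{[\bar\rho_0, \bar u_0, \bar\theta_0]} = \mu^\delta(0,x,v)$ (note that $R\bar\theta_0 = 1 + \delta\vartheta_0$ since $R = 2/3$) and $\|(\bar\rho_0 - 1, \bar u_0, \bar\theta_0 - 3/2)\|_{H^{N+2}} \leq C\delta$. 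For $\delta$ sufficiently small, Proposition \ref{prop.1.1} then produces a smooth Euler solution $(\bar\rho, \bar u, \bar\theta)$ on $[0,\tau]$ with $H^{N+2}$ norm of order $\delta$, and Theorem \ref{thm1.1} produces the non-negative Landau solution $F^\varepsilon$ with
$$
\sup_{t\in[0,\tau]} \Big(\Big\|\tfrac{F^\varepsilon - M_{[\bar\rho, \bar u, \bar\theta]}}{\sqrt\mu}\Big\|_{L^2_x L^2_v} + \Big\|\tfrac{F^\varepsilon - M_{[\bar\rho, \bar u, \bar\theta]}}{\sqrt\mu}\Big\|_{L^\infty_x L^2_v}\Big) \leq C_\tau \varepsilon.
$$

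Next we rescale the Euler perturbation by setting $(\varrho_\delta, \varphi_\delta, \vartheta_\delta) := (\delta^{-1}(\bar\rho - 1),\ \delta^{-1}\bar u,\ \tfrac{2}{3}\delta^{-1}(\bar\theta - 3/2))$, so that $(\varrho_\delta, \varphi_\delta, \vartheta_\delta)|_{t=0} = (\varrho_0, \varphi_0, \vartheta_0)$ and its $H^{N+2}$ norm is of order one uniformly in $\delta$. Substituting $\bar\rho = 1 + \delta\varrho_\delta$ and its analogues into the Euler system \eqref{1.4} and expanding in $\delta$ shows that $(\varrho_\delta, \varphi_\delta, \vartheta_\delta)$ satisfies the acoustic system \eqref{1.12} modulo a quadratic forcing of size $O(\delta)$ in $H^{N+1}$. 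A standard symmetric-hyperbolic energy estimate on the difference against the true acoustic solution $(\varrho, \varphi, \vartheta)\in C([0,\tau]; H^{N+2})$, whose initial value vanishes, yields
$$
\sup_{t\in[0,\tau]} \|(\varrho_\delta - \varrho,\ \varphi_\delta - \varphi,\ \vartheta_\delta - \vartheta)\|_{H^{N+1}} \leq C_\tau \delta.
$$
In parallel we Taylor-expand $M_{[\bar\rho, \bar u, \bar\theta]}$ about $(1, 0, 3/2)$; using
$$
\partial_\rho M|_* = \mu, \quad \partial_{u_i} M|_* = v_i \mu, \quad \partial_\theta M|_* = \Big(\frac{|v|^2}{3} - 1\Big)\mu,
$$
a direct computation gives
$$
\frac{M_{[\bar\rho, \bar u, \bar\theta]} - \mu}{\delta\sqrt\mu} = \Big\{\varrho_\delta + v\cdot\varphi_\delta + \frac{|v|^2 - 3}{2}\vartheta_\delta\Big\}\sqrt\mu + \delta\,\mathcal R,
$$
where $\mathcal R$ is quadratic in $(\varrho_\delta, \varphi_\delta, \vartheta_\delta)$ times a polynomial in $v$ times $\sqrt\mu$, hence bounded in $L^2_x L^2_v$ and $L^\infty_x L^2_v$ uniformly in $\delta$.

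Assembling these ingredients through the decomposition
$$
\mathbf f^\varepsilon - \mathbf f = \frac{F^\varepsilon - M_{[\bar\rho, \bar u, \bar\theta]}}{\delta\sqrt\mu} + \Big(\frac{M_{[\bar\rho, \bar u, \bar\theta]} - \mu}{\delta\sqrt\mu} - \mathbf f\Big),
$$
the first term contributes $O(\varepsilon/\delta)$ by Theorem \ref{thm1.1}, whereas the Taylor expansion combined with the Euler-to-acoustic comparison shows that the second term contributes $O(\delta)$ in both norms, yielding \eqref{1.21}. The only delicate point, rather than a genuine obstacle, is verifying that the constant $C_\tau$ in Theorem \ref{thm1.1} stays uniform as $\delta \to 0$; this is automatic, since the smallness hypothesis $C_\tau\eta_0 \ll 1$ of that theorem only becomes easier to satisfy (as $\eta_0 = O(\delta)$), and the resulting bound $\mathcal E_N(t) \leq \tfrac{1}{2}\varepsilon^2$ is independent of the amplitude of the Euler background. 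The remaining work is purely book-keeping to match the Sobolev norms used in Proposition \ref{prop.1.1}, in the acoustic linearization of Euler, and in the Maxwellian Taylor remainder, and involves no new analytic difficulty.
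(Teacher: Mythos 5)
Your proposal is correct and follows essentially the same route as the paper: apply Theorem \ref{thm1.1} to Euler data of amplitude $O(\delta)$, compare the rescaled Euler solution with the acoustic solution to get an $O(\delta)$ discrepancy, Taylor-expand the local Maxwellian about $\mu$ to identify the leading term with $\delta\sqrt{\mu}\,\mathbf{f}$ up to an $O(\delta^2)$ remainder, and split $\mathbf{f}^{\varepsilon}-\mathbf{f}$ into an $O(\varepsilon/\delta)$ kinetic part and an $O(\delta)$ fluid part. The only cosmetic difference is that you sketch the proofs of the Euler-to-acoustic comparison and the Maxwellian expansion, whereas the paper imports both as Lemmas 3.2 and 3.3 of Guo--Jang--Jiang.
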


\begin{remark}
The convergence rate obtained in \eqref{1.21}
should be optimal similar to the one in \cite{Guo-Jang-Jiang-2010} for the acoustic limit
of the Boltzmann equation with cutoff.
\end{remark}

\subsection{Strategy of the proof}\label{sec.3.3A}

In what follows we give some key points in the proof of main results.
As mentioned above, the $L^{2}-L^{\infty}$ framework  in \cite{Guo-Jang-Jiang-2010} cannot
be directly carried over to our problem on the compressible fluid limit of the Landau equation in the whole space
because their analysis depends crucially on the Grad's splitting of the linearized Boltzmann operator with cutoff,
which fails for the angular non-cutoff case or even in the grazing limit giving the Landau equation. Therefore, we have to
develop new ideas  instead of the $L^{2}-L^{\infty}$ framework.

Our strategy is based on the pure high order energy estimates framework developed in \cite{Liu-Yu,Liu-Yang-Yu} and \cite{Guo-2002} to construct global solutions in the perturbative framework for kinetic equations. Note that  the energy method  in \cite{Liu-Yu,Liu-Yang-Yu} is based on the macro-micro decomposition with respect to the local Maxwellian $M$ determined by the solution of kinetic equations. We can make use of such macro-micro decomposition to rewrite the Landau equation as a Navier-Stokes-type system with the non-fluid component appearing in the conservative source terms,
cf.~\eqref{2.14}, coupled with an equation for  the non-fluid component, cf.~\eqref{2.11}.

In order to carry out the energy estimate on the non-fluid component, we have
to overcome some major difficulties. First of all, we need to subtract $\overline{G}$ in \eqref{2.16} from $G$ so as to remove the trouble term
\begin{align*}
\frac{1}{\varepsilon}L_{M}\overline{G}=P_{1}\big\{v\cdot(\frac{|v-u|^{2}
\nabla_{x}\bar{\theta}}{2R\theta^{2}}+\frac{(v-u)\cdot\nabla_{x}\bar{u}}{R\theta})M\big\}
\end{align*}
when estimating the linear term $P_{1}(v\cdot\nabla_{x}M)$ in \eqref{2.11}, see the identity \eqref{3.18AA}.
Otherwise, a trouble term $\varepsilon(\|\nabla_{x}\bar{\theta}\|^2+\|\nabla_{x}\bar{u}\|^2)$
appears in the $L^2$ estimate. This term is bounded by $C\eta^2_{0}\varepsilon$ in terms of \eqref{1.7},
and it is out of control by $O(\varepsilon^2)$ corresponding to \eqref{3.1} so that the
energy estimate can not be closed. To further decompose $G-\overline{G}$, if one sets  $\sqrt{M}f=G-\overline{G}$, the equation of $f$ includes
the large-velocity growth term $\sqrt{M}^{-1}(\partial_{t}+v\cdot\nabla_{x})\sqrt{M} f$, which involves the cubic power of $v$ and this creates 
a key analytical difficulty in the
$L^2$ estimate similar to the one in \cite{Guo-Jang-Jiang-2010}. To avoid this difficulty, we set $\sqrt{\mu}f=G-\overline{G}$ 
to deduce the  microscopic equation for $f$ as \eqref{2.21} and then perform the energy estimate for $f$.

One of the most important point of the proof is that  $f\in (\ker\mathcal{L})^{\perp}$ in \eqref{2.21}  is purely microscopic such that the estimate
\begin{equation*}
-\frac{1}{\varepsilon}(\mathcal{L}f,f)\geq c_1\frac{1}{\varepsilon}\|f\|^{2}_{\sigma}
\end{equation*}
holds ture, which is again not true for the one in \cite{Guo-2002}. 
Indeed, \cite{Guo-2002}  used the decomposition $F=\mu+\sqrt{\mu}f$ with the perturbation
$f$ involving the macroscopic part, namely $f\notin(\ker\mathcal{L})^{\perp}$,  and as such one has to decompose $f=\mathbf{P}f+\{\mathbf{I-P}\}f$ with $\mathbf{P}$ the projection on the kernel space of $\mathcal{L}$,
so that one can only obtain
\begin{equation}\notag
-\frac{1}{\varepsilon}(\mathcal{L}f,f)\geq c_1\frac{1}{\varepsilon}\|\{\mathbf{I-P}\}f\|^{2}_{\sigma}.
\end{equation}
This results in the appearance of a difficult term $\frac{1}{\varepsilon}\|\mathbf{P}f\|_\sigma^2$
that would not be able to be controlled  since it involves a strong singularity about $\frac{1}{\varepsilon}$.
The main reason is that the $\varepsilon$-dependent coefficient of the hydrodynamic part $\mathbf{P}f$ in the energy dissipation functional is just $\varepsilon$ instead of $1/\varepsilon$.  Therefore, the fact that $f\in (\ker\mathcal{L})^{\perp}$ as in \eqref{2.21} is 
the most important point in the whole proof.
In this case, the trilinear estimate $\frac{1}{\varepsilon}(\Gamma(f,f),f)$ can be obtained easily because the difficult term
$\frac{1}{\varepsilon}(\Gamma(\mathbf{P}f,\mathbf{P}f),\{\mathbf{I-P}\}f)$ would no longer appear. Moreover, many known estimates on $\mathcal{L}$ and $\Gamma$ in \cite{Guo-2002} can be directly employed, for instance, see Lemma \ref{lem5.5} and Lemma \ref{lem5.6}. 

Note that the linearized Landau operator $\mathcal{L}$ in \eqref{2.21}  lacks a spectral gap, which results in the very weak velocity dissipation by 
\eqref{2.22a} and \eqref{2.26}.  The velocity-derivatives of the free streaming term $v\cdot\nabla_{x}f$ in the
$L^2$ estimate can not be directly bounded by the dissipation of $\mathcal{L}$. So
we adapt techniques in \cite{Guo-2002} basing on the velocity weight function as \eqref{2.24}
to overcome this difficult. To treat the $N$-order space derivative estimate, we need to deal with a complicated term 
$\frac{1}{\varepsilon}(\mathcal{L}\partial^{\alpha}f,\frac{\partial^{\alpha}M}{\sqrt{\mu}})=\frac{1}{\varepsilon}(\mathcal{L}\partial^{\alpha}f,\frac{I_{4}}{\sqrt{\mu}})
+\frac{1}{\varepsilon}(\mathcal{L}\partial^{\alpha}f,\frac{I_{5}}{\sqrt{\mu}})$ in \eqref{4.52a} and \eqref{4.48}.
The linear term $\frac{1}{\varepsilon}(\mathcal{L}\partial^{\alpha}f,\frac{I_{4}}{\sqrt{\mu}})$ cannot be directly estimated 
since a significant difficulty occurs to it. The key technique for handling this term is to use the properties  of the linearized operator  $\mathcal{L}$ and 
the relation between $M$ and $\mu$ as in \eqref{3.6} as well as the smallness of $\varepsilon$ and $\eta_0$.
In particular, to estimate the term $\frac{1}{\varepsilon}(\mathcal{L}\partial^{\alpha}f,\frac{I^{22}_{4}}{\sqrt{\mu}})$
in $\frac{1}{\varepsilon}(\mathcal{L}\partial^{\alpha}f,\frac{I_{4}}{\sqrt{\mu}})$,
we must move one derivative from the  $N$-order derivative $\mathcal{L}\partial^{\alpha}f$ to the other component of the inner product by integration by part, see \eqref{4.49}.
In the end, we can obtain the estimate 
\begin{align}
\label{1.26}
\frac{1}{\varepsilon}|(\mathcal{L}\partial^{\alpha}f,\frac{\partial^{\alpha}M}{\sqrt{\mu}})|\leq C(\eta_{0}+\varepsilon^{\frac{1}{2}})\frac{1}{\varepsilon}(\|\partial^{\alpha}f\|^{2}_{\sigma}+\|\partial^{\alpha}(\widetilde{\rho},\widetilde{u},\widetilde{\theta})\|^{2}+\frac{1}{\varepsilon^{2}}\|\partial^{\alpha'} f\|^{2}_{\sigma}+\varepsilon^{2}),
\end{align}
for $|\alpha'|=N-1$, see \eqref{4.48}-\eqref{4.60b} for detailed calculations. Similar difficulties also arise elsewhere, such as $\frac{1}{\varepsilon}(\Gamma(\frac{M-\mu}{\sqrt{\mu}},\partial^{\alpha}f),\frac{\partial^{\alpha}M}{\sqrt{\mu}})$ in \eqref{4.54}.
Because of  $\varepsilon^{-1}$ factor in front of the fluid part $\|\partial^{\alpha}(\widetilde{\rho},\widetilde{u},\widetilde{\theta})\|^{2}$ in \eqref{1.26}, one has to multiply the estimate \eqref{1.26} by $\varepsilon^2$
so that the fluid term can be controlled by $\mathcal{D}_N(t)$ in \eqref{3.8}. Hence, this motives how we include the Knudsen number $\varepsilon$ to the highest-order derivative $|\alpha|=N$ for the energy functional $\mathcal{E}_N(t)$ in \eqref{3.2}.

On the other hand, to make the energy estimate on the  fluid component, we need to treat those integral terms in \eqref{4.12}
involving the inverse of the linearized operator $L_M$ around the local Maxwellian $M$, such as the terms
\begin{equation}
\label{1.22}
\int_{\R^3} v_iv_j L_M^{-1} \Theta\,dv,\quad \int_{\R^3} v_i|v|^2L_M^{-1} \Theta\,dv,
\end{equation}
where $\Theta$ is defined in \eqref{2.13}. Those terms in \eqref{1.22} are difficult to estimate in a direct way since they
involve the polynomial velocity growth. To bound them, we follow the  strategy developed in \cite{Duan-Yu1,Duan-Yang-Yu} basing on the Burnett functions $\hat{A}_i$ and $\hat{B}_{ij}$ as in \eqref{5.1}. Indeed, in terms of the basic properties of the Burnett functions, the integral terms in \eqref{1.22} can be represented as the inner products of $A_i$ and $B_{ij}$ with $\Theta$, where $A_i$ and $B_{ij}$ defined in \eqref{5.2} are the inverse of $\hat{A}_i$ and $\hat{B}_{ij}$ under the linear operator $L_M$, respectively, see the identities \eqref{4.13}, \eqref{4.14} and \eqref{4.15} for details. Hence, any polynomial velocity growth in $\Theta$ can be absorbed since  $A_i$ and $B_{ij}$ enjoy the fast velocity decay, see \eqref{4.16}. In addition, we notice that any smooth solution of the compressible Euler system \eqref{1.4} and \eqref{1.6a} given in Proposition \ref{prop.1.1} does not enjoy an explicit time decay rate. One then has to use the smallness of $C_\tau\eta_{0}$ in \eqref{1.7} to control
those hard terms as in \eqref{4.14b}. This kind of technique will be used for the energy estimates on both the fluid-type system and the non-fluid system. 

Combining the energy estimates on the non-fluid component and the fluid component, we are able to
obtain the uniform a priori estimate \eqref{3.21A} and then  derive
the convergence rate \eqref{1.11} as stated in Theorem \ref{thm1.1}. Following the same strategy as above, we can obtain the similar
arguments as  \eqref{3.21A} and \eqref{1.11} under the assumptions in Theorem \ref{thm1.2}, and then  derive
the convergence rate \eqref{1.21}.

\section{Basic estimates}\label{seca.2}

In this section, we first make the a priori assumption in order to perform energy analysis conveniently.
Then we estimate the correction term and the complicated collision terms. Finally we derive an estimate on fluid
quantities involving with the temporal derivatives.
We should emphasize that in all estimates below, all constants $C>0$ at different places may depend on $\tau $ but  do not depend on both small parameters $\varepsilon$ and $\eta_0$.

\subsection{A priori assumption}Since the local existence of the solutions to the Landau equation near a global Maxwellian is well known in the  torus or the whole space, cf.~\cite{Guo-2002,Hsiao-Yu}, by a straightforward modification of the arguments there one can construct a unique short-time solution to the Landau equation \eqref{1.1}
under the assumptions in Theorem \ref{thm1.1}. The details are omitted for simplicity of presentation.
In order to extend the short-time solution to any finite time where Proposition \ref{prop.1.1} is satisfied, we only need to close the following a priori assumption
\begin{align}
\label{3.1}
\sup_{0\leq t\leq T}\mathcal{E}_N(t)\leq  \varepsilon^{2},
\end{align}
for an arbitrary time $T\in(0,\tau]$ with $\tau$ as in Proposition \ref{prop.1.1}, where $\mathcal{E}_N(t)$ is given by \eqref{3.2}.

Under the a priori assumption \eqref{3.1}, we have from the embedding inequality and \eqref{3.2} that
$$
\sup_{t\in[0,\tau]}\|\widetilde{\rho}(t,\cdot)\|_{L_x^{\infty}}
\leq C\sup_{t\in[0,\tau]}\|\widetilde{\rho}(t,\cdot)\|_{H^2_x}
\leq C\varepsilon,
$$
which together with \eqref{1.7} and \eqref{2.15} yields
$$
|\rho(t,x)-1|\leq |\rho(t,x)-\bar{\rho}(t,x)|+|\bar{\rho}(t,x)-1|
\leq C_\tau(\varepsilon+\eta_{0}).
$$
Similar estimates also hold for $u(t,x)$ and $\theta(t,x)$. Therefore,
for sufficiently small $\varepsilon$ and $\eta_{0}$, it holds that
\begin{equation}
\label{3.6}
|\rho(t,x)-1|+|u(t,x)|+|\theta(t,x)-\frac{3}{2}|<C_\tau(\varepsilon+\eta_{0}),\quad
1<\theta(t,x)<2,
\end{equation}
uniformly in all $(t,x)\in[0,\tau]\times\mathbb{R}^{3}$.
Note that under the condition \eqref{3.6}, there exist constants $C>0$ and $1<\sigma_1<\frac{3}{2}$ such that $M^{\sigma_1}\leq C\mu$ for all $(t,x,v)$.
It is crucial in our energy estimates.
\subsection{Sobolev inequalities}
We list several basic inequalities frequently used throughout this paper.
\begin{lemma}\label{lem5.1}
For any function $h=h(x)\in H^{1}(\mathbb{R}^{3})$, we have
\begin{align*}
\|h\|_{L^{6}(\mathbb{R}^{3})}&\leq C\|\nabla_{x} h\|,
\\
\|h\|_{L^{3}(\mathbb{R}^{3})}&\leq C\| h\|^{\frac{1}{2}}\|\nabla_{x} h\|^{\frac{1}{2}},
\\
\|h\|_{L^{p}(\mathbb{R}^{3})}&\leq C\| h\|_{H^1},\quad 2\leq p\leq 6,
\end{align*}
and for any function $h=h(x)\in H^{2}(\mathbb{R}^{3})$, it holds that
\begin{align*}
\|h\|_{L^{\infty}(\mathbb{R}^{3})}
\leq C\|\nabla_{x} h\|^{\frac{1}{2}}\|\nabla_{x}^{2} h\|^{\frac{1}{2}}.
\end{align*}
Here $C$ is a positive constant independent of $h(x)$.
\end{lemma}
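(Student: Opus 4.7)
The four statements are classical Gagliardo--Nirenberg-type inequalities on $\mathbb{R}^3$. My plan is to establish the critical Sobolev embedding $\|h\|_{L^6}\leq C\|\nabla_x h\|$ first and then deduce the remaining three from it by Hölder interpolation and, for the $L^\infty$ bound, a Fourier-splitting argument. For (1) I would use the standard proof of Gagliardo--Nirenberg--Sobolev: for $h\in C_c^\infty(\mathbb{R}^3)$, the one-dimensional identity $|h(x)|\leq \int_{-\infty}^{\infty}|\partial_{x_i}h|\,dx_i$ combined with iterated Hölder (Loomis--Whitney) yields the $L^1$-based estimate $\|h\|_{L^{3/2}}\leq C\prod_{i=1}^3\|\partial_{x_i}h\|_{L^1}^{1/3}$. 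Substituting $|h|^4$ in place of $h$ and using $\|\partial_{x_i}|h|^4\|_{L^1}\leq 4\|h\|_{L^6}^3\,\|\partial_{x_i}h\|$ from Cauchy--Schwarz converts this into an inequality of the form $\|h\|_{L^6}^4\leq C\|h\|_{L^6}^3\|\nabla_x h\|$, so that dividing and applying a density argument extend the bound from $C_c^\infty$ to $H^1(\mathbb{R}^3)$.

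\textbf{Interpolation to $L^3$ and $L^p$.} Once (1) is available, the $L^3$ bound follows directly from Hölder's inequality with conjugate exponents $(4/3, 4)$ applied to the factorization $|h|^3=|h|^{3/2}\cdot|h|^{3/2}$: this gives $\|h\|_{L^3}^3\leq \|h\|^{3/2}\|h\|_{L^6}^{3/2}$, hence $\|h\|_{L^3}\leq C\|h\|^{1/2}\|\nabla_x h\|^{1/2}$. For the $L^p$ bound with $p\in[2,6]$, I write $\frac{1}{p}=\frac{\theta}{2}+\frac{1-\theta}{6}$ with $\theta\in[0,1]$, and Hölder yields $\|h\|_{L^p}\leq \|h\|^\theta\|h\|_{L^6}^{1-\theta}\leq C\|h\|_{H^1}$, with the constant uniform in $p$.

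\textbf{Agmon-type $L^\infty$ bound and main obstacle.} For the final estimate I would work on the Fourier side: by the inversion formula, $|h(x)|\leq (2\pi)^{-3/2}\int|\hat h(\xi)|\,d\xi$. Splitting the integral at radius $R>0$ and applying Cauchy--Schwarz together with the explicit computations $\int_{|\xi|<R}|\xi|^{-2}\,d\xi=4\pi R$ and $\int_{|\xi|\geq R}|\xi|^{-4}\,d\xi=4\pi R^{-1}$ gives, via Plancherel, $|h(x)|\leq CR^{1/2}\|\nabla_x h\|+CR^{-1/2}\|\nabla_x^2 h\|$. Optimizing over $R$ by the choice $R=\|\nabla_x^2 h\|/\|\nabla_x h\|$ produces the required $\|h\|_{L^\infty}\leq C\|\nabla_x h\|^{1/2}\|\nabla_x^2 h\|^{1/2}$. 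None of these steps presents a genuine obstacle, since they are textbook; the only choice point is in the last step, where the Fourier-splitting route is preferable to the alternative of applying (1) to $\nabla_x h$ followed by a Morrey-type embedding, because it is fully quantitative and self-contained, requiring no auxiliary embedding theorem.
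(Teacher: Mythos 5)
Your proof is correct. The paper states this lemma without proof, treating it as a collection of standard Gagliardo--Nirenberg--Sobolev inequalities, so there is no argument in the paper to compare against; your derivation (iterated H\"older for the $L^6$ embedding, interpolation for $L^3$ and $L^p$, and Fourier splitting with the optimal choice $R=\|\nabla_x^2h\|/\|\nabla_xh\|$ for the Agmon-type $L^\infty$ bound) is a complete and standard justification. The only implicit points worth noting are that $h\in H^2(\mathbb{R}^3)$ guarantees $\hat h\in L^1$ so the pointwise inversion formula applies, and that $\int|\xi|^4|\hat h|^2\,d\xi$ is comparable to $\|\nabla_x^2h\|^2$; both are routine.
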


\subsection{Estimates on correction term $\overline{G}$}
To perform the energy estimates for the equations \eqref{2.21} 
and \eqref{2.23}, one has to treat those integral terms involving $L^{-1}_{M}$ and $\overline{G}$.
For this, we need to give the estimate concerning the inverse of the linearized
operator whose proof can be found in \cite[Lemma 6.1]{Duan-Yu1}.

\begin{lemma}\label{lem5.2}
Suppose that $U(v)$ is any polynomial of $\frac{v-\hat{u}}{\sqrt{R}\hat{\theta}}$ such that
$U(v)\widehat{M}\in(\ker{L_{\widehat{M}}})^{\perp}$ for any Maxwellian $\widehat{M}=M_{[\widehat{\rho},\widehat{u},\widehat{\theta}]}(v)$ where $L_{\widehat{M}}$ is as \eqref{2.9}.
For any $\epsilon\in(0,1)$ and any multi-index $\beta$, there exists constant $C_{\beta}>0$ such that
$$
|\partial_{\beta}L^{-1}_{\widehat{M}}(U(v)\widehat{M})|\leq C_{\beta}(\widehat{\rho},\widehat{u},\widehat{\theta})\widehat{M}^{1-\epsilon}.
$$
In particular, under the condition of \eqref{3.6}, there exists a constant $C_{\beta}>0$ such that
\begin{equation}
\label{5.3}
|\partial_{\beta}A_{j}(\frac{v-u}{\sqrt{R\theta}})|+|\partial_{\beta}B_{ij}(\frac{v-u}{\sqrt{R\theta}})|
\leq C_{\beta}M^{1-\epsilon}.
\end{equation}
\end{lemma}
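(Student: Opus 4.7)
The plan is to reduce the general Maxwellian $\widehat{M}$ to a standard one by scaling, establish a weighted $L^{2}$ bound for $L_{\widehat{M}}^{-1}(U\widehat{M})$ from the coercivity of the linearized Landau operator, and then bootstrap to a pointwise derivative bound by exploiting the elliptic-diffusive structure of $L_{\widehat{M}}$ in the velocity variable. First I would perform the change of variables $w=(v-\hat u)/\sqrt{R\hat\theta}$, which, up to the factor $\hat\rho$ and a temperature rescaling, transforms $L_{\widehat{M}}$ into the standard linearized Landau operator about $\mu_{*}(w)=(2\pi)^{-3/2}e^{-|w|^{2}/2}$, and carries $U(v)\widehat{M}$ to a polynomial times $\mu_{*}$. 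This reduces the problem to proving $|\partial_{\beta}L_{\mu_{*}}^{-1}(\widetilde U\mu_{*})|\leq C_{\beta}\,\mu_{*}^{1-\epsilon}$ for an arbitrary polynomial $\widetilde U$ with $\widetilde U\mu_{*}\in(\ker L_{\mu_{*}})^{\perp}$; the $(\hat\rho,\hat u,\hat\theta)$-dependent prefactors and the replacement $\mu_{*}^{1-\epsilon}\leftrightarrow\widehat{M}^{1-\epsilon}$ are restored by inverting the substitution.

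Next, setting $g=L_{\mu_{*}}^{-1}(\widetilde U\mu_{*})$, I would test $L_{\mu_{*}}g=\widetilde U\mu_{*}$ against $\mu_{*}^{-(1-\epsilon)}g$. Because the source has Gaussian decay (leaving ample room for the weight $\mu_{*}^{-(1-\epsilon)}$ with $\epsilon\in(0,1)$) and $L_{\mu_{*}}$ is coercive on $(\ker L_{\mu_{*}})^{\perp}$ in the dissipation norm $|\cdot|_{\sigma}$ (cf.~\eqref{2.26}), this produces a weighted bound of the form $\|\mu_{*}^{-(1-\epsilon)/2}g\|_{2}+\|\mu_{*}^{-(1-\epsilon)/2}g\|_{\sigma}\leq C$. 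To upgrade to a pointwise estimate, I would view $L_{\mu_{*}}g=\widetilde U\mu_{*}$ as a second-order degenerate elliptic PDE in $v$ with smooth coefficients built from $\sigma^{ij}$ in \eqref{2.25}, and apply interior elliptic regularity with Gaussian weights to obtain $|g(v)|\leq C\mu_{*}^{1-\epsilon}$. The bound on $\partial_{\beta}g$ then follows by induction on $|\beta|$: differentiating the equation gives $L_{\mu_{*}}\partial_{\beta}g=\partial_{\beta}(\widetilde U\mu_{*})+[\partial_{\beta},L_{\mu_{*}}]g$, and the commutator is a lower-order operator with polynomial coefficients in $v$, whose contribution is absorbed by the inductive hypothesis applied to $\partial_{\beta'}g$ with $|\beta'|<|\beta|$.

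The main obstacle is the interplay between the Coulomb ellipticity of $L_{\mu_{*}}$, encoded through the matrix $\sigma^{ij}$ in \eqref{2.25} which is positive definite but degenerate along the direction of $v$ at infinity, and the polynomial growth produced each time one commutes a derivative past $\mu_{*}$ or past the singular kernel \eqref{1.3}. The small loss $\epsilon\in(0,1)$ in the Maxwellian exponent is precisely what is needed to absorb these polynomial costs without destroying the Gaussian tail. Once the general bound is in hand, the specialization \eqref{5.3} is immediate: the Burnett functions $\hat A_{j},\hat B_{ij}$ in \eqref{5.1} are finite-degree polynomials in $\frac{v-u}{\sqrt{R\theta}}$ lying in the orthogonal complement of $\ker L_{M}$, their pre-images under $L_{M}^{-1}$ are the functions $A_{j},B_{ij}$ of \eqref{5.2}, and under the a priori bound \eqref{3.6} the parameter-dependent constants $C_{\beta}(\rho,u,\theta)$ stay uniformly bounded on $[0,\tau]\times\R^{3}$.
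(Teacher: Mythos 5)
The paper does not actually prove this lemma: it is quoted verbatim from \cite[Lemma 6.1]{Duan-Yu1}, so there is no internal argument to compare yours against. Judged on its own terms, your outline follows a standard and viable route (scaling reduction, weighted coercivity, local regularity, induction on $\beta$), but the three points you pass over quickly are where essentially all of the work lies. First, after the reduction the equation $L_{\mu_*}g=\widetilde U\mu_*$ is \emph{not} a local elliptic PDE: $L_{\mu_*}g=Q(\mu_*,g)+Q(g,\mu_*)$, and only the first term is the divergence-form operator with coefficients $\sigma^{ij}$ as in \eqref{2.25}; the second is nonlocal in $g$, involving convolutions of the kernel \eqref{1.3} and its derivatives against $g$. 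You must move $Q(g,\mu_*)$ to the right-hand side and verify that it retains Gaussian decay --- it does, because each of its terms carries a factor $\mu_*$ or $\nabla_v\mu_*$ while the convolutions grow at most polynomially once the weighted $L^2$ bound is in hand --- but this step cannot be treated as implicit in ``smooth coefficients built from $\sigma^{ij}$.'' Second, $\sigma^{ij}$ degenerates at infinity (its smallest eigenvalue decays like $\langle v\rangle^{-3}$ in the radial direction), so interior elliptic estimates on unit balls centred at $v_0$ carry constants growing polynomially in $|v_0|$, and the Gaussian weight $\mu_*^{-(1-\epsilon)}$ used in the energy step sits at the edge of where weighted coercivity for the Landau operator is available; you correctly identify the $\epsilon$-loss as the mechanism that absorbs both effects, but this bookkeeping is the heart of the proof rather than a remark. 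Third, in the induction on $|\beta|$ the function $\partial_\beta g$ is no longer in $(\ker L_{\mu_*})^{\perp}$, so the coercivity \eqref{2.22a} is unavailable and one must invoke the weighted estimate of the type \eqref{5.10}, whose lower-order remainders are of exactly the form your inductive hypothesis handles. With these points filled in the argument delivers the stated bound, and the specialization to \eqref{5.3} via \eqref{5.1}--\eqref{5.2} under \eqref{3.6} is indeed immediate.
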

Based on the properties of the Burnett functions and Lemma \ref{lem5.2},
we can prove the following lemma which will be used frequently in the energy analysis later on.

\begin{lemma}
\label{lem5.4}
Assume \eqref{3.1} and \eqref{3.6} hold. Let $\overline{G}$ be defined
in \eqref{2.16} and $\langle v\rangle=\sqrt{1+|v|^{2}}$, then for any $l\geq 0$, $|\beta|\geq 0$ and $|\alpha|\leq N$, one has
\begin{align}
\label{5.5}
\|\langle v\rangle^{l}\partial^{\alpha}_{\beta}(\frac{\overline{G}}{\sqrt{\mu}})\|_{2,|\beta|}+
\|\langle v\rangle^{l}\partial^{\alpha}_{\beta}(\frac{\overline{G}}{\sqrt{\mu}})\|_{\sigma,|\beta|}
\leq C\eta_{0}\varepsilon.
\end{align}
\end{lemma}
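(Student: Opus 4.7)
The plan is to first rewrite $\overline{G}$ explicitly in terms of the Burnett functions $A_j$ and $B_{ij}$ defined in \eqref{5.2}. Observe that inside \eqref{2.16}, the polynomial factor $v$ can be replaced by $(v-u)$ modulo $\ker L_M$ since $u\cdot(\cdots)M$ is annihilated by $P_1$ after the bulk-velocity shift. Setting $w=(v-u)/\sqrt{R\theta}$ and collecting terms by odd and even parts in $w$, the expression $P_1\{v\cdot(\frac{|v-u|^{2}\nabla_{x}\bar{\theta}}{2R\theta^{2}}+\frac{(v-u)\cdot\nabla_{x}\bar{u}}{R\theta})M\}$ becomes a linear combination of $\hat{A}_j(w)M$ with coefficient $\propto \partial_{x_j}\bar\theta/\theta^{3/2}$ and $\hat{B}_{ij}(w)M$ with coefficient $\propto \partial_{x_j}\bar u_i/\theta$. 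Applying $L_M^{-1}$ and \eqref{5.2}, I obtain a representation
\begin{equation*}
\overline{G}(t,x,v)=\varepsilon\sum_{j=1}^{3}a_j(\theta)\,\partial_{x_j}\bar\theta(t,x)\,A_j(w)+\varepsilon\sum_{i,j=1}^{3}b_{ij}(\theta)\,\partial_{x_j}\bar u_i(t,x)\,B_{ij}(w),
\end{equation*}
with $a_j,b_{ij}$ smooth bounded functions of $\theta$ by virtue of \eqref{3.6}.

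Next I apply $\partial^{\alpha}_{\beta}$ to $\overline{G}/\sqrt{\mu}$ via the Leibniz rule. Every derivative lands on one of three factors: (i) the smooth $\theta$-dependent coefficients $a_j,b_{ij}$, producing polynomials in derivatives of $\theta$; (ii) the fluctuation factors $\partial_{x_j}\bar\theta,\partial_{x_j}\bar u_i$, producing higher $x$-derivatives of $(\bar u,\bar\theta)$; or (iii) the composite function $A_j(w),B_{ij}(w)$, in which chain-rule expansion together with Lemma \ref{lem5.2} gives a bound by $C M^{1-\epsilon_0}$ times polynomials in derivatives of $(u,\theta)$. Under \eqref{3.6} there exist $\epsilon_0>0$ and $c>0$ with $M^{1-\epsilon_0}/\sqrt{\mu}\leq C e^{-c|v|^{2}}$, so the weight $\langle v\rangle^{l}$ and any polynomial velocity growth are absorbed, leaving a pointwise-in-$(t,x,v)$ bound of the schematic form
\begin{equation*}
\bigl|\langle v\rangle^{l}\partial^{\alpha}_{\beta}(\overline{G}/\sqrt{\mu})\bigr|\leq C\varepsilon\sum_{\alpha_0+\alpha_1+\cdots+\alpha_p=\alpha}\bigl|\partial^{\alpha_0}\nabla_x(\bar u,\bar\theta)\bigr|\,\prod_{k=1}^{p}\bigl|\partial^{\alpha_k}(\bar\rho-1,\bar u,\bar\theta-\tfrac32,\widetilde\rho,\widetilde u,\widetilde\theta)\bigr|\,e^{-c|v|^{2}/2}.
\end{equation*}

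Taking the $L^{2}_{v}$ norm gives an $(t,x)$-dependent polynomial expression, and then taking the $L^{2}_{x}$ norm I place the one distinguished factor $\partial^{\alpha_0}\nabla_x(\bar u,\bar\theta)$ in $L^{2}_{x}$ (or $L^{\infty}_{x}$ when it is low-order) and the remaining factors in $L^{\infty}_{x}$ via Sobolev embedding; this is possible because $k\geq N+2$ in Proposition \ref{prop.1.1} and the a priori assumption \eqref{3.1} controls $(\widetilde\rho,\widetilde u,\widetilde\theta)$ in $H^{N-1}\hookrightarrow L^{\infty}$. By \eqref{1.7}, $\|\nabla_x(\bar u,\bar\theta)\|_{H^{k-1}}\leq C_\tau\eta_0$ and all other $\bar{(\cdot)}$-factors are $O(1)$, while the $\widetilde{(\cdot)}$-factors are $O(\varepsilon)\leq O(1)$. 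Hence $\|\langle v\rangle^{l}\partial^{\alpha}_{\beta}(\overline{G}/\sqrt{\mu})\|_{2,|\beta|}\leq C\varepsilon\eta_0$. For the $\sigma$-norm the same scheme works: by \eqref{2.26}, $|\cdot|_{\sigma,|\beta|}$ is controlled by weighted $L^{2}_v$-norms of the function and one extra $\nabla_v$, which only adds one more application of Lemma \ref{lem5.2} and changes the weight by a harmless polynomial factor already absorbed into $e^{-c|v|^{2}/2}$, yielding the same bound $C\varepsilon\eta_0$. The main technical obstacle is the bookkeeping in Leibniz's rule, in particular verifying that after all distributions each surviving term retains at least one factor of $\nabla_x(\bar u,\bar\theta)$ so that the small constant $\eta_0$ can be extracted; the chain-rule expansion of $\partial^{\alpha}_{\beta}\bigl[A_j((v-u)/\sqrt{R\theta})\bigr]$ is routine but must be carried out carefully to confirm that this factor is never consumed by interior derivatives.
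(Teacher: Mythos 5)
Your proposal is correct and follows essentially the same route as the paper: the Burnett representation \eqref{5.6} of $\overline{G}$, Lemma \ref{lem5.2} combined with the Gaussian gain $M^{1-\epsilon}\mu^{-1/2}\lesssim e^{-c|v|^2}$ (the content of \eqref{5.9}) to absorb all weights and velocity derivatives, and then Leibniz plus $L^2$--$L^\infty$ splitting so that each term retains a factor $\nabla_x(\bar u,\bar\theta)$ yielding $\eta_0$. The only minor imprecision is that for $|\alpha|=N$ the a priori bound \eqref{3.1} gives $\|\partial^{\alpha}(\widetilde u,\widetilde\theta)\|=O(1)$ rather than $O(\varepsilon)$, but this does not affect the conclusion since $O(1)$ suffices there.
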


\begin{proof}
In view of \eqref{5.1} and \eqref{5.2}, 
the term $\overline{G}$ in \eqref{2.16} can be represented precisely as
\begin{equation}
\label{5.6}
\overline{G}=\varepsilon\frac{\sqrt{R}}{\sqrt{\theta}}\sum^{3}_{j=1}
\frac{\partial\bar{\theta}}{\partial x_{j}}A_{j}(\frac{v-u}{\sqrt{R\theta}})
+\varepsilon\sum^{3}_{j=1}\sum^{3}_{i=1}
\frac{\partial\bar{u}_{j}}{\partial x_{i}}B_{ij}(\frac{v-u}{\sqrt{R\theta}}).
\end{equation}
Then, for $k=1,2,3$, we have
\begin{equation}
\label{5.7}
\frac{\partial\overline{G}}{\partial v_{k}}=\varepsilon\frac{\sqrt{R}}{\sqrt{\theta}}\sum^{3}_{j=1}
\frac{\partial\bar{\theta}}{\partial x_{j}}\partial_{v_{k}}A_{j}(\frac{v-u}{\sqrt{R\theta}})\frac{1}{\sqrt{R\theta}} +\varepsilon\sum^{3}_{i,j=1}\frac{\partial\bar{u}_{j}}{\partial x_{i}}
\partial_{v_{k}}B_{ij}(\frac{v-u}{\sqrt{R\theta}})\frac{1}{\sqrt{R\theta}},
\end{equation}
and
\begin{align}
\label{5.8}
\frac{\partial\overline{G}}{\partial x_{k}}&=\varepsilon\Big\{
\frac{\sqrt{R}}{\sqrt{\theta}}\sum^{3}_{j=1}\frac{\partial^{2}\bar{\theta}}{\partial x_{j}\partial x_{k}}A_{j}(\frac{v-u}{\sqrt{R\theta}})
-\frac{\sqrt{R}}{2\sqrt{\theta^{3}}}\sum^{3}_{j=1}\frac{\partial\bar{\theta}}{\partial x_{j}}
\frac{\partial\theta}{\partial x_{k}}A_{j}(\frac{v-u}{\sqrt{R\theta}})
\nonumber\\
&\quad-\frac{\sqrt{R}}{\sqrt{\theta}}\sum^{3}_{j=1}
\frac{\partial\bar{\theta}}{\partial x_{j}}\frac{\partial u}{\partial x_{k}}\cdot
\nabla_{v}A_{j}(\frac{v-u}{\sqrt{R\theta}})\frac{1}{\sqrt{R\theta}}
-\frac{\sqrt{R}}{\sqrt{\theta}}\sum^{3}_{j=1}
\frac{\partial\bar{\theta}}{\partial x_{j}}\frac{\partial\theta}{\partial x_{k}}\nabla_{v}A_{j}(\frac{v-u}{\sqrt{R\theta}})
\cdot\frac{v-u}{2\sqrt{R\theta^{3}}}
\nonumber\\
&\quad+\sum^{3}_{i,j=1}\frac{\partial^{2}\bar{u}_{j}}{\partial x_{i}\partial x_{k}}B_{ij}(\frac{v-u}{\sqrt{R\theta}})
-\sum^{3}_{i,j=1}\frac{\partial\bar{u}_{j}}{\partial x_{i}}\frac{\partial u}{\partial x_{k}}\cdot\nabla_{v}B_{ij}(\frac{v-u}{\sqrt{R\theta}})\frac{1}{\sqrt{R\theta}}
\nonumber\\
&\quad-\sum^{3}_{i,j=1}\frac{\partial\bar{u}_{j}}{\partial x_{i}}\frac{\partial\theta}{\partial x_{k}}
\nabla_{v}B_{ij}(\frac{v-u}{\sqrt{R\theta}})
\cdot\frac{v-u}{2\sqrt{R\theta^{3}}}
\Big\}.
\end{align}
Likewise, $\partial_{t}\overline{G}$ has the similar expression as \eqref{5.8}.
For any $|\beta|\geq0$, any $l\geq0$ and sufficiently
small $\epsilon>0$, we can deduce
from \eqref{3.6}, \eqref{2.24} and \eqref{2.26} that
\begin{equation}
\label{5.9}
|\langle v\rangle^{l}w^{|\beta|}\mu^{-\frac{1}{2}}M^{1-\epsilon}|_{2}
+|\langle v\rangle^{l}w^{|\beta|}\mu^{-\frac{1}{2}}M^{1-\epsilon}|_{\sigma}\leq C.
\end{equation}
For any $|\beta|\geq0$, we get by \eqref{5.3}, \eqref{5.6}, the similar arguments as \eqref{5.7}, \eqref{5.9} and \eqref{1.7} that
\begin{equation}
\label{5.4}
\|\langle v\rangle^{l}\partial_{\beta}(\frac{\overline{G}}{\sqrt{\mu}})\|^2_{2,|\beta|}
+\|\langle v\rangle^{l}\partial_{\beta}(\frac{\overline{G}}{\sqrt{\mu}})\|^2_{\sigma,|\beta|}
\leq C\varepsilon^2(\|\nabla_{x}\bar{u}\|^2+\|\nabla_{x}\bar{\theta}\|^2)\leq C\eta^2_{0}\varepsilon^2.
\end{equation}
For $|\alpha|=1$, we use \eqref{5.8}, \eqref{5.9}, \eqref{5.3}, \eqref{3.1} and \eqref{1.7} to get
\begin{align}
\label{4.10AA}
&\|\langle v\rangle^{l}\partial^{\alpha}_{\beta}(\frac{\overline{G}}{\sqrt{\mu}})\|^2_{2,|\beta|}+
\|\langle v\rangle^{l}\partial^{\alpha}_{\beta}(\frac{\overline{G}}{\sqrt{\mu}})\|^2_{\sigma,|\beta|}
\nonumber\\
&\leq 
C\varepsilon^2\int_{{\mathbb R}^3} (|\partial^{\alpha}\nabla_{x}\bar{u}|^2
+|\partial^{\alpha}\nabla_{x}\bar{\theta}|^2)+(|\nabla_{x}\bar{u}|^2+|\nabla_{x}\bar{\theta}|^2)
(|\partial^{\alpha}u|^2+|\partial^{\alpha}\theta|^2)\,dx
\nonumber\\
&\leq 
C\varepsilon^2(\|(\partial^{\alpha}\nabla_{x}\bar{u},\partial^{\alpha}\nabla_{x}\bar{\theta})\|^2+\|(\nabla_{x}\bar{u},\nabla_{x}\bar{\theta})\|_{L^{\infty}}^2\|(\partial^{\alpha}u,\partial^{\alpha}\theta)\|^2)
\nonumber\\
&\leq  C\eta^2_{0}\varepsilon^2(1+\eta^2_0+\varepsilon^2)\leq  C\eta^2_{0}\varepsilon^2.
\end{align}
Similar arguments also hold for the cases $2\leq|\alpha|\leq N$. Therefore, we can prove that the desired estimate \eqref{5.5} holds true. 
This ends the proof of Lemma \ref{lem5.4}.
\end{proof}

\subsection{Estimates on collision terms $\mathcal{L}$ and $\Gamma$}
Next, we summarize some refined estimates for the
linearized Landau  operator $\mathcal{L}$ and the nonlinear collision terms $\Gamma(g_1,g_2)$ defined in
\eqref{2.19}. We start from collecting some known basic estimates. The following two lemmas can be found in \cite[Lemma 6]{Guo-2002}
and \cite[Proposition 1]{Strain-Zhu}, respectively.
\begin{lemma}\label{lem5.5}
Let $|\alpha|\geq0$ and $|\beta|>0$, then for	
$w$ defined in \eqref{2.24} and any small $\eta>0$,
there exist $c_0>0$ and $C_{\eta}>0$ such that
\begin{equation}
\label{5.10}
-\langle\partial^{\alpha}_{\beta}\mathcal{L}g,w^{2|\beta|}\partial^{\alpha}_{\beta}g\rangle\geq
c_0|\partial^{\alpha}_{\beta}g|^{2}_{\sigma,|\beta|}-\eta\sum_{|\beta_{1}|\leq|\beta|}
|\partial^{\alpha}_{\beta_{1}}g|_{\sigma,|\beta_{1}|}^{2}-C_{\eta}|\partial^{\alpha}g|_{\sigma}^{2}.
\end{equation}
If $|\beta|=0$, then for any  $g\in (\ker\mathcal{L})^{\perp}$,
there exists a generic constant $c_{1}>0$ such that
\begin{equation}
\label{2.22a}
-\langle\mathcal{L}\partial^{\alpha}g, \partial^{\alpha}g \rangle\geq c_{1}|\partial^{\alpha}g|^{2}_{\sigma}.
\end{equation}
\end{lemma}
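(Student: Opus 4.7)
The plan is to follow the coercivity approach for the linearized Landau operator developed in \cite{Guo-2002}, treating the two cases separately. For $|\beta|=0$, observe that $\mathcal{L}$ acts only in velocity variables, so spatial derivatives commute with $\mathcal{L}$. If $g(t,x,\cdot)\in(\ker\mathcal{L})^\perp$ for each $(t,x)$, then so is $\partial^\alpha g(t,x,\cdot)$. The self-adjointness and non-positivity of $\mathcal{L}$ together with the explicit description of $\ker\mathcal{L}=\mathrm{span}\{\sqrt{\mu},v_i\sqrt{\mu},|v|^{2}\sqrt{\mu}\}$ give a spectral gap of $\mathcal{L}$ on $(\ker\mathcal{L})^\perp$ measured in the dissipation norm $|\cdot|_\sigma$, yielding \eqref{2.22a} after applying the estimate to $h=\partial^\alpha g$ pointwise in $x$ and integrating.

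For $|\beta|>0$, the obstacle is that velocity derivatives no longer commute with $\mathcal{L}$. I would first write
\[
\partial_\beta^\alpha(\mathcal{L}g) \;=\; \mathcal{L}(\partial_\beta^\alpha g) \;+\; \sum_{\beta_1<\beta}C_{\beta_1}^{\beta}\,\mathcal{L}_{\beta-\beta_1}(\partial_{\beta_1}^\alpha g),
\]
where $\mathcal{L}_{\beta-\beta_1}$ is the operator obtained by letting the extra $v$-derivatives fall on the coefficients $\sigma^{ij}$ and on the $\sqrt{\mu}$-factors inside $\Gamma(\cdot,\sqrt{\mu})$ and $\Gamma(\sqrt{\mu},\cdot)$. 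Pairing with $w^{2|\beta|}\partial_\beta^\alpha g$ and analyzing the leading term $-\langle\mathcal{L}(\partial_\beta^\alpha g),w^{2|\beta|}\partial_\beta^\alpha g\rangle$ by integrating by parts in $v$, the second-order diffusion piece produces the coercive quantity $c_0|\partial_\beta^\alpha g|^{2}_{\sigma,|\beta|}$ via the positive-definiteness of $\sigma^{ij}$ encoded in \eqref{2.26}, while the commutator between $w^{2|\beta|}$ and the diffusion coefficient generates weight-lowered terms that Cauchy--Schwarz absorbs into $\eta\sum_{|\beta_1|\le|\beta|}|\partial^\alpha_{\beta_1}g|^{2}_{\sigma,|\beta_1|}$.

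The remaining pieces, namely the commutator operators $\mathcal{L}_{\beta-\beta_1}$ and the non-coercive ``compact'' part of $\mathcal{L}$, are handled by observing that each velocity derivative landing on a Gaussian factor provides extra decay in $v$. A careful Cauchy--Schwarz splitting of the weight against the Gaussian tail bounds every such contribution either by $\eta|\partial^\alpha_{\beta_1}g|^{2}_{\sigma,|\beta_1|}$ with $|\beta_1|\leq|\beta|$, to be absorbed on the left, or by the lower-order pure-spatial-derivative remainder $C_\eta|\partial^\alpha g|^{2}_\sigma$. The choice $w=\langle v\rangle^{-1}$ is precisely tuned so that polynomial growth from differentiated $\sigma^{ij}$ and from hydrodynamic $v$-factors in $\Gamma(\cdot,\sqrt{\mu})$ is exactly compensated. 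The hardest step is controlling the non-coercive piece of $\mathcal{L}$ with the prescribed weight, where naive estimates lose a factor of $\langle v\rangle^{2|\beta|}$; one must exploit the explicit convolution structure of the Landau kernel so that differentiated $\mu$-factors yield Gaussian-decaying integral kernels that absorb any polynomial loss. Inducting on $|\beta|$ and choosing $\eta$ small then closes the weighted dissipation estimate \eqref{5.10}.
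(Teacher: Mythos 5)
The paper does not actually prove this lemma: it states immediately before the statement that \eqref{5.10} is \cite[Lemma 6]{Guo-2002} and that the companion nonlinear estimates are \cite[Proposition 1]{Strain-Zhu}, and it imports both as black boxes. So the comparison here is between your sketch and Guo's proof. For the case $|\beta|>0$ your outline does track Guo's argument: split $\mathcal{L}=-A-K$ into the diffusive part and the ``compact'' part, commute $\partial_\beta$ through, let the coercive weighted quadratic form of $A$ produce $c_0|\partial^\alpha_\beta g|^2_{\sigma,|\beta|}$, and push all commutators (derivatives falling on $\sigma^{ij}$ or on Gaussian factors) into the $\eta$-absorbed terms and the $C_\eta|\partial^\alpha g|_\sigma^2$ remainder, using the Gaussian decay of differentiated kernels and the choice $w=\langle v\rangle^{-1}$. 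That is the right skeleton, and an induction on $|\beta|$ closes it exactly as you say.

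The one genuine soft spot is your justification of \eqref{2.22a}. Coercivity of $-\mathcal{L}$ on $(\ker\mathcal{L})^{\perp}$ in the $|\cdot|_\sigma$ norm is \emph{not} a formal consequence of self-adjointness, non-positivity, and the identification of $\ker\mathcal{L}$; for Coulomb potentials $\mathcal{L}$ has no spectral gap in $L^2_v$, and the statement $-\langle\mathcal{L}h,h\rangle\ge c_1|h|^2_\sigma$ is itself a nontrivial theorem. In Guo's framework it is proved by writing $-\langle\mathcal{L}h,h\rangle=|h|_\sigma^2-\langle Kh,h\rangle$ and running a compactness/contradiction argument to beat down the $K$-contribution on $(\ker\mathcal{L})^\perp$ (alternatively one can invoke the constructive spectral analysis of Degond--Lemou). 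You should either carry out that argument or cite it explicitly; once it is in hand, applying it pointwise in $x$ to $h=\partial^\alpha g$ (which is legitimate since $\partial^\alpha$ commutes with $\mathcal{L}$ here, $\mathcal{L}$ having constant-in-$x$ coefficients, and preserves $(\ker\mathcal{L})^\perp$) and integrating over $x$ gives \eqref{2.22a} as you describe.
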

\begin{lemma}
\label{lem5.6}
Let $|\alpha|\geq0$ and $|\beta|\geq0$, then for arbitrarily large constant $b>0$, one has
\begin{align}
\label{5.11}
|\langle \partial^{\alpha}\Gamma(g_{1},g_{2}), \partial^{\alpha}g_{3}\rangle|
\leq C\sum_{\alpha_{1}\leq\alpha}|\langle v\rangle^{-b}\partial^{\alpha_{1}}g_{1}|_{2}
|\partial^{\alpha-\alpha_{1}}g_{2}|_{\sigma}
|\partial^{\alpha}g_{3}|_{\sigma}.
\end{align}
Moreover, for $w$ defined in \eqref{2.24} and $l\geq0$, one has
\begin{align}
\label{5.12}
|\langle \partial^{\alpha}_{\beta}\Gamma(g_{1},g_{2}), w^{2l}\partial^{\alpha}_{\beta}g_{3}\rangle|
\leq C\sum_{\alpha_{1}\leq\alpha}
\sum_{\beta'\leq\beta_{1}\leq\beta}|\langle v\rangle^{-b}\partial^{\alpha_{1}}_{\beta^{'}}g_{1}|_{2}
|\partial^{\alpha-\alpha_{1}}_{\beta-\beta_{1}}g_{2}|_{\sigma,l}
|\partial^{\alpha}_{\beta}g_{3}|_{\sigma,l}.
\end{align}
\end{lemma}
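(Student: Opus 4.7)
My plan is to reduce both estimates to a pointwise convolution bound against the Landau kernel $\Phi_{ij}$, exploiting the Gaussian factor $\sqrt{\mu}$ that is built into $\Gamma$. Unfolding \eqref{2.19} gives
\begin{align*}
\Gamma(g_1,g_2)(v)=\mu^{-\frac{1}{2}}(v)\nabla_v\cdot\int_{\R^{3}}\Phi(v-v_*)\bigl\{\sqrt{\mu}(v_*)g_1(v_*)\nabla_v[\sqrt{\mu}(v)g_2(v)]-\nabla_{v_*}[\sqrt{\mu}(v_*)g_1(v_*)]\sqrt{\mu}(v)g_2(v)\bigr\}dv_*.
\end{align*}
First I would apply $\partial^{\alpha}_{\beta}$ via the Leibniz rule. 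The $x$-derivatives distribute over $g_1$ and $g_2$ only; each $v$-derivative either lands on a $\sqrt{\mu}$ factor, producing a polynomial prefactor that is harmlessly reabsorbed by a neighbouring Gaussian, or lands on $\Phi(v-v_*)$, in which case I use $\partial_v\Phi=-\partial_{v_*}\Phi$ and integrate by parts in $v_*$ to transfer that derivative onto the $g_1$ factor. This transfer is precisely what forces the restriction $\beta'\leq\beta_1$ in the statement.

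After pairing with $w^{2l}\partial^{\alpha}_{\beta}g_3$ and integrating by parts once more in $v$ to move the outer $\nabla_v$ onto the test function, I am left with a finite sum of integrals of the schematic form
\begin{align*}
\iint \Phi_{ij}(v-v_*)\sqrt{\mu}(v_*)P_1(v_*)\partial^{\alpha_1}_{\beta'}g_1(v_*)P_2(v)\partial^{\alpha-\alpha_1}_{\beta-\beta_1}g_2(v)\partial_{v_i}\!\bigl[w^{2l}\partial^{\alpha}_{\beta}g_3(v)\bigr]dv_*dv,
\end{align*}
with polynomials $P_1,P_2$ of controlled degree. The decisive input is the pointwise bound
\begin{align*}
\int_{\R^{3}}|\Phi(v-v_*)|\sqrt{\mu}(v_*)|P_1(v_*)\partial^{\alpha_1}_{\beta'}g_1(v_*)|\,dv_*\leq C\langle v\rangle^{-1}|\langle v\rangle^{-b}\partial^{\alpha_1}_{\beta'}g_1|_2,
\end{align*}
valid for arbitrarily large $b>0$; this follows from Cauchy--Schwarz in $v_*$ together with the elementary estimate $\int|\Phi(v-v_*)|^2\mu(v_*)|P_1(v_*)|^2\langle v_*\rangle^{2b}\,dv_*\lesssim\langle v\rangle^{-2}$, and the factor $\langle v\rangle^{-1}$ matches the worst-decay component of the $\sigma$-norm characterized in \eqref{2.26}.

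With this convolution bound in hand, a further Cauchy--Schwarz in $v$ separates the remaining two factors, and because $\Phi_{ij}$ is the orthogonal projection onto $(v-v_*)^{\perp}$, the quadratic form produced by the integration by parts carries exactly the parallel/perpendicular splitting that appears on the right-hand side of \eqref{2.26}; consequently the leftover $v$-integral is controlled by $|\partial^{\alpha-\alpha_1}_{\beta-\beta_1}g_2|_{\sigma,l}\,|\partial^{\alpha}_{\beta}g_3|_{\sigma,l}$. I expect the main obstacle to be this last step: arranging the integrations by parts so that the derivatives on $g_2$ and $g_3$ appear in the specific combinations $\nabla_v\cdot v/|v|$ and $\nabla_v\times v/|v|$ recognized by $|\cdot|_{\sigma,l}$ rather than along arbitrary directions where no coercivity is available, while simultaneously tracking the weight $w^{2l}=\langle v\rangle^{-2l}$ through the combinatorics so that it is absorbed by a Gaussian prefactor at each step. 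Once this arrangement is verified, summation over the finitely many Leibniz terms yields \eqref{5.11} (by taking $|\beta|=0$ and $l=0$) as well as the weighted estimate \eqref{5.12}.
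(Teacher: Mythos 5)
The paper does not actually prove this lemma; it is quoted from \cite[Lemma 6]{Guo-2002} and \cite[Proposition 1]{Strain-Zhu}, and your architecture (unfold $\Gamma$, Leibniz, transfer $v$-derivatives hitting $\Phi$ onto $g_1$ via $\partial_v\Phi=-\partial_{v_*}\Phi$, integrate by parts onto the test function, then estimate the resulting convolutions) is indeed the architecture of those cited proofs. The problem is that the step you designate as the ``decisive input'' is not the decisive input, and the step you defer as ``the main obstacle'' is exactly where the proof lives. Your scalar convolution bound gives only the isotropic estimate $|\Phi_{ij}\ast(\sqrt{\mu}\,P_1\partial^{\alpha_1}_{\beta'}g_1)(v)|\lesssim \langle v\rangle^{-1}|\langle v\rangle^{-b}\partial^{\alpha_1}_{\beta'}g_1|_2$. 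For the term produced after your final integration by parts in which gradients land on both $g_2$ and $g_3$, namely $\iint \Phi_{ij}\ast(\sqrt{\mu}g_1)\,\partial_{v_j}g_2\,\partial_{v_i}g_3$, consider the components of $\nabla_v g_2$ and $\nabla_v g_3$ parallel to $v$: by \eqref{2.26} the $\sigma$-norm supplies only the weight $\langle v\rangle^{-3/2}$ for each of them, i.e.\ $\langle v\rangle^{-3}$ in total, while your kernel bound supplies only $\langle v\rangle^{-1}$. An uncompensated factor $\langle v\rangle^{2}$ remains, and the estimate does not close. What is actually needed is the \emph{anisotropic matrix} estimate: the quadratic form of $K^{ij}(v)=\Phi_{ij}\ast(\sqrt{\mu}\,h)$ evaluated on the direction $v/|v|$ decays like $\langle v\rangle^{-3}$ (times $|\langle v\rangle^{-b}h|_2$), with $\langle v\rangle^{-2}$ for the mixed components and $\langle v\rangle^{-1}$ only transverse to $v$. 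Proving this requires using the null direction $\Phi(\xi)\xi=0$ together with a decomposition into the regions $2|v_*|\le|v|$ and $2|v_*|\ge|v|$ and control of the angle between $v$ and $v-v_*$; it does not follow from Cauchy--Schwarz against $\int|\Phi(v-v_*)|^2\mu(v_*)\langle v_*\rangle^{2b}dv_*$. Since you explicitly leave this arrangement ``to be verified,'' the proposal is incomplete at precisely the point where the Landau structure, rather than a generic convolution bound, must be invoked.

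Two smaller points. First, in your Leibniz step the $v$-derivatives can also fall on $g_2(v)$ itself, not only on $\sqrt{\mu}$ and on $\Phi(v-v_*)$; this is where the factor $\partial^{\alpha-\alpha_1}_{\beta-\beta_1}g_2$ with $\beta_1\le\beta$ in \eqref{5.12} comes from, and it should be stated. Second, the polynomial prefactors $P_1(v_*)$ produced by differentiating $\sqrt{\mu}(v_*)$ are harmless as you say, but the prefactor $\mu^{-1/2}(v)$ in front of $\Gamma$ combined with $\nabla_v\sqrt{\mu}(v)=-\tfrac{v}{2}\sqrt{\mu}(v)$ produces the linear-in-$v$ factors $\tfrac{v_i}{2}$ that are \emph{not} absorbed by a Gaussian; they are absorbed only because the zeroth-order part of the $\sigma$-norm carries the matching weight $\sigma^{ij}\tfrac{v_i}{2}\tfrac{v_j}{2}$, which is again the anisotropic information, not Gaussian decay.
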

With Lemma \ref{lem5.6}, we now prove some nonlinear energy estimates. We first consider estimates on linear collision terms $\Gamma(\frac{M-\mu}{\sqrt{\mu}},f)$ and $\Gamma(f,\frac{M-\mu}{\sqrt{\mu}})$ in \eqref{2.21}, which will be used in Section \ref{seca.4}.
\begin{lemma}
\label{lem5.7}
Let $|\alpha|+|\beta|\leq N$ with $|\beta|\geq1$ and $w=(1+|v|^{2})^{-1/2}$ as in \eqref{2.24}.
Let $F=M+\overline{G}+\sqrt{\mu}f$ be the solution to the Landau equation \eqref{1.1} and \eqref{1.10}, and
assume \eqref{3.1} and \eqref{3.6} hold. If we choose $\eta_0>0$ in \eqref{1.7} and $\varepsilon>0$ in \eqref{3.1} small enough, then for any  $\eta>0$, one has 
\begin{align}
\label{5.13}
\frac{1}{\varepsilon}&|(\partial^{\alpha}_{\beta}\Gamma(\frac{M-\mu}{\sqrt{\mu}},f),	w^{2|\beta|}\partial^{\alpha}_{\beta}f)|+\frac{1}{\varepsilon}|(\partial^{\alpha}_{\beta}\Gamma(f,\frac{M-\mu}{\sqrt{\mu}}),w^{2|\beta|}\partial^{\alpha}_{\beta}f)|
\nonumber\\
&\leq 
C\eta\frac{1}{\varepsilon}\|\partial^{\alpha}_{\beta}f\|^{2}_{\sigma,|\beta|}
+C_{\eta}(\eta_{0}+\varepsilon^{\frac{1}{2}})\mathcal{D}_N(t).
\end{align}
Moreover, for $|\beta|=0$ and  $|\alpha|\leq N-1$, one has
\begin{align}
\label{5.14}
\frac{1}{\varepsilon}&|(\partial^{\alpha}\Gamma(\frac{M-\mu}{\sqrt{\mu}},f),\partial^{\alpha}f)|
+\frac{1}{\varepsilon}|(\partial^{\alpha}\Gamma(f,\frac{M-\mu}{\sqrt{\mu}}),\partial^{\alpha}f)|
\nonumber\\
&\leq C\eta\frac{1}{\varepsilon}\|\partial^{\alpha}f\|^{2}_{\sigma}+C_{\eta}(\eta_{0}+\varepsilon^{\frac{1}{2}})\mathcal{D}_N(t).
\end{align}
Here $\mathcal{D}_N(t)$ is defined by \eqref{3.8}.
\end{lemma}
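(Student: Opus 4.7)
The plan is to apply the weighted trilinear estimate \eqref{5.12} (respectively \eqref{5.11}) of Lemma \ref{lem5.6} directly, with $g_1=(M-\mu)/\sqrt{\mu}$ or $g_2=(M-\mu)/\sqrt{\mu}$ and $g_3=f$, and then extract smallness from the fact that $(M-\mu)/\sqrt{\mu}$ is small by \eqref{3.6}. After applying Cauchy--Schwarz on the final factor $|\partial^{\alpha}_{\beta}f|_{\sigma,|\beta|}$, the diagonal piece is absorbed into $\eta\varepsilon^{-1}\|\partial^{\alpha}_{\beta}f\|^{2}_{\sigma,|\beta|}$, and the task reduces to showing, for each $\alpha_1\leq\alpha$ and $\beta'\leq\beta_1\leq\beta$, that
\begin{equation*}
\frac{1}{\varepsilon}\int_{\R^{3}}\Big(\big|\langle v\rangle^{-b}\partial^{\alpha_{1}}_{\beta'}\tfrac{M-\mu}{\sqrt{\mu}}\big|_{2}\,\big|\partial^{\alpha-\alpha_{1}}_{\beta-\beta_{1}}f\big|_{\sigma,|\beta|}\Big)^{2}dx\leq C_\eta(\eta_{0}+\varepsilon^{1/2})^{2}\,\mathcal{D}_N(t).
\end{equation*}
To this end, I would Taylor expand $M_{[\rho,u,\theta]}$ around $(1,0,3/2)$, so that $(M-\mu)/\sqrt{\mu}$ becomes a smooth function of $(\rho-1,u,\theta-3/2)$ and $v$, whose derivatives in $v$ decay like $\mu^{\kappa-1/2}$ (for some $\kappa\in(1/2,1)$) under \eqref{3.6}. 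Splitting $(\rho-1,u,\theta-3/2)=(\widetilde\rho,\widetilde u,\widetilde\theta)+(\bar\rho-1,\bar u,\bar\theta-3/2)$ and using Leibniz, one has pointwise in $(t,x)$
\begin{equation*}
\big|\langle v\rangle^{-b}\partial^{\alpha_{1}}_{\beta'}\tfrac{M-\mu}{\sqrt{\mu}}\big|_{2}\leq C\sum_{k\geq 1,\,\gamma_{1}+\cdots+\gamma_{k}=\alpha_{1}}\prod_{i=1}^{k}\big(|\partial^{\gamma_{i}}(\widetilde\rho,\widetilde u,\widetilde\theta)|+|\partial^{\gamma_{i}}(\bar\rho-1,\bar u,\bar\theta-\tfrac32)|\big).
\end{equation*}

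In the resulting integral over $x$, I would place the factor of highest derivative order in $L^{2}_{x}$ and bound the remaining factors in $L^\infty_{x}$ via Sobolev embedding (Lemma \ref{lem5.1}). Derivatives $\partial^{\gamma_i}(\bar\rho-1,\bar u,\bar\theta-\tfrac32)$ are bounded in $L^\infty_{x}$ by $C\eta_{0}$ through Proposition \ref{prop.1.1}, whenever $|\gamma_i|\leq N-1$; sub-top-order derivatives $\partial^{\gamma_i}(\widetilde\rho,\widetilde u,\widetilde\theta)$ with $|\gamma_i|\leq N-2$ are bounded in $L^\infty_{x}$ by $C\sqrt{\mathcal{E}_N(t)}\leq C\varepsilon$ via the a priori assumption \eqref{3.1}; likewise, for the $f$-factor, whenever $|\alpha-\alpha_{1}|+|\beta-\beta_{1}|\leq N-2$, $\|\partial^{\alpha-\alpha_{1}}_{\beta-\beta_{1}}f\|_{L^\infty_{x}L^{2}_{v}(\cdot)}\leq C\sqrt{\mathcal{E}_N(t)}\leq C\varepsilon$. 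In the generic case where all $L^\infty_{x}$ factors give $C(\eta_{0}+\varepsilon)$ and the single $L^{2}_{x}$ factor of highest order is either the top-order $f$-derivative (with $\|\cdot\|^{2}_{\sigma,|\beta|}\leq \varepsilon\mathcal{D}_N(t)$ coming from the $1/\varepsilon$-weighted sub-$N$-order term in \eqref{3.8}), or a top-order fluid derivative (with squared $L^{2}_{x}$-norm $\leq \varepsilon^{-1}\mathcal{D}_N(t)$ through the $\varepsilon$-weighted term in \eqref{3.8}), the overall bound is $C(\eta_{0}+\varepsilon)\mathcal{D}_N(t)$ or $C(\eta_{0}^{2}+\varepsilon)\mathcal{D}_N(t)$, in either case of the desired form.

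The main obstacle will be the borderline case where the highest-order derivative falls on $\partial^{\gamma_i}(\widetilde\rho,\widetilde u,\widetilde\theta)$ with $|\gamma_i|=N$, for which the $L^{2}_{x}$-bound $\|\partial^{\gamma_i}\cdot\|\leq\varepsilon^{-1/2}\sqrt{\mathcal{D}_N(t)}$ carries a singular $\varepsilon^{-1/2}$: however, in both \eqref{5.13} (where $|\alpha|\leq N-1$ since $|\beta|\geq 1$) and \eqref{5.14} (where $|\alpha|\leq N-1$ is assumed directly), the constraint $|\alpha_1|\leq|\alpha|\leq N-1$ guarantees this singular regime for the fluid factor is never reached. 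For sub-top-order fluid derivatives, the $\varepsilon$-coefficient in \eqref{3.8} yields $\|\partial^{\gamma_i}(\widetilde\rho,\widetilde u,\widetilde\theta)\|_{L^{2}_{x}}\leq\sqrt{\varepsilon^{-1}\mathcal{D}_N(t)}$ for $1\leq|\gamma_i|\leq N-1$ paired against an $L^\infty_x$ intermediate $f$-factor of size $C\sqrt{\varepsilon\mathcal{D}_N(t)}$ by Sobolev, producing exactly the $\sqrt\varepsilon$ needed after the $1/\varepsilon$ prefactor in $\mathcal{I}$; this is precisely the reason why the $\varepsilon$-dependent weighting of $\mathcal{E}_N$ and $\mathcal{D}_N$ in \eqref{3.2}--\eqref{3.8} is designed as it is. Summing over $\alpha_{1},\beta',\beta_{1}$ yields \eqref{5.13}, and the case \eqref{5.14} is handled identically with the unweighted version \eqref{5.11}.
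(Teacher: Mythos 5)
Your skeleton coincides with the paper's: apply Lemma \ref{lem5.6}, absorb the diagonal piece, and reduce to bounding the off-diagonal product after Leibniz-expanding $\partial^{\alpha_1}_{\beta'}\bigl(\tfrac{M-\mu}{\sqrt\mu}\bigr)$ into products of fluid derivatives. The gap is in how you distribute the two remaining factors between $L^2_x$ and $L^\infty_x$. Whenever you estimate the fluid factor through the dissipation, $\|\partial^{\gamma_i}(\widetilde\rho,\widetilde u,\widetilde\theta)\|_{L^2_x}\leq\sqrt{\varepsilon^{-1}\mathcal{D}_N(t)}$, and simultaneously estimate the $L^\infty_x$ $f$-factor by $C\sqrt{\varepsilon\mathcal{D}_N(t)}$, the squared product times the prefactor $1/\varepsilon$ is $\varepsilon^{-1}\mathcal{D}_N(t)^2$, which is \emph{quadratic} in $\mathcal{D}_N(t)$. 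This cannot be absorbed into $C_\eta(\eta_0+\varepsilon^{1/2})\mathcal{D}_N(t)$: the a priori assumption \eqref{3.1} controls $\mathcal{E}_N(t)$, not $\mathcal{D}_N(t)$ pointwise in $t$ (only $\int_0^t\mathcal{D}_N\,ds$ is controlled, and only a posteriori), so the extra factor of $\mathcal{D}_N(t)$ carries no smallness whatsoever. The same defect appears in your ``generic'' case when the single $L^2_x$ factor is a fluid derivative bounded by $\varepsilon^{-1}\mathcal{D}_N(t)$ and the $f$-factor contributes only $\varepsilon^2$ from $\mathcal{E}_N$: one gets $C\,\mathcal{D}_N(t)$ with a constant $C$ that is not small, which is equally useless for closing the energy estimate. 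The rule the paper follows, and which you must adopt, is that in this lemma the $\mathcal{D}_N$ budget is spent \emph{only} on the $f$-factors; every fluid factor must be bounded by a genuinely small constant. Concretely, the paper uses the $L^6$--$L^3$--$L^2$ H\"older split: the fluid factor of order $|\alpha_1|\leq N-1$ goes into $L^3_x$ and is bounded by $C(\eta_0+\varepsilon^{1/2})$ via $\|g\|_{L^3}\leq C\|g\|^{1/2}\|\nabla_x g\|^{1/2}$ together with $\mathcal{E}_N(t)\leq\varepsilon^2$ (which gives $\|\partial^{\alpha_1}(\widetilde\rho,\widetilde u,\widetilde\theta)\|\leq\varepsilon$ and, at the borderline $|\alpha_1|=N-1$, $\|\nabla_x\partial^{\alpha_1}(\widetilde\rho,\widetilde u,\widetilde\theta)\|\leq 1$ from the $\varepsilon^2$-weighted top-order term of \eqref{3.2}); the $f$-factor goes into $L^6_x\hookrightarrow\dot H^1_x$, contributing the single admissible dissipation factor $\|\nabla_x\partial^{\alpha-\alpha_1}_{\beta-\beta_1}f\|^2_{\sigma,\cdot}\leq\varepsilon\mathcal{D}_N(t)$.

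A secondary point: your bound $\|\,|\partial^{\alpha-\alpha_1}_{\beta-\beta_1}f|_{\sigma,\cdot}\|_{L^\infty_x}\leq C\sqrt{\mathcal{E}_N(t)}$ for sub-top orders is not justified as stated. The functional \eqref{3.2} controls the weighted $L^2_v$ norms $\|\partial^\alpha_\beta f\|_{2,|\beta|}$, whereas $|\cdot|_\sigma$ involves $\nabla_v f$ with different weights by \eqref{2.26}; converting costs an extra velocity derivative, and the $L^\infty_x$ embedding costs two more space derivatives, which already exhausts the regularity at $N=3$. This is another reason to keep the $f$-factors in $L^2_x$ or $L^6_x$ of the $\sigma$-norm, as the paper does.
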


\begin{proof}
For the first term on the left hand side of \eqref{5.13}, since $w^{2|\beta|}\leq w^{2|\beta-\beta_{1}|}$ for $|\beta-\beta_{1}|\leq|\beta|$, we have from this and \eqref{5.12} that
\begin{align}
\label{5.15}	
&\frac{1}{\varepsilon}|(\partial^{\alpha}_{\beta}\Gamma(\frac{M-\mu}{\sqrt{\mu}},f),	w^{2|\beta|}\partial^{\alpha}_{\beta}f)|
\nonumber\\
&\leq C\frac{1}{\varepsilon}\sum_{\alpha_{1}\leq\alpha}\sum_{\beta'\leq\beta_{1}\leq\beta}\int_{\mathbb{R}^{3}}|\langle v\rangle^{-b}\partial^{\alpha_{1}}_{\beta^{'}}(\frac{M-\mu}{\sqrt{\mu}})|_{2}|\partial^{\alpha-\alpha_{1}}_{\beta-\beta_{1}}f|_{\sigma,|\beta-\beta_{1}|}|\partial^{\alpha}_{\beta}f|_{\sigma,|\beta|}\,dx.
\end{align}
In order to further compute \eqref{5.15}, for any $|\bar{\beta}|\geq0$ and $l\geq0$ we claim that 
\begin{align}
\label{5.16}
|\langle v\rangle^{l}\partial_{\bar{\beta}}(\frac{M-\mu}{\sqrt{\mu}})|_{\sigma}+|\langle v\rangle^{l}\partial_{\bar{\beta}}(\frac{M-\mu}{\sqrt{\mu}})|_{2}\leq C(\eta_{0}+\varepsilon).
\end{align}
In fact, for any $|\bar{\beta}|\geq0$ and $l\geq 0$,
using \eqref{2.26}, we know that there exists a small constant $\epsilon_{1}>0$ such that
\begin{align*}
|\langle v\rangle^{l}\partial_{\bar{\beta}}(\frac{M-\mu}{\sqrt{\mu}})|^{2}_{\sigma}+|\langle v\rangle^{l}\partial_{\bar{\beta}}(\frac{M-\mu}{\sqrt{\mu}})|^{2}_{2}\leq C_{l}\sum_{|\bar{\beta}|\leq|\beta'|\leq|\bar{\beta}|+1}\int_{\mathbb{R}^{3}}\mu^{-\epsilon_{1}}|\partial_{\beta'}(\frac{M-\mu}{\sqrt{\mu}})|^{2}\,dv.
\end{align*}
Thanks to \eqref{3.6}, we can choose a suitably large constant  $R>0$ such that 
\begin{align*}
\int_{|v|\geq R}\mu^{-\epsilon_{1}}|\partial_{\beta'}(\frac{M-\mu}{\sqrt{\mu}})|^{2}\,dv\leq C(\eta_{0}+\varepsilon)^{2},
\end{align*}
and
\begin{align*}
\int_{|v|\leq R}\mu^{-\epsilon_{1}}|\partial_{\beta'}(\frac{M-\mu}{\sqrt{\mu}})|^{2}\,dv\leq C(|\rho-1|+|u-0|+|\theta-\frac{3}{2}|)^{2}\leq C(\eta_{0}+\varepsilon)^{2}.
\end{align*}
By these estimates, we can get
the desired estimate \eqref{5.16} and thus ends the proof of \eqref{5.16}.

We  first have from a direct calculations that
\begin{align}
\label{4.19B}
\partial_{x_i}M=M\big(\frac{\partial_{x_i}\rho}{\rho}+\frac{(v-u)\cdot\partial_{x_i}u}{R\theta}
+(\frac{|v-u|^{2}}{2R\theta}-\frac{3}{2})\frac{\partial_{x_i}\theta}{\theta} \big).
\end{align}
Then for $|\alpha|\geq 2$ and $\partial^{\alpha}=\partial^{\alpha'}\partial_{x_i}$, it holds that
\begin{align}
\label{4.20B}
\partial^{\alpha}M=&M\big(\frac{\partial^{\alpha}\rho}{\rho}+\frac{(v-u)\cdot\partial^{\alpha}u}{R\theta}
+(\frac{|v-u|^{2}}{2R\theta}-\frac{3}{2})\frac{\partial^{\alpha}\theta}{\theta} \big)
\nonumber\\
&+\sum_{1\leq\alpha_{1}\leq \alpha'}C^{\alpha_1}_{\alpha'}\big(\partial^{\alpha_{1}}(M\frac{1}{\rho})\partial^{\alpha'-\alpha_{1}}\partial_{x_i}\rho+\partial^{\alpha_{1}}(M\frac{v-u}{R\theta})\cdot\partial^{\alpha'-\alpha_{1}}\partial_{x_i}u
\nonumber\\
&\hspace{2cm}+\partial^{\alpha_{1}}(M\frac{|v-u|^{2}}{2R\theta^{2}}-M\frac{3}{2\theta})\partial^{\alpha'-\alpha_{1}}\partial_{x_i}\theta\big).
\end{align}	
We now turn to compute \eqref{5.15}.
Note that $|\alpha_1|\leq|\alpha|\leq N-1$ since we only consider the case $|\alpha|+|\beta|\leq N$ and $|\beta|\geq1$. If $|\alpha_{1}|=0$,  we get from \eqref{5.16}, the Cauchy-Schwarz inequality and \eqref{3.8} that
\begin{align*}
&\frac{1}{\varepsilon}\int_{\mathbb{R}^{3}}|\langle
v\rangle^{-b}\partial^{\alpha_{1}}_{\beta^{'}}(\frac{M-\mu}{\sqrt{\mu}})|_{2}|\partial^{\alpha-\alpha_{1}}_{\beta-\beta_{1}}f|_{\sigma,|\beta-\beta_{1}|}|\partial^{\alpha}_{\beta}f|_{\sigma,|\beta|}\,dx
\\
&\leq\eta\frac{1}{\varepsilon}\|\partial^{\alpha}_{\beta}f\|^{2}_{\sigma,|\beta|}+C_{\eta}(\eta_{0}+\varepsilon)^{2}\frac{1}{\varepsilon}\|\partial^{\alpha-\alpha_{1}}_{\beta-\beta_{1}}f\|^{2}_{\sigma,|\beta-\beta_{1}|}
\\
&\leq \eta\frac{1}{\varepsilon}\|\partial^{\alpha}_{\beta}f\|^{2}_{\sigma,|\beta|}+C_{\eta}(\eta_{0}+\varepsilon)^{2}\mathcal{D}_N(t).
\end{align*}
If  $1\leq|\alpha_1|\leq|\alpha|\leq N-1$, then $|\alpha-\alpha_{1}|\leq N-2$ and $|\alpha-\alpha_{1}|+|\beta-\beta_{1}|\leq|\alpha|+|\beta|-1$. Taking the $L^{6}-L^{3}-L^{2}$ H\"{o}lder inequality and using \eqref{3.6} and the Cauchy-Schwarz and Sobolev inequalities, we get
\begin{align*}
&\frac{1}{\varepsilon}\int_{\mathbb{R}^{3}}|\langle
v\rangle^{-b}\partial^{\alpha_{1}}_{\beta^{'}}(\frac{M-\mu}{\sqrt{\mu}})|_{2}|\partial^{\alpha-\alpha_{1}}_{\beta-\beta_{1}}f|_{\sigma,|\beta-\beta_{1}|}|\partial^{\alpha}_{\beta}f|_{\sigma,|\beta|}\,dx
\\
&\leq C\frac{1}{\varepsilon}\big\||\langle
v\rangle^{-b}\partial^{\alpha_{1}}_{\beta^{'}}(\frac{M-\mu}{\sqrt{\mu}})|_{2}\big\|_{L^{3}}
\big\||\partial^{\alpha-\alpha_{1}}_{\beta-\beta_{1}}f|_{\sigma,|\beta-\beta_{1}|}\big\|_{L^{6}}
\big\||\partial^{\alpha}_{\beta}f|_{\sigma,|\beta|}\big\|_{L^{2}}
\\
&\leq C\frac{1}{\varepsilon}(\eta_{0}+\varepsilon^{\frac{1}{2}})
\|\partial^{\alpha-\alpha_{1}}_{\beta-\beta_{1}}\nabla_{x}f\|_{\sigma,|\beta-\beta_{1}|}
\|\partial^{\alpha}_{\beta}f\|_{\sigma,|\beta|}
\\
&\leq \eta\frac{1}{\varepsilon}\|\partial^{\alpha}_{\beta}f\|^{2}_{\sigma,|\beta|}+C_{\eta}(\eta_{0}+\varepsilon^{\frac{1}{2}})^2\mathcal{D}_N(t),
\end{align*}
where we have used \eqref{1.7}, \eqref{3.1}, \eqref{4.19B}, \eqref{4.20B} and the fact 
$$
\||\langle v\rangle^{-b}\partial^{\alpha_{1}}_{\beta^{'}}(\frac{M-\mu}{\sqrt{\mu}})|_{2}\big\|_{L^{3}}
\leq C(\eta_{0}+\varepsilon^{\frac{1}{2}}),
$$
since one has to deal with  $|\alpha_1|=N-1$,
\begin{align*}
\|\partial^{\alpha_{1}}(\rho,u,\theta)\|_{L^{3}}&\leq C\|\partial^{\alpha_{1}}(\bar{\rho},\bar{u},\bar{\theta})\|_{L^{3}}
+C\|\partial^{\alpha_{1}}(\widetilde{\rho},\widetilde{u},\widetilde{\theta})\|_{L^{3}}
\\
&\leq C\eta_{0}
+C\|\partial^{\alpha_{1}}(\widetilde{\rho},\widetilde{u},\widetilde{\theta})\|^{\frac{1}{2}}
\|\nabla_x\partial^{\alpha_{1}}(\widetilde{\rho},\widetilde{u},\widetilde{\theta})\|^{\frac{1}{2}}
\\
&\leq  C(\eta_{0}+\varepsilon^{\frac{1}{2}}).
\end{align*}
Therefore, substituting the above estimates into \eqref{5.15} and using the smallness of  $\eta_{0}$ and $\varepsilon$, we obtain
\begin{align}
\label{5.17}
\frac{1}{\varepsilon}|(\partial^{\alpha}_{\beta}\Gamma(\frac{M-\mu}{\sqrt{\mu}},f),	w^{2|\beta|}\partial^{\alpha}_{\beta}f)|
\leq  C\eta\frac{1}{\varepsilon}\|\partial^{\alpha}_{\beta}f\|^{2}_{\sigma,|\beta|}+C_{\eta}(\eta_{0}+\varepsilon^{\frac{1}{2}})\mathcal{D}_N(t).
\end{align}
	
For the second term on the left hand side of \eqref{5.13}, it is straightforward to see by \eqref{5.12} that
\begin{align}
\label{5.18}	&\frac{1}{\varepsilon}|(\partial^{\alpha}_{\beta}\Gamma(f,\frac{M-\mu}{\sqrt{\mu}}),	w^{2|\beta|}\partial^{\alpha}_{\beta}f)|
\nonumber\\
&\leq C\frac{1}{\varepsilon}\sum_{\alpha_{1}\leq\alpha}\sum_{\beta'\leq\beta_{1}\leq\beta}\int_{\mathbb{R}^{3}}|\langle v\rangle^{-b}\partial^{\alpha_{1}}_{\beta^{'}}f|_{2}|\partial^{\alpha-\alpha_{1}}_{\beta-\beta_{1}}(\frac{M-\mu}{\sqrt{\mu}})|_{\sigma,|\beta|}|\partial^{\alpha}_{\beta}f|_{\sigma,|\beta|}\,dx.
\end{align}
The estimate \eqref{5.18} can be treated in the similar way as \eqref{5.15}. First note that $|\alpha_1|\leq|\alpha|\leq N-1$ due to $|\alpha|+|\beta|\leq N$ and $|\beta|\geq1$. If $|\alpha-\alpha_{1}|=0$,  we  use \eqref{2.26}, \eqref{5.16}, the Cauchy-Schwarz inequality and \eqref{3.8} again, to obtain
\begin{align*}
&\frac{1}{\varepsilon}
\int_{\mathbb{R}^{3}}|\langle v\rangle^{-b}\partial^{\alpha_{1}}_{\beta^{'}}f|_{2}|\partial^{\alpha-\alpha_{1}}_{\beta-\beta_{1}}(\frac{M-\mu}{\sqrt{\mu}})|_{\sigma,|\beta|}|\partial^{\alpha}_{\beta}f|_{\sigma,|\beta|}\,dx
\\
&\leq C(\eta_{0}+\varepsilon)\frac{1}{\varepsilon}
\|\langle v\rangle^{-b}\partial^{\alpha_{1}}_{\beta^{'}}f\|
\|\partial^{\alpha}_{\beta}f\|_{\sigma,|\beta|}
\\
&\leq C(\eta_{0}+\varepsilon)\frac{1}{\varepsilon}
\|\partial^{\alpha_{1}}_{\beta^{'}}f\|_{\sigma,|\beta^{'}|}
\|\partial^{\alpha}_{\beta}f\|_{\sigma,|\beta|}
\\
&\leq \eta\frac{1}{\varepsilon}\|\partial^{\alpha}_{\beta}f\|^{2}_{\sigma,|\beta|}+C_{\eta}(\eta_{0}+\varepsilon)^{2}\mathcal{D}_N(t).
\end{align*}
Here we have used $\langle v\rangle^{-b}\leq\langle v\rangle^{-\frac{1}{2}}\langle v\rangle^{-|\beta^{'}|}$
by choosing $b\geq N+1/2\geq|\beta^{'}|+ 1/2$.
If $|\alpha-\alpha_{1}|\neq0$, that is $|\alpha_{1}|<|\alpha|\leq N-1$, then $|\alpha_{1}|+|\beta^{'}|\leq|\alpha_{1}|+|\beta|
\leq|\alpha|-1+|\beta|\leq N-1$, and it holds that
\begin{align*}
&\frac{1}{\varepsilon}
\int_{\mathbb{R}^{3}}|\langle v\rangle^{-b}\partial^{\alpha_{1}}_{\beta^{'}}f|_{2}|\partial^{\alpha-\alpha_{1}}_{\beta-\beta_{1}}(\frac{M-\mu}{\sqrt{\mu}})|_{\sigma,|\beta|}|\partial^{\alpha}_{\beta}f|_{\sigma,|\beta|}\,dx
\\
&\leq C\frac{1}{\varepsilon}\big\||\langle v\rangle^{-b}\partial^{\alpha_{1}}_{\beta^{'}}f|_{2}\big\|_{L^{6}}
\big\||\partial^{\alpha-\alpha_{1}}_{\beta-\beta_{1}}(\frac{M-\mu}{\sqrt{\mu}})|_{\sigma,|\beta|}\big\|_{L^{3}}\|\partial^{\alpha}_{\beta}f\|_{\sigma,|\beta|}
\\
&\leq
\eta\frac{1}{\varepsilon}\|\partial^{\alpha}_{\beta}f\|^{2}_{\sigma,|\beta|}+C_{\eta}(\eta_{0}+\varepsilon^{\frac{1}{2}})^2\mathcal{D}_N(t).
\end{align*}
All in all, plugging the above estimates into \eqref{5.18}, yields
\begin{align}
\label{5.19}
\frac{1}{\varepsilon}|(\partial^{\alpha}_{\beta}\Gamma(f,\frac{M-\mu}{\sqrt{\mu}}),
w^{2|\beta|}\partial^{\alpha}_{\beta}f)|
\leq C\eta\frac{1}{\varepsilon}\|\partial^{\alpha}_{\beta}f\|^{2}_{\sigma,|\beta|}
+C_{\eta}(\eta_{0}+\varepsilon^{\frac{1}{2}})\mathcal{D}_N(t).
\end{align}
In summary, the desired estimate \eqref{5.13}
follows from  \eqref{5.17} and \eqref{5.19}.
This concludes the proof of \eqref{5.13}. By \eqref{5.11} and the similar arguments as \eqref{5.17} and \eqref{5.19}, we can prove that \eqref{5.14} holds and details are omitted for brevity. This ends the proof of Lemma \ref{lem5.7}.
\end{proof}

Next we consider the estimates of 
the nonlinear collision term $\Gamma(\frac{G}{\sqrt{\mu}},\frac{G}{\sqrt{\mu}})$, which will be used in Section \ref{seca.4}.

\begin{lemma}
	\label{lem5.8}
	Let $|\alpha|+|\beta|\leq N$ with $|\beta|\geq1$
	and the conditions of Lemma \ref{lem5.7} be satisfied,
	then for any $\eta>0$, one has
	\begin{align}
		\label{5.20}
		&\frac{1}{\varepsilon}|(\partial^{\alpha}_{\beta}\Gamma(\frac{G}{\sqrt{\mu}},\frac{G}{\sqrt{\mu}}),
		w^{2|\beta|}\partial^{\alpha}_{\beta}f)|
		\nonumber\\
		&\leq  C\eta\frac{1}{\varepsilon}\|\partial^{\alpha}_{\beta}f\|^{2}_{\sigma,|\beta|}
		+C_{\eta}(\eta_{0}+\varepsilon^{\frac{1}{2}})\mathcal{D}_N(t)+C_{\eta}(\eta_{0}+\varepsilon^{\frac{1}{2}})\varepsilon^2.
	\end{align}
	Moreover, for $|\beta|=0$ and  $|\alpha|\leq N-1$,  it holds that
	\begin{align}
		\label{5.21}
		\frac{1}{\varepsilon}|(\partial^{\alpha}\Gamma(\frac{G}{\sqrt{\mu}},\frac{G}{\sqrt{\mu}}),\partial^{\alpha}f)|
		\leq  C\eta\frac{1}{\varepsilon}\|\partial^{\alpha}f\|^{2}_{\sigma}+C_{\eta}(\eta_{0}+\varepsilon^{\frac{1}{2}})\mathcal{D}_N(t)+C_{\eta}(\eta_{0}+\varepsilon^{\frac{1}{2}})\varepsilon^2.
	\end{align}
\end{lemma}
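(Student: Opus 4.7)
The plan is to split $G/\sqrt{\mu} = \overline{G}/\sqrt{\mu} + f$ and expand the bilinear $\Gamma$ into four pieces:
\[
\Gamma(G/\sqrt{\mu},G/\sqrt{\mu}) = \Gamma(\overline{G}/\sqrt{\mu},\overline{G}/\sqrt{\mu}) + \Gamma(\overline{G}/\sqrt{\mu},f) + \Gamma(f,\overline{G}/\sqrt{\mu}) + \Gamma(f,f).
\]
Each piece is then handled by the weighted trilinear inequality \eqref{5.12} combined with Lemma \ref{lem5.4} (which controls every space-velocity derivative of $\overline{G}/\sqrt{\mu}$ in both $|\cdot|_{2,\cdot}$ and $|\cdot|_{\sigma,\cdot}$ by $\eta_0\varepsilon$) and the a priori bound $\mathcal{E}_N(t)^{1/2}\leq \varepsilon$. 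One global observation used throughout is that $w\leq 1$ implies $\|\cdot\|_{\sigma,|\beta|}\leq\|\cdot\|_{\sigma,|\beta-\beta_1|}$, so the factor $|\partial^{\alpha-\alpha_1}_{\beta-\beta_1}g_2|_{\sigma,|\beta|}$ produced by \eqref{5.12} can always be matched to the corresponding weighted dissipation norm appearing in $\mathcal{D}_N(t)$.

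For the pure correction piece $\Gamma(\overline{G}/\sqrt{\mu},\overline{G}/\sqrt{\mu})$, Sobolev embedding places whichever factor carries at most $N-2$ $x$-derivatives in $L^\infty_x L^2_v$ (where Lemma \ref{lem5.4} still applies) while the other factor sits in $L^2_x L^\sigma_v$. After the $1/\varepsilon$ prefactor and Cauchy--Schwarz against $\|\partial^\alpha_\beta f\|_{\sigma,|\beta|}$ this absorbs into $\eta\frac{1}{\varepsilon}\|\partial^\alpha_\beta f\|^2_{\sigma,|\beta|} + C_\eta \eta_0^2\varepsilon^3$, hence into the $C_\eta(\eta_0+\varepsilon^{1/2})\varepsilon^2$ contribution of the target bound. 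For the two mixed pieces $\Gamma(\overline{G}/\sqrt{\mu},f)$ and $\Gamma(f,\overline{G}/\sqrt{\mu})$, the $\overline{G}$-factor contributes a Lemma \ref{lem5.4} gain of $\eta_0\varepsilon$ (again with Sobolev placement depending on whether $|\alpha_1|$ or $|\alpha-\alpha_1|$ exceeds $N-2$, the opposite factor $f$ being controlled via \eqref{3.1}), while the $f$-factor is kept in $L^2_x$ and matched to the appropriate dissipation norm in $\mathcal{D}_N(t)$. The gain $\varepsilon$ absorbs the $1/\varepsilon$ prefactor and yields the $C_\eta(\eta_0+\varepsilon^{1/2})\mathcal{D}_N(t)$ contribution.

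The main technical obstacle is the genuine trilinear piece $\Gamma(f,f)$, which has no $\overline{G}$-gain and must rely entirely on the smallness of $f$. Since $|\alpha|+|\beta|\leq N$, at least one of $|\alpha_1|$ and $|\alpha-\alpha_1|$ is $\leq \lfloor N/2\rfloor\leq N-2$, so the corresponding $f$-factor is placed in $L^\infty_x L^2_v$ via Sobolev embedding and bounded by $\mathcal{E}_N(t)^{1/2}\leq \varepsilon$. The remaining two $f$-factors stay in $L^2_x L^\sigma_v$ and are absorbed into $\mathcal{D}_N(t)$ using its $\frac{1}{\varepsilon}$-weighted pieces, either $\|\partial^{\alpha-\alpha_1}_{\beta-\beta_1}f\|_{\sigma,|\beta-\beta_1|}^2$ when $|\beta-\beta_1|\geq 1$ or $\|\partial^{\alpha-\alpha_1}f\|_\sigma^2$ when $\beta_1=\beta$, together with the Cauchy--Schwarz absorbing term $\eta\frac{1}{\varepsilon}\|\partial^\alpha_\beta f\|^2_{\sigma,|\beta|}$. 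Here the hypothesis $|\beta|\geq 1$ is crucial: it forces $|\alpha|\leq N-1$ and hence $|\alpha-\alpha_1|\leq N-1$, so the $g_2$-factor never falls into the top-order $|\alpha|=N$ bucket of $\mathcal{D}_N(t)$ whose prefactor is only $\varepsilon$ instead of $1/\varepsilon$. The net factor $\frac{1}{\varepsilon}\cdot\varepsilon = 1$ from the prefactor against the Sobolev smallness, combined with the remaining $\varepsilon^{1/2}$ kept as a small parameter, yields the $C_\eta(\eta_0+\varepsilon^{1/2})\mathcal{D}_N(t)$ contribution.

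Finally, the lower-order estimate \eqref{5.21} follows from the same four-piece decomposition, but using the unweighted bound \eqref{5.11} in place of \eqref{5.12}. The restriction $|\alpha|\leq N-1$ simplifies the argument because no derivative ever reaches top order, so the relevant dissipation weight is uniformly $\frac{1}{\varepsilon}$ and no alternative Sobolev configuration is needed; the $\varepsilon^2$ contribution again comes exclusively from the $\Gamma(\overline{G}/\sqrt{\mu},\overline{G}/\sqrt{\mu})$ piece via Lemma \ref{lem5.4}.
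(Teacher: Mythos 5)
Your proposal is correct and follows essentially the same route as the paper: the identical four-piece expansion of $\Gamma(G/\sqrt{\mu},G/\sqrt{\mu})$ via $G=\overline{G}+\sqrt{\mu}f$, the weighted trilinear bound \eqref{5.12} combined with Lemma \ref{lem5.4} for every piece containing $\overline{G}$, and the same Sobolev case split on which factor carries the lower derivative count for the mixed and pure $\Gamma(f,f)$ pieces, with the low-order factor placed in $L^\infty_x$ and the rest matched to $\mathcal{D}_N(t)$. The only (harmless) quantitative looseness is that in the borderline top-order Sobolev configuration the $L^\infty_x$ factor is controlled by $\varepsilon^{1/2}$ rather than $\varepsilon$, which is exactly the $\varepsilon^{1/2}$ prefactor the statement allows.
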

\begin{proof}
	We only prove the estimate \eqref{5.20} since the estimate \eqref{5.21} can be handled in the same way. 
	Let $|\alpha|+|\beta|\leq N$ with $|\beta|\geq1$, we use $G=\overline{G}+\sqrt{\mu}f$ to show
	\begin{equation*}
		\partial^{\alpha}_{\beta}\Gamma(\frac{G}{\sqrt{\mu}},\frac{G}{\sqrt{\mu}})=\partial^{\alpha}_{\beta}\Gamma(\frac{\overline{G}}{\sqrt{\mu}},\frac{\overline{G}}{\sqrt{\mu}})
		+\partial^{\alpha}_{\beta}\Gamma(\frac{\overline{G}}{\sqrt{\mu}},f)+\partial^{\alpha}_{\beta}\Gamma(f,\frac{\overline{G}}{\sqrt{\mu}})+\partial^{\alpha}_{\beta}\Gamma(f,f).
	\end{equation*}
	We take the inner product of the above equality with $\frac{1}{\varepsilon}w^{2|\beta|}\partial^{\alpha}_{\beta}f$ and then
	compute each term. In view of \eqref{5.12}, Lemma \ref{lem5.4}, the Cauchy-Schwarz and Sobolev inequalities, \eqref{1.7} and \eqref{3.1}, we arrive at
	\begin{align}
		\label{5.23}
		&\frac{1}{\varepsilon}|(\partial^{\alpha}_{\beta}
		\Gamma(\frac{\overline{G}}{\sqrt{\mu}},\frac{\overline{G}}{\sqrt{\mu}}),w^{2|\beta|}\partial^{\alpha}_{\beta}f)|
		\nonumber\\
		&\leq C\frac{1}{\varepsilon}\sum_{\alpha_{1}\leq\alpha}\sum_{\beta'\leq\beta_{1}\leq\beta}\int_{\mathbb{R}^{3}}|\langle v\rangle^{-b}\partial^{\alpha_{1}}_{\beta^{'}}(\frac{\overline{G}}{\sqrt{\mu}})|_{2}|\partial^{\alpha-\alpha_{1}}_{\beta-\beta_{1}}(\frac{\overline{G}}{\sqrt{\mu}})|_{\sigma,|\beta-\beta_{1}|}|\partial^{\alpha}_{\beta}f|_{\sigma,|\beta|}\,dx
		\nonumber\\
		&\leq 
		\eta\frac{1}{\varepsilon}\|\partial^{\alpha}_{\beta}f\|^{2}_{\sigma,|\beta|}+C_{\eta}(\eta_{0}+\varepsilon^{\frac{1}{2}})\varepsilon^2.
	\end{align}
Using \eqref{5.12} again, it gives
\begin{align}
\label{5.23a}
&\frac{1}{\varepsilon}|(\partial^{\alpha}_{\beta}\Gamma(\frac{\overline{G}}{\sqrt{\mu}},f),w^{2|\beta|}\partial^{\alpha}_{\beta}f)|
\nonumber\\
&\leq  C\sum_{\alpha_{1}\leq\alpha}\sum_{\beta'\leq\beta_{1}\leq\beta}\underbrace{\frac{1}{\varepsilon}\int_{\mathbb{R}^{3}}|\langle v\rangle^{-b}\partial^{\alpha_{1}}_{\beta^{'}}(\frac{\overline{G}}{\sqrt{\mu}})|_{2}|\partial^{\alpha-\alpha_{1}}_{\beta-\beta_{1}}f|_{\sigma,|\beta-\beta_{1}|}|\partial^{\alpha}_{\beta}f|_{\sigma,|\beta|}\,dx}_{J_1}.
\end{align}
Note that for $|\alpha|+|\beta|\leq N$ with $|\beta|\geq1$, one has $|\alpha|\leq N-1$.
To estimate the term $J_1$, we consider the following two cases.
If $|\alpha-\alpha_{1}|+|\beta-\beta_{1}|\leq\frac{|\alpha|+|\beta|}{2}$, we derive from the Cauchy-Schwarz and Sobolev inequalities,
\eqref{5.5}, \eqref{1.7}, \eqref{3.1} and \eqref{3.8} that
\begin{align*}
J_1&\leq C\frac{1}{\varepsilon}\big\||\langle v\rangle^{-b}\partial^{\alpha_{1}}_{\beta^{'}}(\frac{\overline{G}}{\sqrt{\mu}})|_{2}\big\|_{L^2}\big\||\partial^{\alpha-\alpha_{1}}_{\beta-\beta_{1}}f|_{\sigma,|\beta-\beta_{1}|}\big\|_{L^\infty}\big\||\partial^{\alpha}_{\beta}f|_{\sigma,|\beta|}\big\|_{L^2}
\\
&\leq \eta\frac{1}{\varepsilon}\|\partial^{\alpha}_{\beta}f\|^{2}_{\sigma,|\beta|}+
C_\eta\eta_{0}\varepsilon\|\nabla_{x}\partial^{\alpha-\alpha_{1}}_{\beta-\beta_{1}}f\|_{\sigma,|\beta-\beta_{1}|}
\|\nabla^2_{x}\partial^{\alpha-\alpha_{1}}_{\beta-\beta_{1}}f\|_{\sigma,|\beta-\beta_{1}|}
\\	
&\leq \eta\frac{1}{\varepsilon}\|\partial^{\alpha}_{\beta}f\|^{2}_{\sigma,|\beta|}+C_{\eta}(\eta_{0}+\varepsilon^{\frac{1}{2}})\mathcal{D}_N(t).
\end{align*}
If $\frac{|\alpha|+|\beta|}{2}<|\alpha-\alpha_{1}|+|\beta-\beta_{1}|\leq |\alpha|+|\beta|$, then it holds that
\begin{align*}
J_1&\leq C\frac{1}{\varepsilon}\big\||\langle v\rangle^{-b}\partial^{\alpha_{1}}_{\beta^{'}}(\frac{\overline{G}}{\sqrt{\mu}})|_{2}\big\|_{L^\infty}\big\||\partial^{\alpha-\alpha_{1}}_{\beta-\beta_{1}}f|_{\sigma,|\beta-\beta_{1}|}\big\|_{L^2}\big\||\partial^{\alpha}_{\beta}f|_{\sigma,|\beta|}\big\|_{L^2}
\\
&\leq \eta\frac{1}{\varepsilon}\|\partial^{\alpha}_{\beta}f\|^{2}_{\sigma,|\beta|}+C_{\eta}(\eta_{0}+\varepsilon^{\frac{1}{2}})\mathcal{D}_N(t).
\end{align*}
Consequently, plugging the above estimates into \eqref{5.23a}, implies 
\begin{align}
\label{5.24}
\frac{1}{\varepsilon}|(\partial^{\alpha}_{\beta}\Gamma(\frac{\overline{G}}{\sqrt{\mu}},f),w^{2|\beta|}\partial^{\alpha}_{\beta}f)|
\leq 
C\eta\frac{1}{\varepsilon}\|\partial^{\alpha}_{\beta}f\|^{2}_{\sigma,|\beta|}+C_{\eta}(\eta_{0}+\varepsilon^{\frac{1}{2}})\mathcal{D}_N(t).
\end{align}
Carrying out the similar calculations as \eqref{5.24}, one has the following same bound
\begin{align}
\label{5.25a}
\frac{1}{\varepsilon}|(\partial^{\alpha}_{\beta}\Gamma(f,\frac{\overline{G}}{\sqrt{\mu}}),w^{2|\beta|}\partial^{\alpha}_{\beta}f)|
\leq 
C\eta\frac{1}{\varepsilon}\|\partial^{\alpha}_{\beta}f\|^{2}_{\sigma,|\beta|}+C_{\eta}(\eta_{0}+\varepsilon^{\frac{1}{2}})\mathcal{D}_N(t).
\end{align}
Similarly, one has from \eqref{5.12} that
\begin{align*}
&\frac{1}{\varepsilon}
|(\partial^{\alpha}_{\beta}\Gamma(f,f), w^{2|\beta|}\partial^{\alpha}_{\beta}f)|
\\
&\leq C\sum_{\alpha_{1}\leq\alpha}\sum_{\beta'\leq\beta_{1}\leq\beta}\underbrace{\frac{1}{\varepsilon}\int_{\mathbb{R}^{3}}|\langle v\rangle^{-b}\partial^{\alpha_{1}}_{\beta^{'}}f|_{2}|\partial^{\alpha-\alpha_{1}}_{\beta-\beta_{1}}f|_{\sigma,|\beta-\beta_{1}|}
|\partial^{\alpha}_{\beta}f|_{\sigma,|\beta|}\,dx}_{J_2}.
\end{align*}
The term $J_2$ can be treated in the similar way as the term $J_1$. 
If $|\alpha-\alpha_{1}|+|\beta-\beta_{1}|
\leq\frac{|\alpha|+|\beta|}{2}$, using the Cauchy-Schwarz and Sobolev inequalities,
\eqref{3.1} and \eqref{3.8}, we get
\begin{align*}
J_2&\leq C\frac{1}{\varepsilon}\big\||\langle v\rangle^{-b}\partial^{\alpha_{1}}_{\beta^{'}}f|_{2}\big\|_{L^2}
\big\||\partial^{\alpha-\alpha_{1}}_{\beta-\beta_{1}}f|_{\sigma,|\beta-\beta_{1}|}\big\|_{L^\infty}
\big\||\partial^{\alpha}_{\beta}f|_{\sigma,|\beta|}\big\|_{L^2}
\\
&\leq 
\eta\frac{1}{\varepsilon}\|\partial^{\alpha}_{\beta}f\|^{2}_{\sigma,|\beta|}+C_{\eta}
\frac{1}{\varepsilon}\|\partial^{\alpha_{1}}_{\beta^{'}}f\|_{2,|\beta^{'}|}^2
\|\nabla_{x}\partial^{\alpha-\alpha_{1}}_{\beta-\beta_{1}}f\|_{\sigma,|\beta-\beta_{1}|}\|\nabla^2_{x}\partial^{\alpha-\alpha_{1}}_{\beta-\beta_{1}}f\|_{\sigma,|\beta-\beta_{1}|}	
\\
&\leq \eta\frac{1}{\varepsilon}\|\partial^{\alpha}_{\beta}f\|^{2}_{\sigma,|\beta|}+C_{\eta}\varepsilon^{\frac{1}{2}}\mathcal{D}_N(t).
\end{align*}
Here we have used $\langle v\rangle^{-b}\leq\langle v\rangle^{-|\beta^{'}|}$ by choosing $b\geq N\geq |\beta^{'}|$.
If $\frac{|\alpha|+|\beta|}{2}<|\alpha-\alpha_{1}|+|\beta-\beta_{1}|\leq |\alpha|+|\beta|$, we also have
\begin{align*}
J_2&\leq C\frac{1}{\varepsilon}\big\||\langle v\rangle^{-b}\partial^{\alpha_{1}}_{\beta^{'}}f|_{2}\big\|_{L^\infty}
\big\||\partial^{\alpha-\alpha_{1}}_{\beta-\beta_{1}}f|_{\sigma,|\beta-\beta_{1}|}\big\|_{L^2}
\big\||\partial^{\alpha}_{\beta}f|_{\sigma,|\beta|}\big\|_{L^2}
\\
&\leq \eta\frac{1}{\varepsilon}\|\partial^{\alpha}_{\beta}f\|^{2}_{\sigma,|\beta|}+C_{\eta}\varepsilon^{\frac{1}{2}}\mathcal{D}_N(t).
\end{align*}
Therefore, with these estimates in hand, it follows that
\begin{align}
\label{5.25}
\frac{1}{\varepsilon}
|(\partial^{\alpha}_{\beta}\Gamma(f,f), w^{2|\beta|}\partial^{\alpha}_{\beta}f)|
\leq C \eta\frac{1}{\varepsilon}\|\partial^{\alpha}_{\beta}f\|^{2}_{\sigma,|\beta|}+C_{\eta}\varepsilon^{\frac{1}{2}}\mathcal{D}_N(t).
\end{align}
	
In summary, the desired estimate \eqref{5.20} follows from \eqref{5.23}, \eqref{5.24}, \eqref{5.25a} and \eqref{5.25}.
By the similar arguments, we can prove that \eqref{5.21} holds and we omit the details for brevity. This completes the proof of Lemma \ref{lem5.8}. 
\end{proof}

\subsection{Estimates on fluid quantities}
In what follows  we give the estimates of the fluid quantities involving the temporal derivatives, which will be used in Section \ref{seca.4}.
\begin{lemma}\label{lem5.10}
For $|\alpha|\leq N-1$, one has
\begin{align}
\label{5.26}
\|\partial^{\alpha}\partial_t(\widetilde{\rho},\widetilde{u},\widetilde{\theta})\|^{2}\leq &
C\|\partial^{\alpha}\nabla_{x}(\widetilde{\rho},\widetilde{u},\widetilde{\theta})\|^{2}+C\|\langle v\rangle^{-\frac{1}{2}}\partial^{\alpha}\nabla_{x} f\|^{2}+C(\eta_{0}+\varepsilon^{\frac{1}{2}})\varepsilon^{2}.
\end{align}
For $|\alpha|\leq N-2$, it also holds that
\begin{align}
\label{5.28a}
\|\partial^{\alpha}\partial_t(\widetilde{\rho},\widetilde{u},\widetilde{\theta})\|^{2}\leq &C\varepsilon^{2}.
\end{align}
\end{lemma}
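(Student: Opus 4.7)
The plan is to derive \eqref{5.26} by reading $\partial_t(\widetilde\rho,\widetilde u,\widetilde\theta)$ off the perturbation system \eqref{2.23} and then obtain \eqref{5.28a} as a corollary. A useful preliminary observation is that, by \eqref{2.12} together with the Burnett-function identity
\begin{equation*}
-\int_{\R^{3}} v_{i}v\cdot\nabla_{x}L^{-1}_{M}[P_{1}(v\cdot\nabla_{x}M)]\,dv=\sum_{j=1}^{3}\partial_{x_{j}}[\mu(\theta)D_{ij}]
\end{equation*}
displayed just before \eqref{def.vst}, the right-hand side of the momentum equation in \eqref{2.23} collapses to $-\rho^{-1}\int v_{i}v\cdot\nabla_{x}G\,dv$, the two $\varepsilon$-viscous pieces cancelling exactly; the analogous cancellation occurs in the energy equation. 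This is equivalent to starting from the undifferentiated macroscopic system \eqref{2.10}, and has the advantage that no $\varepsilon\,\nabla_{x}^{|\alpha|+2}u$ contribution ever appears from the viscous stress tensor.

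Based on this reformulation, I would apply $\partial^{\alpha}$ with $|\alpha|\leq N-1$ to each equation and take the $L^{2}_{x}$-norm. The fluid coupling terms, such as $u\cdot\nabla_{x}\widetilde\rho$, $\bar\rho\nabla_{x}\cdot\widetilde u$ and $\widetilde u\cdot\nabla_{x}\bar\rho$, are expanded via Leibniz: derivatives falling entirely on the fluctuations contribute $C\|\partial^{\alpha}\nabla_{x}(\widetilde\rho,\widetilde u,\widetilde\theta)\|^{2}$, while those involving the Euler background are controlled by \eqref{1.7}, \eqref{3.6}, Sobolev embedding and the a priori assumption \eqref{3.1}, producing a remainder of size $C(\eta_{0}+\varepsilon^{1/2})\varepsilon^{2}$. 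For the microscopic flux $\partial^{\alpha}[\rho^{-1}\int P(v)v\cdot\nabla_{x}G\,dv]$ with $P(v)\in\{v_{i},\tfrac12|v|^{2}\}$, the decomposition $G=\overline G+\sqrt\mu f$ yields two pieces: the $\overline G$-piece is $O(\eta_{0}\varepsilon)$ by Lemma \ref{lem5.4}, while the $\sqrt\mu f$-piece is controlled by Cauchy--Schwarz in $v$ through
\begin{equation*}
\Big|\int_{\R^{3}} P(v)v\sqrt{\mu}\cdot\partial^{\alpha}\nabla_{x}f\,dv\Big|^{2}\leq\Big(\int_{\R^{3}}|P(v)v|^{2}\mu\langle v\rangle\,dv\Big)\,\big|\langle v\rangle^{-1/2}\partial^{\alpha}\nabla_{x}f\big|_{2}^{2},
\end{equation*}
whose $v$-integral is finite, so integration in $x$ reproduces the middle term of \eqref{5.26}. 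Derivatives falling on $\rho^{-1}$ or on the $(u,\theta)$-dependent coefficients hidden in $\overline G$ through its Burnett-function representation \eqref{5.6} are absorbed into the $C(\eta_{0}+\varepsilon^{1/2})\varepsilon^{2}$ remainder via the same $L^{6}$--$L^{3}$--$L^{2}$ H\"older bookkeeping as in the proof of Lemma \ref{lem5.7}.

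The estimate \eqref{5.28a} then follows immediately from \eqref{5.26}: for $|\alpha|\leq N-2$ one has $|\alpha|+1\leq N-1$, so by the definition \eqref{3.2} and the a priori bound \eqref{3.1} both $\|\partial^{\alpha}\nabla_{x}(\widetilde\rho,\widetilde u,\widetilde\theta)\|^{2}\leq\mathcal{E}_{N}(t)\leq\varepsilon^{2}$ and $\|\langle v\rangle^{-1/2}\partial^{\alpha}\nabla_{x}f\|^{2}\leq\|\partial^{\alpha}\nabla_{x}f\|^{2}\leq\mathcal{E}_{N}(t)\leq\varepsilon^{2}$, which combined with $C(\eta_{0}+\varepsilon^{1/2})\varepsilon^{2}\leq C\varepsilon^{2}$ gives the claim.

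The main obstacle will be the careful accounting of derivatives distributed across the $(\rho,u,\theta)$-dependent factors $\rho^{-1}$, $B_{ij}(\frac{v-u}{\sqrt{R\theta}})$ and $A_{j}(\frac{v-u}{\sqrt{R\theta}})$ appearing inside $\overline G$. Each such derivative generates products of intermediate-order derivatives of $(u,\theta)$ that must be dominated by $\eta_{0}+\varepsilon^{1/2}$ via the interpolation argument used in Lemma \ref{lem5.7}, producing the factor $(\eta_{0}+\varepsilon^{1/2})$ in front of $\varepsilon^{2}$ in \eqref{5.26}; this is the one point where the smallness thresholds $\eta_\tau$ and $\varepsilon_{0}$ are genuinely used in the proof of the lemma.
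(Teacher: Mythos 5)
Your proposal is correct and follows essentially the same route as the paper: the authors likewise work from the Euler-type difference system \eqref{2.22} (equivalently, your observation that the viscous pieces in \eqref{2.23} recombine with part of $L_M^{-1}\Theta$ into $-\rho^{-1}\int v\otimes v\cdot\nabla_x G\,dv$ via \eqref{2.12}), read off $\partial^{\alpha}\partial_t(\widetilde\rho,\widetilde u,\widetilde\theta)$, bound the fluid terms by $\|\partial^{\alpha}\nabla_x(\widetilde\rho,\widetilde u,\widetilde\theta)\|^2$ plus an $(\eta_0+\varepsilon^{1/2})\varepsilon^2$ remainder, control the microscopic flux through $G=\overline G+\sqrt\mu f$ exactly as you describe, and deduce \eqref{5.28a} from \eqref{5.26} together with \eqref{3.1}. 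No gaps.
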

\begin{proof}
Subtracting \eqref{1.4} from system \eqref{2.10} yields  that
\begin{align}
\label{2.22}
\left\{
\begin{array}{rl}
&\partial_{t}\widetilde{\rho}+u\cdot\nabla_{x}\widetilde{\rho}+\bar{\rho}\nabla_{x}\cdot\widetilde{u}
+\widetilde{u}\cdot\nabla_{x}\bar{\rho}
+\widetilde{\rho}\nabla_{x}\cdot u=0,
\\
&\partial_{t}\widetilde{u}+u\cdot\nabla_{x}\widetilde{u}
+\frac{2\bar{\theta}}{3\bar{\rho}}\nabla_{x}\widetilde{\rho}
+\frac{2}{3}\nabla_{x}\widetilde{\theta}
+\widetilde{u}\cdot\nabla_{x}\bar{u}+\frac{2}{3}(\frac{\theta}{\rho}-\frac{\bar{\theta}}{\bar{\rho}})
\nabla_{x}\rho=-\frac{1}{\rho}\int_{\mathbb{R}^{3}} v\otimes v\cdot\nabla_{x} G\,dv,
\\
&\partial_{t}\widetilde{\theta}+u\cdot\nabla_{x}\widetilde{\theta}+\frac{2}{3}\bar{\theta}\nabla_{x}\cdot \widetilde{u}
+\widetilde{u}\cdot\nabla_{x}\bar{\theta}+\frac{2}{3}\widetilde{\theta}\nabla_{x}\cdot u
=-\frac{1}{\rho}\int_{\mathbb{R}^{3}} \frac{1}{2}|v|^{2} v\cdot\nabla_{x} G\,dv\\
&\qquad\qquad\qquad\qquad\qquad\qquad\qquad\qquad\qquad\qquad\qquad+\frac{1}{\rho}u\cdot\int_{\mathbb{R}^{3}} v\otimes v\cdot\nabla_{x}G\,dv.
\end{array} \right.
\end{align}	
Applying $\partial^{\alpha}$ with $|\alpha|\leq N-1$ to the second equation of \eqref{2.22} and taking the inner product of the resulting equation with  $\partial^{\alpha}\partial_t\widetilde{u}$, we can arrive at
\begin{multline*}
(\partial^{\alpha}\partial_t\widetilde{u},\partial^{\alpha}\partial_t\widetilde{u})=
-(\partial^{\alpha}[u\cdot\nabla_{x}\widetilde{u}
+\frac{2\bar{\theta}}{3\bar{\rho}}\nabla_{x}\widetilde{\rho}
+\frac{2}{3}\nabla_{x}\widetilde{\theta}
+\widetilde{u}\cdot\nabla_{x}\bar{u}+\frac{2}{3}(\frac{\theta}{\rho}-\frac{\bar{\theta}}{\bar{\rho}})
\nabla_{x}\rho],\partial^{\alpha}\partial_t\widetilde{u})
\nonumber\\
-(\partial^{\alpha}(\frac{1}{\rho}\int_{\mathbb{R}^{3}} v\otimes v\cdot\nabla_{x} G\,dv),\partial^{\alpha}\partial_t\widetilde{u})
\nonumber\\
\leq C\eta\|\partial^{\alpha}\partial_t\widetilde{u}\|^{2}+
C_\eta\|\partial^{\alpha}(\nabla_{x}\widetilde{\rho},\nabla_{x}\widetilde{u},\nabla_{x}\widetilde{\theta})\|^{2}+C_\eta\|\langle v\rangle^{-\frac{1}{2}}\partial^{\alpha}\nabla_{x} f\|^{2}+C_\eta(\eta_{0}+\varepsilon^{\frac{1}{2}})\varepsilon^{2}.
\end{multline*}
Similar estimates also hold for $\partial^{\alpha}\partial_t\widetilde{\rho}$ and $\partial^{\alpha}\partial_t\widetilde{\theta}$. Therefore, choosing sufficiently small $\eta>0$,
we can obtain the desired estimate \eqref{5.26}.
If $|\alpha|\leq N-2$, we can deduce from \eqref{3.1} that
\begin{align*}
\|\partial^{\alpha}(\nabla_{x}\widetilde{\rho},\nabla_{x}\widetilde{u},\nabla_{x}\widetilde{\theta})\|^{2}+\|\langle v\rangle^{-\frac{1}{2}}\partial^{\alpha}\nabla_{x} f\|^{2}\leq C\varepsilon^{2}.
\end{align*}
This and \eqref{5.26} together gives \eqref{5.28a}
by using the smallness of $\eta_{0}$ and $\varepsilon$. Thus the proof of Lemma \ref{lem5.10} is completed.
\end{proof}

\section{A priori estimates}\label{seca.4}
This section is a core part to make preparations for the proof of the main results. We shall obtain the desired a priori estimate \eqref{3.21A} on the solution step by step in a series of lemmas in order to close the a priori assumption \eqref{3.1}. In all lemmas below, $F=M+\overline{G}+\sqrt{\mu}f$ is assumed to be the smooth solution to the Landau equation \eqref{1.1} and \eqref{1.10} for $t\in[0,T]$ with $T\in(0,\tau]$ and all derived estimates are satisfied for any $0\leq t\leq T$. In the meantime, we assume that \eqref{3.1} and \eqref{3.6} are valid.

\subsection{Zero-order estimate on fluid part}\label{sec5.1}
We start from the zeroth order energy estimates of the fluid part
	$(\widetilde{\rho},\widetilde{u},\widetilde{\theta})$  by the entropy and entropy flux motivated in \cite{Liu-Yang-Yu}. We show that the energy and energy dissipation for the fluid part are bounded by the dissipation up to first order for the non-fluid part. The proof also makes use of the dissipation mechanism for the Navier-Stokes-type equations \eqref{2.14} as well as the Euler-type equations \eqref{2.10}. 
	
\begin{lemma}\label{lem.zefp}
It holds that
\begin{align}
\label{4.22}
&\|(\widetilde{\rho},\widetilde{u},\widetilde{\theta})(t)\|^{2}+c\varepsilon\int^{t}_0
\|\nabla_x(\widetilde{\rho},\widetilde{u},\widetilde{\theta})(s)\|^{2}\,ds
\nonumber\\
&\leq C\varepsilon\int^{t}_0(\|f(s)\|_{\sigma}^{2}
+\|\nabla_{x}f(s)\|_{\sigma}^{2})\,ds+C(1+t)(\eta_{0}+\varepsilon)\varepsilon^{2}.
\end{align}
\end{lemma}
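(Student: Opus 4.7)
The approach is to derive a weighted $L^2$ energy identity from the perturbation system \eqref{2.23}, viewed as a viscous perturbation of compressible Euler, then augment it by a dispersive cross-term to produce the missing $\widetilde{\rho}$-dissipation, and finally control the kinetic source integrals via the Burnett-function representations already used to obtain \eqref{2.14}.

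First I would introduce the symmetrizer-based quadratic functional
$$
\mathcal{E}_0(t)=\frac{1}{2}\int_{\R^3}\Big[\frac{2\bar{\theta}}{3\bar{\rho}}\widetilde{\rho}^{\,2}+\bar{\rho}\,|\widetilde{u}|^{2}+\frac{\bar{\rho}}{\bar{\theta}}\widetilde{\theta}^{\,2}\Big]\,dx,
$$
modelled on the relative entropy for the Euler system \eqref{1.4}. Multiplying the three equations of \eqref{2.23} by $\frac{2\bar{\theta}}{3\bar{\rho}}\widetilde{\rho}$, $\bar{\rho}\widetilde{u}$ and $\frac{\bar{\rho}}{\bar{\theta}}\widetilde{\theta}$ respectively and summing, the hyperbolic transport and pressure cross-terms cancel exactly in the constant-state case; for the non-constant background they leave remainders quadratic in $(\widetilde{\rho},\widetilde{u},\widetilde{\theta})$ multiplied by derivatives of $(\bar{\rho},\bar{u},\bar{\theta})$, all bounded in $L^\infty_t L^\infty_x$ by $C\eta_0$ thanks to \eqref{1.7} and Sobolev embedding, so they are absorbed by Gr\"onwall. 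The viscous terms, after integration by parts, produce the dissipation $c\varepsilon\big(\|\nabla_x\widetilde{u}\|^{2}+\|\nabla_x\widetilde{\theta}\|^{2}\big)$ up to commutator errors between $\nabla_x$ and the smooth coefficients $\mu(\theta),\kappa(\theta)$, which are similarly controlled by the smallness of $\eta_0$ and $\varepsilon$.

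To recover $c\varepsilon\|\nabla_x\widetilde{\rho}\|^{2}$ I would add the standard cross-term functional $\varepsilon\lambda\int_{\R^3}\bar{\rho}\,\widetilde{u}\cdot\nabla_x\widetilde{\rho}\,dx$ with $\lambda>0$ small. Differentiating in time, substituting $\partial_t\widetilde{u}\approx-\tfrac{2\bar{\theta}}{3\bar{\rho}}\nabla_x\widetilde{\rho}+\cdots$ from the momentum equation and $\partial_t\nabla_x\widetilde{\rho}\approx-\bar{\rho}\nabla_x(\nabla_x\cdot\widetilde{u})+\cdots$ from the continuity equation, one obtains the extra dissipation $-\varepsilon\lambda\int\tfrac{2\bar{\theta}}{3}|\nabla_x\widetilde{\rho}|^{2}\,dx$ plus errors dominated by a small fraction of $\varepsilon\|\nabla_x\widetilde{u}\|^{2}$ and by the kinetic source terms treated below. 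Picking $\lambda$ small enough, the augmented functional stays coercively equivalent to $\|(\widetilde{\rho},\widetilde{u},\widetilde{\theta})\|^{2}$ while the full dissipation $c\varepsilon\|\nabla_x(\widetilde{\rho},\widetilde{u},\widetilde{\theta})\|^{2}$ is produced.

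The delicate part is the kinetic source $-\frac{1}{\rho}\int v_i(v\cdot\nabla_xL_M^{-1}\Theta)\,dv$ and its energy analog. Using the identity that precedes \eqref{2.14} together with \eqref{5.2}, these source terms can be rewritten, after integration by parts, as $\int \partial_{x_j}\widetilde{u}_i\cdot\langle B_{ij},\Theta\rangle\,dx$ and $\int\partial_{x_j}\widetilde{\theta}\cdot\langle A_j,\Theta\rangle\,dx$, up to fluid multipliers. Lemma \ref{lem5.2} yields the rapid $v$-decay of $A_j,B_{ij}$ which absorbs the polynomial $v$-growth in $\Theta=\varepsilon\partial_tG+\varepsilon P_1(v\cdot\nabla_xG)-Q(G,G)$. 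Writing $G=\overline{G}+\sqrt{\mu}f$, the $\overline{G}$-pieces are bounded by $C\eta_0\varepsilon$ via Lemma \ref{lem5.4}, giving $\eta_0\varepsilon^{2}$ after Cauchy--Schwarz against $\nabla_x\widetilde{u}$; the $\sqrt{\mu}f$-pieces are controlled by $\eta\varepsilon\|\nabla_x\widetilde{u}\|^{2}+C_\eta\varepsilon(\|f\|_\sigma^{2}+\|\nabla_xf\|_\sigma^{2})$ after a balanced application of Cauchy--Schwarz that exploits the two explicit $\varepsilon$-factors in $\Theta$; and $Q(G,G)$ is dealt with via Lemma \ref{lem5.6} and \eqref{3.1} at order $\varepsilon^{3}$.

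The main obstacle will be the $\varepsilon\partial_tG$ inside $\Theta$: the piece $\varepsilon\partial_t\overline{G}$ produces $\partial_t\nabla_x(\bar{u},\bar{\theta})$ which must be converted through the Euler system \eqref{1.4} back into spatial derivatives of $(\bar{\rho},\bar{u},\bar{\theta})$ bounded by $\eta_0$ via \eqref{1.7}, while $\varepsilon\sqrt{\mu}\partial_tf$ has to be replaced, using the microscopic equation \eqref{2.21}, by $\nabla_xf$, $\overline{G}$-terms and collision terms in order not to reintroduce a time derivative on the right-hand side. Careful $\varepsilon$-bookkeeping through this substitution, together with the smallness of $\eta_0$ and $\varepsilon$, forces every non-dissipative error to be at most of order $\eta_0\varepsilon^{2}+\varepsilon^{3}$; integration on $[0,t]$ then delivers the claimed bound, where the factor $(1+t)$ arises from the lack of time-decay of the Euler solution.
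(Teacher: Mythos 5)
Your overall architecture (a symmetrized $L^2$ functional for $(\widetilde{\rho},\widetilde{u},\widetilde{\theta})$, the cross term $\varepsilon\lambda\int\bar{\rho}\,\widetilde{u}\cdot\nabla_x\widetilde{\rho}\,dx$ to generate the density dissipation, and the Burnett-function representation of the kinetic sources) matches the paper's proof in all but one respect; the paper even remarks that the plain $L^2$ symmetrizer method is an admissible substitute for its entropy--entropy flux computation on a finite time interval. The problem is your treatment of the $\varepsilon\partial_t G$ piece of $\Theta$, and specifically of $\varepsilon\sqrt{\mu}\,\partial_t f$.

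You propose to eliminate the time derivative by substituting the microscopic equation \eqref{2.21}. But \eqref{2.21} gives $\varepsilon\partial_t f=\mathcal{L}f+\Gamma(\cdots)-\varepsilon v\cdot\nabla_x f+\cdots$, so the explicit factor $\varepsilon$ in front of $\partial_t G$ is exactly cancelled by the $\varepsilon^{-1}$ in front of $\mathcal{L}f$, and the resulting contribution to the source term is an $O(1)$ pairing of the form
\begin{align*}
\int_{\mathbb{R}^3}\partial_{x_j}\widetilde{u}_i\,\big\langle B_{ij}\big(\tfrac{v-u}{\sqrt{R\theta}}\big),\tfrac{\sqrt{\mu}\,\mathcal{L}f}{M}\big\rangle\,dx
\ \lesssim\ \int_{\mathbb{R}^3}|\nabla_x\widetilde{u}|\,|f|_{\sigma}\,dx
\ \le\ \eta\varepsilon\|\nabla_x\widetilde{u}\|^2+\tfrac{C_\eta}{\varepsilon}\|f\|_{\sigma}^2,
\end{align*}
with $C_\eta$ not small (there is no cancellation: $B_{ij}$ lies in $(\ker L_M)^{\perp}$, not in $(\ker\mathcal{L})^{\perp}$, and $M-\mu$ only removes an $O(\eta_0+\varepsilon)$ portion). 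A term $\frac{C}{\varepsilon}\|f\|_\sigma^2$ on the right of \eqref{4.22} is fatal for the closure: in Lemma \ref{lem4.2} the fluid estimate must be multiplied by a \emph{large} constant before being added to the non-fluid estimate \eqref{4.25}, whose only $\frac{1}{\varepsilon}\|f\|_\sigma^2$ dissipation carries the fixed spectral-gap constant $c_1$; the claimed bound $C\varepsilon\|f\|_\sigma^2=C\varepsilon^2\cdot\frac{1}{\varepsilon}\|f\|_\sigma^2$ survives this multiplication because of the $\varepsilon^2$ smallness, whereas $\frac{C}{\varepsilon}\|f\|_\sigma^2$ does not. The paper avoids this by integrating by parts \emph{in time}: the $\varepsilon\sqrt{\mu}\partial_t f$ contribution is written as $\frac{d}{dt}E(t)$ minus terms where $\partial_t$ falls on the fluid multipliers $\partial_{x_i}(\widetilde{\theta}/\theta)$, $\partial_{x_j}\widetilde{u}_i$ and on $A_i,B_{ij},M$; these are controlled by Lemma \ref{lem5.10} and \eqref{4.18} at order $(\eta_0+\varepsilon)\varepsilon^2$, while $|E(t)|\le C(\eta_0+\varepsilon)\varepsilon^2$ is harmlessly absorbed into the energy functional. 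Your proof needs this time integration by parts (or an equivalent device retaining the full factor $\varepsilon$ against $f$); the equation substitution you describe cannot deliver the stated right-hand side of \eqref{4.22}.
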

\begin{proof}
As in \cite{Liu-Yang-Yu}, we define the macroscopic entropy $S$ by
\begin{equation}
\label{4.3b}
-\frac{3}{2}\rho S:=\int_{\mathbb{R}^{3}}M\ln M\,dv.
\end{equation}
By plugging \eqref{2.3} and further taking integration, it follows that
\begin{equation}
\label{4.3}
S=-\frac{2}{3}\ln\rho+\ln(2\pi R\theta)+1,\quad
p=R\rho\theta=\frac{1}{2\pi e}\rho^{\frac{5}{3}}\exp (S),\quad R=\frac{2}{3}.
\end{equation}
Multiplying \eqref{2.8} by $\ln M$  and integrating over $v$, direct computations give
\begin{align*}
-\frac{3}{2}\partial_t(\rho S)-\frac{3}{2}\nabla_{x}\cdot(\rho uS)
+\nabla_{x}\cdot\int_{\mathbb{R}^{3}} vG\ln M\,dv=\int_{\mathbb{R}^{3}} \frac{G v\cdot\nabla_{x} M}{M}\,dv.
\end{align*}
We denote
\begin{align*}
m:=&(m_{0},m_{1},m_{2},m_{3},m_{4})^{t}=(\rho,\rho u_{1},\rho u_{2},\rho u_{3},\rho(\theta+\frac{|u|^{2}}{2}))^{t},
\\
n:=&(n_{0},n_{1},n_{2},n_{3},n_{4})^{t}
=(\rho u,\rho uu_{1}+p\mathbb{I}_{1},\rho uu_{2}+p\mathbb{I}_{2},\rho uu_{3}+p\mathbb{I}_{3},\rho u(\theta+\frac{|u|^{2}}{2})+pu)^{t},
\end{align*}
where $\mathbb{I}_{1}=(1,0,0)^{t}$, $\mathbb{I}_{2}=(0,1,0)^{t}$, $\mathbb{I}_{3}=(0,0,1)^{t}$
and $(\cdot,\cdot,\cdot)^{t}$ is the transpose of a row vector.
Then the conservation law \eqref{2.14} can be rewritten as
\begin{equation*}
\partial_tm+\nabla_{x}\cdot n=\begin{pmatrix}
0
\\
\varepsilon\sum^{3}_{j=1}\partial_{x_{j}}[\mu(\theta)D_{1j}]
-\int_{\mathbb{R}^{3}} v_{1}(v\cdot\nabla_{x}L^{-1}_{M}\Theta)\,dv
\\
\varepsilon\sum^{3}_{j=1}\partial_{x_{j}}[\mu(\theta)D_{2j}]
-\int_{\mathbb{R}^{3}} v_{2}(v\cdot\nabla_{x}L^{-1}_{M}\Theta)\,dv
\\
\varepsilon\sum^{3}_{j=1}\partial_{x_{j}}[\mu(\theta)D_{3j}]
-\int_{\mathbb{R}^{3}} v_{3}(v\cdot\nabla_{x}L^{-1}_{M}\Theta)\,dv
\\
\varepsilon\nabla_{x}\cdot(\kappa(\theta)\nabla_{x}\theta)
+\varepsilon\nabla_{x}\cdot[\mu(\theta) u\cdot D]-\int_{\mathbb{R}^{3}} \frac{1}{2}|v|^{2} v\cdot\nabla_{x}L^{-1}_{M}\Theta\,dv
\end{pmatrix},
\end{equation*}
where $D=[D_{ij}]_{1\leq i,j\leq 3}$ is given in \eqref{def.vst}.
Define a relative entropy-entropy flux pair $(\eta,q)(t,x)$ around the local Maxwellian
$\overline{M}=M_{[\bar{\rho},\bar{u},\bar{S}]}$ 
with $\bar{S}=-\frac{2}{3}\ln\bar{\rho}+\ln(2\pi R\bar{\theta})+1$ as
\begin{equation}
\label{4.5}
\left\{
\begin{array}{rl}
&\eta(t,x)=\bar{\theta}\{-\frac{3}{2}\rho S+\frac{3}{2}\bar{\rho}\bar{S}+\frac{3}{2}\nabla_{m}(\rho S)|_{m=\bar{m}}(m-\bar{m})\},
\\
&q_{j}(t,x)=\bar{\theta}\{-\frac{3}{2}\rho u_{j}S+\frac{3}{2}\bar{\rho}\bar{u}_{j}\bar{S}+\frac{3}{2}\nabla_{m}(\rho S)|_{m=\bar{m}}\cdot(n_{j}-\bar{n}_{j})\},\quad j=1,2,3.
\end{array} \right.
\end{equation}
Here $\bar{m}=(\bar{\rho},\bar{\rho}\bar{ u}_{1},\bar{\rho}\bar{ u}_{2},\bar{\rho}\bar{ u}_{3},\bar{\rho}(\bar{\theta}
+\frac{1}{2}|\bar{u}|^{2}))^{t}$. Since
\begin{equation}
\label{4.6}
\partial_{m_{0}}(\rho S)=S+\frac{|u|^{2}}{2\theta}-\frac{5}{3}, \quad
\partial_{m_{i}}(\rho S)=-\frac{u_{i}}{\theta}, ~~~i=1,2,3, \quad \partial_{m_{4}}(\rho S)=\frac{1}{\theta},
\end{equation}
an elementary calculation leads to
\begin{equation}\notag
\left\{
\begin{array}{rl}
\eta(t,x)&=\frac{3}{2}\{\rho\theta-\bar{\theta}\rho S+\rho[(\bar{S}-\frac{5}{3})\bar{\theta}
+\frac{|u-\bar{u}|^{2}}{2}]+\frac{2}{3}\bar{\rho}\bar{\theta}\}
\\
&=\rho\bar{\theta}\Psi(\frac{\bar{\rho}}{\rho})+\frac{3}{2}\rho\bar{\theta}\Psi(\frac{\theta}{\bar{\theta}})
+\frac{3}{4}\rho|u-\bar{u}|^{2},
\\
q_{j}(t,x)&=u_{j}\eta(t,x)+(u_{j}-\bar{u}_{j})(\rho\theta-\bar{\rho}\bar{\theta}), \quad j=1,2,3,
\end{array} \right.
\end{equation}
where $\Psi(s)=s-\ln s-1$ is a strictly convex function around $s=1$. By these facts and \eqref{2.15}, for $\bf{X}$ in any closed
bounded region in $\sum=\{{\bf{X}}:\rho>0,~~\theta>0\}$,  there exists a constant $C>1$ such that
\begin{equation}
\label{4.8}
C^{-1}|(\widetilde{\rho},\widetilde{u},\widetilde{\theta})|^{2}\leq \eta(t,x)\leq C|(\widetilde{\rho},\widetilde{u},\widetilde{\theta})|^{2}.
\end{equation}

Using \eqref{4.5} and making a direct calculation, it holds that
\begin{multline}
\label{4.9}
\partial_{t}\eta(t,x)+\nabla_{x}\cdot q(t,x)
=\nabla_{[\bar{\rho},\bar{u},\bar{S}]}\eta(t,x)\cdot \partial_t(\bar{\rho},\bar{u},\bar{S})
+\sum^{3}_{j=1}\nabla_{[\bar{\rho},\bar{u},\bar{S}]}q_{j}(t,x)\cdot \partial_{x_j}(\bar{\rho},\bar{u},\bar{S})\\
+\bar{\theta}\{-\frac{3}{2}\partial_t(\rho S)-\frac{3}{2}\nabla_{x}\cdot(\rho uS)\}
+\frac{3}{2}\bar{\theta}\nabla_{m}(\rho S)|_{m=\bar{m}}(\partial_tm+\nabla_{x}\cdot n).
\end{multline}
In order to further estimate \eqref{4.9}, we first claim that the following identity holds
\begin{align}
\label{4.10}
&\nabla_{[\bar{\rho},\bar{u},\bar{S}]}\eta(t,x)\cdot \partial_t(\bar{\rho},\bar{u},\bar{S})
+\sum^{3}_{j=1}\nabla_{[\bar{\rho},\bar{u},\bar{S}]}q_{j}(t,x)\cdot \partial_{x_j}(\bar{\rho},\bar{u},\bar{S})
\nonumber\\
=&-\frac{3}{2}\rho\widetilde{u}\cdot(\widetilde{u}\cdot\nabla_{x}\bar{u})
-\frac{2}{3}\rho\bar{\theta}(\nabla_{x}\cdot\bar{u})\Psi(\frac{\bar{\rho}}{\rho})
-\rho\bar{\theta}(\nabla_{x}\cdot\bar{u})\Psi(\frac{\theta}{\bar{\theta}})
\nonumber\\
&-\frac{3}{2}\rho\nabla_{x}\bar{\theta}\cdot\widetilde{u}(\frac{2}{3}
\ln\frac{\bar{\rho}}{\rho}+\ln\frac{\theta}{\bar{\theta}}),
\end{align}
whose proof is given in the Appendix in Section \ref{seca.5}. 
Note that
$$
\partial_tS=-\frac{2}{3}\frac{1}{\rho}\partial_t\rho+\frac{1}{\theta}\partial_t\theta,\quad
\nabla_{x} S=-\frac{2}{3}\frac{1}{\rho}\nabla_{x}\rho+\frac{1}{\theta}\nabla_{x}\theta,
$$
due to \eqref{4.3},
then we deduce from  this and \eqref{2.14}  that
\begin{align*}
&\bar{\theta}\{-\frac{3}{2}\partial_t(\rho S)-\frac{3}{2}\nabla_{x}\cdot(\rho uS)\}
=\bar{\theta}\{-\frac{3}{2}\frac{\rho}{\theta}(\partial_t\theta+u\cdot\nabla_{x}\theta)-\rho\nabla_{x}\cdot u\}
\\
&=-\frac{3}{2}\frac{\bar{\theta}}{\theta}\varepsilon\Big\{\sum^{3}_{j=1}\partial_{x_j}(\kappa(\theta)\partial_{x_{j}}\theta)+
\sum^{3}_{i,j=1}(\mu(\theta) \partial_{x_j}u_{i}D_{ij})\Big\}
\\
&\hspace{0.5cm}+\frac{3}{2}\frac{\bar{\theta}}{\theta}\Big\{\int_{\mathbb{R}^{3}}\frac{1}{2}|v|^{2}v\cdot\nabla_{x}
L^{-1}_{M}\Theta\, dv-\sum^{3}_{i=1}u_{i}\int_{\mathbb{R}^{3}} v_{i}v\cdot\nabla_{x}L^{-1}_{M}\Theta\, dv\Big\}.
\end{align*}
In view of the conservation law and the similar arguments as \eqref{4.6}, one has
\begin{align*}
&\frac{3}{2}\bar{\theta}\nabla_{m}(\rho S)|_{m=\bar{m}}(\partial_tm+\nabla_{x}\cdot n)
\\
=&\frac{3}{2}\sum^{3}_{i=1}\bar{u}_{i}\int_{\mathbb{R}^{3}} v_{i}v\cdot\nabla_{x}L^{-1}_{M}\Theta\,dv
-\frac{3}{2}\int_{\mathbb{R}^{3}} \frac{1}{2}|v|^{2} v\cdot\nabla_{x}L^{-1}_{M}\Theta\,dv
+\frac{3}{2}\varepsilon\nabla_{x}\cdot(\kappa(\theta)\nabla_{x}\theta)
\\
&-\frac{3}{2}\varepsilon\sum^{3}_{i,j=1}\bar{u}_{i}\partial_{x_j}[\mu(\theta)D_{ij}]
+\frac{3}{2}\varepsilon\nabla_{x}\cdot[\mu(\theta) u\cdot D].
\end{align*}
Therefore, plugging these estimates into \eqref{4.9}  and making a direct computation, we get
\begin{align}
\label{4.11}
&\partial_{t}\eta(t,x)+\nabla_{x}\cdot q(t,x)
+\varepsilon\frac{3\bar{\theta}}{2\theta}\mu(\theta)\sum^{3}_{i,j=1} \partial_{x_j}\widetilde{u}_{i}(\partial_{x_j}\widetilde{u}_{i}+\partial_{x_i}\widetilde{u}_{j}-\frac{2}{3}\delta_{ij}\nabla_{x}\cdot \widetilde{u})
+\varepsilon\frac{3\bar{\theta}}{2\theta^{2}}\kappa(\theta)|\nabla_{x}\widetilde{\theta}|^{2}
\nonumber\\
=&\frac{3}{2}\varepsilon\nabla_{x}\cdot[\mu(\theta)\widetilde{u}\cdot D] +\frac{3}{2}\varepsilon\nabla_{x}\cdot(\frac{\widetilde{\theta}}{\theta}\kappa(\theta)\nabla_{x}\theta)
-\frac{3}{2}\rho\widetilde{u}\cdot(\widetilde{u}\cdot\nabla_{x}\bar{u})
\nonumber\\
&-\frac{2}{3}\rho\bar{\theta}(\nabla_{x}\cdot\bar{u})\Psi(\frac{\bar{\rho}}{\rho})
-\rho\bar{\theta}(\nabla_{x}\cdot\bar{u})\Psi(\frac{\theta}{\bar{\theta}})
-\frac{3}{2}\rho\nabla_{x}\bar{\theta}\cdot\widetilde{u}(\frac{2}{3} \ln\frac{\bar{\rho}}{\rho}+\ln\frac{\theta}{\bar{\theta}})
\nonumber\\
&-\varepsilon\frac{3\bar{\theta}}{2\theta^{2}}\kappa(\theta)
\nabla_{x}\widetilde{\theta}\cdot\nabla_{x}\bar{\theta} +\varepsilon\frac{3\widetilde{\theta}}{2\theta^{2}}\kappa(\theta)\nabla_{x}\bar{\theta}\cdot\nabla_{x}\theta          +\varepsilon\frac{3\widetilde{\theta}}{2\theta}\sum^{3}_{i,j=1}[\partial_{x_j}\bar{u}_{i}\mu(\theta)D_{ij}]
\nonumber\\
&-\varepsilon\frac{3\bar{\theta}}{2\theta}\sum^{3}_{i,j=1}\mu(\theta) \partial_{x_j}\widetilde{u}_{i}(\partial_{x_j}\bar{u}_{i}+\partial_{x_i}\bar{u}_{j}-\frac{2}{3}\delta_{ij}\nabla_{x}\cdot \bar{u})
+I_{1}.
\end{align}
Here $I_{1}$ is given as
\begin{align}
\label{4.12}
I_{1}
=&-\frac{3}{2}\nabla_{x}\cdot(\frac{\widetilde{\theta}}{\theta}\int_{\mathbb{R}^{3}}(\frac{1}{2}|v|^{2}-u\cdot v)vL^{-1}_{M}\Theta\, dv)
-\frac{3}{2}\nabla_{x}\cdot(\sum^{3}_{i=1}\widetilde{u}_{i}\int_{\mathbb{R}^{3}} v_{i}vL^{-1}_{M}\Theta\,dv)
\nonumber\\
&+\frac{3}{2}\nabla_{x}(\frac{\widetilde{\theta}}{\theta})\cdot\int_{\mathbb{R}^{3}}(\frac{1}{2}|v|^{2}-v\cdot u)vL^{-1}_{M}\Theta\, dv
+\frac{3}{2}\frac{\bar{\theta}}{\theta}\sum^{3}_{i=1}\nabla_{x}\widetilde{u}_{i}\cdot\int_{\mathbb{R}^{3}} v_{i}vL^{-1}_{M}\Theta\,dv
\nonumber\\
&-\frac{3}{2}\frac{\widetilde{\theta}}{\theta}\sum^{3}_{i=1}\nabla_{x}\bar{u}_{i}\cdot\int_{\mathbb{R}^{3}} v_{i}vL^{-1}_{M}\Theta\,dv.
\end{align}

To complete the estimate \eqref{4.11}, we integrate it with respect to $x$ over $\mathbb{R}^{3}$ and then compute the resulting equation for each term.
Thanks to \eqref{2.15} and \eqref{3.1}, one has
$$
\Psi(\frac{\bar{\rho}}{\rho})\approx|\widetilde{\rho}|^{2},\quad
\Psi(\frac{\theta}{\bar{\theta}})\approx|\widetilde{\theta}|^{2},\quad
|\ln\frac{\bar{\rho}}{\rho}|\approx|\widetilde{\rho}|,\quad
|\ln\frac{\theta}{\bar{\theta}}|\approx|\widetilde{\theta}|.
$$
By taking the $L^2-L^2-L^\infty$ estimate, we get from
this, \eqref{3.6}, \eqref{1.7}, \eqref{3.1} and the imbedding inequality that
\begin{align}
\label{4.14b}
&\int_{\mathbb{R}^{3}}|\frac{3}{2}\rho\widetilde{u}\cdot(\widetilde{u}\cdot\nabla_{x}\bar{u})
+\frac{2}{3}\rho\bar{\theta}(\nabla_{x}\cdot\bar{u})\Psi(\frac{\bar{\rho}}{\rho})
+\rho\bar{\theta}(\nabla_{x}\cdot\bar{u})\Psi(\frac{\theta}{\bar{\theta}})
+\frac{3}{2}\rho\nabla_{x}\bar{\theta}\cdot\widetilde{u}(\frac{2}{3}
\ln\frac{\bar{\rho}}{\rho}+\ln\frac{\theta}{\bar{\theta}})|\,dx
\nonumber\\
&\leq C(\|\nabla_{x}\bar{u}\|_{L^{\infty}}+\|\nabla_{x}\bar{\theta}\|_{L^{\infty}})
\|(\widetilde{\rho},\widetilde{u},\widetilde{\theta})\|^{2}
\leq C\eta_{0}\varepsilon^{2}.
\end{align}
Since both $\mu(\theta)$ and $\kappa(\theta)$ are smooth functions of $\theta$, there exists a constant $C>1$ 
such that $\mu(\theta),\kappa(\theta)\in[C^{-1},C]$.
It follows from this, \eqref{3.6}, \eqref{1.7} and \eqref{3.1} that
\begin{align*}
&\int_{\mathbb{R}^{3}}(|\varepsilon\frac{3\bar{\theta}}{2\theta^{2}}\kappa(\theta)
\nabla_{x}\widetilde{\theta}\cdot\nabla_{x}\bar{\theta}|+
|\varepsilon\frac{3\widetilde{\theta}}{2\theta^{2}}\kappa(\theta)\nabla_{x}\bar{\theta}\cdot\nabla_{x}\theta|)\,dx
\\
&\leq C\varepsilon\|\nabla_{x}\widetilde{\theta}\|\|\nabla_{x}\bar{\theta}\|
+C\varepsilon\|\nabla_{x}\bar{\theta}\|_{L^{\infty}}\|\widetilde{\theta}\|\|\nabla_{x}\theta\|
\leq C(\eta_{0}+\varepsilon)\varepsilon^{2}.
\end{align*}
Recall $D_{ij}$ in \eqref{def.vst}, then the similar calculations as the above lead us to
\begin{align*}
&\int_{\mathbb{R}^{3}}|\varepsilon\frac{3\widetilde{\theta}}{2\theta}\sum^{3}_{i,j=1}[\partial_{x_j}\bar{u}_{i}\mu(\theta)D_{ij}]|\,dx
+\int_{\mathbb{R}^{3}}|\varepsilon\frac{3\bar{\theta}}{2\theta}\sum^{3}_{i,j=1}\mu(\theta)
\partial_{x_j}\widetilde{u}_{i}(\partial_{x_j}\bar{u}_{i}+\partial_{x_i}\bar{u}_{j}-\frac{2}{3}\delta_{ij}\nabla_{x}\cdot \bar{u})|\,dx
\nonumber\\
&\leq C(\eta_{0}+\varepsilon)\varepsilon^{2}.
\end{align*}
Applying the above estimates together with \eqref{lem.I1dx.bdd} whose proof will be postponed to Lemma \ref{lem.I1dx} later, we have from \eqref{4.11} that
\begin{align}
\label{4.20}
\frac{d}{dt}\int_{\mathbb{R}^3}\eta(t,x)\,dx
-\frac{d}{dt}E(t)
+c\varepsilon(\|\nabla_x\widetilde{u}\|^2+\|\nabla_x\widetilde{\theta}\|^{2})
\leq C\varepsilon\|f\|_{\sigma}^{2}+C(\eta_{0}+\varepsilon)\varepsilon^{2}.
\end{align}
Here the following crucial estimate has been  used
\begin{align*}
&\varepsilon\int_{\mathbb{R}^{3}}\frac{3\bar{\theta}}{2\theta}\mu(\theta)\sum^{3}_{i,j=1} \partial_{x_j}\widetilde{u}_{i}(\partial_{x_j}\widetilde{u}_{i}+\partial_{x_i}\widetilde{u}_{j}-\frac{2}{3}\delta_{ij}\nabla_{x}\cdot \widetilde{u})\,dx
\\
=&\varepsilon\sum^{3}_{i,j=1}\int_{\mathbb{R}^{3}}\frac{3\bar{\theta}}{2\theta}\mu(\theta) (\partial_{x_j}\widetilde{u}_{i})^2\,dx
+
\varepsilon\int_{\mathbb{R}^{3}}\frac{3\bar{\theta}}{2\theta}\mu(\theta)\frac{1}{3}(\nabla_{x}\cdot \widetilde{u})^2\,dx
\\
&-\varepsilon\sum^{3}_{i,j=1}\int_{\mathbb{R}^{3}}
\partial_{x_i}(\frac{3\bar{\theta}}{2\theta}\mu(\theta))\partial_{x_j}\widetilde{u}_{i}\widetilde{u}_{j}\,dx
+\varepsilon\sum^{3}_{i,j=1}\int_{\mathbb{R}^{3}}
\partial_{x_j}(\frac{3\bar{\theta}}{2\theta}\mu(\theta)) \partial_{x_i}\widetilde{u}_{i}\widetilde{u}_{j}\,dx
\\
\geq&c\varepsilon\|\nabla_x\widetilde{u}\|^2
-C(\eta_{0}+\varepsilon)\varepsilon^{2}.
\end{align*}
Recall $E(t)$ in \eqref{5.17A}. Then we employ \eqref{4.16}, \eqref{1.7} and \eqref{3.1} to obtain
\begin{align*}
|E(t)|&\leq C\varepsilon\{\|\nabla_x(\frac{\widetilde{\theta}}{\theta})\|\|f\|+
\|\nabla_x\widetilde{u}\|\|f\|+\|\nabla_x\bar{u}\|_{L^{\infty}}\|\widetilde{\theta}\|\|f\|\}
\\
&\leq C(\eta_{0}+\varepsilon)\varepsilon^{2}.
\end{align*}
Integrating \eqref{4.20} with respect to $t$ and using \eqref{3.5}, \eqref{4.8} and
the above estimate on $E(t)$, we have
\begin{align}
\label{5.13A}
&\|(\widetilde{\rho},\widetilde{u},\widetilde{\theta})(t)\|^{2}+c\varepsilon\int^{t}_0
\|\nabla_x(\widetilde{u},\widetilde{\theta})(s)\|^{2}\,ds
\nonumber\\
&\leq C\varepsilon\int^{t}_0\|f(s)\|_{\sigma}^{2}\,ds+C(1+t)(\eta_{0}+\varepsilon)\varepsilon^{2}.
\end{align}

\begin{remark}
Instead of the above proof based on the entropy-entropy flux method, one can also simply use the usual $L^2$ energy method similar to the one in the
space derivative estimate on the fluid part in Lemma \ref{lem.sdefpN1} in order
to get the same estimate as \eqref{5.13A}, since the estimate is restricted to only the finite time interval $[0,\tau]$.
\end{remark}

It should be noted that there are no dissipation terms for density function  in \eqref{5.13A}. For this, we turn to the Euler-type equations \eqref{2.10}. Recall that the difference system  \eqref{2.22} is derived from subtraction of \eqref{2.10} and \eqref{1.4}. We then take the inner product of the second equation of \eqref{2.22} with $\nabla_{x}\widetilde{\rho}$ to get
\begin{align*}
(\frac{2\bar{\theta}}{3\bar{\rho}}\nabla_{x}\widetilde{\rho},\nabla_{x}\widetilde{\rho})
=&-(\partial_{t}\widetilde{u},\nabla_{x}\widetilde{\rho})-(u\cdot\nabla_{x}\widetilde{u}
+\frac{2}{3}\nabla_{x}\widetilde{\theta}
,\nabla_{x}\widetilde{\rho})
\\
&-(\widetilde{u}\cdot\nabla_{x}\bar{u}+\frac{2}{3}(\frac{\theta}{\rho}-\frac{\bar{\theta}}{\bar{\rho}})\nabla_{x}\rho,\nabla_{x}\widetilde{\rho})
-(\frac{1}{\rho}\int_{\mathbb{R}^{3}} v\otimes v\cdot\nabla_{x} G\,dv,\nabla_{x}\widetilde{\rho}).
\end{align*}
Using the integration by parts and the first equation of \eqref{2.22}, one gets
\begin{align*}
-(\partial_{t}\widetilde{u},\nabla_{x}\widetilde{\rho})=&-\frac{d}{dt}(\widetilde{u},\nabla_{x}\widetilde{\rho})-(\nabla_{x}\widetilde{u},
\partial_t\widetilde{\rho})
\\
=&-\frac{d}{dt}(\widetilde{u},\nabla_{x}\widetilde{\rho})+(\nabla_{x}\widetilde{u},
u\cdot\nabla_{x}\widetilde{\rho}+\bar{\rho}\nabla_{x}\cdot\widetilde{u}
+\widetilde{u}\cdot\nabla_{x}\bar{\rho}
+\widetilde{\rho}\nabla_{x}\cdot u)
\\
\leq&-\frac{d}{dt}(\widetilde{u},\nabla_{x}\widetilde{\rho})+C\eta\|\nabla_{x}\widetilde{\rho}\|^2
+C_\eta\|\nabla_{x}\widetilde{u}\|^2+
C(\eta_0+\varepsilon)\varepsilon^2,
\end{align*}
where in the last inequality we have used the Cauchy-Schwarz inequality, \eqref{3.6}, \eqref{1.7} and \eqref{3.1}.
Likewise, we have
\begin{align*}
&|(u\cdot\nabla_{x}\widetilde{u}
+\frac{2}{3}\nabla_{x}\widetilde{\theta}
,\nabla_{x}\widetilde{\rho})|+|(\widetilde{u}\cdot\nabla_{x}\bar{u}+\frac{2}{3}(\frac{\theta}{\rho}-\frac{\bar{\theta}}{\bar{\rho}})\nabla_{x}\rho,\nabla_{x}\widetilde{\rho})|
\\
&\leq C\eta\|\nabla_{x}\widetilde{\rho}\|^2
+C_\eta(\|\nabla_{x}\widetilde{u}\|^2
+\|\nabla_{x}\widetilde{\theta}\|^2)
+C_\eta(\eta_0+\varepsilon)\varepsilon^2.
\end{align*}
On the other hand, it holds by the fact $G=\overline{G}+\sqrt{\mu}f$ and \eqref{5.5} that
\begin{align*}
|(\frac{1}{\rho}\int_{\mathbb{R}^{3}} v\otimes v\cdot\nabla_{x} G\,dv,\nabla_{x}\widetilde{\rho})|
\leq C\eta\|\nabla_{x}\widetilde{\rho}\|^2
+C_\eta\|\nabla_{x}f\|_{\sigma}^{2}
+C_\eta(\eta_0+\varepsilon)\varepsilon^2.
\end{align*}
Collecting the above estimates and taking a small constant $\eta>0$, one has
\begin{align}
\label{4.21}
\frac{d}{dt}\varepsilon(\widetilde{u},\nabla_{x}\widetilde{\rho})+c\varepsilon\|\nabla_{x}\widetilde{\rho}\|^2
\leq C\varepsilon(\|\nabla_{x}\widetilde{u}\|^2
+\|\nabla_{x}\widetilde{\theta}\|^2+\|\nabla_{x}f\|_{\sigma}^{2})+C(\eta_0+\varepsilon)\varepsilon^3.
\end{align}
Integrating \eqref{4.21} with respect to $t$ and using the estimate
$$
\varepsilon|(\widetilde{u},\nabla_{x}\widetilde{\rho})|\leq C\varepsilon\|\widetilde{u}\|\|\nabla_{x}\widetilde{\rho}\|
\leq C\varepsilon^3,
$$
as well as \eqref{3.5}, we have
\begin{align}
\label{5.15A}
\varepsilon\int^{t}_0\|\nabla_{x}\widetilde{\rho}(s)\|^2\,ds
\leq C\varepsilon\int^{t}_0\{
\|\nabla_x(\widetilde{u},\widetilde{\theta})(s)\|^{2}+\|\nabla_{x}f\|_{\sigma}^{2}\}\,ds
+C(1+t)(\eta_{0}+\varepsilon)\varepsilon^{2}.
\end{align}

In summary, a suitable linear combination of 
\eqref{5.13A} and \eqref{5.15A} gives the desired estimate \eqref{4.22}.
This completes the proof of Lemma \ref{lem.zefp} for the zero-order energy estimates on the macroscopic component $(\widetilde{\rho},\widetilde{u},\widetilde{\theta})$.
\end{proof}

For deducing \eqref{4.20} in the proof of Lemma \ref{lem.zefp}  above, we have used the following estimate on the basis of the Burnett functions.

\begin{lemma}\label{lem.I1dx}
Recall \eqref{4.12} for $I_1$. It holds that
\begin{align}\label{lem.I1dx.bdd}
\int_{\mathbb{R}^{3}} I_{1}\,dx
\leq&
\frac{d}{dt}E(t)+C\varepsilon\|f\|_{\sigma}^{2}+C(\eta_{0}+\varepsilon)\varepsilon^{2}.
\end{align}
Here $E(t)$ is given by
\begin{align}\label{5.17A}
E(t)=&\frac{3}{2}\sum^{3}_{i=1}\int_{\mathbb{R}^{3}}\int_{\mathbb{R}^{3}}\partial_{x_{i}}(\frac{\widetilde{\theta}}{\theta})
(R\theta)^{\frac{3}{2}}A_{i}(\frac{v-u}{\sqrt{R\theta}})\frac{\varepsilon \sqrt{\mu}}{M}f\,dv\,dx
\notag\\
&+\frac{3}{2}\sum^{3}_{i,j=1}\int_{\mathbb{R}^{3}}\int_{\mathbb{R}^{3}}\partial_{x_j}\widetilde{u}_{i}R\bar{\theta} B_{ij}(\frac{v-u}{\sqrt{R\theta}})
\frac{\varepsilon\sqrt{\mu}}{M}f\,dv\,dx
\notag\\
&-\frac{3}{2}\sum^{3}_{i,j=1}\int_{\mathbb{R}^{3}}\int_{\mathbb{R}^{3}}\partial_{x_j}\bar{u}_{i}R\widetilde{\theta} B_{ij}(\frac{v-u}{\sqrt{R\theta}})
\frac{\varepsilon\sqrt{\mu}}{M}f\,dv\,dx.
\end{align}
\end{lemma}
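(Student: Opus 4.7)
The plan is to reduce $\int_{\mathbb{R}^3}I_1\,dx$ to a pure time derivative $\frac{d}{dt}E(t)$ plus remainders absorbable into the right-hand side of \eqref{lem.I1dx.bdd}. First I would discard the two $\nabla_x\!\cdot$ terms in \eqref{4.12}, which integrate to zero over $\mathbb{R}^3$. For the three surviving terms, self-adjointness of $L_M$ in the weighted inner product $\langle\cdot,\cdot/M\rangle$ together with $L_M^{-1}\Theta\in\mathcal{N}^\perp$ allows me to pass to the Burnett basis: a direct expansion with $\xi=(v-u)/\sqrt{R\theta}$ shows $P_1[v_iv_jM]=R\theta\,\hat{B}_{ij}(\xi)M$ and $P_1[(\tfrac{1}{2}|v|^2-v\cdot u)v_jM]=(R\theta)^{3/2}\hat{A}_j(\xi)M$, so using \eqref{5.2} one obtains
\begin{align*}
\int_{\mathbb{R}^3} v_iv_j L_M^{-1}\Theta\,dv&=R\theta\int_{\mathbb{R}^3}\frac{B_{ij}(\xi)\,\Theta}{M}\,dv,\\
\int_{\mathbb{R}^3}\bigl(\tfrac{1}{2}|v|^2-v\cdot u\bigr)v_j L_M^{-1}\Theta\,dv&=(R\theta)^{3/2}\int_{\mathbb{R}^3}\frac{A_j(\xi)\,\Theta}{M}\,dv.
\end{align*}
The factors $R\theta$ combine with the outer prefactors $\bar\theta/\theta$ and $-\widetilde\theta/\theta$ to produce exactly the weights $R\bar\theta$ and $-R\widetilde\theta$ appearing in \eqref{5.17A}.

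Next I would substitute $\Theta=\varepsilon\partial_tG+\varepsilon P_1(v\cdot\nabla_xG)-Q(G,G)$ from \eqref{2.13} and $G=\overline G+\sqrt\mu f$, so that the dominant piece is $\varepsilon\sqrt\mu\,\partial_t f$. Applying the Leibniz rule in $t$ to each of the three surviving inner products produces precisely the three terms of $\frac{d}{dt}E(t)$ in \eqref{5.17A}, leaving remainders in which $\partial_t$ falls either on the outer fluid coefficient $R\bar\theta\partial_{x_j}\widetilde u_i$, on $\partial_{x_i}(\widetilde\theta/\theta)$, on $R\widetilde\theta\partial_{x_j}\bar u_i$, or on the inner weights $A_j(\xi)/M$ and $B_{ij}(\xi)/M$. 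Every such time derivative can be expressed in terms of spatial derivatives of fluid quantities via the Euler-type system \eqref{2.22} for $\partial_t(\bar\rho,\bar u,\bar\theta)$ and via Lemma \ref{lem5.10} for $\partial_t(\widetilde\rho,\widetilde u,\widetilde\theta)$, yielding the pointwise control $\|\partial_t(\widetilde\rho,\widetilde u,\widetilde\theta)\|\leq C(\|\nabla_x(\widetilde\rho,\widetilde u,\widetilde\theta)\|+\|\langle v\rangle^{-1/2}\nabla_x f\|)+C(\eta_0+\varepsilon^{1/2})\varepsilon$. Cauchy--Schwarz then bounds every remainder by $\eta\,\varepsilon\|\nabla_x(\widetilde u,\widetilde\theta)\|^2+C_\eta\varepsilon\|f\|_\sigma^2+C(\eta_0+\varepsilon)\varepsilon^2$, where the small constant $\eta>0$ is absorbed on the left of \eqref{4.20}.

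The remaining contributions of $\Theta$ are pure errors. The $\varepsilon\partial_t\overline G$ piece is $O(\eta_0\varepsilon^2)$ by Lemma \ref{lem5.4}. For $\varepsilon P_1(v\cdot\nabla_xG)$, since $A_j(\xi)/M$ and $B_{ij}(\xi)/M$ lie in $\mathcal{N}^\perp$ one drops $P_1$; the $\overline G$ part is again $O(\eta_0\varepsilon^2)$ via Lemma \ref{lem5.4}, while the $\sqrt\mu f$ part is handled by integrating by parts in $x$ so that the surviving $\nabla_x$ moves from $f$ onto the fluid factors. The trilinear $Q(G,G)$ contribution is estimated by Lemma \ref{lem5.6} combined with the decomposition $G=\overline G+\sqrt\mu f$, Lemma \ref{lem5.4}, and the a priori bound \eqref{3.1}, producing an $O((\eta_0+\varepsilon^{1/2})\varepsilon^2)$ bound. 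The main obstacle throughout is the bookkeeping in the previous paragraph: a naive estimate on the Leibniz remainder would seem to require the second-order quantity $\partial_{x_j}\partial_t\widetilde u_i$, which is not controlled at the zero-order level; the fix is an additional integration by parts in $x$ that relocates the surviving $\partial_{x_j}$ onto the velocity integral, so that only $\|\partial_t(\widetilde\rho,\widetilde u,\widetilde\theta)\|$ remains, and this quantity is exactly what Lemma \ref{lem5.10} controls.
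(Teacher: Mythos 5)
Your proposal is correct and follows essentially the same route as the paper: discard the two divergence terms, pass to the Burnett representation via the self-adjointness identities \eqref{4.13}--\eqref{4.14}, substitute $\Theta$ with $G=\overline G+\sqrt{\mu}f$, integrate by parts in $t$ on the $\varepsilon\sqrt{\mu}\,\partial_t f$ contribution to extract $\tfrac{d}{dt}E(t)$, and control the remainders and the $\varepsilon P_1(v\cdot\nabla_xG)$ and $Q(G,G)$ pieces exactly as the paper does via \eqref{4.16}, Lemma \ref{lem5.4}, Lemma \ref{lem5.10} and the trilinear estimate. The one ``obstacle'' you flag is not actually one: \eqref{5.28a} holds for all $|\alpha|\le N-2$ with $N\ge3$, so the mixed derivative $\|\partial_{x_j}\partial_t\widetilde{u}_i\|\le C\varepsilon$ is already controlled and the paper bounds the Leibniz remainder directly, although your extra integration by parts in $x$ is an equally valid (if unnecessary) workaround.
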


\begin{proof}
In order to estimate $\int_{\mathbb{R}^{3}} I_{1}\,dx$,
we only need to estimate the last three terms in \eqref{4.12} since other terms vanish after integration.
First note that the following identities hold: 
\begin{align}
\label{4.13}
\int_{\mathbb{R}^{3}}& (\frac{1}{2}v_{i}|v|^{2}-v_{i}u\cdot v)L^{-1}_{M}\Theta \,dv=
\int_{\mathbb{R}^{3}} L^{-1}_{M}\{P_{1}(\frac{1}{2}v_{i}|v|^{2}-v_{i}u\cdot v)M\}\frac{\Theta}{M}\,dv
\nonumber\\
=&\int_{\mathbb{R}^{3}} L^{-1}_{M}\{(R\theta)^{\frac{3}{2}}\hat{A}_{i}(\frac{v-u}{\sqrt{R\theta}})M\}\frac{\Theta}{M}\,dv
=(R\theta)^{\frac{3}{2}}\int_{\mathbb{R}^{3}}A_{i}(\frac{v-u}{\sqrt{R\theta}})\frac{\Theta}{M}\,dv,
\end{align}
and
\begin{align}
\label{4.14}
\int_{\mathbb{R}^{3}}v_{i}v_{j}L^{-1}_{M}\Theta \,dv=&
\int_{\mathbb{R}^{3}} L^{-1}_{M}\{P_{1}(v_{i}v_{j}M)\}\frac{\Theta}{M}\,dv
\nonumber\\
=&\int_{\mathbb{R}^{3}} L^{-1}_{M}\{R\theta\hat{B}_{ij}(\frac{v-u}{\sqrt{R\theta}})M\}\frac{\Theta}{M}\,dv
=R\theta\int_{\mathbb{R}^{3}}B_{ij}(\frac{v-u}{\sqrt{R\theta}})\frac{\Theta}{M}\,dv,
\end{align}
for $i,j=1,2,3$, in term of the self-adjoint property of $L^{-1}_{M}$, \eqref{2.5}, \eqref{5.1} and \eqref{5.2}.
Hence, the third term in \eqref{4.12} can be rewritten as
\begin{align}
\label{4.15}
&\int_{\mathbb{R}^{3}}\frac{3}{2}\nabla_{x}(\frac{\widetilde{\theta}}{\theta})
\cdot\int_{\mathbb{R}^{3}}(\frac{1}{2}|v|^{2}-v\cdot u)vL^{-1}_{M}\Theta\,dv\,dx
\nonumber\\ &
=\sum_{i=1}^{3}\int_{\mathbb{R}^{3}}\Big\{\frac{3}{2}\partial_{x_{i}}(\frac{\widetilde{\theta}}{\theta}) (R\theta)^{\frac{3}{2}}\int_{\mathbb{R}^{3}}A_{i}(\frac{v-u}{\sqrt{R\theta}})\frac{\Theta}{M}\,dv\Big\}\,dx.
\end{align}
Before computing \eqref{4.15}, we give the following desired estimate that, for any multi-index $\beta$ and $k\geq 0$,
\begin{equation}
\label{4.16}
\int_{\mathbb{R}^{3}}\frac{|\langle v\rangle^{k}\sqrt{\mu}
\partial_{\beta}A_{i}(\frac{v-u}{\sqrt{R\theta}})|^{2}}{M^{2}}\,dv
+\int_{\mathbb{R}^{3}}\frac{|\langle v\rangle^{k}
\sqrt{\mu}\partial_{\beta}B_{ij}(\frac{v-u}{\sqrt{R\theta}})|^{2}}{M^{2}}\,dv\leq C,
\end{equation}
in terms of \eqref{5.3} and \eqref{3.6}.
Recall $\Theta$ in \eqref{2.13} given by
\begin{equation}
\label{4.17}
\Theta:=\varepsilon \partial_tG+\varepsilon P_{1}(v\cdot\nabla_{x}G)-Q(G,G).
\end{equation}
We now estimate \eqref{4.15} associated  with \eqref{4.17}.
For the first term of \eqref{4.17}, noting that $G=\overline{G}+\sqrt{\mu}f$,
we use \eqref{4.16} and the similar arguments as \eqref{5.5} to obtain
\begin{align*}
&\int_{\mathbb{R}^{3}}\Big\{\frac{3}{2}\partial_{x_{i}}(\frac{\widetilde{\theta}}{\theta})
(R\theta)^{\frac{3}{2}}\int_{\mathbb{R}^{3}}A_{i}(\frac{v-u}{\sqrt{R\theta}})
\frac{\varepsilon \partial_t\overline{G}}{M}\,dv\Big\}\,dx
\nonumber\\
&\leq C\varepsilon^{2}\int_{\mathbb{R}^{3}}|\partial_{x_{i}}(\frac{\widetilde{\theta}}{\theta})|
\{|(\nabla_{x}\partial_t\bar{u},\nabla_{x}\partial_t\bar{\theta})|
+|(\nabla_{x}\bar{u},\nabla_{x}\bar{\theta})|\cdot|(\partial_tu,\partial_t\theta)|\}\,dx
\nonumber\\
&\leq C\varepsilon^{2}(\|\partial_{x_{i}}\widetilde{\theta}\|
+\|\widetilde{\theta}\partial_{x_{i}}\theta\|)
\{\|(\nabla_{x}\partial_t\bar{u},\nabla_{x}\partial_t\bar{\theta})\|
+\|(\nabla_{x}\bar{u},\nabla_{x}\bar{\theta})\|_{L^{\infty}}\|(\partial_tu,\partial_t\theta)\|\}
\nonumber\\
&\leq C\eta_{0}\varepsilon^{3},
\end{align*}
where in the last inequality we have used \eqref{3.1} and the fact
\begin{equation}
\label{4.18}
\|\partial_t(u,\theta)\|\leq C,\quad \|\partial_t(\bar{\rho},\bar{u},\bar{\theta})\|_{H^{k}}\leq C\eta_{0}, \quad \mbox{for}\quad k\geq 3,
\end{equation}
due to \eqref{5.28a},  \eqref{1.7} and \eqref{1.4}.
We use an integration by parts about $t$ to get
\begin{align*}
&\int_{\mathbb{R}^{3}}\Big\{\frac{3}{2}\partial_{x_{i}}(\frac{\widetilde{\theta}}{\theta})
(R\theta)^{\frac{3}{2}}\int_{\mathbb{R}^{3}}A_{i}(\frac{v-u}{\sqrt{R\theta}})
\frac{\varepsilon \sqrt{\mu}\partial_tf}{M}\,dv\Big\}\,dx
\nonumber\\
=&\frac{d}{dt}\int_{\mathbb{R}^{3}}\int_{\mathbb{R}^{3}}\frac{3}{2}\partial_{x_{i}}(\frac{\widetilde{\theta}}{\theta})
(R\theta)^{\frac{3}{2}}A_{i}(\frac{v-u}{\sqrt{R\theta}})
\frac{\varepsilon \sqrt{\mu}}{M}f\,dv\,dx
\nonumber\\
&-\int_{\mathbb{R}^{3}}\int_{\mathbb{R}^{3}}\partial_t\Big\{\frac{3}{2}\partial_{x_{i}}(\frac{\widetilde{\theta}}{\theta})
(R\theta)^{\frac{3}{2}}A_{i}(\frac{v-u}{\sqrt{R\theta}})
\frac{\varepsilon \sqrt{\mu}}{M}\Big\}f\,dv\,dx
\nonumber\\
\leq&\frac{d}{dt}\int_{\mathbb{R}^{3}}\int_{\mathbb{R}^{3}}
\Big\{\frac{3}{2}\partial_{x_{i}}(\frac{\widetilde{\theta}}{\theta})
(R\theta)^{\frac{3}{2}}A_{i}(\frac{v-u}{\sqrt{R\theta}})
\frac{\varepsilon \sqrt{\mu}}{M}f\Big\}\,dv\,dx
+C(\eta_{0}+\varepsilon)\varepsilon^{2},
\end{align*}
with the help of the following estimates
\begin{align*}
&-\int_{\mathbb{R}^{3}}\int_{\mathbb{R}^{3}}\partial_t\Big\{\frac{3}{2}\partial_{x_{i}}(\frac{\widetilde{\theta}}{\theta})
(R\theta)^{\frac{3}{2}}A_{i}(\frac{v-u}{\sqrt{R\theta}})
\frac{\varepsilon \sqrt{\mu}}{M}\Big\}f\,dv\,dx
\\
&\leq C\varepsilon\|\partial_{x_{i}}\partial_t(\frac{\widetilde{\theta}}{\theta})\|\|f\|
+C\varepsilon\|\partial_{x_{i}}(\frac{\widetilde{\theta}}{\theta})\|
\||f|_2\|_{L_x^{\infty}}\|\partial_t(\rho,u,\theta)\|
\\
&\leq C(\eta_{0}+\varepsilon)\varepsilon^{2}.
\end{align*}
Collecting the above estimates, we obtain
\begin{align*}
&\sum^{3}_{i=1}\int_{\mathbb{R}^{3}}\Big\{\frac{3}{2}\partial_{x_{i}}(\frac{\widetilde{\theta}}{\theta})
(R\theta)^{\frac{3}{2}}\int_{\mathbb{R}^{3}}A_{i}(\frac{v-u}{\sqrt{R\theta}})
\frac{\varepsilon \partial_tG}{M}\,dv\Big\}\,dx
\\
&\leq\frac{d}{dt}\sum^{3}_{i=1}\int_{\mathbb{R}^{3}}\int_{\mathbb{R}^{3}}
\Big\{\frac{3}{2}\partial_{x_{i}}(\frac{\widetilde{\theta}}{\theta})
(R\theta)^{\frac{3}{2}}A_{i}(\frac{v-u}{\sqrt{R\theta}})
\frac{\varepsilon \sqrt{\mu}}{M}f\Big\}\,dv\,dx
+C(\eta_{0}+\varepsilon)\varepsilon^{2}.
\end{align*}
Similarly, it is straightforward to check that
\begin{align*}
&\sum^{3}_{i=1}\int_{\mathbb{R}^{3}}\Big\{\frac{3}{2}\partial_{x_{i}}(\frac{\widetilde{\theta}}{\theta})
(R\theta)^{\frac{3}{2}}\int_{\mathbb{R}^{3}}A_{i}(\frac{v-u}{\sqrt{R\theta}})
\frac{ \varepsilon P_{1}(v\cdot\nabla_{x}G)}{M}\,dv\Big\}\,dx
\\
&=\sum^{3}_{i=1}\int_{\mathbb{R}^{3}}\Big\{\frac{3}{2}\partial_{x_{i}}(\frac{\widetilde{\theta}}{\theta})
(R\theta)^{\frac{3}{2}}\int_{\mathbb{R}^{3}}A_{i}(\frac{v-u}{\sqrt{R\theta}})
\frac{ \varepsilon [v\cdot\nabla_{x}G-P_{0}(v\cdot\nabla_{x}G)]}{M}\,dv\Big\}\,dx
\\
&\leq C(\eta_{0}+\varepsilon)\varepsilon^{2}.
\end{align*}
For the third term of \eqref{4.17}, applying
\eqref{2.19}, \eqref{5.11} and \eqref{4.16} yields that
\begin{align*}
&\sum^{3}_{i=1}|\int_{\mathbb{R}^{3}}\Big\{\frac{3}{2}\partial_{x_{i}}(\frac{\widetilde{\theta}}{\theta})
(R\theta)^{\frac{3}{2}}\int_{\mathbb{R}^{3}}A_{i}(\frac{v-u}{\sqrt{R\theta}})
\frac{Q(G,G)}{M}\,dv\Big\}\,dx|
\\
&=\sum^{3}_{i=1}|\int_{\mathbb{R}^{3}}\Big\{\frac{3}{2}\partial_{x_{i}}(\frac{\widetilde{\theta}}{\theta})
(R\theta)^{\frac{3}{2}}\int_{\mathbb{R}^{3}}A_{i}(\frac{v-u}{\sqrt{R\theta}})
\frac{\sqrt{\mu}}{M}\Gamma(\frac{G}{\sqrt{\mu}},\frac{G}{\sqrt{\mu}})\,dv\Big\}\,dx|
\\
&\leq C\int_{\mathbb{R}^{3}}|\frac{G}{\sqrt{\mu}}|_{2}
|\frac{G}{\sqrt{\mu}}|_{\sigma}|\nabla_{x}(\frac{\widetilde{\theta}}{\theta})|\,dx
\\
&\leq C\sup_{x\in\mathbb{R}^3}(|\frac{\overline{G}}{\sqrt{\mu}}|_{2}
+|f|_{2})(\|\frac{\overline{G}}{\sqrt{\mu}}\|_{\sigma}+\|f\|_{\sigma})
\|\nabla_{x}(\frac{\widetilde{\theta}}{\theta})\|
\\
&\leq C\varepsilon\|f\|_{\sigma}^{2}+C(\eta_{0}+\varepsilon)\varepsilon^{2},
\end{align*}
where in the last two inequalities we have used $G=\overline{G}+\sqrt{\mu}f$, Lemma \ref{lem5.4}, 
the Cauchy-Schwarz inequality, \eqref{1.7} and \eqref{3.1}.
Recalling \eqref{4.17} and plugging the estimates above into \eqref{4.15} leads to
\begin{align}
\label{4.19}
&\frac{3}{2}\int_{\mathbb{R}^{3}}\nabla_{x}(\frac{\widetilde{\theta}}{\theta})
\cdot\int_{\mathbb{R}^{3}}(\frac{1}{2}|v|^{2}-v\cdot u)vL^{-1}_{M}\Theta\,dv\,dx
\nonumber\\
\leq& \frac{d}{dt}\frac{3}{2}\sum^{3}_{i=1}\int_{\mathbb{R}^{3}}\int_{\mathbb{R}^{3}}\partial_{x_{i}}(\frac{\widetilde{\theta}}{\theta})
(R\theta)^{\frac{3}{2}}A_{i}(\frac{v-u}{\sqrt{R\theta}})
\frac{\varepsilon \sqrt{\mu}}{M}f\,dv\,dx
+C\varepsilon\|f\|_{\sigma}^{2}+C(\eta_{0}+\varepsilon)\varepsilon^{2}.
\end{align}
The estimation of the last two terms in \eqref{4.12} can be done similarly as \eqref{4.19} because they have the same structure.
It follows that
\begin{align*}
&\frac{3}{2}\sum^{3}_{i,j=1}\int_{\mathbb{R}^{3}}\frac{\bar{\theta}}{\theta}\partial_{x_j}\widetilde{u}_{i}\int_{\mathbb{R}^{3}} v_{i}v_jL^{-1}_{M}\Theta\,dv\,dx
=\frac{3}{2}\sum^{3}_{i,j=1}\int_{\mathbb{R}^{3}}\int_{\mathbb{R}^{3}}\frac{\bar{\theta}}{\theta}\partial_{x_j}\widetilde{u}_{i}R\theta B_{ij}(\frac{v-u}{\sqrt{R\theta}})\frac{\Theta}{M}\,dv\,dx
\\
&\leq\frac{d}{dt}\frac{3}{2}\sum^{3}_{i,j=1}\int_{\mathbb{R}^{3}}\int_{\mathbb{R}^{3}}\partial_{x_j}\widetilde{u}_{i}R\bar{\theta} B_{ij}(\frac{v-u}{\sqrt{R\theta}})
\frac{\varepsilon\sqrt{\mu}}{M}f\,dv\,dx
+C\varepsilon\|f\|_{\sigma}^{2}+C(\eta_{0}+\varepsilon)\varepsilon^{2},
\end{align*}
and
\begin{align*}
&-\frac{3}{2}\sum^{3}_{i,j=1}\int_{\mathbb{R}^{3}}\frac{\widetilde{\theta}}{\theta}\partial_{x_j}\bar{u}_{i}\int_{\mathbb{R}^{3}} v_{i}v_jL^{-1}_{M}\Theta\,dv\,dx
=-\frac{3}{2}\sum^{3}_{i,j=1}\int_{\mathbb{R}^{3}}\int_{\mathbb{R}^{3}}\frac{\widetilde{\theta}}{\theta}\partial_{x_j}\bar{u}_{i}R\theta B_{ij}(\frac{v-u}{\sqrt{R\theta}})\frac{\Theta}{M}\,dv\,dx
\\
&\leq-\frac{d}{dt}\frac{3}{2}\sum^{3}_{i,j=1}\int_{\mathbb{R}^{3}}\int_{\mathbb{R}^{3}}\partial_{x_j}\bar{u}_{i}R\widetilde{\theta} B_{ij}(\frac{v-u}{\sqrt{R\theta}})
\frac{\varepsilon\sqrt{\mu}}{M}f\,dv\,dx
+C\varepsilon\|f\|_{\sigma}^{2}+C(\eta_{0}+\varepsilon)\varepsilon^{2}.
\end{align*}
With the above two estimates and \eqref{4.19} in hand, the desired estimate \eqref{lem.I1dx.bdd} follows. This then completes the proof of Lemma \ref{lem.I1dx}.
\end{proof}

\subsection{Zero-order estimate on non-fluid part}
Next we make use of
the microscopic equation \eqref{2.21} to derive the zero-order energy estimates of the non-fluid part $f$ by using the properties of the linearized
operator. We should emphasize that the fact that $f\in (\ker\mathcal{L})^{\perp}$ is purely microscopic is crucial in the estimates.

\begin{lemma}\label{lem.zenfp}
It holds that
\begin{align}
\label{4.25}
&\|f(t)\|^2+c_1\frac{1}{\varepsilon}\int_0^t\|f(s)\|_\sigma^2\,ds \notag\\
&\leq C\varepsilon\int_0^t(\|(\nabla_{x}\widetilde{u},\nabla_{x}\widetilde{\theta})(s)\|^{2}+\|\nabla_{x}f(s)\|_{\sigma}^{2})\,ds+C(1+t)(\eta_{0}+\varepsilon)\varepsilon^{2}.
\end{align}
\end{lemma}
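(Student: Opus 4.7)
\noindent\emph{Proof plan.} The plan is to test the microscopic equation \eqref{2.21} against $f$ in $L^{2}_{x,v}$ and integrate over $[0,t]$. The streaming contribution $(v\cdot\nabla_{x}f,f)$ vanishes after integration by parts in $x$, so the left-hand side produces $\tfrac12\|f(t)\|^{2}-\tfrac12\|f(0)\|^{2}$; the initial datum contribution $\|f(0)\|^{2}\leq C\eta_{0}^{2}\varepsilon^{2}$ is absorbed into the final error via Lemma \ref{lem5.4}, using the identity $f(0,x,v)=-\overline{G}(0,x,v)/\sqrt{\mu}$ established in the remark following Theorem \ref{thm1.1}.

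The decisive structural step is the full coercivity of $\mathcal L$ coming from the identity $\mathbf{P}f=0$ emphasised in Remark \ref{rm3.3}: since $P_{0}G=P_{0}\overline{G}=0$ and $\ker\mathcal L=\mathrm{span}\{\sqrt{\mu},v_{i}\sqrt{\mu},|v|^{2}\sqrt{\mu}\}$, every collision invariant tested against $\sqrt{\mu}f=G-\overline{G}$ vanishes, so \eqref{2.22a} with $|\alpha|=0$ yields $-\tfrac{1}{\varepsilon}(\mathcal Lf,f)\geq c_{1}\tfrac{1}{\varepsilon}\|f\|_{\sigma}^{2}$. The linearised perturbations $\tfrac{1}{\varepsilon}(\Gamma(\tfrac{M-\mu}{\sqrt{\mu}},f)+\Gamma(f,\tfrac{M-\mu}{\sqrt{\mu}}),f)$ are absorbed by the $|\alpha|=0$ case of Lemma \ref{lem5.7} (bound \eqref{5.14}), and the genuinely nonlinear term $\tfrac{1}{\varepsilon}(\Gamma(G/\sqrt{\mu},G/\sqrt{\mu}),f)$ by the $|\alpha|=0$ case of Lemma \ref{lem5.8} (bound \eqref{5.21}); both produce $\eta\tfrac{1}{\varepsilon}\|f\|_{\sigma}^{2}+C_{\eta}(\eta_{0}+\varepsilon^{1/2})\mathcal D_{N}(t)+C_{\eta}(\eta_{0}+\varepsilon^{1/2})\varepsilon^{2}$. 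After integration in time, the $\mathcal D_{N}$ contribution is controlled using the a priori bound \eqref{3.1}, feeding into the $C(1+t)(\eta_{0}+\varepsilon)\varepsilon^{2}$ term on the right of \eqref{4.25}.

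The remaining source terms are handled by Cauchy--Schwarz. The corrector terms $(P_{1}(v\cdot\nabla_{x}\overline{G})/\sqrt{\mu},f)$ and $(\partial_{t}\overline{G}/\sqrt{\mu},f)$ are bounded using the weighted estimates \eqref{5.5} of Lemma \ref{lem5.4}; the time derivative additionally invokes control of $\partial_{t}(\rho,u,\theta)$ from Lemma \ref{lem5.10} and of $\partial_{t}(\bar\rho,\bar u,\bar\theta)$ from \eqref{1.4} together with \eqref{1.7}, giving an $O(\eta_{0}\varepsilon^{2})$ net contribution. The linear inhomogeneity $-\tfrac{1}{\sqrt{\mu}}P_{1}\{v\cdot(\tfrac{|v-u|^{2}\nabla_{x}\widetilde{\theta}}{2R\theta^{2}}+\tfrac{(v-u)\cdot\nabla_{x}\widetilde{u}}{R\theta})M\}$ is bounded by a direct Cauchy--Schwarz using the Gaussian smallness $M/\sqrt{\mu}\leq C\mu^{s}$ for some $s>0$ coming from \eqref{3.6}, producing $\tfrac{\eta}{\varepsilon}\|f\|_{\sigma}^{2}+C_{\eta}\varepsilon\|\nabla_{x}(\widetilde{u},\widetilde{\theta})\|^{2}$, which furnishes the $\|\nabla_{x}(\widetilde{u},\widetilde{\theta})\|^{2}$ factor on the right of \eqref{4.25}.

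The main technical obstacle is the projection term $(P_{0}[v\cdot\nabla_{x}(\sqrt{\mu}f)]/\sqrt{\mu},f)$, because $P_{0}$ is defined relative to the local Maxwellian $M$ rather than to $\mu$, so the convenient identity $\mathbf{P}f=0$ does not directly eliminate it. Expanding $P_{0}g=\sum_{i=0}^{4}\langle g,\chi_{i}/M\rangle\chi_{i}$, noting that $\chi_{i}/\sqrt{\mu}$ inherits Gaussian decay from \eqref{3.6} and that $\langle v\sqrt{\mu}\cdot\nabla_{x}f,\chi_{i}/M\rangle$ controls $|\nabla_{x}f|_{\sigma}$ via a $\langle v\rangle^{1/2}$-weighted Cauchy--Schwarz combined with the dissipation equivalence \eqref{2.26}, one obtains $|(P_{0}[v\cdot\nabla_{x}(\sqrt{\mu}f)]/\sqrt{\mu},f)|\leq C\|\nabla_{x}f\|_{\sigma}\|f\|_{\sigma}\leq \tfrac{\eta}{\varepsilon}\|f\|_{\sigma}^{2}+C_{\eta}\varepsilon\|\nabla_{x}f\|_{\sigma}^{2}$, which supplies the $C\varepsilon\|\nabla_{x}f\|_{\sigma}^{2}$ piece in \eqref{4.25}. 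Collecting all contributions, choosing $\eta>0$ sufficiently small so that every $\tfrac{\eta}{\varepsilon}\|f\|_{\sigma}^{2}$ is absorbed into the coercive $c_{1}\tfrac{1}{\varepsilon}\|f\|_{\sigma}^{2}$, and integrating in $t$ produces \eqref{4.25}.
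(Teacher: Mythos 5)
Your overall architecture is the paper's: test \eqref{2.21} against $f$, use $f\in(\ker\mathcal{L})^{\perp}$ and \eqref{2.22a} for the coercive term $c_{1}\varepsilon^{-1}\|f\|_{\sigma}^{2}$, handle the $P_{0}$ transport term and the two inhomogeneities by $\langle v\rangle^{\pm 1/2}$-weighted Cauchy--Schwarz to produce the $C\varepsilon\|\nabla_{x}f\|_{\sigma}^{2}$ and $C\varepsilon\|\nabla_{x}(\widetilde{u},\widetilde{\theta})\|^{2}$ terms, and treat the $\overline{G}$ terms via Lemma \ref{lem5.4}. All of that is sound.

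The gap is in your treatment of the collision terms. By invoking the general bounds \eqref{5.14} and \eqref{5.21} at $|\alpha|=0$ you import a remainder $C_{\eta}(\eta_{0}+\varepsilon^{1/2})\mathcal{D}_{N}(t)$, and you then claim that $\int_{0}^{t}\mathcal{D}_{N}(s)\,ds$ is ``controlled using the a priori bound \eqref{3.1}'' and can be folded into $C(1+t)(\eta_{0}+\varepsilon)\varepsilon^{2}$. This step fails: \eqref{3.1} bounds the instantaneous energy $\mathcal{E}_{N}(t)$, not the time-integrated dissipation. The functional $\mathcal{D}_{N}$ contains $\varepsilon^{-1}\|\partial^{\alpha}_{\beta}f\|_{\sigma,|\beta|}^{2}$, and by \eqref{2.26} the $\sigma$-norm involves velocity derivatives, so $\mathcal{D}_{N}(t)$ is not pointwise dominated by any multiple of $\mathcal{E}_{N}(t)$ that would yield an $O(\varepsilon^{2})$ contribution; for $|\alpha|=N$ it even involves $(N+1)$-order derivatives $\nabla_{v}\partial^{\alpha}f$ absent from $\mathcal{E}_{N}$ altogether. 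Establishing control of $\int_{0}^{t}\mathcal{D}_{N}$ is precisely the goal of the whole energy scheme and is not available at this stage. Two repairs are possible. Either retain $C(\eta_{0}+\varepsilon^{1/2})\int_{0}^{t}\mathcal{D}_{N}(s)\,ds$ on the right of your version of \eqref{4.25} (a weaker statement than the lemma as posed, though it would still be absorbable in the final assembly, as in \eqref{4.26} and \eqref{4.1}); or, as the paper does, estimate the zero-order collision terms directly rather than through Lemmas \ref{lem5.7}--\ref{lem5.8}: using \eqref{5.11}, \eqref{5.16} and \eqref{2.26} the linear terms are bounded by $C(\eta_{0}+\varepsilon)\varepsilon^{-1}\|f\|_{\sigma}^{2}$, and splitting $G=\overline{G}+\sqrt{\mu}f$ together with Lemma \ref{lem5.4} and $\||f|_{2}\|_{L^{\infty}_{x}}\leq C\varepsilon$ gives $\varepsilon^{-1}|(\Gamma(G/\sqrt{\mu},G/\sqrt{\mu}),f)|\leq C(\eta_{0}+\varepsilon)\varepsilon^{-1}\|f\|_{\sigma}^{2}+C\eta_{0}\varepsilon^{3}$; these are absorbed into the coercive term with no $\mathcal{D}_{N}$ appearing, which is what makes the lemma true as stated.
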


\begin{proof} 
Recall the microscopic equation \eqref{2.21} together with Remarks \ref{rm3.2} and \ref{rm3.3}. Taking the inner product of \eqref{2.21} with $f$ over $\mathbb{R}^3\times\mathbb{R}^{3}$ and using \eqref{2.22a}, one has
\begin{align}
\label{4.23}
\frac{1}{2}\frac{d}{dt}\|f\|^2+c_1\frac{1}{\varepsilon}\|f\|_\sigma^2
\leq&\frac{1}{\varepsilon}(\Gamma(\frac{M-\mu}{\sqrt{\mu}},f)+\Gamma(f,\frac{M-\mu}{\sqrt{\mu}}),f)
+\frac{1}{\varepsilon}(\Gamma(\frac{G}{\sqrt{\mu}},\frac{G}{\sqrt{\mu}}),f)
\nonumber\\
&-(\frac{P_{1}(v\cdot\nabla_{x}\overline{G})}{\sqrt{\mu}},f)-(\frac{\partial_{t}\overline{G}}{\sqrt{\mu}},f)
+(\frac{P_{0}[v\cdot\nabla_{x}(\sqrt{\mu}f)]}{\sqrt{\mu}},f)
\nonumber\\
&-(\frac{1}{\sqrt{\mu}}P_{1}\big\{v\cdot(\frac{|v-u|^{2}\nabla_{x}\widetilde{\theta}}{2R\theta^{2}}+\frac{(v-u)\cdot\nabla_{x}\widetilde{u}}{R\theta})M\big\},f).	
\end{align}
We will deal with each term in \eqref{4.23}. For the first term on the right hand side of \eqref{4.23},
in view of \eqref{5.11}, \eqref{5.16} and \eqref{2.26}, we get
\begin{align*}
&\frac{1}{\varepsilon}|(\Gamma(\frac{M-\mu}{\sqrt{\mu}},f),f)|+\frac{1}{\varepsilon}|(\Gamma(f,\frac{M-\mu}{\sqrt{\mu}}),f)|
\\
&\leq C\frac{1}{\varepsilon}\int_{\mathbb{R}^{3}}|\langle v\rangle^{-1}(\frac{M-\mu}{\sqrt{\mu}})|_{2}
|f|_{\sigma}^2\,dx+C\frac{1}{\varepsilon}\int_{\mathbb{R}^{3}}|\langle v\rangle^{-1}f|_{2}|(\frac{M-\mu}{\sqrt{\mu}})|_{\sigma}|f|_{\sigma}\,dx
\\
&\leq C(\varepsilon+\eta_0)\frac{1}{\varepsilon}\|f\|_\sigma^2.
\end{align*}
For the second term on the right hand side of \eqref{4.23}, we first note that
\begin{equation*}
\Gamma(\frac{G}{\sqrt{\mu}},\frac{G}{\sqrt{\mu}})
=\Gamma(\frac{\overline{G}}{\sqrt{\mu}},\frac{\overline{G}}{\sqrt{\mu}})
+\Gamma(\frac{\overline{G}}{\sqrt{\mu}},f)+\Gamma(f,\frac{\overline{G}}{\sqrt{\mu}})+\Gamma(f,f).
\end{equation*}
Then, using \eqref{5.11}, Lemma \ref{lem5.4}, the Cauchy-Schwarz and Sobolev inequalities, \eqref{1.7}, \eqref{3.1} and \eqref{3.6}, we get
\begin{align*}
\frac{1}{\varepsilon}|(\Gamma(\frac{\overline{G}}{\sqrt{\mu}},\frac{\overline{G}}{\sqrt{\mu}}),f)|	
\leq C\varepsilon\int_{\mathbb{R}^{3}}
|(\nabla_x\bar{u},\nabla_x\bar{\theta})|^2|f|_{\sigma}\,dx
\leq C\eta_{0}\frac{1}{\varepsilon}\|f\|^{2}_{\sigma}+C\eta_{0}\varepsilon^{3},
\end{align*}
\begin{align*}
\frac{1}{\varepsilon}|(\Gamma(\frac{\overline{G}}{\sqrt{\mu}},f),f)|+\frac{1}{\varepsilon}|(\Gamma(f,\frac{\overline{G}}{\sqrt{\mu}}),f)|
\leq C\int_{\mathbb{R}^{3}}|(\nabla_x\bar{u},\nabla_x\bar{\theta})||f|^{2}_{\sigma}\,dx
\leq  C\eta_{0}\|f\|^{2}_{\sigma},
\end{align*}
and
\begin{align*}
\frac{1}{\varepsilon}|(\Gamma(f,f),f)|
\leq C\frac{1}{\varepsilon}\||f|_2\|_{L_{x}^{\infty}} \|f\|^{2}_{\sigma}\leq C\|f\|^{2}_{\sigma}.
\end{align*}
With these, it follows that
\begin{equation*}
\frac{1}{\varepsilon}|(\Gamma(\frac{G}{\sqrt{\mu}},\frac{G}{\sqrt{\mu}}),f)|\leq C(\eta_{0}+\varepsilon)\frac{1}{\varepsilon}\|f\|^{2}_{\sigma}
+C\eta_{0}\varepsilon^{3}.
\end{equation*}
For the fourth term on the right hand side of \eqref{4.23}, applying the similar arguments as \eqref{5.5}, \eqref{2.26}, \eqref{4.18} and the Cauchy-Schwarz inequality, we get
\begin{align*}
|(\frac{\partial_{t}\overline{G}}{\sqrt{\mu}},f)|
\leq& \eta\frac{1}{\varepsilon}\|\langle v\rangle^{-\frac{1}{2}}f\|^{2}+C_{\eta}\varepsilon\|\langle v\rangle^{\frac{1}{2}}\frac{\partial_{t}\overline{G}}{\sqrt{\mu}}\|^{2}
\\
\leq& C\eta\frac{1}{\varepsilon}\|f\|_{\sigma}^{2}
+C_{\eta}\varepsilon^{3}\int_{\mathbb{R}^{3}}\{|\partial_{t}(\nabla_x\bar{u},\nabla_x\bar{\theta})|
+|(\nabla_x\bar{u},\nabla_x\bar{\theta})||\partial_{t}(u,\theta)|\}^{2}\,dx
\\
\leq& C\eta\frac{1}{\varepsilon}\|f\|_{\sigma}^{2}+C_{\eta}(\eta_{0}+\varepsilon)\varepsilon^{2}.
\end{align*}
The third term on the right hand side of \eqref{4.23} shares the similar bound as
\begin{align*}
|(\frac{P_{1}(v\cdot\nabla_{x}\overline{G})}{\sqrt{\mu}},f)|&=|(\frac{v\cdot\nabla_{x}\overline{G}}{\sqrt{\mu}},f)-(\frac{P_{0}(v\cdot\nabla_{x}\overline{G})}{\sqrt{\mu}},f)|\leq C\eta\frac{1}{\varepsilon}\|f\|_{\sigma}^{2}
+C_{\eta}(\eta_{0}+\varepsilon)\varepsilon^{2}.
\end{align*}
For the fifth term on the right hand side of \eqref{4.23}, we deduce from \eqref{2.5}, \eqref{2.4}, \eqref{3.6}, \eqref{2.26}
and the Cauchy-Schwarz inequality that
\begin{align*}
|(\frac{P_{0}[v\cdot\nabla_{x}(\sqrt{\mu}f)]}{\sqrt{\mu}},f)|&=|(\frac{1}{\sqrt{\mu}}\sum_{i=0}^{4}\langle v\sqrt{\mu}\cdot\nabla_{x}f,\frac{\chi_{i}}{M}\rangle\chi_{i},f)|
\\	
&\leq \eta\frac{1}{\varepsilon}\|\langle v\rangle^{-\frac{1}{2}}f\|^{2}+C_{\eta}\varepsilon\|\langle v\rangle^{\frac{1}{2}}\frac{1}{\sqrt{\mu}}\sum_{i=0}^{4}\langle v\sqrt{\mu}\cdot\nabla_{x}f,\frac{\chi_{i}}{M}\rangle\chi_{i}\|^{2}
\\
&\leq C\eta\frac{1}{\varepsilon}\|f\|_{\sigma}^{2}+ C_{\eta}\varepsilon\|\nabla_{x}f\|_{\sigma}^{2},
\end{align*}
where we have used the fact that $|\langle v\rangle^{l}\mu^{-\frac{1}{2}}M|_{2}\leq C$ for any $l\geq0$
by \eqref{3.6}.
The last term of \eqref{4.23} can be handled in the same manner and it is bounded by
\begin{align*}
&|(\frac{1}{\sqrt{\mu}}P_{1}\big\{v\cdot(\frac{|v-u|^{2}\nabla_{x}\widetilde{\theta}}{2R\theta^{2}}+\frac{(v-u)\cdot\nabla_{x}\widetilde{u}}{R\theta})M\big\},f)|
\\
&\leq C\eta\frac{1}{\varepsilon}\|f\|_{\sigma}^{2}
+C_{\eta}\varepsilon\|(\nabla_{x}\widetilde{u},\nabla_{x}\widetilde{\theta})\|^{2}.
\end{align*}
In summary, we substitute the above estimates into \eqref{4.23} and choose $\eta>0$, $\eta_{0}>0$ and $\varepsilon>0$ suitably small to get
\begin{align}
\label{4.24}
\frac{1}{2}\frac{d}{dt}\|f\|^2+\frac{c_1}{2}\frac{1}{\varepsilon}\|f\|_\sigma^2
\leq& C\varepsilon(\|(\nabla_{x}\widetilde{u},\nabla_{x}\widetilde{\theta})\|^{2}+\|\nabla_{x}f\|_{\sigma}^{2})+C(\eta_{0}+\varepsilon)\varepsilon^{2}.
\end{align}
Integrating \eqref{4.24} with respect to $t$ and using \eqref{3.5} yields the desired estimate \eqref{4.25}. This then completes the proof of Lemma \ref{lem.zenfp}.
\end{proof}

Combining Lemma \ref{lem.zenfp} together with Lemma \ref{lem.zefp} immediately implies the following result which gives the estimates on the zero-order energy norm for both the fluid and non-fluid parts.

\begin{lemma}\label{lem4.2}
It holds that
\begin{align}
\label{4.2}
&\|(\widetilde{\rho},\widetilde{u},\widetilde{\theta})(t)\|^{2}+\|f(t)\|^{2}
+c\varepsilon\int^{t}_{0}\|\nabla_x
(\widetilde{\rho},\widetilde{u},\widetilde{\theta})(s)\|^{2}\,ds
+c\frac{1}{\varepsilon}\int^{t}_{0}\|f(s)\|_{\sigma}^{2}\,ds
\nonumber\\
&\leq C\varepsilon\int^{t}_{0}\|\nabla_xf(s)\|^{2}_{\sigma}\,ds
+C(1+t)(\eta_{0}+\varepsilon^{\frac{1}{2}})\varepsilon^{2}.
\end{align}
\end{lemma}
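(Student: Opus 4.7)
The plan is to derive \eqref{4.2} as a suitable linear combination of the two preceding lemmas, namely \eqref{4.22} for the fluid part and \eqref{4.25} for the non-fluid part. The only subtlety is that each of these two estimates contains, on its right-hand side, a term that must be absorbed by the dissipation on the left-hand side of the other.

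First I would pick a small constant $\kappa>0$, to be determined, and form the combination \eqref{4.22}${}+\kappa\cdot{}$\eqref{4.25}. On the left-hand side this produces
\begin{equation*}
\|(\widetilde\rho,\widetilde u,\widetilde\theta)(t)\|^2+\kappa\|f(t)\|^2+c\varepsilon\int_0^t\|\nabla_x(\widetilde\rho,\widetilde u,\widetilde\theta)\|^2\,ds+\kappa c_1\frac{1}{\varepsilon}\int_0^t\|f\|_\sigma^2\,ds,
\end{equation*}
while on the right-hand side the only terms that are not already of the desired form are $C\varepsilon\int_0^t\|f\|_\sigma^2\,ds$ coming from \eqref{4.22} and $\kappa C\varepsilon\int_0^t\|(\nabla_x\widetilde u,\nabla_x\widetilde\theta)\|^2\,ds$ coming from \eqref{4.25}.

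The key observation is that the first of these is controlled by the $\|f\|_\sigma^2$--dissipation produced by \eqref{4.25}: indeed, for any fixed $\kappa>0$ and sufficiently small $\varepsilon$ one has $C\varepsilon\le\tfrac12\kappa c_1/\varepsilon$, so the term can be absorbed into $\kappa c_1\varepsilon^{-1}\int_0^t\|f\|_\sigma^2\,ds$. On the other hand, fixing $\kappa>0$ small enough that $\kappa C\le c/2$, the second unwanted term is bounded by $\tfrac12 c\varepsilon\int_0^t\|\nabla_x(\widetilde\rho,\widetilde u,\widetilde\theta)\|^2\,ds$ and is therefore absorbed into the fluid dissipation on the left. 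Note that these two requirements on $\kappa$ are compatible: one first chooses $\kappa$ as an $O(1)$ constant small enough that $\kappa C\le c/2$, and then the smallness of $\varepsilon$ in \eqref{3.1} (and in \eqref{3.6}) takes care of the first absorption.

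After these two absorptions, the remaining right-hand side consists of the term $C\varepsilon\int_0^t\|\nabla_x f\|_\sigma^2\,ds$ (which appears on the right of both \eqref{4.22} and \eqref{4.25} and is simply kept) together with the error $C(1+t)(\eta_0+\varepsilon)\varepsilon^2$; bounding $\eta_0+\varepsilon\le\eta_0+\varepsilon^{1/2}$ for small $\varepsilon$ yields the error term in \eqref{4.2}. I do not anticipate any real obstacle here since the proof is essentially bookkeeping of constants; the only point to watch is the compatibility of the two smallness conditions on $\kappa$, which is immediate from the disparity between the sizes $\varepsilon$ and $1/\varepsilon$ of the two dissipation coefficients.
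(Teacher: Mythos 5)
Your proposal is correct and is essentially the paper's own argument: the paper forms $C\cdot\eqref{4.22}+\eqref{4.25}$ with $C$ large, which is just the reciprocal normalization of your combination $\eqref{4.22}+\kappa\cdot\eqref{4.25}$ with $\kappa$ small, and the two absorptions (the $\varepsilon\|f\|_\sigma^2$ term into the $\varepsilon^{-1}\|f\|_\sigma^2$ dissipation via smallness of $\varepsilon$, and the $\varepsilon\|(\nabla_x\widetilde u,\nabla_x\widetilde\theta)\|^2$ term into the fluid dissipation via smallness of the combination constant) are exactly the ones the paper relies on.
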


\begin{proof}
Multiplying \eqref{4.22} by a large constant $C>0$ and then adding the resultant equation together with \eqref{4.25}, one can obtain \eqref{4.2} by using the smallness of $\varepsilon$. This completes the proof of Lemma \ref{lem4.2}.
\end{proof}

\subsection{Space derivative estimate on fluid part up to $(N-1)$-order}
This subsection is devoted to deriving the 
space derivative  estimate up to the $(N-1)$-order for the fluid part $(\widetilde{\rho},\widetilde{u},\widetilde{\theta})$. As in Section \ref{sec5.1}, the proof is based on the fluid-type systems  \eqref{2.23} and  \eqref{2.22}.

\begin{lemma}\label{lem.sdefpN1}
It holds that
 \begin{align}
\label{4.41}
&\sum_{1\leq |\alpha|\leq N-1}\|\partial^{\alpha}(\widetilde{\rho},\widetilde{u},\widetilde{\theta})(t)\|^2+c\varepsilon\sum_{2\leq |\alpha|\leq N}\int^t_0\|\partial^{\alpha}(\widetilde{\rho},\widetilde{u},\widetilde{\theta})(s)\|^2\,ds
\nonumber\\
\leq& C\varepsilon^2\sum_{|\alpha|=N}
(\|\partial^{\alpha}\widetilde{\rho}(t)\|^2+
\|\partial^{\alpha}f(t)\|^2)+ C\varepsilon\sum_{1\leq |\alpha|\leq N}\int^t_0\| \partial^{\alpha}f(s)\|^2_\sigma\,ds
\nonumber\\
&+C(\eta_{0}+\varepsilon^{\frac{1}{2}})\int^t_0\mathcal{D}_N(s)\,ds+C(1+t)(\eta_{0}+\varepsilon^{\frac{1}{2}})\varepsilon^{2}.
 \end{align}
\end{lemma}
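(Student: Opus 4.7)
The plan is to mimic, for each multi-index $\alpha$ with $1\le|\alpha|\le N-1$, the two-step procedure used in the proof of Lemma \ref{lem.zefp}: a symmetrizer-based $L^2_x$ estimate on the Navier--Stokes-type system \eqref{2.23}, followed by a density-recovery step on the momentum equation in \eqref{2.22}. The derivatives at the top order $|\alpha|=N$ are never closed independently; instead they appear only on the right-hand side, to be absorbed later by the $\varepsilon^2$-weighted top-order component of $\mathcal{E}_N(t)$ built into \eqref{3.2}.

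\textbf{Step 1 (symmetrized energy at $1\le|\alpha|\le N-1$).} I apply $\partial^\alpha$ to \eqref{2.23} and take the $L^2_x$ inner products with the symmetrizers $\frac{2\bar\theta}{3\bar\rho^2}\partial^\alpha\widetilde\rho$, $\partial^\alpha\widetilde u_i$, and $\frac{1}{\bar\theta}\partial^\alpha\widetilde\theta$. These weights kill the principal linear couplings between $\bar\rho\nabla\cdot\widetilde u$ and $\frac{2\bar\theta}{3\bar\rho}\nabla\widetilde\rho$, and between $\frac{2}{3}\bar\theta\nabla\cdot\widetilde u$ and $\frac{2}{3}\nabla\widetilde\theta$, after integration by parts, leaving a Lyapunov functional equivalent to $\|\partial^\alpha(\widetilde\rho,\widetilde u,\widetilde\theta)\|^2$. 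The viscous and heat-flux sources $\varepsilon\partial_{x_j}[\mu(\theta)D_{ij}]$ and $\varepsilon\partial_{x_j}(\kappa(\theta)\partial_{x_j}\theta)$, after a further integration by parts and the standard symmetrization of $D_{ij}$, produce the dissipation $c\varepsilon(\|\nabla_x\partial^\alpha\widetilde u\|^2+\|\nabla_x\partial^\alpha\widetilde\theta\|^2)$; summed over $1\le|\alpha|\le N-1$ this yields the $c\varepsilon\sum_{2\le|\alpha|\le N}\|\partial^\alpha(\widetilde u,\widetilde\theta)\|^2$ piece of \eqref{4.41}. The microscopic sources $\frac{1}{\rho}\int v_iv\cdot\nabla_xL^{-1}_M\Theta\,dv$ and $\frac{1}{\rho}\int\frac{1}{2}|v|^2v\cdot\nabla_xL^{-1}_M\Theta\,dv$ are unfolded via the Burnett-function identities \eqref{4.13}--\eqref{4.14}, as in Lemma \ref{lem.I1dx}; the $\varepsilon\partial_tG$ piece of $\Theta$ is integrated by parts in $t$, and Cauchy--Schwarz and Young on the resulting time-boundary at $|\alpha|=N-1$ give $\eta\|\partial^{N-1}\widetilde u(t)\|^2 + C_\eta\varepsilon^2\|\partial^N f(t)\|^2$, which is the origin of the $C\varepsilon^2\sum_{|\alpha|=N}\|\partial^\alpha f(t)\|^2$ term on the right of \eqref{4.41}. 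The remaining $\varepsilon P_1(v\cdot\nabla_xG)$ and $Q(G,G)$ pieces of $\Theta$ contribute, via Lemmas \ref{lem5.4}--\ref{lem5.8}, the $C\varepsilon\sum_{1\le|\alpha|\le N}\int_0^t\|\partial^\alpha f\|_\sigma^2\,ds$ term together with the $(\eta_0+\varepsilon^{1/2})\int_0^t\mathcal{D}_N$ and $(1+t)(\eta_0+\varepsilon^{1/2})\varepsilon^2$ residues.

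\textbf{Step 2 (density recovery at $1\le|\alpha|\le N-1$).} To restore the missing density dissipation, I apply $\partial^\alpha$ with $1\le|\alpha|\le N-1$ to the momentum equation of \eqref{2.22} and test against $\nabla_x\partial^\alpha\widetilde\rho$. The principal term $(\frac{2\bar\theta}{3\bar\rho}\nabla\partial^\alpha\widetilde\rho,\nabla\partial^\alpha\widetilde\rho)$ yields $c\|\nabla_x\partial^\alpha\widetilde\rho\|^2$; the $\partial_t\partial^\alpha\widetilde u$ term is integrated by parts in $t$ and combined with the continuity equation (rewriting $\partial_t\partial^\alpha\widetilde\rho$ as $-\bar\rho\nabla\cdot\partial^\alpha\widetilde u$ modulo controllable commutators), leaving only the total time-derivative $-\frac{d}{dt}(\partial^\alpha\widetilde u,\nabla_x\partial^\alpha\widetilde\rho)$. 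Multiplying by $\varepsilon$ and time-integrating produces $c\varepsilon\int_0^t\|\nabla_x\partial^\alpha\widetilde\rho\|^2\,ds$ on the left, completing the $c\varepsilon\sum_{2\le|\alpha|\le N}$ dissipation; the resulting boundary term $\varepsilon(\partial^\alpha\widetilde u,\nabla_x\partial^\alpha\widetilde\rho)(t)$ at $|\alpha|=N-1$ is Young-bounded by $\eta\|\partial^{N-1}\widetilde u(t)\|^2+C_\eta\varepsilon^2\|\partial^N\widetilde\rho(t)\|^2$, producing the $C\varepsilon^2\sum_{|\alpha|=N}\|\partial^\alpha\widetilde\rho(t)\|^2$ term on the right of \eqref{4.41}. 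A suitable linear combination of Steps~1 and~2 (multiplying Step~1 by a large constant and adding Step~2), followed by absorption of the $\eta$-small factors into the left-hand side, then closes the estimate.

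\textbf{Main obstacle.} The central delicacy is the bookkeeping of the top-order $N$-th derivatives. Both Step~1 (via integration by parts in $t$ on the $\varepsilon\partial_tG$ source, which unavoidably manufactures an $N$-th-order $f$-norm at time $t$) and Step~2 (via the time-boundary of the density recovery, which manufactures an $N$-th-order $\widetilde\rho$-norm at time $t$) produce quantities at order $|\alpha|=N$ that are \emph{not} present on the left-hand side of \eqref{4.41} and can be absorbed only by the $\varepsilon^2$-weighted top-order component of $\mathcal{E}_N(t)$ hard-coded into \eqref{3.2}. This is precisely why $\mathcal{E}_N$ is designed to be more singular in $\varepsilon$ at the highest derivative order; without this design the estimate at order $N-1$ could not be closed. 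The rest is a routine distribution-of-smallness calculation: commutators hitting $(\bar\rho,\bar u,\bar\theta)$ carry $\eta_0$ via \eqref{1.7}, nonlinear terms quadratic in the perturbation (such as $\widetilde u\cdot\nabla\widetilde u$ and $(\theta/\rho-\bar\theta/\bar\rho)\nabla\rho$) carry $\varepsilon^{1/2}$ via Sobolev interpolation under \eqref{3.1}, and the trilinear collision contributions are controlled by Lemmas \ref{lem5.7}--\ref{lem5.8}; all such residues fit into $(\eta_0+\varepsilon^{1/2})\int_0^t\mathcal{D}_N(s)\,ds$ and $(1+t)(\eta_0+\varepsilon^{1/2})\varepsilon^2$.
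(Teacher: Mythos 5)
Your proposal is correct and follows essentially the same route as the paper: the symmetrized $L^2$ estimates on the three equations of \eqref{2.23} (the paper's Steps 1--4, producing the functional $E_1(t)$ whose bound at $t$ generates the $\varepsilon^2\sum_{|\alpha|=N}\|\partial^\alpha f(t)\|^2$ term), the density-recovery step on the momentum equation of \eqref{2.22} tested against $\nabla_x\partial^\alpha\widetilde\rho$ (the paper's Step 5, whose time-boundary at $|\alpha|=N-1$ generates the $\varepsilon^2\sum_{|\alpha|=N}\|\partial^\alpha\widetilde\rho(t)\|^2$ term), and the final linear combination. You have also correctly identified the role of the $\varepsilon^2$-weighted top-order component of $\mathcal{E}_N(t)$ and the $\eta_0$ versus $\varepsilon^{1/2}$ bookkeeping, so no gap remains.
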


\begin{proof}
It is divided by six steps as follows. In the first three steps we make the direct energy estimates on $\widetilde{\rho},\widetilde{u}$ and $\widetilde{\theta}$ in terms of the Navier-Stokes-type system \eqref{2.23} and then obtain the combined estimate in Step 4. In Step 5 we use the Euler-type system \eqref{2.22} to obtain the energy dissipation of $\widetilde{\rho}$ as in Section \ref{sec5.1} for the zero order estimate. In the last step we combine those results to deduce the desired estimate \eqref{4.41}.

\medskip
\noindent{\it {Step 1.}} Applying $\partial^{\alpha}$
with $1\leq|\alpha|\leq N-1$ to the first equation of  \eqref{2.23} and taking the inner product of the resulting equation with 
$\frac{2\bar{\theta}}{3\bar{\rho}^{2}}\partial^{\alpha}\widetilde{\rho}$, one has
\begin{align}
\label{4.27}
&\frac{1}{2}\frac{d}{dt}(\partial^{\alpha}\widetilde{\rho},\frac{2\bar{\theta}}{3\bar{\rho}^{2}}\partial^{\alpha}\widetilde{\rho})
-\frac{1}{2}(\partial^{\alpha}\widetilde{\rho},\partial_t(\frac{2\bar{\theta}}{3\bar{\rho}^{2}})\partial^{\alpha}\widetilde{\rho})
+(\bar{\rho}\nabla_x\cdot\partial^{\alpha}\widetilde{u},\frac{2\bar{\theta}}{3\bar{\rho}^{2}}\partial^{\alpha}\widetilde{\rho})\notag\\
&+\sum_{1\leq \alpha_{1}\leq \alpha}C^{\alpha_{1}}_\alpha(\partial^{\alpha_1}\bar{\rho}\nabla_x\cdot\partial^{\alpha-\alpha_{1}}\widetilde{u},\frac{2\bar{\theta}}{3\bar{\rho}^{2}}\partial^{\alpha}\widetilde{\rho})
\nonumber\\
&=-(\partial^{\alpha}(u\cdot\nabla_x\widetilde{\rho}),\frac{2\bar{\theta}}{3\bar{\rho}^{2}}\partial^{\alpha}\widetilde{\rho})
-(\partial^{\alpha}(\widetilde{u}\cdot\nabla_x\bar{\rho}),\frac{2\bar{\theta}}{3\bar{\rho}^{2}}\partial^{\alpha}\widetilde{\rho})
-(\partial^{\alpha}(\widetilde{\rho}\nabla_x\cdot u),\frac{2\bar{\theta}}{3\bar{\rho}^{2}}\partial^{\alpha}\widetilde{\rho}).
\end{align}	
Let us now deal with \eqref{4.27} term by term. By the Sobolev inequality, \eqref{4.18} and \eqref{3.1}, one has
\begin{align*}
|(\partial^{\alpha}\widetilde{\rho},\partial_t(\frac{2\bar{\theta}}{3\bar{\rho}^{2}})\partial^{\alpha}\widetilde{\rho})|\leq C(\|\partial_t\bar{\rho}\|_{L^\infty}+
\|\partial_t\bar{\theta}\|_{L^\infty})\|\partial^{\alpha}\widetilde{\rho}\|^2\leq C\eta_0\|\partial^{\alpha}\widetilde{\rho}\|^2
\leq C\eta_0\varepsilon^2.
\end{align*}
Performing the similar calculation as the above estimate implies
\begin{align*}
\sum_{1\leq \alpha_{1}\leq \alpha}C^{\alpha_{1}}_\alpha|(\partial^{\alpha_1}\bar{\rho}\nabla_x\cdot\partial^{\alpha-\alpha_{1}}\widetilde{u},\frac{2\bar{\theta}}{3\bar{\rho}^{2}}\partial^{\alpha}\widetilde{\rho})|
\leq C\eta_0\varepsilon^2.
\end{align*}
The first term on the right hand side of \eqref{4.27} can be written as
\begin{equation*}
-(\partial^{\alpha}(u\cdot\nabla_x\widetilde{\rho}),\frac{2\bar{\theta}}{3\bar{\rho}^{2}}\partial^{\alpha}\widetilde{\rho})=-
(u\cdot\nabla_x\partial^{\alpha}\widetilde{\rho},\frac{2\bar{\theta}}{3\bar{\rho}^{2}}\partial^{\alpha}\widetilde{\rho})-\sum_{1\leq \alpha_{1}\leq \alpha}C^{\alpha_{1}}_\alpha
(\partial^{\alpha_1}u\cdot\nabla_x\partial^{\alpha-\alpha_1}\widetilde{\rho},\frac{2\bar{\theta}}{3\bar{\rho}^{2}}\partial^{\alpha}\widetilde{\rho}).
\end{equation*}
By virtue of integration by parts, the Sobolev inequality, \eqref{1.7} and \eqref{3.1}, we obtain
\begin{align*}
|(u\cdot\nabla_x\partial^{\alpha}\widetilde{\rho},\frac{2\bar{\theta}}{3\bar{\rho}^{2}}\partial^{\alpha}\widetilde{\rho})|&=	|\frac{1}{2}(\partial^{\alpha}\widetilde{\rho},\nabla_x\cdot(u\frac{2\bar{\theta}}{3\bar{\rho}^{2}})\partial^{\alpha}\widetilde{\rho})|
\\	
&\leq C(\|\nabla_{x}(\bar{\rho},\bar{u},\bar{\theta})\|_{L^{\infty}}+\|\nabla_{x}\widetilde{u}\|_{L^{\infty}})
\|\partial^{\alpha}\widetilde{\rho}\|^2\leq C(\eta_0+\varepsilon^{\frac{1}{2}})\varepsilon^2,
\end{align*}
where we have used the fact that
\begin{align}
\label{4.30b}
\|\nabla_{x}\widetilde{u}\|_{L^{\infty}}\leq C\|\nabla^2_{x}\widetilde{u}\|^{\frac{1}{2}}\|\nabla^3_{x}\widetilde{u}\|^{\frac{1}{2}}
\leq C\varepsilon^{\frac{1}{2}}.
\end{align}
For $1\leq |\alpha_{1}|\leq |\alpha|$, it is clear to see that
\begin{align*}
(\partial^{\alpha_1}u\cdot\nabla_x\partial^{\alpha-\alpha_1}\widetilde{\rho},\frac{2\bar{\theta}}{3\bar{\rho}^{2}}\partial^{\alpha}\widetilde{\rho})
=(\partial^{\alpha_1}\widetilde{u}\cdot\nabla_x\partial^{\alpha-\alpha_1}\widetilde{\rho},\frac{2\bar{\theta}}{3\bar{\rho}^{2}}\partial^{\alpha}\widetilde{\rho})+(\partial^{\alpha_1}\bar{u}\cdot\nabla_x\partial^{\alpha-\alpha_1}\widetilde{\rho},\frac{2\bar{\theta}}{3\bar{\rho}^{2}}\partial^{\alpha}\widetilde{\rho}).
\end{align*}
If $1\leq|\alpha_{1}|\leq |\alpha|/2$, we use the similar arguments as \eqref{4.30b} and \eqref{3.1} to get
\begin{align*}
&|(\partial^{\alpha_1}\widetilde{u}\cdot\nabla_x\partial^{\alpha-\alpha_1}\widetilde{\rho},\frac{2\bar{\theta}}{3\bar{\rho}^{2}}\partial^{\alpha}\widetilde{\rho})|
\leq \|\partial^{\alpha_1}\widetilde{u}\|_{L^{\infty}}\|\nabla_x\partial^{\alpha-\alpha_1}\widetilde{\rho}\|\|\partial^{\alpha}\widetilde{\rho}\|
\leq C\varepsilon^{\frac{1}{2}}\varepsilon^2.
\end{align*}
If $|\alpha|/2<|\alpha_{1}|\leq |\alpha|$, we have the same bound as 
\begin{align*}
|(\partial^{\alpha_1}\widetilde{u}\cdot\nabla_x\partial^{\alpha-\alpha_1}\widetilde{\rho},\frac{2\bar{\theta}}{3\bar{\rho}^{2}}\partial^{\alpha}\widetilde{\rho})|
\leq C\|\partial^{\alpha_1}\widetilde{u}\|\|\nabla_x\partial^{\alpha-\alpha_1}\widetilde{\rho}\|_{L^{\infty}}\|\partial^{\alpha}\widetilde{\rho}\|
\leq C\varepsilon^{\frac{1}{2}}\varepsilon^2.
\end{align*}
Thanks to these estimates, it follows that
\begin{align*}
\sum_{1\leq \alpha_{1}\leq \alpha}C^{\alpha_{1}}_\alpha
|(\partial^{\alpha_1}\widetilde{u}\cdot\nabla_x\partial^{\alpha-\alpha_1}\widetilde{\rho},\frac{2\bar{\theta}}{3\bar{\rho}^{2}}\partial^{\alpha}\widetilde{\rho})|
\leq C\varepsilon^{\frac{1}{2}}\varepsilon^2.
\end{align*}
On the other hand, we can obtain
\begin{align*}
\sum_{1\leq \alpha_{1}\leq \alpha}C^{\alpha_{1}}_\alpha
|(\partial^{\alpha_1}\bar{u}\cdot\nabla_x\partial^{\alpha-\alpha_1}\widetilde{\rho},\frac{2\bar{\theta}}{3\bar{\rho}^{2}}\partial^{\alpha}\widetilde{\rho})|\leq C\eta_0\varepsilon^2.
\end{align*}
Collecting the above estimates, we thereby obtain
\begin{align}
\label{4.28}
|(\partial^{\alpha}(u\cdot\nabla_x\widetilde{\rho}),\frac{2\bar{\theta}}{3\bar{\rho}^{2}}\partial^{\alpha}\widetilde{\rho})|
\leq C(\eta_0+\varepsilon^{\frac{1}{2}})\varepsilon^2.
\end{align} 
The second term on the right hand side of \eqref{4.27} is relatively easy and it is bounded by
\begin{align}
\label{4.29}
|(\partial^{\alpha}(\widetilde{u}\cdot\nabla_x\bar{\rho}),\frac{2\bar{\theta}}{3\bar{\rho}^{2}}\partial^{\alpha}\widetilde{\rho})|
\leq C\sum_{\alpha_{1}\leq\alpha}
\|\nabla_x\partial^{\alpha-\alpha_{1}}\bar{\rho}\|_{L^{\infty}}\|\partial^{\alpha_1}\widetilde{u}\|\|\partial^{\alpha}\widetilde{\rho}\|
\leq C\eta_0\varepsilon^2.
\end{align}
For the last term of \eqref{4.27}, we divide it into three parts as
\begin{align*}
-(\partial^{\alpha}(\widetilde{\rho}\nabla_x\cdot u),\frac{2\bar{\theta}}{3\bar{\rho}^{2}}\partial^{\alpha}\widetilde{\rho})=&
-(\widetilde{\rho}\nabla_x\cdot \partial^{\alpha}\widetilde{u},\frac{2\bar{\theta}}{3\bar{\rho}^{2}}\partial^{\alpha}\widetilde{\rho})
-(\partial^{\alpha}(\widetilde{\rho}\nabla_x\cdot \bar{u}),\frac{2\bar{\theta}}{3\bar{\rho}^{2}}\partial^{\alpha}\widetilde{\rho})
\\
&-\sum_{1\leq \alpha_{1}\leq \alpha}C^{\alpha_{1}}_\alpha
(\partial^{\alpha_1}\widetilde{\rho}\nabla_x\cdot\partial^{\alpha-\alpha_{1}}\widetilde{u},\frac{2\bar{\theta}}{3\bar{\rho}^{2}}\partial^{\alpha}\widetilde{\rho}).
\end{align*}
The last two terms of the above equality can be treated in the same way as \eqref{4.28} and \eqref{4.29}, so that we obtain
\begin{align*}
\sum_{1\leq\alpha_{1}\leq\alpha}C^{\alpha_{1}}_\alpha
|(\partial^{\alpha_1}\widetilde{\rho}\nabla_x\cdot\partial^{\alpha-\alpha_{1}}\widetilde{u},\frac{2\bar{\theta}}{3\bar{\rho}^{2}}\partial^{\alpha}\widetilde{\rho})|+
|(\partial^{\alpha}(\widetilde{\rho}\nabla_x\cdot \bar{u}),\frac{2\bar{\theta}}{3\bar{\rho}^{2}}\partial^{\alpha}\widetilde{\rho})|
\leq C(\eta_0+\varepsilon^{\frac{1}{2}})\varepsilon^2.
\end{align*}
Applying the Cauchy-Schwarz and Sobolev inequalities together with \eqref{3.1} gives
\begin{align*}
|(\widetilde{\rho}\nabla_x\cdot \partial^{\alpha}\widetilde{u},\frac{2\bar{\theta}}{3\bar{\rho}^{2}}\partial^{\alpha}\widetilde{\rho})|
\leq \eta\varepsilon\|\nabla_x\cdot\partial^{\alpha}\widetilde{u}\|^2
+C_\eta\frac{1}{\varepsilon}\|\widetilde{\rho}\|_{L^{\infty}}^2
\|\partial^{\alpha}\widetilde{\rho}\|^2\leq \eta\varepsilon\|\nabla_x\cdot\partial^{\alpha}\widetilde{u}\|^2+C_\eta\varepsilon^3.
\end{align*}
It follows from the above estimates that
\begin{align}
\label{4.30}
|(\partial^{\alpha}(\widetilde{\rho}\nabla_x\cdot u),\frac{2\bar{\theta}}{3\bar{\rho}^{2}}\partial^{\alpha}\widetilde{\rho})|
\leq C\eta\varepsilon\|\nabla_x\cdot\partial^{\alpha}\widetilde{u}\|^2+C_\eta(\eta_0+\varepsilon^{\frac{1}{2}})\varepsilon^2.
\end{align}
For $1\leq|\alpha|\leq N-1$ and any small $\eta>0$, we
substitute the above estimates into \eqref{4.27} to get
\begin{align}
\label{4.31}
\frac{1}{2}\frac{d}{dt}\int_{\mathbb{R}^{3}} \frac{2\bar{\theta}}{3\bar{\rho}^{2}}|\partial^{\alpha}\widetilde{\rho}|^{2}\,dx
+(\nabla_x\cdot\partial^{\alpha}\widetilde{u},\frac{2\bar{\theta}}{3\bar{\rho}}\partial^{\alpha}\widetilde{\rho})\leq C\eta\varepsilon\|\nabla_x\cdot\partial^{\alpha}\widetilde{u}\|^2
+C_\eta(\eta_0+\varepsilon^{\frac{1}{2}})\varepsilon^2.
\end{align}

\medskip
\noindent{\it {Step 2.}}
Next we concentrate on the second equation of  \eqref{2.23}. Applying $\partial^{\alpha}$
with $1\leq|\alpha|\leq N-1$ to the second equation of \eqref{2.23} and taking the inner product of the resulting equation with 
$\partial^{\alpha}\widetilde{u}_i$ yields
\begin{align}
\label{4.32}
&\frac{1}{2}\frac{d}{dt}\|\partial^{\alpha}\widetilde{u}_i\|^{2}+
(\frac{2\bar{\theta}}{3\bar{\rho}}\partial^{\alpha}\partial_{x_{i}}\widetilde{\rho},\partial^{\alpha}\widetilde{u}_i)+\sum_{1\leq \alpha_{1}\leq\alpha}C_{\alpha}^{\alpha_{1}}
(\partial^{\alpha_1}(\frac{2\bar{\theta}}{3\bar{\rho}})\partial^{\alpha-\alpha_1}\partial_{x_{i}}\widetilde{\rho},\partial^{\alpha}\widetilde{u}_i)+
(\frac{2}{3}\partial^{\alpha}\partial_{x_{i}}\widetilde{\theta},\partial^{\alpha}\widetilde{u}_i)
\nonumber\\
&\hspace{0.5cm}+(\partial^{\alpha}(u\cdot\nabla_{x}\widetilde{u}_{i})+\partial^{\alpha}(\widetilde{u}\cdot\nabla_{x}\bar{u}_{i}),\partial^{\alpha}\widetilde{u}_i)
+(\partial^{\alpha}[\frac{2}{3}(\frac{\theta}{\rho}-\frac{\bar{\theta}}{\bar{\rho}})\partial_{x_{i}}\rho],\partial^{\alpha}\widetilde{u}_i)
\nonumber\\
&=\varepsilon\sum^{3}_{j=1}(\partial^{\alpha}(\frac{1}{\rho}\partial_{x_{j}}[\mu(\theta)D_{ij}]),\partial^{\alpha}\widetilde{u}_i)
-(\partial^{\alpha}(\frac{1}{\rho}\int_{\mathbb{R}^{3}} v_{i}v\cdot\nabla_{x}L^{-1}_{M}\Theta\,dv),\partial^{\alpha}\widetilde{u}_i).
\end{align}
We will estimate each term for \eqref{4.32}.
Following the same method as in \eqref{4.28} and \eqref{4.29}, we get
\begin{align*}
&\sum_{1\leq\alpha_{1}\leq\alpha}C_{\alpha}^{\alpha_{1}}|
(\partial^{\alpha_1}(\frac{2\bar{\theta}}{3\bar{\rho}})\partial^{\alpha-\alpha_1}\partial_{x_{i}}\widetilde{\rho},\partial^{\alpha}\widetilde{u}_i)|+
|(\partial^{\alpha}(u\cdot\nabla_{x}\widetilde{u}_{i})+\partial^{\alpha}(\widetilde{u}\cdot\nabla_{x}\bar{u}_{i}),\partial^{\alpha}\widetilde{u}_i)|\\
&\leq 
C(\eta_0+\varepsilon^{\frac{1}{2}})\varepsilon^2.
\end{align*}
Carrying out the similar calculations as \eqref{4.30}, one can arrive at
\begin{align*}
|(\partial^{\alpha}[\frac{2}{3}(\frac{\theta}{\rho}-\frac{\bar{\theta}}{\bar{\rho}})\partial_{x_{i}}\rho],\partial^{\alpha}\widetilde{u}_i)|
\leq C\eta\varepsilon\|\partial_{x_i}\partial^{\alpha}\widetilde{u}_i\|^2
+C_\eta(\eta_0+\varepsilon^{\frac{1}{2}})\varepsilon^2.
\end{align*}
We shall carefully deal with the first term on the right hand side of \eqref{4.32}. 
By the definition of $D_{ij}$ in \eqref{def.vst}, we first write 
\begin{align*}
\varepsilon\sum^{3}_{j=1}(\partial^{\alpha}(\frac{1}{\rho}\partial_{x_{j}}[\mu(\theta)D_{ij}]),\partial^{\alpha}\widetilde{u}_i)
&=\varepsilon\sum^{3}_{j=1}(\partial^{\alpha}(\frac{1}{\rho}\partial_{x_{j}}[\mu(\theta)(\partial_{x_{j}}\widetilde{u}_{i}+\partial_{x_{i}}\widetilde{u}_{j}
-\frac{2}{3}\delta_{ij}\nabla_{x}\cdot\widetilde{u})]),\partial^{\alpha}\widetilde{u}_i)
\\
+\varepsilon\sum^{3}_{j=1}&(\partial^{\alpha}(\frac{1}{\rho}\partial_{x_{j}}[\mu(\theta)(\partial_{x_{j}}\bar{u}_{i}+\partial_{x_{i}}\bar{u}_{j}-\frac{2}{3}\delta_{ij}\nabla_{x}\cdot \bar{u})]),\partial^{\alpha}\widetilde{u}_i)
\\
&:=I_2+I_3.
\end{align*}
By the integration by parts, the term $I_2$ can be reduced to
\begin{align*}
I_2=&-\varepsilon\sum^{3}_{j=1}(\partial^{\alpha}[\frac{1}{\rho}\mu(\theta)(\partial_{x_{j}}\widetilde{u}_{i}+\partial_{x_{i}}\widetilde{u}_{j}
-\frac{2}{3}\delta_{ij}\nabla_{x}\cdot \widetilde{u})],\partial^{\alpha}\partial_{x_{j}}\widetilde{u}_{i})
\\
&-\varepsilon\sum^{3}_{j=1}(\partial^{\alpha}[\partial_{x_{j}}(\frac{1}{\rho})\mu(\theta)(\partial_{x_{j}}\widetilde{u}_{i}+\partial_{x_{i}}\widetilde{u}_{j}
-\frac{2}{3}\delta_{ij}\nabla_{x}\cdot\widetilde{u})],\partial^{\alpha}\widetilde{u}_{i})
\\
&:=I^{1}_2+I^{2}_2.
\end{align*}
To compute the term $I_2$, it only suffices to estimate 
$I^{1}_2$ and $I^{2}_2$. Note that
\begin{align*}
I^{1}_2=&-\varepsilon\sum^{3}_{j=1}(\frac{1}{\rho}\mu(\theta)(\partial^{\alpha}\partial_{x_{j}}\widetilde{u}_{i}+\partial^{\alpha}\partial_{x_{i}}\widetilde{u}_{j}
-\frac{2}{3}\delta_{ij}\nabla_{x}\cdot \partial^{\alpha}\widetilde{u}),\partial^{\alpha}\partial_{x_{j}}\widetilde{u}_{i})
\\
&-\varepsilon\sum^{3}_{j=1}\sum_{1\leq \alpha_{1}\leq\alpha}C_{\alpha}^{\alpha_{1}}(\partial^{\alpha_1}[\frac{1}{\rho}\mu(\theta)]
\partial^{\alpha-\alpha_1}(\partial_{x_{j}}\widetilde{u}_{i}+\partial_{x_{i}}\widetilde{u}_{j}-\frac{2}{3}\delta_{ij}\nabla_{x}\cdot\widetilde{u}),\partial^{\alpha}\partial_{x_{j}}\widetilde{u}_{i}).
\end{align*}	
Applying the Sobolev inequality, \eqref{1.7} and \eqref{3.1}, we have,  with $1\leq|\alpha_{1}|\leq |\alpha|/2$, 
\begin{align*}
&\varepsilon|(\partial^{\alpha_1}[\frac{1}{\rho}\mu(\theta)]\partial^{\alpha-\alpha_1}
(\partial_{x_{j}}\widetilde{u}_{i}+\partial_{x_{i}}\widetilde{u}_{j}-\frac{2}{3}\delta_{ij}\nabla_{x}\cdot\widetilde{u}),\partial^{\alpha}\partial_{x_{j}}\widetilde{u}_{i})|
\\
&\leq C\varepsilon \|\partial^{\alpha_1}[\frac{1}{\rho}\mu(\theta)]\|_{L^{\infty}}
\|\nabla_x\partial^{\alpha-\alpha_1}\widetilde{u}\|\|\partial^{\alpha}\partial_{x_{j}}\widetilde{u}_{i}\|
\\
&\leq C\|\{|\partial^{\alpha_1}(\rho,\theta)|+\cdot\cdot\cdot+|\nabla_{x}(\rho,\theta)|^{|\alpha_1|}\}\|_{L^{\infty}}\varepsilon^2
\\
&\leq C(\eta_{0}+\varepsilon^{\frac{1}{2}})\varepsilon^2.
\end{align*}
Likewise, the case $|\alpha|/2<|\alpha_{1}|\leq |\alpha|$ has the same bound
\begin{align*}
&\varepsilon|(\partial^{\alpha_1}[\frac{1}{\rho}\mu(\theta)]\partial^{\alpha-\alpha_1}(\partial_{x_{j}}\widetilde{u}_{i}+\partial_{x_{i}}\widetilde{u}_{j}-\frac{2}{3}\delta_{ij}\nabla_{x}\cdot\widetilde{u}),\partial^{\alpha}\partial_{x_{j}}\widetilde{u}_{i})|
\\
&\leq C\varepsilon\|\partial^{\alpha_1}[\frac{1}{\rho}\mu(\theta)]\|_{L^{3}}
\|\nabla_x\partial^{\alpha-\alpha_1}\widetilde{u}\|_{L^{6}}\|\partial^{\alpha}\partial_{x_{j}}\widetilde{u}_{i}\|_{L^{2}}
\\
&\leq C(\eta_{0}+\varepsilon^{\frac{1}{2}})\varepsilon^2.
\end{align*}
By the three estimates above, we can obtain the estimation for $I^{1}_2$ as
\begin{align*}
I^{1}_2\leq&-\varepsilon\sum^{3}_{j=1}(\frac{1}{\rho}\mu(\theta)(\partial^{\alpha}\partial_{x_{j}}\widetilde{u}_{i}+\partial^{\alpha}\partial_{x_{i}}\widetilde{u}_{j}
-\frac{2}{3}\delta_{ij}\nabla_{x}\cdot \partial^{\alpha}\widetilde{u}),\partial^{\alpha}\partial_{x_{j}}\widetilde{u}_{i})+C(\eta_{0}+\varepsilon^{\frac{1}{2}})\varepsilon^2.
\end{align*}		
The term $I^{2}_2$ can be handled in the similar manner and it can be controlled by $C(\eta_{0}+\varepsilon^{\frac{1}{2}})\varepsilon^2$. We thereby obtain
\begin{align*}
I_2\leq&-\varepsilon\sum^{3}_{j=1}(\frac{1}{\rho}\mu(\theta)(\partial^{\alpha}\partial_{x_{j}}\widetilde{u}_{i}+\partial^{\alpha}\partial_{x_{i}}\widetilde{u}_{j}
-\frac{2}{3}\delta_{ij}\nabla_{x}\cdot \partial^{\alpha}\widetilde{u}),\partial^{\alpha}\partial_{x_{j}}\widetilde{u}_{i})+C(\eta_{0}+\varepsilon^{\frac{1}{2}})\varepsilon^2.
\end{align*}
The estimations for $I_3$ is easier and it is dominated by $C(\eta_{0}+\varepsilon^{\frac{1}{2}})\varepsilon^2$.
Hence, we can conclude from the above estimates
on  $I_2$ and  $I_3$ that
\begin{align}
\label{4.33}
&\varepsilon\sum^{3}_{j=1}(\partial^{\alpha}(\frac{1}{\rho}\partial_{x_{j}}[\mu(\theta)D_{ij}]),\partial^{\alpha}\widetilde{u}_i)
\nonumber\\
\leq&
-\varepsilon\sum^{3}_{j=1}(\frac{1}{\rho}\mu(\theta)(\partial^{\alpha}\partial_{x_{j}}\widetilde{u}_{i}
+\partial^{\alpha}\partial_{x_{i}}\widetilde{u}_{j}
-\frac{2}{3}\delta_{ij}\nabla_{x}\cdot \partial^{\alpha}\widetilde{u}),\partial^{\alpha}\partial_{x_{j}}\widetilde{u}_{i})+C(\eta_{0}+\varepsilon^{\frac{1}{2}})\varepsilon^2.
\end{align}

We still need to estimate the last term of \eqref{4.32}. In light of \eqref{4.14} and the integration by parts, we write
\begin{align}
\label{4.34a}
-&(\partial^{\alpha}[\frac{1}{\rho}\int_{\mathbb{R}^{3}}v_{i}v\cdot\nabla_{x}L^{-1}_{M}\Theta\,dv],\partial^{\alpha}\widetilde{u}_i)
\nonumber\\
=&-\sum^{3}_{j=1}(\partial^{\alpha}[\frac{1}{\rho}\partial_{x_j}(\int_{\mathbb{R}^{3}}R\theta B_{ij}(\frac{v-u}{\sqrt{R\theta}})\frac{\Theta}{M}\, dv)],\partial^{\alpha}\widetilde{u}_i)
\nonumber\\
=&\sum^{3}_{j=1}(\partial^{\alpha}[\partial_{x_j}(\frac{1}{\rho})\int_{\mathbb{R}^{3}}R\theta B_{ij}(\frac{v-u}{\sqrt{R\theta}})\frac{\Theta}{M}\, dv],\partial^{\alpha}\widetilde{u}_i)
\nonumber\\
&+\sum^{3}_{j=1}(\partial^{\alpha}[\frac{1}{\rho}\int_{\mathbb{R}^{3}}R\theta B_{ij}(\frac{v-u}{\sqrt{R\theta}})\frac{\Theta}{M}\, dv],\partial^{\alpha}\partial_{x_j}\widetilde{u}_i).
\end{align}
Consider the first term on the right hand side of \eqref{4.34a} associated with $\Theta:=\varepsilon \partial_tG+\varepsilon P_{1}(v\cdot\nabla_{x}G)-Q(G,G)$.
Thanks to  $1\leq|\alpha|\leq N-1$, applying \eqref{4.16}, the similar arguments as \eqref{5.5}, Lemma \ref{lem5.10}, \eqref{4.18}, \eqref{1.7}, \eqref{3.1},
as well as the Cauchy-Schwarz and Sobolev inequalities, we arrive at
\begin{align*}
\sum^{3}_{j=1}\int_{\mathbb{R}^{3}}\Big\{\partial^{\alpha}[\partial_{x_j}(\frac{1}{\rho})\int_{\mathbb{R}^{3}}R\theta B_{ij}(\frac{v-u}{\sqrt{R\theta}})\frac{\varepsilon\partial_t\overline{G}}{M}\, dv]\,\partial^{\alpha}\widetilde{u}_i\Big\}\,dx
\leq C(\eta_0+\varepsilon^{\frac{1}{2}})
\varepsilon^2.
\end{align*}
We use an integration by parts  about $t$ to get
\begin{align*}
&\sum^{3}_{j=1}\int_{\mathbb{R}^{3}}\Big\{\partial^{\alpha}[\partial_{x_j}(\frac{1}{\rho})\int_{\mathbb{R}^{3}}R\theta B_{ij}(\frac{v-u}{\sqrt{R\theta}})\frac{\varepsilon\sqrt{\mu}\partial_tf}{M}\, dv]\,\partial^{\alpha}\widetilde{u}_i\Big\}\,dx
\nonumber\\
=&\sum^{3}_{j=1}\frac{d}{dt}\int_{\mathbb{R}^{3}}\int_{\mathbb{R}^{3}}\partial^{\alpha}
\Big\{[\partial_{x_j}(\frac{1}{\rho})R\theta B_{ij}(\frac{v-u}{\sqrt{R\theta}})\frac{\varepsilon\sqrt{\mu}}{M}]f\Big\}\partial^{\alpha}\widetilde{u}_i\,dv\,dx
\nonumber\\
&-\sum^{3}_{j=1}\int_{\mathbb{R}^{3}}\int_{\mathbb{R}^{3}}\partial^{\alpha}
\Big\{\partial_t[\partial_{x_j}(\frac{1}{\rho})R\theta B_{ij}(\frac{v-u}{\sqrt{R\theta}})\frac{\varepsilon\sqrt{\mu}}{M}]f\Big\}\partial^{\alpha}\widetilde{u}_i\,dv\,dx
\nonumber\\
&-\sum^{3}_{j=1}\int_{\mathbb{R}^{3}}\int_{\mathbb{R}^{3}}\partial^{\alpha}
\Big\{[\partial_{x_j}(\frac{1}{\rho})R\theta B_{ij}(\frac{v-u}{\sqrt{R\theta}})\frac{\varepsilon\sqrt{\mu}}{M}]f\Big\}\partial^{\alpha}\partial_t\widetilde{u}_{i}\,dv\,dx.
\end{align*}
Denote $\partial^{e_i}=\partial_{x_i}$ with $|e_i|=1$,
we have from this and the integration by parts that
\begin{align*}
&-\sum^{3}_{j=1}\int_{\mathbb{R}^{3}}\int_{\mathbb{R}^{3}}\partial^{\alpha}
\Big\{\partial_t[\partial_{x_j}(\frac{1}{\rho})R\theta B_{ij}(\frac{v-u}{\sqrt{R\theta}})\frac{\varepsilon\sqrt{\mu}}{M}]f\Big\}\partial^{\alpha}\widetilde{u}_i\,dv\,dx
\\
=&\sum^{3}_{j=1}\int_{\mathbb{R}^{3}}\int_{\mathbb{R}^{3}}\partial^{\alpha-e_i}
\Big\{\partial_t[\partial_{x_j}(\frac{1}{\rho})R\theta B_{ij}(\frac{v-u}{\sqrt{R\theta}})\frac{\varepsilon\sqrt{\mu}}{M}]f\Big\}\partial^{\alpha+e_i}\widetilde{u}_i\,dv\,dx
\\
\leq&\eta\varepsilon\|\partial^{\alpha+e_i}\widetilde{u}_i\|^{2}+C_\eta\varepsilon
\sum^{3}_{j=1}\int_{\mathbb{R}^{3}}|\int_{\mathbb{R}^{3}}\partial^{\alpha-e_i}
\Big\{\partial_t[\partial_{x_j}(\frac{1}{\rho})R\theta B_{ij}(\frac{v-u}{\sqrt{R\theta}})\frac{\sqrt{\mu}}{M}]f\Big\}\,dv|^{2}\,dx
\\
\leq& C \eta\varepsilon\|\partial^{\alpha}\nabla_{x}\widetilde{u}_i\|^{2}+C_\eta(\eta_{0}+\varepsilon^{\frac{1}{2}})\varepsilon^{2},
\end{align*}
according to  \eqref{4.16}, Lemma \ref{lem5.10}, \eqref{4.18}, \eqref{1.7} and \eqref{3.1}. Similarly, it also holds that
\begin{align*}
&-\sum^{3}_{j=1}\int_{\mathbb{R}^{3}}\int_{\mathbb{R}^{3}}\partial^{\alpha}
\Big\{[\partial_{x_j}(\frac{1}{\rho})R\theta B_{ij}(\frac{v-u}{\sqrt{R\theta}})\frac{\varepsilon\sqrt{\mu}}{M}]f\Big\}\partial^{\alpha}\partial_t\widetilde{u}_{i}\,dv\,dx
\\
\leq& \eta\varepsilon\|\partial^{\alpha}\partial_t\widetilde{u}_{i}\|^{2}+C_\eta\varepsilon
\sum^{3}_{j=1}\int_{\mathbb{R}^{3}}|\int_{\mathbb{R}^{3}}\partial^{\alpha}
\Big\{[\partial_{x_j}(\frac{1}{\rho})R\theta B_{ij}(\frac{v-u}{\sqrt{R\theta}})\frac{\sqrt{\mu}}{M}]f\Big\}\,dv|^{2}\,dx
\\
\leq& C\eta\varepsilon(\|\partial^{\alpha}
\nabla_x(\widetilde{\rho},\widetilde{u},\widetilde{\theta})\|^{2}+\|\partial^{\alpha}\nabla_x f\|_\sigma^{2})+C_\eta(\eta_{0}+\varepsilon^{\frac{1}{2}})\varepsilon^{2}.
\end{align*}
With these estimates, it is clear to see that
\begin{align*}
&\sum^{3}_{j=1}\int_{\mathbb{R}^{3}}\Big\{\partial^{\alpha}[\partial_{x_j}(\frac{1}{\rho})\int_{\mathbb{R}^{3}}R\theta B_{ij}(\frac{v-u}{\sqrt{R\theta}})\frac{\varepsilon\sqrt{\mu}\partial_tf}{M}\, dv]\,\partial^{\alpha}\widetilde{u}_i\Big\}\,dx
\nonumber\\
\leq&\sum^{3}_{j=1}\frac{d}{dt}\int_{\mathbb{R}^{3}}\int_{\mathbb{R}^{3}}\partial^{\alpha}
\Big\{[\partial_{x_j}(\frac{1}{\rho})R\theta B_{ij}(\frac{v-u}{\sqrt{R\theta}})\frac{\varepsilon\sqrt{\mu}}{M}]f\Big\}\partial^{\alpha}\widetilde{u}_i\,dv\,dx
\\
&+C\eta\varepsilon(\|\partial^{\alpha}\nabla_x(\widetilde{\rho},\widetilde{u},\widetilde{\theta})\|^{2}+\|\partial^{\alpha}\nabla_x f\|_\sigma^{2})+C_\eta(\eta_{0}+\varepsilon^{\frac{1}{2}})\varepsilon^{2}.
\end{align*}
We therefore can conclude from the above estimates and  $G=\overline{G}+\sqrt{\mu}f$ that
\begin{align*}
\sum^{3}_{j=1}&\int_{\mathbb{R}^{3}}\Big\{\partial^{\alpha}[\partial_{x_j}(\frac{1}{\rho})\int_{\mathbb{R}^{3}}R\theta B_{ij}(\frac{v-u}{\sqrt{R\theta}})\frac{\varepsilon \partial_tG}{M}\, dv]\,\partial^{\alpha}\widetilde{u}_i\Big\}\,dx
\\
\leq&\sum^{3}_{j=1}\frac{d}{dt}\int_{\mathbb{R}^{3}}\int_{\mathbb{R}^{3}}\partial^{\alpha}
\Big\{[\partial_{x_j}(\frac{1}{\rho})R\theta B_{ij}(\frac{v-u}{\sqrt{R\theta}})\frac{\varepsilon\sqrt{\mu}}{M}]f\Big\}\partial^{\alpha}\widetilde{u}_i\,dv\,dx
\\
&+C\eta\varepsilon(\|\partial^{\alpha}
\nabla_x(\widetilde{\rho},\widetilde{u},\widetilde{\theta})\|^{2}+\|\partial^{\alpha}\nabla_x f\|_\sigma^{2})+C_\eta(\eta_{0}+\varepsilon^{\frac{1}{2}})\varepsilon^{2}.
\end{align*}
The second term in $\Theta$ can be handled in the similar manner and it can be controlled by
\begin{align*}
&\sum^{3}_{j=1}\int_{\mathbb{R}^{3}}\Big\{\partial^{\alpha}[\partial_{x_j}(\frac{1}{\rho})\int_{\mathbb{R}^{3}}R\theta B_{ij}(\frac{v-u}{\sqrt{R\theta}})\frac{\varepsilon P_{1}(v\cdot\nabla_{x}G)}{M}\, dv]\,\partial^{\alpha}\widetilde{u}_i\Big\}\,dx
\leq
C(\eta_{0}+\varepsilon^{\frac{1}{2}})\varepsilon^{2}.
\end{align*}
For the last term in $\Theta$, it holds by using \eqref{2.19}, \eqref{5.11}, \eqref{4.16}, Lemma \ref{lem5.4}, \eqref{1.7} and \eqref{3.1} that
\begin{align*}
&-\sum^{3}_{j=1}\int_{\mathbb{R}^{3}}\Big\{\partial^{\alpha}[\partial_{x_j}(\frac{1}{\rho})\int_{\mathbb{R}^{3}}R\theta B_{ij}(\frac{v-u}{\sqrt{R\theta}})\frac{Q(G,G)}{M}\, dv]\,\partial^{\alpha}\widetilde{u}_i\Big\}\,dx	
\\
&=-\sum^{3}_{j=1}\int_{\mathbb{R}^{3}}\Big\{\partial^{\alpha}[\partial_{x_j}(\frac{1}{\rho})\int_{\mathbb{R}^{3}}R\theta B_{ij}(\frac{v-u}{\sqrt{R\theta}})\frac{\sqrt{\mu}}{M}\Gamma(\frac{G}{\sqrt{\mu}},\frac{G}{\sqrt{\mu}})\, dv]\,\partial^{\alpha}\widetilde{u}_i\Big\}\,dx
\\	
&\leq C(\eta_{0}+\varepsilon^{\frac{1}{2}})\varepsilon^{2}+C(\eta_{0}+\varepsilon^{\frac{1}{2}})\mathcal{D}_N(t).
\end{align*}
Collecting the above estimates, we can obtain with any small $\eta>0$ that
\begin{align}
\label{4.34}
&\sum^{3}_{j=1}(\partial^{\alpha}[\partial_{x_j}(\frac{1}{\rho})\int_{\mathbb{R}^{3}}R\theta B_{ij}(\frac{v-u}{\sqrt{R\theta}})\frac{\Theta}{M}\, dv],\partial^{\alpha}\widetilde{u}_i)
\nonumber\\
\leq& 
\sum^{3}_{j=1}\frac{d}{dt}\int_{\mathbb{R}^{3}}\int_{\mathbb{R}^{3}}\partial^{\alpha}
\Big\{[\partial_{x_j}(\frac{1}{\rho})R\theta B_{ij}(\frac{v-u}{\sqrt{R\theta}})\frac{\varepsilon\sqrt{\mu}}{M}]f\Big\}\partial^{\alpha}\widetilde{u}_i\,dv\,dx
\nonumber\\
&+C\eta\varepsilon(\|\partial^{\alpha}
\nabla_x(\widetilde{\rho},\widetilde{u},\widetilde{\theta})\|^{2}+\|\partial^{\alpha}\nabla_x f\|_\sigma^{2})+C_\eta(\eta_{0}+\varepsilon^{\frac{1}{2}})\varepsilon^{2}
+C(\eta_{0}+\varepsilon^{\frac{1}{2}})\mathcal{D}_N(t).
\end{align}
The second term of \eqref{4.34a} has the same structure as the first term and it shares the same bound. For brevity, we directly give the following computations:
\begin{align*}
&\sum^{3}_{j=1}(\partial^{\alpha}[\frac{1}{\rho}\int_{\mathbb{R}^{3}}R\theta B_{ij}(\frac{v-u}{\sqrt{R\theta}})\frac{\varepsilon \partial_tG}{M}\, dv],\partial^{\alpha}\partial_{x_j}\widetilde{u}_i)
\\
\leq&
-\sum^{3}_{j=1}\frac{d}{dt}\int_{\mathbb{R}^{3}}\int_{\mathbb{R}^{3}}
\Big\{\partial^{\alpha}\partial_{x_j}[\frac{1}{\rho}R\theta B_{ij}(\frac{v-u}{\sqrt{R\theta}})\frac{\varepsilon \sqrt{\mu}f}{M}]
\partial^{\alpha}\widetilde{u}_i\Big\}\,dv\,dx
\\
&+C\eta\varepsilon\|\partial^{\alpha}
\nabla_x(\widetilde{\rho},\widetilde{u},\widetilde{\theta})\|^{2}+C_\eta\varepsilon\|\partial^{\alpha}\nabla_x f\|_\sigma^{2}+C_\eta(\eta_{0}+\varepsilon^{\frac{1}{2}})\varepsilon^{2},
\end{align*}
and
\begin{align*}
\sum^{3}_{j=1}&|(\partial^{\alpha}[\frac{1}{\rho}\int_{\mathbb{R}^{3}}R\theta B_{ij}(\frac{v-u}{\sqrt{R\theta}})\frac{\varepsilon P_{1}(v\cdot\nabla_{x}G)}{M}\,dv],\partial^{\alpha}\partial_{x_j}\widetilde{u}_i)|
\\
\leq& C\eta\varepsilon\|\partial^{\alpha}\nabla_{x}\widetilde{u}\|^2+ C_\eta\varepsilon\|\partial^{\alpha}\nabla_xf\|^2_\sigma+C_\eta(\eta_{0}+\varepsilon^{\frac{1}{2}})\varepsilon^{2},
\end{align*}
as well as
\begin{align*}
&\sum^{3}_{j=1}|(\partial^{\alpha}[\frac{1}{\rho}\int_{\mathbb{R}^{3}}R\theta B_{ij}(\frac{v-u}{\sqrt{R\theta}})\frac{Q(G,G)}{M}\,dv],\partial^{\alpha}\partial_{x_j}\widetilde{u}_i)|			
\\
\leq& C\eta\varepsilon\|\partial^{\alpha}\nabla_{x}\widetilde{u}\|^2+C_\eta(\eta_{0}+\varepsilon^{\frac{1}{2}})\varepsilon^{2}
+C_\eta(\eta_{0}+\varepsilon^{\frac{1}{2}})\mathcal{D}_N(t).
\end{align*}
Then for any small $\eta>0$, the second term of \eqref{4.34a} is bounded by
\begin{align*}
&\sum^{3}_{j=1}(\partial^{\alpha}[\frac{1}{\rho}\int_{\mathbb{R}^{3}}R\theta B_{ij}(\frac{v-u}{\sqrt{R\theta}})\frac{\Theta}{M}\, dv],\partial^{\alpha}\partial_{x_j}\widetilde{u}_i)
\nonumber\\
\leq&-
\sum^{3}_{j=1}\frac{d}{dt}\int_{\mathbb{R}^{3}}\int_{\mathbb{R}^{3}}
\Big\{\partial^{\alpha}\partial_{x_j}[\frac{1}{\rho}R\theta B_{ij}(\frac{v-u}{\sqrt{R\theta}})\frac{\varepsilon \sqrt{\mu}f}{M}]
\partial^{\alpha}\widetilde{u}_i\Big\}\,dv\,dx
\nonumber\\
&+C\eta\varepsilon\|\partial^{\alpha}
\nabla_x(\widetilde{\rho},\widetilde{u},\widetilde{\theta})\|^{2}+C_\eta\varepsilon\|\partial^{\alpha}\nabla_x f\|_\sigma^{2}
+C_\eta(\eta_{0}+\varepsilon^{\frac{1}{2}})\varepsilon^{2}
+C_\eta(\eta_{0}+\varepsilon^{\frac{1}{2}})\mathcal{D}_N(t).
\end{align*}
This estimate, combined with \eqref{4.34}, as well as \eqref{4.34a}, gives
\begin{align*}
-&(\partial^{\alpha}[\frac{1}{\rho}\int_{\mathbb{R}^{3}}v_{i}v\cdot\nabla_{x}L^{-1}_{M}\Theta\,dv],\partial^{\alpha}\widetilde{u}_i)
\nonumber\\
\leq&-
\sum^{3}_{j=1}\frac{d}{dt}\int_{\mathbb{R}^{3}}\int_{\mathbb{R}^{3}}
\Big\{\partial^{\alpha}[\frac{1}{\rho}\partial_{x_j}(R\theta B_{ij}(\frac{v-u}{\sqrt{R\theta}})\frac{\varepsilon \sqrt{\mu}f}{M})]
\partial^{\alpha}\widetilde{u}_i\Big\}\,dv\,dx
\nonumber\\
&+C\eta\varepsilon\|\partial^{\alpha}
\nabla_x(\widetilde{\rho},\widetilde{u},\widetilde{\theta})\|^{2}+C_\eta\varepsilon\|\partial^{\alpha}\nabla_x f\|_\sigma^{2}
+C_\eta(\eta_{0}+\varepsilon^{\frac{1}{2}})\varepsilon^{2}
+C_\eta(\eta_{0}+\varepsilon^{\frac{1}{2}})\mathcal{D}_N(t).
\end{align*}
For $1\leq|\alpha|\leq N-1$
and any small $\eta>0$,
plugging all the estimates above into \eqref{4.32}  and summing $i$ from 1 to 3, we obtain 
\begin{align}
\label{4.35}
&\frac{1}{2}\frac{d}{dt}\|\partial^{\alpha}\widetilde{u}\|^{2}+
(\frac{2\bar{\theta}}{3\bar{\rho}}\partial^{\alpha}\nabla_{x}\widetilde{\rho},\partial^{\alpha}\widetilde{u})+(\frac{2}{3}\partial^{\alpha}\nabla_{x}\widetilde{\theta},\partial^{\alpha}\widetilde{u})
+c\varepsilon\|\partial^{\alpha}\nabla_{x}\widetilde{u}\|^{2}
\nonumber\\
&+\sum^{3}_{i,j=1}\frac{d}{dt}\int_{\mathbb{R}^{3}}\int_{\mathbb{R}^{3}}
\Big\{\partial^{\alpha}[\frac{1}{\rho}\partial_{x_j}(R\theta B_{ij}(\frac{v-u}{\sqrt{R\theta}})\frac{\varepsilon \sqrt{\mu}f}{M})]
\partial^{\alpha}\widetilde{u}_i\Big\}\,dv\,dx
\nonumber\\
\leq&C\eta\varepsilon\|\partial^{\alpha}
\nabla_x(\widetilde{\rho},\widetilde{u},\widetilde{\theta})\|^{2}+C_\eta\varepsilon\|\partial^{\alpha}\nabla_x f\|_\sigma^{2}
+C_\eta(\eta_{0}+\varepsilon^{\frac{1}{2}})\varepsilon^{2}
+C_\eta(\eta_{0}+\varepsilon^{\frac{1}{2}})\mathcal{D}_N(t).
\end{align}
Here the following crucial inequality has been  used
\begin{align*}
&\varepsilon\sum^{3}_{i,j=1}(\frac{1}{\rho}\mu(\theta)(\partial^{\alpha}\partial_{x_{j}}\widetilde{u}_{i}+\partial^{\alpha}\partial_{x_{i}}\widetilde{u}_{j}
-\frac{2}{3}\delta_{ij}\nabla_{x}\cdot \partial^{\alpha}\widetilde{u}),\partial^{\alpha}\partial_{x_{j}}\widetilde{u}_{i})
\\
=&\varepsilon\sum^{3}_{i,j=1}(\frac{\mu(\theta)}{\rho}\partial^{\alpha}\partial_{x_{j}}\widetilde{u}_{i},\partial^{\alpha}\partial_{x_{j}}\widetilde{u}_{i})
+\frac{1}{3}\varepsilon\sum^{3}_{i,j=1}(\frac{\mu(\theta)}{\rho}\partial^{\alpha}\partial_{x_{j}}\widetilde{u}_{j},\partial^{\alpha}\partial_{x_{i}}\widetilde{u}_{i})
\\
&-\varepsilon\sum^{3}_{i,j=1}(\partial_{x_{j}}[\frac{\mu(\theta)}{\rho}]\partial^{\alpha}\partial_{x_{i}}\widetilde{u}_{j},\partial^{\alpha}\widetilde{u}_{i})
+\varepsilon\sum^{3}_{i,j=1}(\partial_{x_{i}}[\frac{\mu(\theta)}{\rho}]\partial^{\alpha}\partial_{x_{j}}\widetilde{u}_{j},\partial^{\alpha}\widetilde{u}_{i})
\\
&\geq c\varepsilon\|\partial^{\alpha}\nabla_{x}\widetilde{u}\|^{2}-C(\eta_0+\varepsilon^{\frac{1}{2}})\varepsilon^{2}.
\end{align*}

\medskip
\noindent{\it {Step 3.}}
Let us now turn to consider the  third equation of  \eqref{2.23}. Applying $\partial^{\alpha}$
with $1\leq|\alpha|\leq N-1$ to  the third equation of \eqref{2.23} and taking the inner product of the resulting equation with 
$\frac{1}{\bar{\theta}}\partial^{\alpha}\widetilde{\theta}$ yields
\begin{align}
\label{4.36}
&(\partial_{t}\partial^{\alpha}\widetilde{\theta},\frac{1}{\bar{\theta}}\partial^{\alpha}\widetilde{\theta})+(\frac{2}{3}\nabla_{x}\cdot \partial^{\alpha}\widetilde{u},\partial^{\alpha}\widetilde{\theta})+\sum_{1\leq \alpha_{1}\leq \alpha}C_{\alpha}^{\alpha_{1}}(\frac{2}{3}\partial^{\alpha_{1}}\bar{\theta}\nabla_{x}\cdot \partial^{\alpha-\alpha_{1}}\widetilde{u},\frac{1}{\bar{\theta}}\partial^{\alpha}\widetilde{\theta})
\nonumber\\
=&-(\partial^{\alpha}(u\cdot\nabla_{x}\widetilde{\theta}),\frac{1}{\bar{\theta}}\partial^{\alpha}\widetilde{\theta})-(\partial^{\alpha}(\widetilde{u}\cdot\nabla_{x}\bar{\theta}),\frac{1}{\bar{\theta}}\partial^{\alpha}\widetilde{\theta})
-(\frac{2}{3}\partial^{\alpha}(\widetilde{\theta}\nabla_{x}\cdot u)
,\frac{1}{\bar{\theta}}\partial^{\alpha}\widetilde{\theta})\nonumber\\
&
+\varepsilon\sum^{3}_{j=1}(\partial^{\alpha}[\frac{1}{\rho}\partial_{x_{j}}(\kappa(\theta)\partial_{x_{j}}\theta)],\frac{1}{\bar{\theta}}\partial^{\alpha}\widetilde{\theta})
+\varepsilon\sum^{3}_{i,j=1}(\partial^{\alpha}[\frac{1}{\rho}\mu(\theta)\partial_{x_{j}}u_{i}D_{ij}],\frac{1}{\bar{\theta}}\partial^{\alpha}\widetilde{\theta})
\notag\\
&-(\partial^{\alpha}[\frac{1}{\rho}\int_{\mathbb{R}^{3}} \frac{1}{2}|v|^{2} v\cdot\nabla_{x}L^{-1}_{M}\Theta\,dv],\frac{1}{\bar{\theta}}\partial^{\alpha}\widetilde{\theta})
+(\partial^{\alpha}[\frac{1}{\rho}u\cdot\int_{\mathbb{R}^{3}} v\otimes v\cdot\nabla_{x}L^{-1}_{M}\Theta\,dv]
,\frac{1}{\bar{\theta}}\partial^{\alpha}\widetilde{\theta}).
\end{align}
We will estimate each term for \eqref{4.36}. We use the Sobolev inequality, \eqref{4.18} and  \eqref{3.1} to get
\begin{align*}
(\partial_{t}\partial^{\alpha}\widetilde{\theta},\frac{1}{\bar{\theta}}\partial^{\alpha}\widetilde{\theta})&=\frac{1}{2}\frac{d}{dt}(\partial^{\alpha}\widetilde{\theta},\frac{1}{\bar{\theta}}\partial^{\alpha}\widetilde{\theta})-\frac{1}{2}(\partial^{\alpha}\widetilde{\theta},\partial_{t}(\frac{1}{\bar{\theta}})\partial^{\alpha}\widetilde{\theta})
\\
&\geq\frac{1}{2}\frac{d}{dt}(\partial^{\alpha}\widetilde{\theta},\frac{1}{\bar{\theta}}\partial^{\alpha}\widetilde{\theta})
-C\|\partial_t\bar{\theta}\|_{\infty}\|\partial^{\alpha}\widetilde{\theta}\|^2\geq \frac{1}{2}\frac{d}{dt}(\partial^{\alpha}\widetilde{\theta},\frac{1}{\bar{\theta}}\partial^{\alpha}\widetilde{\theta})-C\eta_0\varepsilon^2.
\end{align*}
Performing calculations similar to those for \eqref{4.28}, \eqref{4.29} and \eqref{4.30}, we get
\begin{multline*}
\sum_{1\leq\alpha_{1}\leq \alpha} C_{\alpha}^{\alpha_{1}}|(\frac{2}{3}\partial^{\alpha_{1}}\bar{\theta}\nabla_{x}\cdot \partial^{\alpha-\alpha_{1}}\widetilde{u},\frac{1}{\bar{\theta}}\partial^{\alpha}\widetilde{\theta})|
+|(\partial^{\alpha}(u\cdot\nabla_{x}\widetilde{\theta}),\frac{1}{\bar{\theta}}\partial^{\alpha}\widetilde{\theta})|
\\
+|(\partial^{\alpha}(\widetilde{u}\cdot\nabla_{x}\bar{\theta}),\frac{1}{\bar{\theta}}\partial^{\alpha}\widetilde{\theta})|
+|(\frac{2}{3}\partial^{\alpha}(\widetilde{\theta}\nabla_{x}\cdot u)
,\frac{1}{\bar{\theta}}\partial^{\alpha}\widetilde{\theta})|
\leq C\eta\varepsilon\|\nabla_x\cdot\partial^{\alpha}\widetilde{u}\|^2+C_\eta(\eta_0+\varepsilon^{\frac{1}{2}})\varepsilon^2.
\end{multline*}
The fourth term on the right hand side of \eqref{4.36} can be handled in the same way as in
\eqref{4.33}, it follows that
\begin{align*}
\varepsilon\sum^{3}_{j=1}(\partial^{\alpha}[\frac{1}{\rho}\partial_{x_{j}}(\kappa(\theta)\partial_{x_{j}}\theta)],\frac{1}{\bar{\theta}}\partial^{\alpha}\widetilde{\theta})
&\leq-\varepsilon\sum^{3}_{j=1}(\frac{1}{\rho}\kappa(\theta)\partial^{\alpha}\partial_{x_{j}}\widetilde{\theta},
\frac{1}{\bar{\theta}}\partial^{\alpha}\partial_{x_{j}}\widetilde{\theta})
+C(\eta_0+\varepsilon^{\frac{1}{2}})\varepsilon^2.
\end{align*}
The fifth term on the right hand side of \eqref{4.36} is controlled by
$C(\eta_0+\varepsilon^{\frac{1}{2}})\varepsilon^2$.
Since the structure of the last two terms in \eqref{4.36} is almost the same as \eqref{4.34a}, we thus  arrive at
\begin{align*}
-&(\partial^{\alpha}[\frac{1}{\rho}\int_{\mathbb{R}^{3}} \frac{1}{2}|v|^{2} v\cdot\nabla_{x}L^{-1}_{M}\Theta\,dv],\frac{1}{\bar{\theta}}\partial^{\alpha}\widetilde{\theta})
+(\partial^{\alpha}[\frac{1}{\rho}u\cdot\int_{\mathbb{R}^{3}} v\otimes v\cdot\nabla_{x}L^{-1}_{M}\Theta\,dv]
,\frac{1}{\bar{\theta}}\partial^{\alpha}\widetilde{\theta})
\\
=&-\sum^{3}_{i=1}
(\partial^{\alpha}[\frac{1}{\rho}\partial_{x_i}(\int_{\mathbb{R}^{3}}
(\frac{1}{2}|v|^{2}v_i-u\cdot vv_{i})L^{-1}_{M}\Theta\,dv)],\frac{1}{\bar{\theta}}\partial^{\alpha}\widetilde{\theta})\\
&\quad-\sum^{3}_{i=1}
(\partial^{\alpha}[\frac{1}{\rho}\int_{\mathbb{R}^{3}}
\partial_{x_i}u\cdot vv_{i}L^{-1}_{M}\Theta\,dv],\frac{1}{\bar{\theta}}\partial^{\alpha}\widetilde{\theta})
\\
=&-\sum^{3}_{i=1}
(\partial^{\alpha}[\frac{1}{\rho}\partial_{x_i}
((R\theta)^{\frac{3}{2}}\int_{\mathbb{R}^{3}}A_{i}(\frac{v-u}{\sqrt{R\theta}})\frac{\Theta}{M}\,dv)],\frac{1}{\bar{\theta}}\partial^{\alpha}\widetilde{\theta})\\
&\quad-\sum^{3}_{i,j=1}
(\partial^{\alpha}[\frac{1}{\rho}
\partial_{x_i}u_j R\theta\int_{\mathbb{R}^{3}}B_{ij}(\frac{v-u}{\sqrt{R\theta}})\frac{\Theta}{M}\,dv],\frac{1}{\bar{\theta}}\partial^{\alpha}\widetilde{\theta}),
\nonumber
\end{align*}
which can be further bounded by
\begin{align*}
&
-\sum^{3}_{i=1}\frac{d}{dt}\int_{\mathbb{R}^{3}}\int_{\mathbb{R}^{3}}
\Big\{\partial^{\alpha}[\frac{1}{\rho}\partial_{x_i}
((R\theta)^{\frac{3}{2}}A_{i}(\frac{v-u}{\sqrt{R\theta}})\frac{\varepsilon\sqrt{\mu}f}{M})]\frac{1}{\bar{\theta}}\partial^{\alpha}\widetilde{\theta}\Big\}\,dv\,dx
\nonumber\\	
&-\sum^{3}_{i,j=1}\frac{d}{dt}\int_{\mathbb{R}^{3}}\int_{\mathbb{R}^{3}}
\Big\{\partial^{\alpha}[\frac{1}{\rho}
\partial_{x_i}u_j R\theta B_{ij}(\frac{v-u}{\sqrt{R\theta}})\frac{\varepsilon\sqrt{\mu}f}{M}]\frac{1}{\bar{\theta}}\partial^{\alpha}\widetilde{\theta}\Big\}\,dv\,dx
\nonumber\\	
&+C\eta\varepsilon\|\partial^{\alpha}
\nabla_x(\widetilde{\rho},\widetilde{u},\widetilde{\theta})\|^{2}+C_\eta\varepsilon\|\partial^{\alpha}\nabla_x f\|_\sigma^{2}
+C_\eta(\eta_{0}+\varepsilon^{\frac{1}{2}})\varepsilon^{2}
+C_\eta(\eta_{0}+\varepsilon^{\frac{1}{2}})\mathcal{D}_N(t).
\end{align*}
Hence, substituting the above estimates into \eqref{4.36}, we have established, for $1\leq|\alpha|\leq N-1$ and any small $\eta>0$,
\begin{align}
\label{4.37}
&\frac{1}{2}\frac{d}{dt}(\partial^{\alpha}\widetilde{\theta},\frac{1}{\bar{\theta}}\partial^{\alpha}\widetilde{\theta})+(\frac{2}{3}\nabla_{x}\cdot \partial^{\alpha}\widetilde{u},\partial^{\alpha}\widetilde{\theta})
+\varepsilon(\frac{1}{\rho}\kappa(\theta)\partial^{\alpha}\nabla_{x}\widetilde{\theta},\frac{1}{\bar{\theta}}\partial^{\alpha}\nabla_{x}\widetilde{\theta})
\nonumber\\
&+\sum^{3}_{i=1}\frac{d}{dt}\int_{\mathbb{R}^{3}}\int_{\mathbb{R}^{3}}
\Big\{\partial^{\alpha}[\frac{1}{\rho}\partial_{x_i}
((R\theta)^{\frac{3}{2}}A_{i}(\frac{v-u}{\sqrt{R\theta}})\frac{\varepsilon\sqrt{\mu}f}{M})]\frac{1}{\bar{\theta}}\partial^{\alpha}\widetilde{\theta}\Big\}\,dv\,dx
\nonumber\\	
&+\sum^{3}_{i,j=1}\frac{d}{dt}\int_{\mathbb{R}^{3}}\int_{\mathbb{R}^{3}}
\Big\{\partial^{\alpha}[\frac{1}{\rho}
\partial_{x_i}u_j R\theta B_{ij}(\frac{v-u}{\sqrt{R\theta}})\frac{\varepsilon\sqrt{\mu}f}{M}]\frac{1}{\bar{\theta}}\partial^{\alpha}\widetilde{\theta}\Big\}\,dv\,dx
\nonumber\\	
&\leq C\eta\varepsilon\|\partial^{\alpha}
\nabla_x(\widetilde{\rho},\widetilde{u},\widetilde{\theta})\|^{2}+C_\eta\varepsilon\|\partial^{\alpha}\nabla_x f\|_\sigma^{2}
+C_\eta(\eta_{0}+\varepsilon^{\frac{1}{2}})\varepsilon^{2}
+C_\eta(\eta_{0}+\varepsilon^{\frac{1}{2}})\mathcal{D}_N(t).
\end{align}

\medskip
\noindent{\it {Step 4.}}
In summary, for any $1\leq|\alpha|\leq N-1$ and any small $\eta>0$, adding \eqref{4.31}, \eqref{4.35} and \eqref{4.37} together, 
combined with the following estimates
\begin{align*}
&|(\nabla_x\cdot\partial^{\alpha}\widetilde{u},\frac{2\bar{\theta}}{3\bar{\rho}}\partial^{\alpha}\widetilde{\rho})+
(\frac{2\bar{\theta}}{3\bar{\rho}}\partial^{\alpha}\nabla_{x}\widetilde{\rho},\partial^{\alpha}\widetilde{u})
+(\frac{2}{3}\partial^{\alpha}\nabla_{x}\widetilde{\theta},\partial^{\alpha}\widetilde{u})
+(\frac{2}{3}\nabla_{x}\cdot \partial^{\alpha}\widetilde{u},\partial^{\alpha}\widetilde{\theta})|
\\
&=|(\partial^{\alpha}\widetilde{u},\nabla_x(\frac{2\bar{\theta}}{3\bar{\rho}})\partial^{\alpha}\widetilde{\rho})|
\leq C\|\nabla_x(\bar{\rho},\bar{\theta})\|_{L^{\infty}}
\|\partial^{\alpha}\widetilde{u}\|\|\partial^{\alpha}\widetilde{\rho}\|\leq C\eta_{0}\varepsilon^{2},
\end{align*}
then the summation of the resulting equation over $|\alpha|$ through a suitable linear combination gives
\begin{align}
\label{4.38}
\frac{1}{2}&\sum_{1\leq |\alpha|\leq N-1}\frac{d}{dt}\int_{\mathbb{R}^{3}}( \frac{2\bar{\theta}}{3\bar{\rho}^{2}}|\partial^{\alpha}\widetilde{\rho}|^{2}+|\partial^{\alpha}\widetilde{u}|^{2}
+\frac{1}{\bar{\theta}}|\partial^{\alpha}\widetilde{\theta}|^{2})
\,dx+\frac{d}{dt}E_1(t)+c\varepsilon\sum_{2\leq |\alpha|\leq N}\|\partial^{\alpha}(\widetilde{u},\widetilde{\theta})\|^{2}
\nonumber\\	
\leq&C\eta\varepsilon\sum_{2\leq |\alpha|\leq N}\|\partial^{\alpha}
(\widetilde{\rho},\widetilde{u},\widetilde{\theta})\|^{2}+C_\eta\varepsilon\sum_{2\leq |\alpha|\leq N}
\|\partial^{\alpha}f\|_\sigma^{2}+C_\eta(\eta_{0}+\varepsilon^{\frac{1}{2}})\varepsilon^{2}
+C_\eta(\eta_{0}+\varepsilon^{\frac{1}{2}})\mathcal{D}_N(t).
\end{align}
Here we have denoted that
\begin{align}
\label{4.40a}
E_1(t)
=&\sum_{1\leq |\alpha|\leq N-1}\sum^{3}_{i,j=1}\int_{\mathbb{R}^{3}}\int_{\mathbb{R}^{3}}
\Big\{\partial^{\alpha}[\frac{1}{\rho}\partial_{x_j}(R\theta B_{ij}(\frac{v-u}{\sqrt{R\theta}})\frac{\varepsilon \sqrt{\mu}f}{M})]
\partial^{\alpha}\widetilde{u}_i\Big\}\,dv\,dx
\nonumber\\
&+\sum_{1\leq |\alpha|\leq N-1}\sum^{3}_{i=1}\int_{\mathbb{R}^{3}}\int_{\mathbb{R}^{3}}
\Big\{\partial^{\alpha}[\frac{1}{\rho}\partial_{x_i}
((R\theta)^{\frac{3}{2}}A_{i}(\frac{v-u}{\sqrt{R\theta}})\frac{\varepsilon\sqrt{\mu}f}{M})]\frac{1}{\bar{\theta}}\partial^{\alpha}\widetilde{\theta}\Big\}\,dv\,dx
\nonumber\\	
&+\sum_{1\leq |\alpha|\leq N-1}\sum^{3}_{i,j=1}\int_{\mathbb{R}^{3}}\int_{\mathbb{R}^{3}}
\Big\{\partial^{\alpha}[\frac{1}{\rho}
\partial_{x_i}u_j R\theta B_{ij}(\frac{v-u}{\sqrt{R\theta}})\frac{\varepsilon\sqrt{\mu}f}{M}]\frac{1}{\bar{\theta}}\partial^{\alpha}\widetilde{\theta}\Big\}\,dv\,dx.
\end{align}

\medskip
\noindent{\it {Step 5.}}
As before, to get the dissipation rate for density function, we apply  $\partial^{\alpha}$ to the second equation of \eqref{2.22} with $1\leq|\alpha|\leq N-1$  
and then take the inner product of the resulting equation with  $\nabla_x\partial^{\alpha}\widetilde{\rho}$ to obtain
\begin{align*}	
&(\partial^{\alpha}\partial_t\widetilde{u},\nabla_x\partial^{\alpha}\widetilde{\rho})
+(\frac{2\bar{\theta}}{3\bar{\rho}}\nabla_x\partial^{\alpha}\widetilde{\rho},\nabla_x\partial^{\alpha}\widetilde{\rho})
=-\sum_{1\leq \alpha_{1}\leq \alpha}C_{\alpha}^{\alpha_{1}}(\partial^{\alpha_1}(\frac{2\bar{\theta}}{3\bar{\rho}})\nabla_x\partial^{\alpha-\alpha_{1}}\widetilde{\rho},\nabla_x\partial^{\alpha}\widetilde{\rho})
\nonumber\\
&
-\big(\partial^{\alpha}(u\cdot\nabla_x\widetilde{u})+\frac{2}{3}\nabla_x\partial^{\alpha}\widetilde{\theta}+\partial^{\alpha}(\widetilde{u}\cdot\nabla_x\bar{u})+\partial^{\alpha}[
\frac{2}{3}(\frac{\theta}{\rho}-\frac{\bar{\theta}}{\bar{\rho}})\nabla_x\rho],\nabla_x\partial^{\alpha}\widetilde{\rho}\big)
\nonumber\\
&-(\partial^{\alpha}(\frac{1}{\rho}\int_{\mathbb{R}^{3}} v\otimes v\cdot\nabla_x G\,dv),\nabla_x\partial^{\alpha}\widetilde{\rho}).
\end{align*}
Thanks to  $1\leq|\alpha|\leq N-1$, we follow the similar strategy as \eqref{4.21} to claim that
\begin{equation}	
\label{4.41a}
\varepsilon \frac{d}{dt}(\partial^{\alpha}\widetilde{u},\nabla_x\partial^{\alpha}\widetilde{\rho})
+c\varepsilon\|\nabla_x\partial^{\alpha}\widetilde{\rho}\|^2\leq
C\varepsilon(\|\nabla_x\partial^{\alpha}\widetilde{u}\|^2+\|\nabla_x\partial^{\alpha}\widetilde{\theta}\|^2+\|\nabla_x \partial^{\alpha}f\|^2_\sigma
)+C(\eta_0+\varepsilon^{\frac{1}{2}})\varepsilon^{2}.
\end{equation}
For any $1\leq|\alpha|\leq N-1$, the
summation of \eqref{4.41a} over $|\alpha|$ through a suitable linear combination gives
\begin{align}	
\label{4.40}
&\varepsilon\sum_{1\leq |\alpha|\leq N-1} \frac{d}{dt}(\partial^{\alpha}\widetilde{u},\nabla_x\partial^{\alpha}\widetilde{\rho})
+c\varepsilon\sum_{2\leq |\alpha|\leq N}\|\partial^{\alpha}\widetilde{\rho}\|^2
\nonumber\\
&\leq C\varepsilon\sum_{2\leq |\alpha|\leq N}
(\|\partial^{\alpha}\widetilde{u}\|^2+\|\partial^{\alpha}\widetilde{\theta}\|^2+\| \partial^{\alpha}f\|^2_\sigma)
+C(\eta_0+\varepsilon^{\frac{1}{2}})\varepsilon^2.
\end{align}

\medskip
\noindent{\it {Step 6.}}
Multiplying \eqref{4.38} by a large constant $C_2>1$ and adding the resulted equation with \eqref{4.40}
together, we have by choosing $\eta>0$ small enough that
\begin{align}
\label{4.44a}
\frac{1}{2}&C_2\sum_{1\leq |\alpha|\leq N-1}\frac{d}{dt}\int_{\mathbb{R}^{3}}( \frac{2\bar{\theta}}{3\bar{\rho}^{2}}|\partial^{\alpha}\widetilde{\rho}|^{2}+|\partial^{\alpha}\widetilde{u}|^{2}
+\frac{1}{\bar{\theta}}|\partial^{\alpha}\widetilde{\theta}|^{2})
\,dx+C_2\frac{d}{dt}E_1(t)
\nonumber\\
&+\varepsilon\sum_{1\leq |\alpha|\leq N-1} \frac{d}{dt}(\partial^{\alpha}\widetilde{u},\nabla_x\partial^{\alpha}\widetilde{\rho})
+c\varepsilon\sum_{2\leq |\alpha|\leq N}\|\partial^{\alpha}(\widetilde{\rho},\widetilde{u},\widetilde{\theta})\|^{2}
\nonumber\\	
\leq&C\varepsilon\sum_{1\leq |\alpha|\leq N}\|\partial^{\alpha}f\|_\sigma^{2}+C(\eta_{0}+\varepsilon^{\frac{1}{2}})\varepsilon^{2}
+C(\eta_{0}+\varepsilon^{\frac{1}{2}})\mathcal{D}_N(t).
\end{align}
Recall $E_1(t)$ in \eqref{4.40a}. We then employ \eqref{4.16}, \eqref{1.7} and \eqref{3.1} to get
\begin{align*}
|E_1(t)|\leq C\eta 
\sum_{1\leq |\alpha|\leq N-1}\|\partial^{\alpha}(\widetilde{\rho},\widetilde{u},\widetilde{\theta})\|^2+C_{\eta}\varepsilon^2\sum_{|\alpha|=N}\|\partial^{\alpha}f\|^2+
C_{\eta}(\eta_{0}+\varepsilon^{\frac{1}{2}})\varepsilon^{2}.
\end{align*}
By integrating \eqref{4.44a} with respect to $t$,
we deduce the desired estimate \eqref{4.41} from  the above estimate on $E_1(t)$ with a small $\eta>0$ and
\eqref{3.5}, as well as \eqref{3.1}. This then completes the proof of Lemma \ref{lem.sdefpN1}.
\end{proof}

\subsection{Space derivative estimate on non-fluid part up to $(N-1)$-order}
Next we deduce the space derivative estimate up to $(N-1)$-order for the non-fluid part $f(t,x,v)$. As before, the proof is based on the microscopic equation \eqref{2.21}.

\begin{lemma}\label{lem.sdenfpN1}
It holds that
\begin{align}
\label{4.44}
&\sum_{1\leq|\alpha|\leq N-1}(\|\partial^{\alpha}f(t)\|^{2}+c\frac{1}{\varepsilon}\int^t_0\|\partial^{\alpha}f(s)\|_{\sigma}^{2}\,ds)
\nonumber\\
\leq& C\varepsilon\sum_{2\leq|\alpha|\leq N}\int^t_0\|(\partial^{\alpha}\widetilde{u},\partial^{\alpha}\widetilde{\theta})(s)\|^{2}\,ds
+C\varepsilon\sum_{|\alpha|= N}\int^t_0\|\partial^{\alpha}f(s)\|_{\sigma}^{2}\,ds
\nonumber\\
&+C(\eta_{0}+\varepsilon^{\frac{1}{2}})\int^t_0\mathcal{D}_N(s)\,ds
+C(1+t)(\eta_{0}+\varepsilon^{\frac{1}{2}})\varepsilon^{2}.
\end{align}
\end{lemma}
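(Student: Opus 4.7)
The plan is to apply $\partial^{\alpha}$ with $1\leq|\alpha|\leq N-1$ to the microscopic equation \eqref{2.21}, take the inner product with $\partial^{\alpha}f$ in $L^2_xL^2_v$, and then bound each resulting term either by the dissipation $\mathcal{D}_N(t)$ (with small prefactors) or by the target right-hand side of \eqref{4.44}. Since $f\in(\ker\mathcal{L})^\perp$, the coercivity \eqref{2.22a} of Lemma \ref{lem5.5} immediately yields
\[
-\tfrac{1}{\varepsilon}(\mathcal{L}\partial^{\alpha}f,\partial^{\alpha}f)\geq c_1\tfrac{1}{\varepsilon}\|\partial^{\alpha}f\|^2_{\sigma},
\]
which provides the dissipation on the left-hand side of \eqref{4.44}. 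Note that the $\|\partial^\alpha f\|^2$ term on the left comes from $\tfrac12\tfrac{d}{dt}\|\partial^\alpha f\|^2$ after time-integration together with the initial bound \eqref{3.5}.

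Next, I would dispatch the linear collision contributions $\tfrac{1}{\varepsilon}\Gamma(\tfrac{M-\mu}{\sqrt{\mu}},f)$ and $\tfrac{1}{\varepsilon}\Gamma(f,\tfrac{M-\mu}{\sqrt{\mu}})$ via \eqref{5.14} of Lemma \ref{lem5.7}, and the nonlinear term $\tfrac{1}{\varepsilon}\Gamma(\tfrac{G}{\sqrt{\mu}},\tfrac{G}{\sqrt{\mu}})$ via \eqref{5.21} of Lemma \ref{lem5.8}; both produce contributions of the form $C\eta\,\tfrac{1}{\varepsilon}\|\partial^\alpha f\|^2_\sigma+C_\eta(\eta_0+\varepsilon^{1/2})\mathcal{D}_N(t)+C_\eta(\eta_0+\varepsilon^{1/2})\varepsilon^2$, which are absorbed for $\eta$ small. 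The correction terms $\partial^\alpha(P_1(v\cdot\nabla_x\overline{G})/\sqrt{\mu})$ and $\partial^\alpha(\partial_t\overline{G}/\sqrt{\mu})$ are handled by pairing Cauchy--Schwarz with the weighted estimate \eqref{5.5} of Lemma \ref{lem5.4}; the time derivative on $\overline{G}$ produces spatial derivatives of $\partial_t(u,\theta)$ which I would convert into $\partial^\alpha\partial_t(\widetilde{\rho},\widetilde u,\widetilde\theta)$-type norms controlled by Lemma \ref{lem5.10}. All these contributions are $O((\eta_0+\varepsilon^{1/2})\varepsilon^2)$ plus $\eta\tfrac{1}{\varepsilon}\|\partial^\alpha f\|^2_\sigma$.

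The two terms requiring the most care are the streaming/projection term $\tfrac{P_0[v\cdot\nabla_x(\sqrt{\mu}f)]}{\sqrt{\mu}}$ and the linear source $\tfrac{1}{\sqrt{\mu}}P_1\{v\cdot(\cdots\nabla_x\widetilde\theta+\cdots\nabla_x\widetilde u)M\}$. For the former, the macroscopic projection expands via \eqref{2.5}--\eqref{2.4}, and after Cauchy--Schwarz with a splitting $\eta/\varepsilon+\varepsilon(\cdot)$, one picks up $C\varepsilon\|\partial^\alpha\nabla_x f\|^2_\sigma$, which is covered by the $|\alpha|=N$ term on the right-hand side of \eqref{4.44} (this is exactly the reason the sum there runs up to order $N$). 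For the linear source, Cauchy--Schwarz with the same splitting yields $C\varepsilon\|\partial^\alpha\nabla_x(\widetilde u,\widetilde\theta)\|^2$, which falls into the $2\leq|\alpha|\leq N$ fluid sum on the right. Crosscommutator terms $\partial^{\alpha_1}(\rho,u,\theta)$ of intermediate order are estimated via the $L^2$--$L^3$--$L^6$ or $L^2$--$L^\infty$ H\"older inequalities together with \eqref{1.7}, \eqref{3.1} and Lemma \ref{lem5.1}, producing factors of $\eta_0+\varepsilon^{1/2}$ that are absorbed into $\mathcal{D}_N(t)$.

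The main obstacle I anticipate is the careful bookkeeping of the highest admissible spatial derivatives arising from the free streaming projection and the linear source: one must tolerate a loss of one derivative into $\partial^{\alpha+e_i}f$ and $\partial^{\alpha+e_i}(\widetilde u,\widetilde\theta)$, and this is precisely why $\varepsilon$-prefactored top-order norms (with $|\alpha|=N$) appear on the right of \eqref{4.44}. Once all terms are bounded, summing over $1\leq|\alpha|\leq N-1$ through a suitable linear combination, integrating in time on $[0,t]$, and using \eqref{3.5} together with the smallness of $\eta$, $\eta_0$ and $\varepsilon$ to absorb $\eta\tfrac{1}{\varepsilon}\|\partial^\alpha f\|^2_\sigma$ into the left-hand side dissipation yields \eqref{4.44}.
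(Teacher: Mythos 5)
Your proposal follows essentially the same route as the paper's proof: the same energy identity from pairing $\partial^{\alpha}$ of \eqref{2.21} with $\partial^{\alpha}f$, the coercivity \eqref{2.22a} exploiting $f\in(\ker\mathcal{L})^{\perp}$, the collision terms dispatched by \eqref{5.14} and \eqref{5.21}, the $\overline{G}$-terms via Lemma \ref{lem5.4} and Lemma \ref{lem5.10}, and the $\eta/\varepsilon+\varepsilon(\cdot)$ splitting for the projection and linear source terms that accounts for the one-derivative loss and hence the top-order terms on the right of \eqref{4.44}. The argument is correct and matches the paper's proof in all essential respects.
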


\begin{proof}
Applying $\partial^{\alpha}$ to \eqref{2.21} with $1\leq|\alpha|\leq N-1$ and taking the inner product of the resulting equation with
$\partial^{\alpha}f$ over $\mathbb{R}^{3}\times\mathbb{R}^{3}$, we obtain
\begin{align}
\label{4.42}
&\frac{1}{2}\frac{d}{dt}\|\partial^{\alpha}f\|^{2}+
c_1 \frac{1}{\varepsilon}\|\partial^{\alpha}f\|_{\sigma}^{2}\notag\\
&\leq\frac{1}{\varepsilon}(\partial^{\alpha}\Gamma(\frac{M-\mu}{\sqrt{\mu}},f)
+\partial^{\alpha}\Gamma(f,\frac{M-\mu}{\sqrt{\mu}}),\partial^{\alpha}f)+\frac{1}{\varepsilon}(\partial^{\alpha}\Gamma(\frac{G}{\sqrt{\mu}},\frac{G}{\sqrt{\mu}}),\partial^{\alpha}f)
\nonumber\\
&\quad+(\frac{\partial^{\alpha}P_{0}(v\sqrt{\mu}\cdot\nabla_{x}f)}{\sqrt{\mu}},\partial^{\alpha}f)
-(\frac{\partial^{\alpha}P_{1}(v\cdot\nabla_{x}\overline{G})}{\sqrt{\mu}},\partial^{\alpha}f)
-(\frac{\partial^{\alpha}\partial_{t}\overline{G}}{\sqrt{\mu}},\partial^{\alpha}f)
\nonumber\\
&\quad-(\frac{1}{\sqrt{\mu}}\partial^{\alpha}P_{1}\{v\cdot(\frac{|v-u|^{2}
	\nabla_{x}\widetilde{\theta}}{2R\theta^{2}}
+\frac{(v-u)\cdot\nabla_{x}\widetilde{u}}{R\theta})M\},\partial^{\alpha}f).
\end{align}
We now compute \eqref{4.42} term by term.
With  Lemma \ref{lem5.7} and Lemma \ref{lem5.8} in hand, it is clear to see
\begin{align*}
\frac{1}{\varepsilon}(\partial^{\alpha}\Gamma(\frac{M-\mu}{\sqrt{\mu}},f)+\partial^{\alpha}\Gamma(f,\frac{M-\mu}{\sqrt{\mu}}),\partial^{\alpha}f)
\leq C\eta\frac{1}{\varepsilon}\|\partial^{\alpha}f\|^{2}_{\sigma}+C_{\eta}(\eta_{0}+\varepsilon^{\frac{1}{2}})\mathcal{D}_N(t),
\end{align*}
and
\begin{equation*}
\frac{1}{\varepsilon}|(\partial^{\alpha}\Gamma(\frac{G}{\sqrt{\mu}},\frac{G}{\sqrt{\mu}}),\partial^{\alpha}f)|\leq  C\eta\frac{1}{\varepsilon}\|\partial^{\alpha}f\|^{2}_{\sigma}+C_{\eta}(\eta_{0}+\varepsilon^{\frac{1}{2}})\mathcal{D}_N(t)+C_{\eta}(\eta_{0}+\varepsilon^{\frac{1}{2}})\varepsilon^2.
\end{equation*}
In view of \eqref{2.5}, \eqref{2.4}, \eqref{2.26}, \eqref{1.7}, \eqref{3.1} and the Cauchy-Schwarz and Sobolev inequalities, we arrive at
\begin{align*}
&|(\frac{1}{\sqrt{\mu}}\partial^{\alpha}P_{0}(v\sqrt{\mu}\cdot\nabla_{x}f),\partial^{\alpha}f)|
\\
&\leq \eta\frac{1}{\varepsilon}\|\langle v\rangle^{-\frac{1}{2}}\partial^{\alpha}f\|^{2}+
C_{\eta}\varepsilon\|\langle v\rangle^{\frac{1}{2}}\frac{1}{\sqrt{\mu}}\partial^{\alpha}P_{0}(v\sqrt{\mu}\cdot\nabla_{x}f)\|^{2}
\\
&\leq C\eta\frac{1}{\varepsilon}\|\partial^{\alpha}f\|_{\sigma}^{2}
+C_{\eta}\varepsilon\|\partial^{\alpha}\nabla_{x}f\|_{\sigma}^{2}+C_{\eta}(\eta_{0}+\varepsilon^{\frac{1}{2}})\varepsilon^2.
\end{align*}
To deal with the term involving $\overline{G}$, we use Lemma \ref{lem5.4},  Lemma \ref{lem5.10}, \eqref{2.5}, \eqref{1.7}, \eqref{2.26} and \eqref{3.1}
to get
\begin{align*}
&|(\frac{\partial^{\alpha}P_{1}(v\cdot\nabla_{x}\overline{G})}{\sqrt{\mu}},\partial^{\alpha}f)|+
|(\frac{\partial^{\alpha}\partial_{t}\overline{G}}{\sqrt{\mu}},\partial^{\alpha}f)|
\\
&\leq \eta\frac{1}{\varepsilon}\|\langle v\rangle^{-\frac{1}{2}}\partial^{\alpha}f\|^{2}
+C_{\eta}\varepsilon(\|\langle v\rangle^{\frac{1}{2}}\frac{\partial^{\alpha}P_{1}(v\cdot\nabla_{x}\overline{G})}{\sqrt{\mu}}\|^{2}
+\|\langle v\rangle^{\frac{1}{2}}\frac{\partial^{\alpha}\partial_{t}\overline{G}}{\sqrt{\mu}}\|^{2})
\\
&\leq C\eta\frac{1}{\varepsilon}\|\partial^{\alpha}f\|_{\sigma}^{2}+C_{\eta}(\eta_{0}+\varepsilon^{\frac{1}{2}})\varepsilon^2.
\end{align*}
Thanks to \eqref{2.5} and \eqref{5.1},  the following identity holds true:
\begin{multline*}
P_{1}v\cdot\{\frac{|v-u|^{2}\nabla_{x}\widetilde{\theta}}{2R\theta^{2}}+\frac{(v-u)\cdot\nabla_{x}\widetilde{u}}{R\theta}\}M\\
=\frac{\sqrt{R}}{\sqrt{\theta}}\sum_{j=1}^{3}\frac{\partial\widetilde{\theta}}{\partial x_{j}}\hat{A}_{j}(\frac{v-u}{\sqrt{R\theta}})M
+\sum_{j=1}^{3}\sum_{i=1}^{3}\frac{\partial\widetilde{u}_{j}}{\partial x_{i}}\hat{B}_{ij}(\frac{v-u}{\sqrt{R\theta}})M.
\end{multline*}
We then see that the last term of \eqref{4.42} can be bounded by
\begin{align*}	&|(\frac{1}{\sqrt{\mu}}\partial^{\alpha}P_{1}\big\{v\cdot(\frac{|v-u|^{2}\nabla_{x}\widetilde{\theta}}{2R\theta^{2}}+\frac{(v-u)\cdot\nabla_{x}\widetilde{u}}{R\theta})M\big\},\partial^{\alpha}f)|
\\
&\leq 
\eta\frac{1}{\varepsilon}\|\langle v\rangle^{-\frac{1}{2}}\partial^{\alpha}f\|^{2}
+C_{\eta}\varepsilon\|\langle v\rangle^{\frac{1}{2}}\frac{1}{\sqrt{\mu}}\partial^{\alpha}P_{1}\big\{v\cdot(\frac{|v-u|^{2}
\nabla_{x}\widetilde{\theta}}{2R\theta^{2}}+\frac{(v-u)\cdot\nabla_{x}\widetilde{u}}{R\theta})M\big\}\|^{2}
\\
&\leq C\eta\frac{1}{\varepsilon}\|\partial^{\alpha}f\|_{\sigma}^{2}+ C_{\eta}\varepsilon\|\partial^{\alpha}(\nabla_{x}\widetilde{u},\nabla_{x}\widetilde{\theta})\|^{2}
+C_{\eta}(\eta_{0}+\varepsilon^{\frac{1}{2}})\varepsilon^2.
\end{align*}
Therefore, substituting all the above estimates into \eqref{4.42} and choosing  a small $\eta>0$, we get
\begin{align}
\label{4.43}
\frac{d}{dt}\|\partial^{\alpha}f\|^{2}+\frac{c_1}{2}\frac{1}{\varepsilon}\|\partial^{\alpha}f\|_{\sigma}^{2}
&\leq C\varepsilon(\|\partial^{\alpha}(\nabla_{x}\widetilde{u},\nabla_{x}\widetilde{\theta})\|^{2}+\|\partial^{\alpha}\nabla_{x}f\|_{\sigma}^{2})\notag\\
&\quad +C(\eta_{0}+\varepsilon^{\frac{1}{2}})\mathcal{D}_N(t)+C(\eta_{0}+\varepsilon^{\frac{1}{2}})\varepsilon^2.
\end{align}
Integrating \eqref{4.43} with respect to $t$ and using  \eqref{3.5}, the summation of the resulting equation over $|\alpha|$ with $1\leq|\alpha|\leq N-1$ with a suitable linear combination gives the desired estimate \eqref{4.44}. This completes the proof of Lemma \ref{lem.sdenfpN1}.
\end{proof}

From Lemma \ref{lem.sdenfpN1} and Lemma \ref{lem.sdefpN1}, we immediately have the full energy estimate on space derivatives up to $(N-1)$-order for both fluid and non-fluid parts.

\begin{lemma}\label{lem.fpnfpN1}
It holds that
\begin{align}
\label{4.45}
&\sum_{1\leq |\alpha|\leq N-1}(\|\partial^{\alpha}(\widetilde{\rho},\widetilde{u},\widetilde{\theta})(t)\|^2+\|\partial^{\alpha}f(t)\|^{2}
+c\frac{1}{\varepsilon}\int^t_0\|\partial^{\alpha}f(s)\|_{\sigma}^{2}\,ds)
\nonumber\\
&\hspace{1cm}+c\varepsilon\sum_{2\leq |\alpha|\leq N}\int^t_0\|\partial^{\alpha}(\widetilde{\rho},\widetilde{u},\widetilde{\theta})(s)\|^2\,ds
\nonumber\\
\leq& 
 C\varepsilon^2\sum_{|\alpha|=N}
(\|\partial^{\alpha}\widetilde{\rho}(t)\|^2+
\|\partial^{\alpha}f(t)\|^2)+ C\varepsilon\sum_{|\alpha|=N}\int^t_0\| \partial^{\alpha}f(s)\|^2_\sigma\,ds
\nonumber\\
&+C(\eta_{0}+\varepsilon^{\frac{1}{2}})\int^t_0\mathcal{D}_N(s)\,ds
+C(1+t)(\eta_{0}+\varepsilon^{\frac{1}{2}})\varepsilon^{2}.
\end{align}
\end{lemma}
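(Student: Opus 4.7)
The plan is to derive Lemma~\ref{lem.fpnfpN1} as a direct consequence of combining Lemma~\ref{lem.sdefpN1} and Lemma~\ref{lem.sdenfpN1} through a suitable linear combination. No further integration by parts or delicate analysis of the Landau structure is needed at this step; the hard work has already been done in the preceding two lemmas, and what remains is a bookkeeping argument that exploits the asymmetry between $\varepsilon$ and $1/\varepsilon$ prefactors on the dissipative and error terms.

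Concretely, I would multiply the estimate of Lemma~\ref{lem.sdefpN1} by a sufficiently large constant $K>0$ and then add the result to the estimate of Lemma~\ref{lem.sdenfpN1}. The motivation is twofold. First, the term $C\varepsilon\sum_{2\leq|\alpha|\leq N}\int_0^t\|(\partial^\alpha\widetilde u,\partial^\alpha\widetilde\theta)(s)\|^2\,ds$ appearing on the right-hand side of Lemma~\ref{lem.sdenfpN1} must be absorbed into the fluid dissipation $c\varepsilon\sum_{2\leq|\alpha|\leq N}\int_0^t\|\partial^\alpha(\widetilde\rho,\widetilde u,\widetilde\theta)(s)\|^2\,ds$ coming from $K\cdot$Lemma~\ref{lem.sdefpN1}; this forces $K$ to be chosen so that $Kc>C$. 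Second, the intermediate-order non-fluid term
\begin{equation*}
KC\varepsilon\sum_{1\leq|\alpha|\leq N-1}\int_0^t\|\partial^\alpha f(s)\|_\sigma^2\,ds
\end{equation*}
on the right-hand side of $K\cdot$Lemma~\ref{lem.sdefpN1} has to be absorbed by the much stronger dissipation $c\frac{1}{\varepsilon}\sum_{1\leq|\alpha|\leq N-1}\int_0^t\|\partial^\alpha f(s)\|_\sigma^2\,ds$ provided by Lemma~\ref{lem.sdenfpN1}. This is the crucial point and it works precisely because the ratio between the two coefficients is $KC\varepsilon^2/c$, which can be made arbitrarily small by choosing $\varepsilon$ sufficiently small (after fixing $K$).

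The remaining pieces fit together automatically: the highest-order piece $C\varepsilon\sum_{|\alpha|=N}\int_0^t\|\partial^\alpha f(s)\|_\sigma^2\,ds$ and the instant-energy piece $C\varepsilon^2\sum_{|\alpha|=N}(\|\partial^\alpha\widetilde\rho(t)\|^2+\|\partial^\alpha f(t)\|^2)$ simply carry over to the right-hand side of the combined estimate, matching exactly the form \eqref{4.45}; the error terms of the form $C(\eta_0+\varepsilon^{1/2})\int_0^t\mathcal{D}_N(s)\,ds$ and $C(1+t)(\eta_0+\varepsilon^{1/2})\varepsilon^2$ add up into themselves (up to the universal constant $C$ getting enlarged). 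I would close the argument by fixing $K$ first, then requiring $\varepsilon$ to be small enough so that $KC\varepsilon^2<c/2$, which allows the subtraction.

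I do not anticipate any genuine obstacle here. The only thing to be careful about is the order in which constants are chosen: $K$ is fixed first based on Lemma~\ref{lem.sdefpN1} alone (so that the fluid dissipation dominates after the combination), and only afterwards is the smallness of $\varepsilon$ (independent of $K$) used to swallow the lower-order non-fluid term; neither choice creates a circular dependence with the a priori assumption \eqref{3.1}, since both $K$ and the required smallness of $\varepsilon$ depend only on $\tau$ through the constants in Proposition~\ref{prop.1.1}.
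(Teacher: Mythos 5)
Your proposal is correct and is essentially identical to the paper's proof, which forms the sum of \eqref{4.44} with \eqref{4.41} multiplied by a large constant $C_3>0$ and then uses the smallness of $\varepsilon$ to absorb the term $C_3C\varepsilon\sum_{1\leq|\alpha|\leq N-1}\int_0^t\|\partial^\alpha f\|_\sigma^2\,ds$ into the dissipation $c\varepsilon^{-1}\sum_{1\leq|\alpha|\leq N-1}\int_0^t\|\partial^\alpha f\|_\sigma^2\,ds$. The order of choosing constants that you describe (fix the large constant first, then shrink $\varepsilon$) is exactly the intended bookkeeping.
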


\begin{proof}
Taking the summation of  \eqref{4.44} with $\eqref{4.41}$ multiplied by a large constant $C_3>0$ and using  the smallness of  $\varepsilon$ directly gives \eqref{4.45}. 
\end{proof}
 
\subsection{$N$-order space derivative estimate}

To complete the estimate on space derivatives of all orders, we need to treat the highest $N$-order space derivatives more carefully. Note that for $|\alpha|=N$, we cannot directly obtain the dissipation of $\|\partial^{\alpha}f\|^2_{\sigma}$ in terms of \eqref{2.21} since the estimate on the transport term $(\frac{1}{\sqrt{\mu}}\partial^{\alpha}P_{0}[v\cdot\nabla_{x}(\sqrt{\mu}f)],\partial^{\alpha}f)$ induces $N$+$1$-order
derivatives so that the estimates cannot be closed. For this, we must use the original equation \eqref{1.1} to deduce the $N$-order energy estimates. In fact, in view of \eqref{2.19} and \eqref{2.20}, the original equation \eqref{1.1} can be equivalently rewritten as
\begin{align}
\label{4.46}
\partial_{t}(\frac{F}{\sqrt{\mu}})+v\cdot\nabla_{x}(\frac{F}{\sqrt{\mu}})
-\frac{1}{\varepsilon}\mathcal{L}f=&\frac{1}{\varepsilon}\Gamma(\frac{M-\mu}{\sqrt{\mu}},f)+\frac{1}{\varepsilon}\Gamma(f,\frac{M-\mu}{\sqrt{\mu}})+\frac{1}{\varepsilon}\Gamma(\frac{G}{\sqrt{\mu}},\frac{G}{\sqrt{\mu}})
\nonumber\\
&+\frac{1}{\sqrt{\mu}}P_{1}\big\{v\cdot(\frac{|v-u|^{2}\nabla_{x}\bar{\theta}}{2R\theta^{2}}+\frac{(v-u)\cdot\nabla_{x}\bar{u}}{R\theta}) M\big\}.
\end{align}
Using the above formulation, the corresponding linear  transport term induces the inner product $(\frac{1}{\sqrt{\mu}}v\cdot\nabla_x\partial^{\alpha}F,\frac{1}{\sqrt{\mu}}\partial^{\alpha}F)$, which vanishes by the integration by parts. Extra efforts have to be made to estimate all other terms, such as $\frac{1}{\varepsilon}\mathcal{L}f$, $\frac{1}{\varepsilon}\Gamma(\frac{M-\mu}{\sqrt{\mu}},f)$ and $\partial_{t}(\frac{F}{\sqrt{\mu}})$, because of singularity of the highest-order space derivatives. In particular, we need to develop delicate estimates on the inner products $-\frac{1}{\varepsilon}(\mathcal{L}\partial^{\alpha}f,\frac{\partial^{\alpha}F}{\sqrt{\mu}})$ and $\frac{1}{\varepsilon}(\Gamma(\frac{M-\mu}{\sqrt{\mu}},\partial^{\alpha}f),\frac{\partial^{\alpha}F}{\sqrt{\mu}})$ for $|\alpha|=N$. Thus, we first obtain the following lemma and the characterization of low bound of the energy norm $\|\frac{\partial^{\alpha}F(t)}{\sqrt{\mu}}\|^{2}$ in terms of $\|\partial^{\alpha}(\widetilde{\rho},\widetilde{u},\widetilde{\theta})\|^{2}+\|\partial^{\alpha}f\|^{2}$ will be treated in Lemma \ref{lem.NFlbdd} later on.

\begin{lemma}\label{lem.Nsde}
It holds that
\begin{align}
\label{4.65}
\varepsilon^{2}&\sum_{|\alpha|=N}(\|\partial^{\alpha}(\widetilde{\rho},\widetilde{u},\widetilde{\theta})(t)\|^{2}+\|\partial^{\alpha}f(t)\|^{2})
+c\varepsilon\sum_{|\alpha|=N}\int^{t}_{0}\|\partial^{\alpha}f(s)\|_{\sigma}^{2}\,ds
\nonumber\\
\leq&C(\eta_{0}+\varepsilon^{\frac{1}{2}})\int^{t}_{0}\mathcal{D}_N(s)\,ds+C(1+t)(\eta_{0}+\varepsilon^{\frac{1}{2}})\varepsilon^{2}.
\end{align}
\end{lemma}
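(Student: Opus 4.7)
The plan is to avoid the microscopic equation \eqref{2.21} entirely for $|\alpha|=N$ and instead work with the equivalent form \eqref{4.46} of the original Landau equation. Applying $\partial^{\alpha}$ and taking the $L^{2}_{x,v}$ inner product with $\frac{1}{\sqrt{\mu}}\partial^{\alpha}F$, the free transport contribution $(\frac{1}{\sqrt{\mu}}v\cdot\nabla_{x}\partial^{\alpha}F,\frac{1}{\sqrt{\mu}}\partial^{\alpha}F)$ vanishes by integration by parts, so the feared $(N{+}1)$-order blow-up of the term $P_{0}[v\cdot\nabla_{x}(\sqrt{\mu}f)]/\sqrt{\mu}$ never appears. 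One is left with the identity
\begin{equation*}
\frac{1}{2}\frac{d}{dt}\Big\|\frac{\partial^{\alpha}F}{\sqrt{\mu}}\Big\|^{2}-\frac{1}{\varepsilon}\Big(\mathcal{L}\partial^{\alpha}f,\frac{\partial^{\alpha}F}{\sqrt{\mu}}\Big)=\text{RHS},
\end{equation*}
where the right-hand side collects $\partial^{\alpha}$ of the three $\Gamma$-nonlinearities and the $P_{1}\{v\cdot(\cdots)M\}$ source. Decomposing $\partial^{\alpha}F/\sqrt{\mu}=\partial^{\alpha}M/\sqrt{\mu}+\partial^{\alpha}\overline{G}/\sqrt{\mu}+\partial^{\alpha}f$, the $\partial^{\alpha}f$ component of the $\mathcal{L}$-inner product yields the good dissipation $\frac{c_{1}}{\varepsilon}\|\partial^{\alpha}f\|_{\sigma}^{2}$ by \eqref{2.22a}, and the $\partial^{\alpha}\overline{G}/\sqrt{\mu}$ piece is $O(\eta_{0}\varepsilon)$ by Lemma \ref{lem5.4} and Cauchy--Schwarz. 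A parallel analysis must be made for the norm equivalence $\|\partial^{\alpha}F/\sqrt{\mu}\|^{2}\sim\|\partial^{\alpha}(\widetilde{\rho},\widetilde{u},\widetilde{\theta})\|^{2}+\|\partial^{\alpha}f\|^{2}$ up to error absorbable by $\eta_{0}+\varepsilon^{1/2}$, using the expansions \eqref{4.19B}--\eqref{4.20B}.

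The main obstacle is the cross term $-\frac{1}{\varepsilon}(\mathcal{L}\partial^{\alpha}f,\partial^{\alpha}M/\sqrt{\mu})$, since $\partial^{\alpha}M$ carries $N$-order derivatives of the fluid quantities and is acted upon by the singular factor $1/\varepsilon$. The plan is to split $\partial^{\alpha}M=I_{4}+I_{5}$ as in \eqref{4.20B}, where $I_{4}$ contains the highest-order derivatives $\partial^{\alpha}(\rho,u,\theta)$ multiplied by the basis polynomials $\{1,v-u,|v-u|^{2}\}M$, and $I_{5}$ gathers the commutator-type products of lower-order derivatives. For $I_{4}$, up to the smallness $\eta_{0}+\varepsilon$ coming from $|M-\mu|\lesssim\eta_{0}+\varepsilon$ in \eqref{3.6}, the quantity $I_{4}/\sqrt{\mu}$ lies essentially in $\ker\mathcal{L}$ (Remark \ref{rm3.2}), so using the self-adjointness of $\mathcal{L}$ one expects to gain a smallness prefactor. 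The genuinely delicate piece is the subterm $I_{4}^{22}$, for which the fluid derivative $\partial^{\alpha}$ is still fully of order $N$; here one must integrate by parts to move one spatial derivative off $\mathcal{L}\partial^{\alpha}f$ onto $I_{4}^{22}/\sqrt{\mu}$, producing a residue involving $\|\partial^{\alpha'}f\|_{\sigma}^{2}$ with $|\alpha'|=N-1$ carrying a harmless $1/\varepsilon^{2}$. The upshot is the bound \eqref{1.26}. The lower-order products in $I_{5}$ are treated by standard $L^{\infty}$--$L^{3}$--$L^{6}$ H\"older splits combined with \eqref{1.7} and the a priori bound \eqref{3.1}.

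The remaining right-hand side pieces are routine but lengthy. The linear collision cross terms $\frac{1}{\varepsilon}(\Gamma(\frac{M-\mu}{\sqrt{\mu}},\partial^{\alpha}f),\partial^{\alpha}F/\sqrt{\mu})$ and the symmetric counterpart fall under the same paradigm as the $\mathcal{L}$ cross term: their inner products against $\partial^{\alpha}f$ are handled by Lemma \ref{lem5.7}, while the contributions against $\partial^{\alpha}M/\sqrt{\mu}$ again require the same integration by parts trick and exploit the smallness \eqref{5.16} of $(M-\mu)/\sqrt{\mu}$. The trilinear $\frac{1}{\varepsilon}(\partial^{\alpha}\Gamma(G/\sqrt{\mu},G/\sqrt{\mu}),\partial^{\alpha}F/\sqrt{\mu})$ is dispatched using Lemma \ref{lem5.8} and the correction-term estimate \eqref{5.5}, and the last source term $\frac{1}{\sqrt{\mu}}\partial^{\alpha}P_{1}\{v\cdot(\cdots)M\}$ is controlled by $C\varepsilon\|\partial^{\alpha}(\nabla_{x}\widetilde{u},\nabla_{x}\widetilde{\theta})\|^{2}$ (times a small prefactor) after Cauchy--Schwarz with weight $\eta\frac{1}{\varepsilon}\|\partial^{\alpha}f\|_{\sigma}^{2}$, which is absorbed by the coercive piece.

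Assembling these estimates, one expects an inequality of the form
\begin{equation*}
\frac{d}{dt}\Big\|\tfrac{\partial^{\alpha}F}{\sqrt{\mu}}\Big\|^{2}+\tfrac{c_{1}}{2\varepsilon}\|\partial^{\alpha}f\|_{\sigma}^{2}\leq C(\eta_{0}+\varepsilon^{\tfrac{1}{2}})\tfrac{1}{\varepsilon}\|\partial^{\alpha}(\widetilde{\rho},\widetilde{u},\widetilde{\theta})\|^{2}+C(\eta_{0}+\varepsilon^{\tfrac{1}{2}})\mathcal{D}_{N}(t)+C(\eta_{0}+\varepsilon^{\tfrac{1}{2}})\varepsilon^{2}.
\end{equation*}
The decisive final step is to multiply through by $\varepsilon^{2}$: the bad fluid term becomes $C(\eta_{0}+\varepsilon^{1/2})\,\varepsilon\|\partial^{\alpha}(\widetilde{\rho},\widetilde{u},\widetilde{\theta})\|^{2}$, which is precisely the scaling of the $N$-order fluid dissipation in $\mathcal{D}_{N}(t)$ from \eqref{3.8}, while the $\varepsilon^{2}$ prefactor matches the one inserted into $\mathcal{E}_{N}(t)$ for $|\alpha|=N$ in \eqref{3.2}. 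Integrating in time, using the initial bound \eqref{3.5} and the norm equivalence identified above, and invoking the smallness of $\eta_{0}+\varepsilon^{1/2}$ to absorb the intermediate constants into $\mathcal{D}_{N}$, one obtains \eqref{4.65}.
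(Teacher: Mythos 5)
Your proposal is correct and follows essentially the same route as the paper: passing to the reformulated equation \eqref{4.46} so the transport term vanishes, splitting $\partial^{\alpha}M=I_{4}+I_{5}$ with the kernel observation for the $\mu$-part of $I_{4}$ and the integration-by-parts trick for $I_{4}^{22}$ yielding \eqref{1.26}, then multiplying by $\varepsilon^{2}$ and invoking the lower bound of $\|\partial^{\alpha}F/\sqrt{\mu}\|^{2}$ in terms of $\|\partial^{\alpha}(\widetilde{\rho},\widetilde{u},\widetilde{\theta})\|^{2}+\|\partial^{\alpha}f\|^{2}$. The only slight imprecision is citing Lemmas \ref{lem5.7} and \ref{lem5.8} verbatim for the cross and trilinear terms, since those are stated for the test function $\partial^{\alpha}f$ rather than $\partial^{\alpha}F/\sqrt{\mu}$ and for $|\alpha|\leq N-1$; the paper instead redoes these estimates directly via \eqref{5.11} and \eqref{4.56}, but the computations are the same in spirit.
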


\begin{proof}
 Applying $\partial^{\alpha}$ to \eqref{4.46} with $|\alpha|=N$ and taking the inner product of the resulting equation with $\frac{\partial^{\alpha}F}{\sqrt{\mu}}$ over $\mathbb{R}^{3}\times\mathbb{R}^{3}$, we obtain
\begin{align}
\label{4.47}
&\frac{1}{2}\frac{d}{dt}\|\frac{\partial^{\alpha}F}{\sqrt{\mu}}\|^{2}-\frac{1}{\varepsilon}(\mathcal{L}\partial^{\alpha}f,\frac{\partial^{\alpha}F}{\sqrt{\mu}})
=\frac{1}{\varepsilon}(\partial^{\alpha}\Gamma(\frac{M-\mu}{\sqrt{\mu}},f)+\partial^{\alpha}\Gamma(f,\frac{M-\mu}{\sqrt{\mu}}),\frac{\partial^{\alpha}F}{\sqrt{\mu}})
\nonumber\\
&+\frac{1}{\varepsilon}(\partial^{\alpha}\Gamma(\frac{G}{\sqrt{\mu}},\frac{G}{\sqrt{\mu}}),\frac{\partial^{\alpha}F}{\sqrt{\mu}})+(\frac{1}{\sqrt{\mu}}\partial^{\alpha}P_{1}\big\{v\cdot(\frac{|v-u|^{2}
\nabla_x\overline{\theta}}{2R\theta^{2}}+\frac{(v-u)\cdot\nabla_x\bar{u}}{R\theta}) M\big\},\frac{\partial^{\alpha}F}{\sqrt{\mu}}).
\end{align}
We now compute \eqref{4.47} term by term.
By the fact that  $F=M+\overline{G}+\sqrt{\mu}f$, 
the second on the left hand side of \eqref{4.47} becomes
\begin{align}
\label{4.52a}
-\frac{1}{\varepsilon}(\mathcal{L}\partial^{\alpha}f,\frac{\partial^{\alpha}F}{\sqrt{\mu}})=-
\frac{1}{\varepsilon}(\mathcal{L}\partial^{\alpha}f,\frac{\partial^{\alpha}M}{\sqrt{\mu}})
-\frac{1}{\varepsilon}(\mathcal{L}\partial^{\alpha}f,\partial^{\alpha}f)
-\frac{1}{\varepsilon}(\mathcal{L}\partial^{\alpha}f,\frac{\partial^{\alpha}\overline{G}}{\sqrt{\mu}}).
\end{align}
We first present the calculations  for the last two terms of \eqref{4.52a}, since the first term of \eqref{4.52a} is more complex and is thus left to the end. 
Thanks to $f\in (\ker\mathcal{L})^{\perp}$, one has from \eqref{2.22a} that
$$
-\frac{1}{\varepsilon}(\mathcal{L}\partial^{\alpha}f,\partial^{\alpha}f)\geq c_1\frac{1}{\varepsilon}\|\partial^{\alpha}f\|_{\sigma}^{2}. $$
By the definition  $\mathcal{L}f=\Gamma(\sqrt{\mu},f)+\Gamma(f,\sqrt{\mu})$ in \eqref{2.19}, one has from \eqref{5.11},
Lemma \ref{lem5.4}, \eqref{2.26}, \eqref{1.7} and \eqref{3.1} that
\begin{align*}
\frac{1}{\varepsilon}|(\mathcal{L}\partial^{\alpha}f,\frac{\partial^{\alpha}\overline{G}}{\sqrt{\mu}})|
&\leq C\frac{1}{\varepsilon}(\|\partial^{\alpha}f\|_{\sigma}+\|\langle v\rangle^{-1}\partial^{\alpha}f\|)
\|\frac{\partial^{\alpha}\overline{G}}{\sqrt{\mu}}\|_{\sigma}
\\
&\leq \eta_0\frac{1}{\varepsilon}\|\partial^{\alpha}f\|^{2}_{\sigma}
+C\eta_{0}\varepsilon.
\end{align*}
To estimate the first term of \eqref{4.52a}. Recall \eqref{4.19B},  one has
$$
\partial_{x_i}M=M\big(\frac{\partial_{x_i}\rho}{\rho}+\frac{(v-u)\cdot\partial_{x_i}u}{R\theta}
+(\frac{|v-u|^{2}}{2R\theta}-\frac{3}{2})\frac{\partial_{x_i}\theta}{\theta} \big).
$$
Let $\partial^{\alpha}=\partial^{\alpha'}\partial_{x_i}$ with $|\alpha'|=N-1$ due to $|\alpha|=N$ , then 
\begin{align}
\label{4.48}
\partial^{\alpha}M=&M\big(\frac{\partial^{\alpha'}\partial_{x_i}\rho}{\rho}+\frac{(v-u)\cdot\partial^{\alpha'}\partial_{x_i}u}{R\theta}+(\frac{|v-u|^{2}}{2R\theta}-\frac{3}{2})\frac{\partial^{\alpha'}\partial_{x_i}\theta}{\theta} \big)
\nonumber\\
&+\sum_{1\leq\alpha_{1}\leq \alpha'}C^{\alpha_1}_{\alpha'}\Big(\partial^{\alpha_{1}}(M\frac{1}{\rho})\partial^{\alpha'-\alpha_{1}}\partial_{x_i}\rho+\partial^{\alpha_{1}}(M\frac{v-u}{R\theta})\cdot\partial^{\alpha'-\alpha_{1}}\partial_{x_i}u
\nonumber\\
&\hspace{2cm}+\partial^{\alpha_{1}}(M\frac{|v-u|^{2}}{2R\theta^{2}}-M\frac{3}{2\theta})\partial^{\alpha'-\alpha_{1}}\partial_{x_i}\theta\Big)
\nonumber\\
&:=I_{4}+I_{5}.
\end{align}
Note that the linear term $\frac{1}{\varepsilon}(\mathcal{L}\partial^{\alpha}f,\frac{I_{4}}{\sqrt{\mu}})$ presents a significant difficulty and  cannot be estimated directly. The key technique to handle this term is to use the properties  of the linearized operator  $\mathcal{L}$ and the smallness of 
$M-\mu$. For this, we denote
\begin{align*}
I_{4}&=\big(\mu+(M-\mu)\big)\big(\frac{\partial^{\alpha}\rho}{\rho}+\frac{(v-u)\cdot\partial^{\alpha}u}{R\theta}+(\frac{|v-u|^{2}}{2R\theta}-\frac{3}{2})\frac{\partial^{\alpha}\theta}{\theta}\big):=I^{1}_{4}+I^{2}_{4}.
\end{align*}
Since $\frac{I^{1}_{4}}{\sqrt{\mu}}\in\ker{\mathcal{L}}$, it follows that
$(\mathcal{L}f,\frac{I^{1}_{4}}{\sqrt{\mu}})=0$. As for $I^{2}_{4}$, we further
decompose it as
\begin{align*}
I^{2}_{4}=&(M-\mu)\big(\frac{\partial^{\alpha}\widetilde{\rho}}{\rho}
+\frac{(v-u)\cdot\partial^{\alpha}\widetilde{u}}{R\theta}+(\frac{|v-u|^{2}}{2R\theta}
-\frac{3}{2})\frac{\partial^{\alpha}\widetilde{\theta}}{\theta}\big)
\\
&+(M-\mu)\big(\frac{\partial^{\alpha}\bar{\rho}}{\rho}
+\frac{(v-u)\cdot\partial^{\alpha}\bar{u}}{R\theta}+(\frac{|v-u|^{2}}{2R\theta}
-\frac{3}{2})\frac{\partial^{\alpha}\bar{\theta}}{\theta}\big):=I^{21}_{4}+I^{22}_{4}.
\end{align*}
Thanks to $\mathcal{L}f=\Gamma(\sqrt{\mu},f)+\Gamma(f,\sqrt{\mu})$, we deduce from \eqref{5.11}, \eqref{2.26} and \eqref{5.16}  that
\begin{align*}
\frac{1}{\varepsilon}|(\mathcal{L}\partial^{\alpha}f,\frac{I^{21}_{4}}{\sqrt{\mu}})|
&\leq C(\eta_{0}+\varepsilon)\frac{1}{\varepsilon}(\|\partial^{\alpha}f\|_{\sigma}+\|\langle v\rangle^{-1}\partial^{\alpha}f\|)\|\partial^{\alpha}(\widetilde{\rho},\widetilde{u},\widetilde{\theta})\|
\\
&\leq C(\eta_{0}+\varepsilon)\frac{1}{\varepsilon}(\|\partial^{\alpha}f\|^{2}_{\sigma}+\|\partial^{\alpha}(\widetilde{\rho},\widetilde{u},\widetilde{\theta})\|^{2}).
\end{align*}
Let $\partial^{\alpha}=\partial^{\alpha'}\partial_{x_i}$ with $|\alpha'|=N-1$ due to $|\alpha|=N$, we have from
the integration by parts,  \eqref{5.11}, \eqref{2.26}, \eqref{1.7} and \eqref{3.1} that
\begin{align}
\label{4.49}
\frac{1}{\varepsilon}|(\mathcal{L}\partial^{\alpha}f,\frac{I^{22}_{4}}{\sqrt{\mu}})|=&\frac{1}{\varepsilon}|\big(\partial^{\alpha'}\partial_{x_i}\mathcal{L}f,\frac{M-\mu}{\sqrt{\mu}}\{\frac{\partial^{\alpha}\bar{\rho}}{\rho}+\frac{(v-u)\cdot\partial^{\alpha}\bar{u}}{R\theta}+(\frac{|v-u|^{2}}{2R\theta}-\frac{3}{2})\frac{\partial^{\alpha}\bar{\theta}}{\theta}\}\big)|
\nonumber\\
=&\frac{1}{\varepsilon}|(\partial^{\alpha'}\mathcal{L}f,\partial_{x_i}\big[\frac{M-\mu}{\sqrt{\mu}}\{\frac{\partial^{\alpha}\bar{\rho}}{\rho}+\frac{(v-u)\cdot\partial^{\alpha}\bar{u}}{R\theta}+(\frac{|v-u|^{2}}{2R\theta}-\frac{3}{2})\frac{\partial^{\alpha}\bar{\theta}}{\theta}\}\big])|
\nonumber\\
\leq& C\frac{1}{\varepsilon}\int_{\mathbb{R}^{3}}
(|\partial^{\alpha'}f|_{\sigma}+|\langle v\rangle^{-1}\partial^{\alpha'}f|_2)
\big\{(|\partial_{x_i}\partial^{\alpha}\bar{\rho}|+|\partial_{x_i}\partial^{\alpha}\bar{u}|+|\partial_{x_i}\partial^{\alpha}\bar{\theta}|)
\nonumber\\
&\hspace{1cm}+(|\partial^{\alpha}\bar{\rho}|+|\partial^{\alpha}\bar{u}|+|\partial^{\alpha}\bar{\theta}|)(|\partial_{x_i}\rho|+|\partial_{x_i}u|+|\partial_{x_i}\theta|)\big\}\,dx
\nonumber\\
\leq& C(\eta_{0}+\varepsilon)\frac{1}{\varepsilon}\|\partial^{\alpha'}f\|_{\sigma}
\leq C(\eta_{0}+\varepsilon)\frac{1}{\varepsilon}(\frac{1}{\varepsilon^{2}}\|\partial^{\alpha'}f\|^{2}_{\sigma}+\varepsilon^2).
\end{align} 
Combining the above estimates on $I^{1}_{4}$ and $I^2_{4}$, we obtain that for $|\alpha'|=N-1$,
\begin{equation}
\label{4.50}
\frac{1}{\varepsilon}|(\mathcal{L}\partial^{\alpha}f,\frac{I_{4}}{\sqrt{\mu}})|=\frac{1}{\varepsilon}|(\mathcal{L}\partial^{\alpha}f,\frac{I^2_{4}}{\sqrt{\mu}})|
\leq C(\eta_{0}+\varepsilon)\frac{1}{\varepsilon}(\|\partial^{\alpha}f\|^{2}_{\sigma}+\|\partial^{\alpha}(\widetilde{\rho},\widetilde{u},\widetilde{\theta})\|^{2}+\frac{1}{\varepsilon^{2}}\|\partial^{\alpha'} f\|^{2}_{\sigma}+\varepsilon^{2}).
\end{equation}
In order to get the estimates on $I_5$, we first write  $I_{5}=I^{1}_{5}+I^{2}_{5}$ with
\begin{align*}
I_{5}^1=\sum_{1\leq\alpha_{1}\leq \alpha'}C^{\alpha_1}_{\alpha'}\Big(\partial^{\alpha_{1}}(M\frac{1}{\rho})\partial^{\alpha'-\alpha_{1}}\partial_{x_i}\widetilde{\rho}&+\partial^{\alpha_{1}}(M\frac{v-u}{R\theta})\cdot\partial^{\alpha'-\alpha_{1}}\partial_{x_i}\widetilde{u}\\
&+\partial^{\alpha_{1}}(M\frac{|v-u|^{2}}{2R\theta^{2}}-M\frac{3}{2\theta})\partial^{\alpha'-\alpha_{1}}\partial_{x_i}\widetilde{\theta}\Big),
\end{align*}
and 
\begin{align*}
I_{5}^2=\sum_{1\leq\alpha_{1}\leq \alpha'}C^{\alpha_1}_{\alpha'}\Big(\partial^{\alpha_{1}}(M\frac{1}{\rho})\partial^{\alpha'-\alpha_{1}}\partial_{x_i}\bar{\rho}&+\partial^{\alpha_{1}}(M\frac{v-u}{R\theta})\cdot\partial^{\alpha'-\alpha_{1}}\partial_{x_i}\bar{u}\\
&+\partial^{\alpha_{1}}(M\frac{|v-u|^{2}}{2R\theta^{2}}-M\frac{3}{2\theta})\partial^{\alpha'-\alpha_{1}}\partial_{x_i}\bar{\theta}\Big).
\end{align*}
For $I^{1}_{5}$, using $\mathcal{L}f=\Gamma(\sqrt{\mu},f)+\Gamma(f,\sqrt{\mu})$, \eqref{5.11}, \eqref{2.26} and  \eqref{3.6} again, we see that $\frac{1}{\varepsilon}|(\mathcal{L}\partial^{\alpha}f,\frac{I^{1}_{5}}{\sqrt{\mu}})|$ is bounded as
\begin{equation*}
C\sum_{1\leq\alpha_{1}\leq \alpha'}\frac{1}{\varepsilon}\int_{\mathbb{R}^{3}}
|\partial^{\alpha}f|_{\sigma}|\partial^{\alpha'-\alpha_{1}}\partial_{x_i}(\widetilde{\rho}, \widetilde{u},\widetilde{\theta})|
(|\partial^{\alpha_{1}}(\rho, u,\theta)|+\cdots+|\nabla_{x}(\rho, u,\theta)|^{|\alpha_{1}|})
\,dx.
\end{equation*}
If $|\alpha_{1}|=|\alpha'|=N-1$, then $|\alpha'-\alpha_{1}|=0$, we take the $L^{6}-L^{3}-L^{2}$ H\"{o}lder inequality and use Lemma \ref{lem5.1}, \eqref{1.7} and \eqref{3.1},
as well as the Cauchy-Schwarz and Sobolev inequalities, to get
\begin{align*}
&\frac{1}{\varepsilon}\int_{\mathbb{R}^{3}}
|\partial^{\alpha}f|_{\sigma}|\partial^{\alpha'-\alpha_{1}}\partial_{x_i}(\widetilde{\rho}, \widetilde{u},\widetilde{\theta})|
(|\partial^{\alpha_{1}}(\rho, u,\theta)|+\cdot\cdot\cdot+|\nabla_{x}(\rho, u,\theta)|^{|\alpha_{1}|})\,dx
\\
&\leq C\frac{1}{\varepsilon}\||\partial^{\alpha}f|_{\sigma}\|_{L^{2}}\|\partial_{x_i}(\widetilde{\rho},\widetilde{u},\widetilde{\theta})\|_{L^{6}}
\|(|\partial^{\alpha_{1}}(\rho, u,\theta)|+\cdot\cdot\cdot+|\nabla_{x}(\rho, u,\theta)|^{|\alpha_{1}|})\|_{L^{3}}  
\\
&\leq C(\eta_{0}+\varepsilon^{\frac{1}{2}})\frac{1}{\varepsilon}\|\partial^{\alpha}f\|_{\sigma}
\|\partial_{x_i}(\widetilde{\rho},\widetilde{u},\widetilde{\theta})\|_{H^1}
\leq C(\eta_{0}+\varepsilon^{\frac{1}{2}})\frac{1}{\varepsilon}(\|\partial^{\alpha}f\|^{2}_{\sigma}
+\varepsilon^{2}).
\end{align*}
If $1\leq|\alpha_{1}|<|\alpha'|=N-1$, we also have
\begin{align*}
&\frac{1}{\varepsilon}\int_{\mathbb{R}^{3}}
|\partial^{\alpha}f|_{\sigma}|\partial^{\alpha'-\alpha_{1}}\partial_{x_i}(\widetilde{\rho}, \widetilde{u},\widetilde{\theta})|
(|\partial^{\alpha_{1}}(\rho, u,\theta)|+\cdot\cdot\cdot+|\nabla_{x}(\rho, u,\theta)|^{|\alpha_{1}|})\,dx
\\
&\leq C\frac{1}{\varepsilon}\||\partial^{\alpha}f|_{\sigma}\|_{L^{2}}\|\partial^{\alpha'-\alpha_{1}}\partial_{x_i}(\widetilde{\rho},\widetilde{u},\widetilde{\theta})\|_{L^{2}}\|(|\partial^{\alpha_{1}}(\rho, u,\theta)|+\cdot\cdot\cdot+|\nabla_{x}(\rho, u,\theta)|^{|\alpha_{1}|})\|_{L^{\infty}}  
\\
&
\leq C(\eta_{0}+\varepsilon^{\frac{1}{2}})\frac{1}{\varepsilon}(\|\partial^{\alpha}f\|^{2}_{\sigma}+\varepsilon^{2}).
\end{align*}
It follows from the above three estimates that
\begin{align}
\label{4.51}
\frac{1}{\varepsilon}|(\mathcal{L}\partial^{\alpha}f,\frac{I^{1}_{5}}{\sqrt{\mu}})|\leq
C(\eta_{0}+\varepsilon^{\frac{1}{2}})\frac{1}{\varepsilon}(\|\partial^{\alpha}f\|^{2}_{\sigma}+\varepsilon^{2}).
\end{align}
The term $I^{2}_{5}$ can be treated in the similar way as \eqref{4.49} and it holds that
\begin{align*}
\frac{1}{\varepsilon}|(\mathcal{L}\partial^{\alpha}f,\frac{I^{2}_{5}}{\sqrt{\mu}})|
\leq C(\eta_{0}+\varepsilon^{\frac{1}{2}})\frac{1}{\varepsilon}(\frac{1}{\varepsilon^{2}}\|\partial^{\alpha'}f\|^{2}_{\sigma}+\varepsilon^{2}).
\end{align*}
This estimate together with \eqref{4.51} yields
\begin{align}
\label{4.52}
\frac{1}{\varepsilon}|(\mathcal{L}\partial^{\alpha}f,\frac{I_{5}}{\sqrt{\mu}})|\leq C(\eta_{0}+\varepsilon^{\frac{1}{2}})\frac{1}{\varepsilon}(\|\partial^{\alpha}f\|^{2}_{\sigma}+\frac{1}{\varepsilon^{2}}\|\partial^{\alpha'} f\|^{2}_{\sigma}+\varepsilon^{2}).
\end{align}
Combining \eqref{4.48}, \eqref{4.50} and \eqref{4.52}, we get
\begin{align}
\label{4.60b}
\frac{1}{\varepsilon}|(\mathcal{L}\partial^{\alpha}f,\frac{\partial^{\alpha}M}{\sqrt{\mu}})|&\leq C(\eta_{0}+\varepsilon^{\frac{1}{2}})\frac{1}{\varepsilon}(\|\partial^{\alpha}f\|^{2}_{\sigma}+\|\partial^{\alpha}(\widetilde{\rho},\widetilde{u},\widetilde{\theta})\|^{2}+\frac{1}{\varepsilon^{2}}\|\partial^{\alpha'} f\|^{2}_{\sigma}+\varepsilon^{2})
\nonumber\\
&\leq C(\eta_{0}+\varepsilon^{\frac{1}{2}})\frac{1}{\varepsilon^{2}}\mathcal{D}_N(t)+C(\eta_{0}+\varepsilon^{\frac{1}{2}})\varepsilon.
\end{align}
Recalling \eqref{4.52a} and collecting all estimates above, we thereby obtain
\begin{align}
\label{4.53}
-\frac{1}{\varepsilon}(\mathcal{L}\partial^{\alpha}f,\frac{\partial^{\alpha}F}{\sqrt{\mu}})
\geq& \frac{c_1}{2}\frac{1}{\varepsilon}\|\partial^{\alpha}f\|_{\sigma}^{2}
-C(\eta_{0}+\varepsilon^{\frac{1}{2}})\frac{1}{\varepsilon^{2}}\mathcal{D}_N(t)-C(\eta_{0}+\varepsilon^{\frac{1}{2}})\varepsilon.
\end{align}

Let us now consider the terms on the right hand of \eqref{4.47}. First note that
\begin{equation*}
\partial^{\alpha}\Gamma(\frac{M-\mu}{\sqrt{\mu}},f)=
\Gamma(\frac{M-\mu}{\sqrt{\mu}},\partial^{\alpha}f)
+\sum_{1\leq\alpha_{1}\leq\alpha}C^{\alpha_{1}}_{\alpha}\Gamma(\frac{\partial^{\alpha_{1}}(M-\mu)}{\sqrt{\mu}},
\partial^{\alpha-\alpha_{1}}f).
\end{equation*}
By the fact that $F=M+\overline{G}+\sqrt{\mu}f$, one writes
\begin{equation}
\label{4.54}
\frac{1}{\varepsilon}(\Gamma(\frac{M-\mu}{\sqrt{\mu}},\partial^{\alpha}f),\frac{\partial^{\alpha}F}{\sqrt{\mu}})
=\frac{1}{\varepsilon}(\Gamma(\frac{M-\mu}{\sqrt{\mu}},\partial^{\alpha}f),\frac{\partial^{\alpha}M}{\sqrt{\mu}}
+\frac{\partial^{\alpha}\overline{G}}{\sqrt{\mu}}+\partial^{\alpha}f).
\end{equation}
Recalling \eqref{4.48}, the first term of \eqref{4.54} can be denoted as
\begin{align*}
\frac{1}{\varepsilon}(\Gamma(\frac{M-\mu}{\sqrt{\mu}},\partial^{\alpha}f),\frac{\partial^{\alpha}M}{\sqrt{\mu}})=\frac{1}{\varepsilon}(\Gamma(\frac{M-\mu}{\sqrt{\mu}},\partial^{\alpha}f),\frac{I_{4}+I_{5}}{\sqrt{\mu}}).
\end{align*}
Using the definition of $I_4$ in \eqref{4.48} again, one has
\begin{align*}
&\frac{1}{\varepsilon}\big(\Gamma(\frac{M-\mu}{\sqrt{\mu}},\partial^{\alpha}f),\frac{I_{4}}{\sqrt{\mu}}\big)\\
&=\frac{1}{\varepsilon}\Big(\Gamma(\frac{M-\mu}{\sqrt{\mu}},\partial^{\alpha}f),\frac{M}{\sqrt{\mu}}
\{\frac{\partial^{\alpha}\widetilde{\rho}}{\rho}+\frac{(v-u)\cdot\partial^{\alpha}\widetilde{u}}{R\theta}
+(\frac{|v-u|^{2}}{2R\theta}-\frac{3}{2})\frac{\partial^{\alpha}\widetilde{\theta}}{\theta}\}\Big)
\\
&\qquad+\frac{1}{\varepsilon}\Big(\Gamma(\frac{M-\mu}{\sqrt{\mu}},\partial^{\alpha}f),\frac{M}{\sqrt{\mu}}\{\frac{\partial^{\alpha}\bar{\rho}}{\rho}+\frac{(v-u)\cdot\partial^{\alpha}\bar{u}}{R\theta}
+(\frac{|v-u|^{2}}{2R\theta}-\frac{3}{2})\frac{\partial^{\alpha}\bar{\theta}}{\theta}\}\Big)
\\
&\leq  C(\eta_{0}+\varepsilon)\frac{1}{\varepsilon}(\|\partial^{\alpha}f\|^{2}_{\sigma}+\|\partial^{\alpha}(\widetilde{\rho},\widetilde{u},\widetilde{\theta})\|^{2}+\frac{1}{\varepsilon^{2}}\|\partial^{\alpha'} f\|^{2}_{\sigma}+\varepsilon^{2}),
\end{align*}
where we have used the similar arguments as \eqref{4.50} and $\partial^{\alpha}=\partial^{\alpha'}\partial_{x_i}$ with $|\alpha'|=N-1$.
On the other hand, performing the similar calculations as \eqref{4.52} gives
\begin{align*}
\frac{1}{\varepsilon}(\Gamma(\frac{M-\mu}{\sqrt{\mu}},\partial^{\alpha}f),\frac{I_{5}}{\sqrt{\mu}})
\leq C(\eta_{0}+\varepsilon^{\frac{1}{2}})\frac{1}{\varepsilon}(\|\partial^{\alpha}f\|^{2}_{\sigma}+\frac{1}{\varepsilon^{2}}\|\partial^{\alpha'} f\|^{2}_{\sigma}+\varepsilon^{2}).
\end{align*}
With the help of the above two estimates,
the first term of \eqref{4.54} is bounded by
\begin{align*}
\frac{1}{\varepsilon}|(\Gamma(\frac{M-\mu}{\sqrt{\mu}},\partial^{\alpha}f),\frac{\partial^{\alpha}M}{\sqrt{\mu}})| 
\leq C(\eta_{0}+\varepsilon^{\frac{1}{2}})\frac{1}{\varepsilon}(\|\partial^{\alpha}f\|^{2}_{\sigma}+\|\partial^{\alpha}(\widetilde{\rho},\widetilde{u},\widetilde{\theta})\|^{2}+\frac{1}{\varepsilon^{2}}\|\partial^{\alpha'} f\|^{2}_{\sigma}+\varepsilon^{2}),
\end{align*}
for $|\alpha'|=N-1$.
The estimates on the last two terms of \eqref{4.54} will be much easier and they are controlled
by
\begin{align*}
\frac{1}{\varepsilon}(\Gamma(\frac{M-\mu}{\sqrt{\mu}},\partial^{\alpha}f),\frac{\partial^{\alpha}\overline{G}}{\sqrt{\mu}}+\partial^{\alpha}f)
&\leq C\frac{1}{\varepsilon}\int_{\mathbb{R}^{3}}|\frac{M-\mu}{\sqrt{\mu}}|_{2}|\partial^{\alpha}f|_{\sigma}(|\frac{\partial^{\alpha}\overline{G}}{\sqrt{\mu}}|_{\sigma}+|\partial^{\alpha}f|_{\sigma})\,dx
\\
&\leq C(\eta_{0}+\varepsilon)\frac{1}{\varepsilon}(\|\partial^{\alpha}f\|^{2}_{\sigma}+\|\partial^{\alpha}(\widetilde{\rho},\widetilde{u},\widetilde{\theta})\|^{2}+\varepsilon^{2}),
\end{align*}
according to \eqref{5.11}, \eqref{5.16}, Lemma \ref{lem5.4}, \eqref{1.7} and \eqref{3.1}.
Plugging the above estimates into \eqref{4.54} gives
\begin{align}
\label{4.55}
\frac{1}{\varepsilon}|(\Gamma(\frac{M-\mu}{\sqrt{\mu}},\partial^{\alpha}f),\frac{\partial^{\alpha}F}{\sqrt{\mu}})|
&\leq C(\eta_{0}+\varepsilon^{\frac{1}{2}})\frac{1}{\varepsilon}(\|\partial^{\alpha}f\|^{2}_{\sigma}+\|\partial^{\alpha}(\widetilde{\rho},\widetilde{u},\widetilde{\theta})\|^{2}+\frac{1}{\varepsilon^{2}}\|\partial^{\alpha'} f\|^{2}_{\sigma}+\varepsilon^{2})
\nonumber\\
&\leq C(\eta_{0}+\varepsilon^{\frac{1}{2}})\frac{1}{\varepsilon^{2}}\mathcal{D}_N(t)+C(\eta_{0}+\varepsilon^{\frac{1}{2}})\varepsilon.
\end{align}
Using \eqref{5.11} again, it is straightforward to see that
\begin{align*}
\frac{1}{\varepsilon}\sum_{1\leq\alpha_{1}\leq\alpha}&C^{\alpha_{1}}_{\alpha}(\Gamma(\partial^{\alpha_{1}}(\frac{M-\mu}{\sqrt{\mu}}),
\partial^{\alpha-\alpha_{1}}f),\frac{\partial^{\alpha}F}{\sqrt{\mu}})	
\\
\leq	
&C\sum_{1\leq\alpha_{1}\leq\alpha}\underbrace{\frac{1}{\varepsilon}\int_{\mathbb{R}^{3}}|\langle v\rangle^{-1}\partial^{\alpha_{1}}(\frac{M-\mu}{\sqrt{\mu}})|_{2}
|\partial^{\alpha-\alpha_{1}}f|_{\sigma}|\frac{\partial^{\alpha}F}{\sqrt{\mu}}|_{\sigma}\,dx}_{J_3}.
\end{align*}
Let us carefully deal with the term $J_3$.
If $1\leq|\alpha_{1}|\leq|\alpha|/2$, we use the Cauchy-Schwarz and Sobolev inequalities, \eqref{4.19B}, \eqref{4.20B}, as well as
\eqref{1.7} and \eqref{3.1},  to get
\begin{align*}	
J_3\leq& C\frac{1}{\varepsilon}\||\partial^{\alpha_{1}}(\frac{M-\mu}{\sqrt{\mu}})|_{2}\|_{L^{\infty}}\|\partial^{\alpha-\alpha_{1}}f\|_{\sigma}\|\frac{\partial^{\alpha}F}{\sqrt{\mu}}\|_{\sigma}
\\
\leq& C\frac{1}{\varepsilon}\||(|\partial^{\alpha_1}(\rho,u,\theta)|+\cdot\cdot\cdot+|\nabla_x(\rho,u,\theta)|^{|\alpha_{1}|})|\|_{L^{\infty}}\|\partial^{\alpha-\alpha_{1}}f\|_{\sigma}\|\frac{\partial^{\alpha}F}{\sqrt{\mu}}\|_{\sigma}
\\
\leq& C(\eta_{0}+\varepsilon^{\frac{1}{2}})\frac{1}{\varepsilon}\|\partial^{\alpha-\alpha_{1}}f\|_{\sigma}(\|\partial^{\alpha}f\|_{\sigma}+\|\partial^{\alpha}(\widetilde{\rho},\widetilde{u},\widetilde{\theta})\|+\eta_{0}+\varepsilon)
\\
\leq& C(\eta_{0}+\varepsilon^{\frac{1}{2}})
\frac{1}{\varepsilon}\big\{\|\partial^{\alpha}f\|^{2}_{\sigma}+\|\partial^{\alpha}(\widetilde{\rho},\widetilde{u},\widetilde{\theta})\|^{2}+\frac{1}{\varepsilon^{2}}\|\partial^{\alpha-\alpha_{1}}f\|^2_{\sigma}+\varepsilon^{2}(\eta_{0}+\varepsilon)\big\}
\\
\leq& 
C(\eta_{0}+\varepsilon^{\frac{1}{2}})\frac{1}{\varepsilon^{2}}\mathcal{D}_N(t)+C(\eta_{0}+\varepsilon^{\frac{1}{2}})\varepsilon,
\end{align*}
where we have used the smallness of $\varepsilon$ and
$\eta_{0}$ as well as the following estimate for $|\alpha|=N$
\begin{align}
\label{4.56}
\|\frac{\partial^{\alpha}F}{\sqrt{\mu}}\|_{\sigma}
&\leq \|\frac{\partial^{\alpha}\sqrt{\mu}f}{\sqrt{\mu}}\|_{\sigma}+\|\frac{\partial^{\alpha}\overline{G}}{\sqrt{\mu}}\|_{\sigma}+\|\frac{\partial^{\alpha}M}{\sqrt{\mu}}\|_{\sigma}
\nonumber\\
&\leq C(\|\partial^{\alpha}f\|_{\sigma}
+\|\partial^{\alpha}(\widetilde{\rho},\widetilde{u},\widetilde{\theta})\|+\eta_{0}+\varepsilon).
\end{align}
If $|\alpha|/2<|\alpha_{1}|\leq|\alpha|-1$, one takes the $L^{6}-L^{3}-L^{2}$ H\"{o}lder inequality to claim
\begin{align*}	
J_3\leq& C\frac{1}{\varepsilon}\||\partial^{\alpha_{1}}(\frac{M-\mu}{\sqrt{\mu}})|_2\|_{L^{3}}\||\partial^{\alpha-\alpha_{1}}f|_{\sigma}\|_{L^{6}}\|\frac{\partial^{\alpha}F}{\sqrt{\mu}}\|_{\sigma}
\\
\leq& C\frac{1}{\varepsilon}(\eta_{0}+\varepsilon^{\frac{1}{2}})\|\nabla_{x}\partial^{\alpha-\alpha_{1}}f\|_{\sigma}(\|\partial^{\alpha}f\|_{\sigma}
+\|\partial^{\alpha}(\widetilde{\rho},\widetilde{u},\widetilde{\theta})\|+\eta_{0}+\varepsilon)
\\
\leq& C(\eta_{0}+\varepsilon^{\frac{1}{2}})\frac{1}{\varepsilon^{2}}\mathcal{D}_N(t)+C(\eta_{0}+\varepsilon^{\frac{1}{2}})\varepsilon.
\end{align*}
If $|\alpha_{1}|=|\alpha|$, then  $|\alpha-\alpha_{1}|=0$ and it is easy to check that
\begin{align*}
J_3 \leq& C\frac{1}{\varepsilon}\|\partial^{\alpha_{1}}(\frac{M-\mu}{\sqrt{\mu}})\|\||f|_{\sigma}\|_{L^{\infty}}\|\frac{\partial^{\alpha}F}{\sqrt{\mu}}\|_{\sigma}
\\
\leq&
C\frac{1}{\varepsilon}(\varepsilon^{-\frac{3}{2}}\|\nabla_xf\|_{\sigma}\|\nabla_x^{2}f\|_{\sigma}+\varepsilon^{\frac{3}{2}}\|\frac{\partial^{\alpha}F}{\sqrt{\mu}}\|^2_{\sigma})
\\
\leq& C\varepsilon^{\frac{1}{2}}\frac{1}{\varepsilon^{2}}\mathcal{D}_N(t)+C(\eta_{0}+\varepsilon)\varepsilon^{\frac{1}{2}}.
\end{align*}
We thus conclude from the above estimates on $J_3$ that
\begin{align}
\label{4.57}
\frac{1}{\varepsilon}\sum_{1\leq\alpha_{1}\leq\alpha}C^{\alpha_{1}}_{\alpha}(\Gamma(\frac{\partial^{\alpha_{1}}(M-\mu)}{\sqrt{\mu}},
\partial^{\alpha-\alpha_{1}}f),\frac{\partial^{\alpha}F}{\sqrt{\mu}})
\leq C(\eta_{0}+\varepsilon^{\frac{1}{2}})\frac{1}{\varepsilon^{2}}\mathcal{D}_N(t)+C(\eta_{0}+\varepsilon^{\frac{1}{2}})\varepsilon^{\frac{1}{2}}.
\end{align}
With \eqref{4.55} and \eqref{4.57} in hand, we get
\begin{align}
\label{4.58}
\frac{1}{\varepsilon}|(\partial^{\alpha}\Gamma(\frac{M-\mu}{\sqrt{\mu}},f),\frac{\partial^{\alpha}F}{\sqrt{\mu}})|\leq C(\eta_{0}+\varepsilon^{\frac{1}{2}})\frac{1}{\varepsilon^{2}}\mathcal{D}_N(t)+C(\eta_{0}+\varepsilon^{\frac{1}{2}})\varepsilon^{\frac{1}{2}}.
\end{align}
The second term on the right hand side of \eqref{4.47} can be treated in the same way as \eqref{4.58}. It follows that 
\begin{align}
\label{4.59}	
\frac{1}{\varepsilon}|(\partial^{\alpha}\Gamma(f,\frac{M-\mu}{\sqrt{\mu}}),\frac{\partial^{\alpha}F}{\sqrt{\mu}})|\leq C(\eta_{0}+\varepsilon^{\frac{1}{2}})\frac{1}{\varepsilon^{2}}\mathcal{D}_N(t)+C(\eta_{0}+\varepsilon^{\frac{1}{2}})\varepsilon^{\frac{1}{2}}.
\end{align}
Next we shall concentrate on the third term on the right hand side of \eqref{4.47}. By $G=\overline{G}+\sqrt{\mu}f$, we see 
\begin{equation}
\label{4.60}
\frac{1}{\varepsilon}(\partial^{\alpha}\Gamma(\frac{G}{\sqrt{\mu}},\frac{G}{\sqrt{\mu}}),\frac{\partial^{\alpha}F}{\sqrt{\mu}})=\frac{1}{\varepsilon}(\partial^{\alpha}\Gamma(\frac{\overline{G}}{\sqrt{\mu}},\frac{\overline{G}}{\sqrt{\mu}})+\partial^{\alpha}\Gamma(\frac{\overline{G}}{\sqrt{\mu}},f)+\partial^{\alpha}\Gamma(f,\frac{\overline{G}}{\sqrt{\mu}})+\partial^{\alpha}\Gamma(f,f),\frac{\partial^{\alpha}F}{\sqrt{\mu}}).
\end{equation}
We compute \eqref{4.60} term by term. By \eqref{5.11}, Lemma \ref{lem5.4}, the Cauchy-Schwarz and Sobolev inequalities, \eqref{4.56}, \eqref{1.7} and \eqref{3.1}, one has
\begin{align*}
\frac{1}{\varepsilon}|(\partial^{\alpha}\Gamma(\frac{\overline{G}}{\sqrt{\mu}},
\frac{\overline{G}}{\sqrt{\mu}}),\frac{\partial^{\alpha}F}{\sqrt{\mu}})|\leq &C\frac{1}{\varepsilon}\sum_{\alpha_{1}\leq\alpha}\int_{\mathbb{R}^{3}}
|\langle v\rangle^{-1}\partial^{\alpha_{1}}(\frac{\overline{G}}{\sqrt{\mu}})|_{2}
|\partial^{\alpha-\alpha_{1}}(\frac{\overline{G}}{\sqrt{\mu}})|_{\sigma}|\frac{\partial^{\alpha}F}{\sqrt{\mu}}|_{\sigma}\,dx
\\
&\leq C(\eta_{0}+\varepsilon^{\frac{1}{2}})\frac{1}{\varepsilon^{2}}\mathcal{D}_N(t)+C(\eta_{0}+\varepsilon^{\frac{1}{2}})\varepsilon^{\frac{1}{2}}.
\end{align*}
Using \eqref{5.11} again, the second term on the right hand side of \eqref{4.60} is bounded by
\begin{align*}
\frac{1}{\varepsilon}|(\partial^{\alpha}\Gamma(\frac{\overline{G}}{\sqrt{\mu}},f),\frac{\partial^{\alpha}F}{\sqrt{\mu}})|
\leq C\sum_{\alpha_{1}\leq\alpha}\underbrace{\frac{1}{\varepsilon}\int_{\mathbb{R}^{3}}|\langle v\rangle^{-1}\partial^{\alpha_{1}}(\frac{\overline{G}}{\sqrt{\mu}})|_{2}
|\partial^{\alpha-\alpha_{1}}f|_{\sigma}
|\frac{\partial^{\alpha}F}{\sqrt{\mu}}|_{\sigma}\,dx}_{J_4}.
\end{align*}
If $|\alpha_{1}|\leq|\alpha|/2$, we can deduce from  Lemma \ref{lem5.4},
the Cauchy-Schwarz and Sobolev inequalities, \eqref{1.7}, \eqref{3.1} and \eqref{4.56} that
\begin{align*}
J_4&\leq C\frac{1}{\varepsilon}\||\partial^{\alpha_{1}}(\frac{\overline{G}}{\sqrt{\mu}})|_2\|_{L^{\infty}}\|\partial^{\alpha-\alpha_{1}}f\|_{\sigma}\|\frac{\partial^{\alpha}F}{\sqrt{\mu}}\|_{\sigma}
\\
&\leq C\eta_{0}\|\partial^{\alpha-\alpha_{1}}f\|_{\sigma}(\|\partial^{\alpha}f\|_{\sigma}+\|\partial^{\alpha}(\widetilde{\rho},\widetilde{u},\widetilde{\theta})\|+\eta_{0}+\varepsilon)
\\
&\leq C(\eta_{0}+\varepsilon^{\frac{1}{2}})\frac{1}{\varepsilon^{2}}\mathcal{D}_N(t)+C(\eta_{0}+\varepsilon^{\frac{1}{2}})\varepsilon^{\frac{1}{2}}.
\end{align*}
If $|\alpha|/2<|\alpha_{1}|\leq|\alpha|$ and then it holds that
\begin{align*}
J_4&\leq C\frac{1}{\varepsilon}\|\partial^{\alpha_{1}}(\frac{\overline{G}}{\sqrt{\mu}})\|\||\partial^{\alpha-\alpha_{1}}f|_{\sigma}\|_{L^{\infty}}
\|\frac{\partial^{\alpha}F}{\sqrt{\mu}}\|_{\sigma}
\\
&\leq C(\eta_{0}+\varepsilon^{\frac{1}{2}})\frac{1}{\varepsilon^{2}}\mathcal{D}_N(t)+C(\eta_{0}+\varepsilon^{\frac{1}{2}})\varepsilon^{\frac{1}{2}}.
\end{align*}
It follows from the above estimate on $J_4$ that
\begin{align*}
\frac{1}{\varepsilon}|(\partial^{\alpha}\Gamma(\frac{\overline{G}}{\sqrt{\mu}},f),\frac{\partial^{\alpha}F}{\sqrt{\mu}})|
\leq C(\eta_{0}+\varepsilon^{\frac{1}{2}})\frac{1}{\varepsilon^{2}}\mathcal{D}_N(t)+C(\eta_{0}+\varepsilon^{\frac{1}{2}})\varepsilon^{\frac{1}{2}}.
\end{align*}
The third term on the right hand side of \eqref{4.60} can be handled in the same manner and it shares the same bound.
We still compute the last term of \eqref{4.60}. It is clear to see by \eqref{5.11} that
\begin{align*}
\frac{1}{\varepsilon}|(\partial^{\alpha}\Gamma(f,f),\frac{\partial^{\alpha}F}{\sqrt{\mu}})|\leq C\sum_{\alpha_{1}\leq\alpha}\underbrace{\frac{1}{\varepsilon}\int_{\mathbb{R}^{3}}|\langle v\rangle^{-1}\partial^{\alpha_{1}}f|_{2}|\partial^{\alpha-\alpha_{1}}f|_{\sigma}|\frac{\partial^{\alpha}F}{\sqrt{\mu}}|_{\sigma}\,dx}_{J_5}.
\end{align*}
The term $J_5$ is treated in the same way as the term $J_4$. For $|\alpha_{1}|\leq|\alpha|/2$, we have
\begin{equation*}
J_5\leq C\frac{1}{\varepsilon}\|\partial^{\alpha_{1}}f\|_{L^{\infty}}\|\partial^{\alpha-\alpha_{1}}f\|_{\sigma}\|\frac{\partial^{\alpha}F}{\sqrt{\mu}}\|_{\sigma}
\leq C(\eta_{0}+\varepsilon^{\frac{1}{2}})\frac{1}{\varepsilon^{2}}\mathcal{D}_N(t)+C(\eta_{0}+\varepsilon^{\frac{1}{2}})\varepsilon^{\frac{1}{2}}.
\end{equation*}
In case of $|\alpha|/2<|\alpha_{1}|\leq|\alpha|$, it follows that
\begin{equation*}
J_5\leq C\frac{1}{\varepsilon}\|\partial^{\alpha_{1}}f\|\||\partial^{\alpha-\alpha_{1}}f|_{\sigma}\|_{L^{\infty}}\|\frac{\partial^{\alpha}F}{\sqrt{\mu}}\|_{\sigma}\leq C(\eta_{0}+\varepsilon^{\frac{1}{2}})\frac{1}{\varepsilon^{2}}\mathcal{D}_N(t)+C(\eta_{0}+\varepsilon^{\frac{1}{2}})\varepsilon^{\frac{1}{2}}.
\end{equation*}
With these estimates, we can obtain	
\begin{align*}
\frac{1}{\varepsilon}|(\partial^{\alpha}\Gamma(f,f),\frac{\partial^{\alpha}F}{\sqrt{\mu}})|
\leq C(\eta_{0}+\varepsilon^{\frac{1}{2}})\frac{1}{\varepsilon^{2}}\mathcal{D}_N(t)+C(\eta_{0}+\varepsilon^{\frac{1}{2}})\varepsilon^{\frac{1}{2}}.
\end{align*}
Consequently, plugging the above estimates into \eqref{4.60} leads us to
\begin{align}
\label{4.61}	
\frac{1}{\varepsilon}|\partial^{\alpha}(\Gamma(\frac{G}{\sqrt{\mu}},\frac{G}{\sqrt{\mu}}),\frac{\partial^{\alpha}F}{\sqrt{\mu}})|\leq C(\eta_{0}+\varepsilon^{\frac{1}{2}})\frac{1}{\varepsilon^{2}}\mathcal{D}_N(t)+C(\eta_{0}+\varepsilon^{\frac{1}{2}})\varepsilon^{\frac{1}{2}}.
\end{align}
The last term of \eqref{4.47} can be controlled by
\begin{align}
\label{4.62}
&C\|\langle v\rangle^{\frac{1}{2}}\frac{1}{\sqrt{\mu}}\partial^{\alpha}P_{1}\big\{v\cdot(\frac{|v-u|^{2}\nabla_x\overline{\theta}}{2R\theta^{2}}+\frac{(v-u)\cdot\nabla_x\bar{u}}{R\theta}) M\big\}\|^{2}+C\|\langle v\rangle^{-\frac{1}{2}}\frac{\partial^{\alpha}F}{\sqrt{\mu}}\|^{2}
\nonumber\\
&\leq C(\|\partial^{\alpha}f\|^{2}_{\sigma}+\|\partial^{\alpha}(\widetilde{\rho},\widetilde{u},\widetilde{\theta})\|^{2}+\eta_{0}+\varepsilon)
\leq C(\eta_{0}+\varepsilon^{\frac{1}{2}})\frac{1}{\varepsilon^{2}}\mathcal{D}_N(t)+C(\eta_{0}+\varepsilon^{\frac{1}{2}}),
\end{align}
where the Cauchy-Schwarz and Sobolev inequalities, \eqref{2.26}, \eqref{1.7}, \eqref{3.1} and \eqref{4.56} have been used.

As a consequence, substituting \eqref{4.53}, \eqref{4.58}, \eqref{4.59}, \eqref{4.61} and \eqref{4.62} 
into \eqref{4.47} and using the smallness of $\varepsilon$ and $\eta_{0}$, we get
\begin{align}
\label{4.63}
\frac{1}{2}\frac{d}{dt}\sum_{|\alpha|=N}\|\frac{\partial^{\alpha}F}{\sqrt{\mu}}\|^{2}+c\frac{1}{\varepsilon}\sum_{|\alpha|=N}\|\partial^{\alpha}f\|_{\sigma}^{2}\leq C
(\eta_{0}+\varepsilon^{\frac{1}{2}})\frac{1}{\varepsilon^{2}}\mathcal{D}_N(t)+C(\eta_{0}+\varepsilon^{\frac{1}{2}}).
\end{align}
To further estimate \eqref{4.63}, using \eqref{lem.NFlbdd.est} whose proof will be postponed to Lemma \ref{lem.NFlbdd} later on, one has
\begin{align}
\label{4.64}
\varepsilon^{2}\sum_{|\alpha|=N}\|\frac{\partial^{\alpha}F(t)}{\sqrt{\mu}}\|^{2}\geq c_3\varepsilon^{2}\sum_{|\alpha|=N}(\|\partial^{\alpha}(\widetilde{\rho},\widetilde{u},\widetilde{\theta})\|^{2}+\|\partial^{\alpha}f\|^{2})-C(\eta_{0}+\varepsilon^{\frac{1}{2}})\varepsilon^{2}.
\end{align}
Therefore, by integrating \eqref{4.63} with respect to $t$ and multiplying the resulting equation by $\varepsilon^{2}$, and then using \eqref{4.64},
 \eqref{3.5} and the following estimate
\begin{align*}
\varepsilon^{2}\sum_{|\alpha|=N}\|\frac{\partial^{\alpha}F(0)}{\sqrt{\mu}}\|^{2}
&\leq C\varepsilon^{2}\sum_{|\alpha|=N}(\|\frac{\sqrt{\mu}\partial^{\alpha}f(0)}{\sqrt{\mu}}\|^{2}+\|\frac{\partial^{\alpha}\overline{G}(0)}{\sqrt{\mu}}\|^{2}+\|\frac{\partial^{\alpha}M(0)}{\sqrt{\mu}}\|^{2})
\\
&\leq C(\eta_{0}+\varepsilon^{\frac{1}{2}})\varepsilon^{2},
\end{align*}
we thus arrive at the desired estimate \eqref{4.65}. This completes the proof of Lemma \ref{lem.Nsde}.
\end{proof}

For applying \eqref{4.64} in the proof of  Lemma \ref{lem.Nsde} above, we need the following result.

\begin{lemma}\label{lem.NFlbdd}
There is a constant $c_2>0$ such that
\begin{equation}\label{lem.NFlbdd.est}
\|\frac{\partial^{\alpha}F(t)}{\sqrt{\mu}}\|^{2}
\geq c_2(\|\partial^{\alpha}(\widetilde{\rho},\widetilde{u},\widetilde{\theta})\|^{2}+\|\partial^{\alpha}f\|^{2})-C(\eta_{0}+\varepsilon^{\frac{1}{2}}),
\end{equation}
for any $\alpha$ with $|\alpha|=N$.
\end{lemma}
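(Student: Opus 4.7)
The plan is to exploit the $L^{2}_{v}$ orthogonality between the macroscopic projection of $\partial^{\alpha}F/\sqrt{\mu}$ and the purely microscopic piece $\partial^{\alpha}f$, which lies at the heart of the macro-micro decomposition. Using $F=M+\overline{G}+\sqrt{\mu}f$, the expansion \eqref{4.48} of $\partial^{\alpha}M=I_{4}+I_{5}$, and the further split $I_{4}=I_{4}^{1}+I_{4}^{2}$ with $I_{4}^{1}=\mu\cdot(\cdots)$ and $I_{4}^{2}=(M-\mu)\cdot(\cdots)$ introduced before \eqref{4.49}, I would write
\begin{equation*}
\frac{\partial^{\alpha}F}{\sqrt{\mu}}=\frac{I_{4}^{1}}{\sqrt{\mu}}+\partial^{\alpha}f+R,\qquad R:=\frac{I_{4}^{2}+I_{5}+\partial^{\alpha}\overline{G}}{\sqrt{\mu}}.
\end{equation*}
Expanded explicitly, $I_{4}^{1}/\sqrt{\mu}$ is pointwise in $(t,x)$ a linear combination of $\sqrt{\mu}$, $v_{i}\sqrt{\mu}$, $|v|^{2}\sqrt{\mu}$ (with coefficients depending on $\partial^{\alpha}(\rho,u,\theta)$ and $(u,\theta)$), so it lies in $\ker\mathcal{L}$. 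On the other hand, $f\in(\ker\mathcal{L})^{\perp}$ means $\int f\cdot\psi_{j}\sqrt{\mu}\,dv=0$ for $j=0,\dots,4$; since $\sqrt{\mu}$ is $x$-independent, $\partial^{\alpha}_{x}$ commutes through these velocity integrals, so $\partial^{\alpha}f\in(\ker\mathcal{L})^{\perp}$ pointwise in $x$. Hence the $v$-integral of the product vanishes pointwise, which after integration in $x$ yields
\begin{equation*}
\big(\tfrac{I_{4}^{1}}{\sqrt{\mu}},\partial^{\alpha}f\big)=0, \qquad \big\|\tfrac{I_{4}^{1}}{\sqrt{\mu}}+\partial^{\alpha}f\big\|^{2}=\big\|\tfrac{I_{4}^{1}}{\sqrt{\mu}}\big\|^{2}+\|\partial^{\alpha}f\|^{2}.
\end{equation*}

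Next I would establish the coercive lower bound $\|I_{4}^{1}/\sqrt{\mu}\|^{2}\ge c\|\partial^{\alpha}(\widetilde{\rho},\widetilde{u},\widetilde{\theta})\|^{2}-C\eta_{0}^{2}$. Setting $A=\partial^{\alpha}\rho/\rho$, $B=\partial^{\alpha}u/(R\theta)$, $C=\partial^{\alpha}\theta/\theta$, the pointwise integral $|I_{4}^{1}/\sqrt{\mu}|_{2}^{2}=\int\mu(A+B\cdot(v-u)+C(|v-u|^{2}/(2R\theta)-3/2))^{2}\,dv$ is a symmetric quadratic form $Q_{(\rho,u,\theta)}(A,B,C)$ whose coefficients are explicit Gaussian moments smooth in $(u,\theta)$. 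At the reference state $(1,0,3/2)$ with $R\theta=1$ all parity-odd integrals vanish and one finds $Q_{(1,0,3/2)}(A,B,C)=A^{2}+|B|^{2}+\frac{3}{2}C^{2}$, a diagonal positive form. By \eqref{3.6} the actual coefficients differ from the reference ones by $O(\eta_{0}+\varepsilon)$, so $Q_{(\rho,u,\theta)}$ remains uniformly coercive and $|I_{4}^{1}/\sqrt{\mu}|_{2}^{2}\ge c(|\partial^{\alpha}\rho|^{2}+|\partial^{\alpha}u|^{2}+|\partial^{\alpha}\theta|^{2})$ pointwise. Integrating in $x$, splitting $\partial^{\alpha}(\rho,u,\theta)=\partial^{\alpha}(\widetilde{\rho},\widetilde{u},\widetilde{\theta})+\partial^{\alpha}(\bar{\rho},\bar{u},\bar{\theta})$, and using \eqref{1.7} to dominate the second piece by $C\eta_{0}^{2}$ gives the claimed bound.

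It remains to estimate $R$ and combine. For $I_{4}^{2}=(M-\mu)(\cdots)$, the velocity bound \eqref{5.16} yields $\|I_{4}^{2}/\sqrt{\mu}\|^{2}\le C(\eta_{0}+\varepsilon)^{2}(\|\partial^{\alpha}(\widetilde{\rho},\widetilde{u},\widetilde{\theta})\|^{2}+\eta_{0}^{2})$. Each summand of $I_{5}$ is a product $\partial^{\alpha_{1}}(\cdots)\cdot\partial^{\alpha-\alpha_{1}+e_{i}}(\rho,u,\theta)$ with $1\le|\alpha_{1}|\le N-1$, so both factors carry strictly fewer than $N$ spatial derivatives; a standard $L^{\infty}$-$L^{2}$ split via the Sobolev embedding $H^{2}\hookrightarrow L^{\infty}$, \eqref{1.7} and the a priori bound \eqref{3.1}, gives $\|I_{5}/\sqrt{\mu}\|^{2}\le C(\eta_{0}+\varepsilon^{1/2})$. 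Lemma \ref{lem5.4} gives $\|\partial^{\alpha}\overline{G}/\sqrt{\mu}\|^{2}\le C\eta_{0}^{2}\varepsilon^{2}$. Altogether, $\|R\|^{2}\le C(\eta_{0}+\varepsilon)^{2}\|\partial^{\alpha}(\widetilde{\rho},\widetilde{u},\widetilde{\theta})\|^{2}+C(\eta_{0}+\varepsilon^{1/2})$. Applying $\|a+b\|^{2}\ge(1-\delta)\|a\|^{2}-\delta^{-1}\|b\|^{2}$ with $a=I_{4}^{1}/\sqrt{\mu}+\partial^{\alpha}f$ and $b=R$ for fixed small $\delta>0$, the orthogonality and coercivity established above produce
\begin{equation*}
\big\|\tfrac{\partial^{\alpha}F}{\sqrt{\mu}}\big\|^{2}\ge\big((1-\delta)c-\delta^{-1}C(\eta_{0}+\varepsilon)^{2}\big)\|\partial^{\alpha}(\widetilde{\rho},\widetilde{u},\widetilde{\theta})\|^{2}+(1-\delta)\|\partial^{\alpha}f\|^{2}-C(\eta_{0}+\varepsilon^{1/2}),
\end{equation*}
and choosing $\eta_{0},\varepsilon$ small enough keeps the coefficient of $\|\partial^{\alpha}(\widetilde{\rho},\widetilde{u},\widetilde{\theta})\|^{2}$ above some $c_{2}>0$, delivering the asserted bound. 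The main obstacle is verifying the uniform coercivity of the quadratic form $Q_{(\rho,u,\theta)}$, since without it the orthogonality would only recover $\|\partial^{\alpha}f\|^{2}$ on the right, and the crucial fluid contribution $\|\partial^{\alpha}(\widetilde{\rho},\widetilde{u},\widetilde{\theta})\|^{2}$ needed in \eqref{4.64} would be lost.
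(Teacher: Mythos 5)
Your proof is correct and follows essentially the same route as the paper's: both rest on the orthogonality between the macroscopic leading part of $\partial^{\alpha}M$ and the purely microscopic piece ($\partial^{\alpha}G$, equivalently $\partial^{\alpha}f$), the coercivity of the resulting Gaussian quadratic form in $\partial^{\alpha}(\rho,u,\theta)$, and the smallness of all remaining contributions ($I_5$, the $M-\mu$ corrections, and $\partial^{\alpha}\overline{G}$). The only cosmetic difference is that you exploit the orthogonality in the $\mu$-weighted inner product and obtain coercivity perturbatively from the reference state $(1,0,3/2)$, whereas the paper works with the $M$-weighted split $(\partial^{\alpha}M)^{2}/M$, where the quadratic form is exactly diagonal and the cross term $\int\partial^{\alpha}G\,I_{4}/M\,dv$ vanishes identically.
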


\begin{proof}
Let $|\alpha|=N$. In view of the macro-micro decomposition  $F=M+G$, it holds that
\begin{align}
\label{4.70aa}
\|\frac{\partial^{\alpha}F(t)}{\sqrt{\mu}}\|^{2}
=\int_{\mathbb{R}^{3}}\int_{\mathbb{R}^{3}}\frac{(\partial^{\alpha}M)^{2}+(\partial^{\alpha}G)^{2}+2\partial^{\alpha}G\partial^{\alpha}M}{\mu}\,dv\,dx.
\end{align}
First of all, we write
\begin{align}
\label{4.70a}
\int_{\mathbb{R}^{3}}\int_{\mathbb{R}^{3}}\frac{(\partial^{\alpha}M)^{2}}{\mu}\,dv\,dx
=\int_{\mathbb{R}^{3}}\int_{\mathbb{R}^{3}}\frac{(\partial^{\alpha}M)^{2}}{M}+(\frac{1}{\mu}-\frac{1}{M})(\partial^{\alpha}M)^{2}\,dv\,dx.
\end{align}
Since $\partial^{\alpha}M=I_4+I_5$ holds in terms of \eqref{4.48}, one has
\begin{align*}
\int_{\mathbb{R}^{3}}\int_{\mathbb{R}^{3}}\frac{(\partial^{\alpha}M)^{2}}{M}\,dv\,dx
=\int_{\mathbb{R}^{3}}\int_{\mathbb{R}^{3}}\frac{(I_{4})^{2}+(I_{5})^{2}+2I_{4}I_{5}}{M}\,dv\,dx.
\end{align*}
Recalling the definition of $I_4$ in \eqref{4.48},
we can deduce from \eqref{2.4} and \eqref{1.7} that
\begin{align*}
\int_{\mathbb{R}^{3}}\int_{\mathbb{R}^{3}}\frac{(I_{4})^{2}}{M}\,dv\,dx&=\int_{\mathbb{R}^{3}}\int_{\mathbb{R}^{3}}M\Big\{\frac{\partial^{\alpha}\rho}{\rho}+\frac{(v-u)\cdot\partial^{\alpha}u}{R\theta}+(\frac{|v-u|^{2}}{2R\theta}-\frac{3}{2})\frac{\partial^{\alpha}\theta}{\theta} \Big\}^{2}\,dv\,dx
\\
&=\int_{\mathbb{R}^{3}}\int_{\mathbb{R}^{3}}M
\Big\{(\frac{\partial^{\alpha}\rho}{\rho})^{2}+(\frac{(v-u)\cdot\partial^{\alpha}u}{R\theta})^{2}+((\frac{|v-u|^{2}}{2R\theta}-\frac{3}{2})\frac{\partial^{\alpha}\theta}{\theta})^{2} \Big\}\,dv\,dx,
\end{align*}
which is further bounded from below as
$$
c\|\partial^{\alpha}(\rho,u,\theta)\|^{2}
\geq c\|\partial^{\alpha}(\widetilde{\rho},\widetilde{u},\widetilde{\theta})\|^{2}-C\|\partial^{\alpha}(\bar{\rho},\bar{u},\bar{\theta})\|^{2}
\geq c\|\partial^{\alpha}(\widetilde{\rho},\widetilde{u},\widetilde{\theta})\|^{2}-C\eta_{0}.
$$
By the definition of $I_5$ in \eqref{4.48}, we see that $\int_{\mathbb{R}^{3}}\int_{\mathbb{R}^{3}}\frac{(I_{5})^{2}}{M}\,dvdx$ is given by
\begin{align*}
\int_{\mathbb{R}^{3}}\int_{\mathbb{R}^{3}}\frac{1}{M}
\Big\{\sum_{1\leq\alpha_{1}\leq \alpha'}C^{\alpha_1}_{\alpha'}&\big[\partial^{\alpha_{1}}(M\frac{1}{\rho})\partial^{\alpha'-\alpha_{1}}\partial_{x_i}\rho
+\partial^{\alpha_{1}}(M\frac{v-u}{R\theta})\cdot\partial^{\alpha'-\alpha_{1}}\partial_{x_i}u\\
&\quad+\partial^{\alpha_{1}}(M\frac{|v-u|^{2}}{2R\theta^{2}}-M\frac{3}{2\theta})\partial^{\alpha'-\alpha_{1}}\partial_{x_i}\theta\big]\Big\}^2
\,dv\,dx,	
\end{align*}
which can be further bounded by $C(\eta_{0}+\varepsilon^{\frac{1}{2}})$. Similarly, using the definition of $I_4$ and $I_5$ in \eqref{4.48} again, we claim that
\begin{align*}
\int_{\mathbb{R}^{3}}\int_{\mathbb{R}^{3}}\frac{2I_{4}I_{5}}{M}\,dv\,dx\leq C(\eta_{0}+\varepsilon^{\frac{1}{2}}).
\end{align*}
It follows from the above estimates that
\begin{align*}
\int_{\mathbb{R}^{3}}\int_{\mathbb{R}^{3}}\frac{(\partial^{\alpha}M)^{2}}{M}\,dv\,dx
\geq c\|\partial^{\alpha}(\widetilde{\rho},\widetilde{u},\widetilde{\theta})\|^{2}-C(\eta_{0}+\varepsilon^{\frac{1}{2}}).
\end{align*}
On the other hand, it holds that
\begin{align*}
\int_{\mathbb{R}^{3}}\int_{\mathbb{R}^{3}}(\frac{1}{\mu}-\frac{1}{M})(\partial^{\alpha}M)^{2}\,dv\,dx\leq
C(\eta_{0}+\varepsilon^{\frac{1}{2}}).
\end{align*}
Plugging the above two estimates into \eqref{4.70a}, we obtain
\begin{align*}
\int_{\mathbb{R}^{3}}\int_{\mathbb{R}^{3}}\frac{(\partial^{\alpha}M)^{2}}{\mu}\,dv\,dx
\geq c\|\partial^{\alpha}(\widetilde{\rho},\widetilde{u},\widetilde{\theta})\|^{2}-C(\eta_{0}+\varepsilon^{\frac{1}{2}}).
\end{align*}
Thanks to $G=\overline{G}+\sqrt{\mu}f$, one has
\begin{align*}
\int_{\mathbb{R}^{3}}\int_{\mathbb{R}^{3}}\frac{(\partial^{\alpha}G)^{2}}{\mu}\,dv\,dx
=&\int_{\mathbb{R}^{3}}\int_{\mathbb{R}^{3}}\frac{(\sqrt{\mu}\partial^{\alpha}f+\partial^{\alpha}\overline{G})^{2}}{\mu}\,dv\,dx
\\
=&\int_{\mathbb{R}^{3}}\int_{\mathbb{R}^{3}}(\partial^{\alpha}f)^{2}\,dv\,dx
+\int_{\mathbb{R}^{3}}\int_{\mathbb{R}^{3}}\frac{(\partial^{\alpha}\overline{G})^{2}
+2\sqrt{\mu}\partial^{\alpha}f\partial^{\alpha}\overline{G}}{\mu}\,dv\,dx
\\
\geq& \frac{1}{2}\|\partial^{\alpha}f\|^{2}-C(\eta_{0}+\varepsilon),
\end{align*}
where in the last inequality we have used Lemma \ref{lem5.4}, \eqref{1.7}, \eqref{3.1} and the smallness of $\varepsilon$ and $\eta_{0}$.
For the last term of \eqref{4.70aa}, we see
\begin{align*}
\int_{\mathbb{R}^{3}}\int_{\mathbb{R}^{3}}\frac{2\partial^{\alpha}G\partial^{\alpha}M}{\mu}\,dv\,dx
&=	\int_{\mathbb{R}^{3}}\int_{\mathbb{R}^{3}}\frac{2\partial^{\alpha}G\partial^{\alpha}M}{M}
+(\frac{1}{\mu}-\frac{1}{M})2\partial^{\alpha}G\partial^{\alpha}M\,dv\,dx.
\end{align*}
First note that
\begin{align*}
\int_{\mathbb{R}^{3}}\int_{\mathbb{R}^{3}}\frac{2\partial^{\alpha}GI_{4}}{M}\,dv\,dx
=\int_{\mathbb{R}^{3}}\int_{\mathbb{R}^{3}}2\partial^{\alpha}G\Big(\frac{\partial^{\alpha}\rho}{\rho}+
\frac{(v-u)\cdot\partial^{\alpha}u}{R\theta}+(\frac{|v-u|^{2}}{2R\theta}-\frac{3}{2})\frac{\partial^{\alpha}\theta}{\theta} \Big)\,dv\,dx=0,
\end{align*}
due to \eqref{4.48} and \eqref{2.6}.
Then it holds by this and the definition of $I_5$ in \eqref{4.48} that
\begin{align*}
\int_{\mathbb{R}^{3}}\int_{\mathbb{R}^{3}}\frac{2\partial^{\alpha}G\partial^{\alpha}M}{M}\,dv\,dx
&=\int_{\mathbb{R}^{3}}\int_{\mathbb{R}^{3}}\frac{2\partial^{\alpha}G(I_{4}+I_{5})}{M}\,dv\,dx
=\int_{\mathbb{R}^{3}}\int_{\mathbb{R}^{3}}\frac{2\partial^{\alpha}GI_{5}}{M}\,dv\,dx,
\end{align*}
which can be bounded by $C(\eta_{0}+\varepsilon^{\frac{1}{2}})$.
On the other hand, it is easy to check that
\begin{align*}
\int_{\mathbb{R}^{3}}\int_{\mathbb{R}^{3}}(\frac{1}{\mu}-\frac{1}{M})2\partial^{\alpha}G\partial^{\alpha}M\,dv\,dx
\leq C(\eta_{0}+\varepsilon^{\frac{1}{2}}).
\end{align*}
It follows that
\begin{align*}
\int_{\mathbb{R}^{3}}\int_{\mathbb{R}^{3}}\frac{2\partial^{\alpha}G\partial^{\alpha}M}{\mu}\,dvdx
\leq C(\eta_{0}+\varepsilon^{\frac{1}{2}}).	
\end{align*}
Consequently, substituting all the above estimates into \eqref{4.70aa}, the desired estimate \eqref{lem.NFlbdd.est} follows. This then completes the proof of Lemma \ref{lem.NFlbdd}.
\end{proof}

Combining those estimates in Lemma \ref{lem.Nsde} and Lemma \ref{lem.fpnfpN1}, we are able to obtain all the  space derivative estimates for both the fluid and non-fluid parts.

\begin{lemma} \label{lem4.3}
It holds that
\begin{align}
\label{4.26}
&\sum_{1\leq|\alpha|\leq N-1}\{\|\partial^{\alpha}(\widetilde{\rho},\widetilde{u},\widetilde{\theta})(t)\|^{2}
+\|\partial^{\alpha}f(t)\|^{2}+c\frac{1}{\varepsilon}\int^{t}_{0}\|\partial^{\alpha}f(s)\|_{\sigma}^{2}\,ds\}\notag\\
&+c\varepsilon\sum_{2\leq|\alpha|\leq N}\int^{t}_{0}\|\partial^{\alpha}(\widetilde{\rho},\widetilde{u},
\widetilde{\theta})(s)\|^{2}\,ds
\nonumber\\
&+C\varepsilon^{2}\sum_{|\alpha|=N}\{\|\partial^{\alpha}(\widetilde{\rho},\widetilde{u},\widetilde{\theta})(t)\|^{2}
+\|\partial^{\alpha}f(t)\|^{2}+c\frac{1}{\varepsilon}\int^{t}_{0}\|\partial^{\alpha}f(s)\|_{\sigma}^{2}\,ds\}
\nonumber\\
&\leq C(\eta_{0}+\varepsilon^{\frac{1}{2}})\int^{t}_{0}\mathcal{D}_N(s)\,ds+C(1+t)(\eta_{0}+\varepsilon^{\frac{1}{2}})\varepsilon^{2}.
\end{align}
\end{lemma}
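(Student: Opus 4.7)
The plan is to obtain \eqref{4.26} by forming a suitable weighted linear combination of the two estimates \eqref{4.45} and \eqref{4.65} already established in Lemma \ref{lem.fpnfpN1} and Lemma \ref{lem.Nsde}, with no new energy identity required. The bound \eqref{4.45} already controls all space derivatives of order $1\le|\alpha|\le N-1$ for both the fluid and non-fluid parts, together with the fluid dissipation $\varepsilon\sum_{2\le|\alpha|\le N}\int_0^t\|\partial^{\alpha}(\widetilde\rho,\widetilde u,\widetilde\theta)\|^{2}\,ds$; however, its right-hand side still carries the two uncontrolled residual pieces
\[
C\varepsilon^{2}\sum_{|\alpha|=N}\bigl(\|\partial^{\alpha}\widetilde\rho(t)\|^{2}+\|\partial^{\alpha}f(t)\|^{2}\bigr)\quad\text{and}\quad C\varepsilon\sum_{|\alpha|=N}\int_0^t\|\partial^{\alpha}f(s)\|_{\sigma}^{2}\,ds,
\]
which are exactly the quantities that Lemma \ref{lem.Nsde} supplies, with the correct sign, on the left-hand side of \eqref{4.65}.

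Concretely, I will pick a large constant $K>1$, depending only on the universal constants appearing in \eqref{4.45} and \eqref{4.65} but independent of $\varepsilon$ and $\eta_0$, and then add $K$ times \eqref{4.65} to \eqref{4.45}. The coercive boundary term $K\varepsilon^{2}\sum_{|\alpha|=N}\bigl(\|\partial^{\alpha}(\widetilde\rho,\widetilde u,\widetilde\theta)(t)\|^{2}+\|\partial^{\alpha}f(t)\|^{2}\bigr)$ then absorbs the residual $C\varepsilon^{2}\sum_{|\alpha|=N}\bigl(\|\partial^{\alpha}\widetilde\rho(t)\|^{2}+\|\partial^{\alpha}f(t)\|^{2}\bigr)$ from the right-hand side of \eqref{4.45}, and the dissipation $Kc\varepsilon\sum_{|\alpha|=N}\int_0^t\|\partial^{\alpha}f\|_{\sigma}^{2}\,ds$ absorbs $C\varepsilon\sum_{|\alpha|=N}\int_0^t\|\partial^{\alpha}f\|_{\sigma}^{2}\,ds$. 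This reshuffling produces on the left precisely the quantity appearing on the left of \eqref{4.26}, including the $\varepsilon^{2}$-weighting of the highest derivative norm and the $\varepsilon^{-1}$-weighting of the corresponding dissipation, exactly matching the structure of $\mathcal{E}_N$ and $\mathcal{D}_N$ in \eqref{3.2}--\eqref{3.8}.

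The right-hand sides of \eqref{4.45} and \eqref{4.65} share the common form $C(\eta_0+\varepsilon^{1/2})\int_0^t\mathcal{D}_N(s)\,ds+C(1+t)(\eta_0+\varepsilon^{1/2})\varepsilon^{2}$, so after combining them and relabelling the generic constants the right-hand side of \eqref{4.26} emerges directly. There is no genuine analytical obstacle at this stage, since all the delicate work---the treatment of $\frac{1}{\varepsilon}(\mathcal{L}\partial^{\alpha}f,\partial^{\alpha}M/\sqrt{\mu})$, the splitting \eqref{4.48} into $I_4+I_5$ with its further subsplitting $I_4=I_4^1+I_4^2$, and the integration-by-parts trick \eqref{4.49}---has already been carried out inside Lemma \ref{lem.Nsde}; the only book-keeping point to watch is that $K$ must be fixed once and for all \emph{before} invoking the smallness of $\eta_0$ and $\varepsilon$, so that the factors $K\cdot C(\eta_0+\varepsilon^{1/2})$ on the right remain as small as required when the a priori assumption \eqref{3.1} is closed in the next step.
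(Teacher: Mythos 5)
Your proposal is correct and is exactly the paper's argument: Lemma \ref{lem4.3} is obtained by adding a sufficiently large fixed multiple of \eqref{4.65} to \eqref{4.45}, so that the coercive $N$-order terms on the left of \eqref{4.65} absorb the residual $C\varepsilon^{2}\sum_{|\alpha|=N}(\cdot)$ and $C\varepsilon\sum_{|\alpha|=N}\int_0^t\|\partial^{\alpha}f\|_{\sigma}^{2}\,ds$ terms on the right of \eqref{4.45}, with the constant fixed before invoking the smallness of $\eta_0$ and $\varepsilon$. No further comment is needed.
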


Notice that Lemma \ref{lem4.3} does not include the zero-order energy estimate. Therefore, adding \eqref{4.26} together with \eqref{4.2}, we conclude the energy estimate on solutions without velocity derivatives.
 
\begin{lemma}\label{lem.noweight}
It holds that
\begin{align}
\label{4.1}
&\sum_{|\alpha|\leq N-1}(\|\partial^{\alpha}(\widetilde{\rho},\widetilde{u},\widetilde{\theta})(t)\|^{2}
+\|\partial^{\alpha}f(t)\|^{2})
+\varepsilon^{2}\sum_{|\alpha|=N}(\|\partial^{\alpha}(\widetilde{\rho},\widetilde{u},\widetilde{\theta})(t)\|^{2}
+\|\partial^{\alpha}f(t)\|^{2})
\nonumber\\
&\quad+c\varepsilon\sum_{1\leq|\alpha|\leq N}\int^{t}_{0}\|\partial^{\alpha}(\widetilde{\rho},\widetilde{u},
\widetilde{\theta})(s)\|^{2}\,ds\notag\\
&\quad+c\frac{1}{\varepsilon}\sum_{|\alpha|\leq N-1}\int^{t}_{0}\|\partial^{\alpha}f(s)\|_{\sigma}^{2}\,ds
+c\varepsilon\sum_{|\alpha|=N}\int^{t}_{0}\|\partial^{\alpha}f(s)\|_{\sigma}^{2}\,ds
\nonumber\\
&\leq
C(\eta_{0}+\varepsilon^{\frac{1}{2}})\int^{t}_{0}\mathcal{D}_N(s)\,ds+
C(1+t)(\eta_{0}+\varepsilon^{\frac{1}{2}})\varepsilon^{2}.
\end{align}
\end{lemma}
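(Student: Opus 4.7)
The plan is to derive \eqref{4.1} as a suitable linear combination of the zero-order estimate \eqref{4.2} from Lemma \ref{lem4.2} and the higher-order estimate \eqref{4.26} from Lemma \ref{lem4.3}. The key observation is that \eqref{4.2} controls exactly the $|\alpha|=0$ terms (both for the fluid component $(\widetilde{\rho},\widetilde{u},\widetilde{\theta})$ and for the non-fluid component $f$) together with the first-order spatial dissipation $\varepsilon\|\nabla_x(\widetilde{\rho},\widetilde{u},\widetilde{\theta})\|^2$ and the zero-order dissipation $\frac{1}{\varepsilon}\|f\|_\sigma^2$, while \eqref{4.26} provides all the remaining dissipation terms that appear on the left-hand side of \eqref{4.1}, namely the spatial derivatives of order $1\leq|\alpha|\leq N-1$ with the non-singular prefactor, the highest $N$-order ones weighted by $\varepsilon^2$, together with the corresponding dissipation rates $c\varepsilon\sum_{2\leq|\alpha|\leq N}\|\partial^\alpha(\widetilde{\rho},\widetilde{u},\widetilde{\theta})\|^2$ and $\frac{c}{\varepsilon}\sum_{1\leq|\alpha|\leq N-1}\|\partial^\alpha f\|_\sigma^2+c\varepsilon\sum_{|\alpha|=N}\|\partial^\alpha f\|_\sigma^2$.

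Concretely, I would multiply \eqref{4.26} by a sufficiently large constant $K>0$ and add the resulting inequality to \eqref{4.2}. The only genuine coupling to address is the term $C\varepsilon\int_0^t\|\nabla_x f(s)\|_\sigma^2\,ds$ appearing on the right-hand side of \eqref{4.2}; since this is bounded by $C\varepsilon^2\cdot\frac{1}{\varepsilon}\sum_{|\alpha|=1}\int_0^t\|\partial^\alpha f(s)\|_\sigma^2\,ds$, and the coefficient in front of the matching dissipation term on the left-hand side of $K\cdot\eqref{4.26}$ is $Kc/\varepsilon$, this term is absorbed by choosing $\varepsilon>0$ so small that $C\varepsilon\leq Kc/(2\varepsilon)$, i.e., $\varepsilon^2\leq Kc/(2C)$. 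The other right-hand side contributions combine directly into $C(\eta_0+\varepsilon^{1/2})\int_0^t\mathcal{D}_N(s)\,ds+C(1+t)(\eta_0+\varepsilon^{1/2})\varepsilon^2$, as stated.

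Finally, after this combination, the left-hand side contains $\sum_{|\alpha|\leq N-1}(\|\partial^\alpha(\widetilde{\rho},\widetilde{u},\widetilde{\theta})(t)\|^2+\|\partial^\alpha f(t)\|^2)+\varepsilon^2\sum_{|\alpha|=N}(\|\partial^\alpha(\widetilde{\rho},\widetilde{u},\widetilde{\theta})(t)\|^2+\|\partial^\alpha f(t)\|^2)$ plus the full set of spatial dissipation terms listed in \eqref{4.1}. Since no step beyond a linear combination with smallness absorption is required and no further differential inequality or Gronwall argument is invoked here, there is no genuine obstacle: the hardest work has already been done in Lemmas \ref{lem4.2} and \ref{lem4.3}, and the present lemma is a direct consolidation. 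Hence the proof reduces to verifying the absorption inequality above and collecting terms, which yields \eqref{4.1} and completes the argument.
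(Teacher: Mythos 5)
Your proposal is correct and matches the paper's own (one-line) proof: the lemma is obtained by simply adding \eqref{4.2} to \eqref{4.26}, with the only nontrivial point being exactly the one you identify, namely absorbing the term $C\varepsilon\int_0^t\|\nabla_xf\|_\sigma^2\,ds$ from \eqref{4.2} into the dissipation $c\varepsilon^{-1}\sum_{|\alpha|=1}\int_0^t\|\partial^\alpha f\|_\sigma^2\,ds$ provided by \eqref{4.26} via the smallness of $\varepsilon$.
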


\subsection{Mixed derivative estimate}
This subsection is devoted to deriving the mixed derivative estimate on the microscopic component $f$. We follows the iteration technique for velocity derivatives as in \cite{Guo-2002}.

\begin{lemma}\label{lem.mde}
It holds that
\begin{align}
\label{4.66}
&\sum_{|\alpha|+|\beta|\leq N,|\beta|\geq1}\big\{\|\partial_{\beta}^{\alpha}f(t)\|_{2,|\beta|}^{2}
+c\frac{1}{\varepsilon}\int^{t}_{0}\|\partial_{\beta}^{\alpha}f(s)\|_{\sigma,|\beta|}^{2}\,ds\big\}
\nonumber\\
\leq& C\frac{1}{\varepsilon}\sum_{|\alpha|\leq N-1}\int^{t}_{0}\|\partial^{\alpha}f(s)\|_{\sigma}^{2}\,ds
+C\varepsilon\sum_{1\leq|\alpha|\leq N}\int^{t}_{0}
\big\{\|\partial^{\alpha}f(s)\|_{\sigma}^{2}
+\|\partial^{\alpha}(\widetilde{\rho},\widetilde{u},\widetilde{\theta})(s)\|^{2}\big\}\,ds
\nonumber\\
&+C(\eta_{0}+\varepsilon^{\frac{1}{2}})\int^{t}_{0}\mathcal{D}_N(s)\,ds
+C(1+t)(\eta_{0}+\varepsilon^{\frac{1}{2}})\varepsilon^{2}.
\end{align}
\end{lemma}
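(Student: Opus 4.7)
The plan is to apply $\partial^\alpha_\beta$ (with $|\alpha|+|\beta|\leq N$, $|\beta|\geq 1$) to the microscopic equation \eqref{2.21} and take the $L^2_{x,v}$ inner product of the result with $w^{2|\beta|}\partial^\alpha_\beta f$. The time derivative contributes $\frac{1}{2}\frac{d}{dt}\|\partial^\alpha_\beta f\|^2_{2,|\beta|}$; the transport term $\partial^\alpha_\beta(v\cdot\nabla_x f)$ decomposes as $v\cdot\nabla_x\partial^\alpha_\beta f$ (whose contribution vanishes after integration by parts in $x$) plus the commutator $\sum_{e_i\leq\beta}C^{e_i}_\beta\partial^{\alpha+e_i}_{\beta-e_i}f$; and Lemma \ref{lem5.5} provides the coercive bound
\begin{equation*}
-\frac{1}{\varepsilon}(\partial^\alpha_\beta\mathcal{L}f,w^{2|\beta|}\partial^\alpha_\beta f)\geq \frac{c_0}{\varepsilon}\|\partial^\alpha_\beta f\|^2_{\sigma,|\beta|}-\frac{\eta}{\varepsilon}\sum_{|\beta_1|\leq|\beta|}\|\partial^\alpha_{\beta_1}f\|^2_{\sigma,|\beta_1|}-\frac{C_\eta}{\varepsilon}\|\partial^\alpha f\|^2_\sigma,
\end{equation*}
whose last term (with $|\alpha|\leq N-1$ since $|\beta|\geq 1$) matches the first sum on the right-hand side of \eqref{4.66}.

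For the transport commutator, the pointwise bound $|v|w\leq 1$ together with the equivalence \eqref{2.26} (which implies $\int w^{2|\beta|-1}|g|^2\,dv\leq C|g|^2_{\sigma,|\beta|-1}$ for the Coulomb kernel) and the weighted Cauchy--Schwarz inequality $|ab|\leq\eta\varepsilon^{-1}a^2+C_\eta\varepsilon b^2$ yield
\begin{equation*}
\bigl|(\partial^{\alpha+e_i}_{\beta-e_i}f,\,vw^{2|\beta|}\partial^\alpha_\beta f)\bigr|\leq \frac{\eta}{\varepsilon}\|\partial^\alpha_\beta f\|^2_{\sigma,|\beta|}+C_\eta\varepsilon\|\partial^{\alpha+e_i}_{\beta-e_i}f\|^2_{\sigma,|\beta-e_i|}.
\end{equation*}
When $|\beta|\geq 2$ the second term, having strictly smaller $|\beta|$-order, is absorbed by the induction hypothesis on $|\beta|$; when $|\beta|=1$ it reduces to $C_\eta\varepsilon\|\partial^{\alpha+e_i}f\|^2_\sigma$ with $1\leq|\alpha+e_i|\leq N$, which is precisely the second sum on the right-hand side of \eqref{4.66}. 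The nonlinear and quasilinear collision contributions $\frac{1}{\varepsilon}\Gamma(\frac{M-\mu}{\sqrt\mu},f)$, $\frac{1}{\varepsilon}\Gamma(f,\frac{M-\mu}{\sqrt\mu})$ and $\frac{1}{\varepsilon}\Gamma(\frac{G}{\sqrt\mu},\frac{G}{\sqrt\mu})$ are handled directly by Lemmas \ref{lem5.7} and \ref{lem5.8}, producing $\eta\varepsilon^{-1}\|\partial^\alpha_\beta f\|^2_{\sigma,|\beta|}+C_\eta(\eta_0+\varepsilon^{1/2})(\mathcal{D}_N(t)+\varepsilon^2)$. The correction-term sources $-\sqrt\mu^{-1}P_1(v\cdot\nabla_x\overline G)$, $-\sqrt\mu^{-1}\partial_t\overline G$ and the $-\sqrt\mu^{-1}P_1\{v\cdot(\cdots)M\}$ piece are bounded via Lemma \ref{lem5.4}, Lemma \ref{lem5.10} and \eqref{3.6} by $\eta\varepsilon^{-1}\|\partial^\alpha_\beta f\|^2_{\sigma,|\beta|}+C_\eta\varepsilon\|\partial^\alpha(\nabla_x\widetilde u,\nabla_x\widetilde\theta)\|^2+C_\eta(\eta_0+\varepsilon^{1/2})\varepsilon^2$. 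Finally the fluid-projection term $P_0[v\cdot\nabla_x(\sqrt\mu f)]/\sqrt\mu$ contributes $\eta\varepsilon^{-1}\|\partial^\alpha_\beta f\|^2_{\sigma,|\beta|}+C_\eta\varepsilon\|\partial^{\alpha+e_k}_{\beta'}f\|^2_{\sigma,|\beta'|}$ with $|\beta'|<|\beta|$, again covered by induction or by the second sum on the right-hand side.

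Summing over all pairs $(\alpha,\beta)$ with $|\alpha|+|\beta|\leq N$, $|\beta|\geq 1$ through a linear combination whose coefficients grow suitably with $|\beta|$ allows us to absorb every $\eta$-term together with all lower-$|\beta|$ commutator errors into the principal dissipation $c_0\varepsilon^{-1}\sum\|\partial^\alpha_\beta f\|^2_{\sigma,|\beta|}$; integration in time and the initial bound \eqref{3.5} then deliver \eqref{4.66}. The main obstacle in this scheme is the bookkeeping of the iteration in $|\beta|$: the commutator from $v\cdot\nabla_x$ exchanges one velocity derivative for one extra space derivative of the same total order but possibly smaller $|\beta|$, so the chain of absorbed errors must be carefully tracked. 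The weights in the linear combination must be arranged so that every intermediate error either is absorbed at a higher $|\beta|$ stage, terminates at a pure space derivative controlled by Lemma \ref{lem.noweight}, or already appears on the right-hand side of \eqref{4.66}.
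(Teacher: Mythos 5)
Your proposal is correct and follows essentially the same route as the paper's proof: the same weighted inner product of $\partial^\alpha_\beta$ applied to \eqref{2.21} with $w^{2|\beta|}\partial^\alpha_\beta f$, the coercivity \eqref{5.10}, the commutator bound matching \eqref{4.78b}, Lemmas \ref{lem5.7}, \ref{lem5.8}, \ref{lem5.4} and \ref{lem5.10} for the collision and source terms, and the concluding induction on $|\beta|$ via a suitable linear combination. The only cosmetic difference is your treatment of the $P_0$ term, which the paper bounds directly by $C_\eta\varepsilon\|\nabla_x\partial^\alpha f\|^2_\sigma$ (no velocity derivatives survive the projection) rather than by a lower-$|\beta|$ term.
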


\begin{proof}
Let $|\alpha|+|\beta|\leq N$ with $|\beta|\geq1$
and $w$ be defined in \eqref{2.24}, then we apply $\partial_{\beta}^{\alpha}$ to \eqref{2.21} and take the inner product of the resulting equation with $w^{2|\beta|}\partial^{\alpha}_{\beta}f$ over $\mathbb{R}^{3}\times\mathbb{R}^{3}$ to get
\begin{align}
\label{4.67}
&\frac{1}{2}\frac{d}{dt}\|\partial_{\beta}^{\alpha}f\|^{2}_{2,|\beta|}+(v\cdot\nabla_{x}\partial^{\alpha}_{\beta}f,w^{2|\beta|}\partial^{\alpha}_{\beta}f)
+(C^{\beta-e_{i}}_{\beta}\delta^{e_{i}}_{\beta}\partial^{\alpha+e_{i}}_{\beta-e_{i}}f,w^{2|\beta|}\partial^{\alpha}_{\beta}f)
-\frac{1}{\varepsilon}(\partial^{\alpha}_{\beta}\mathcal{L}f,w^{2|\beta|}\partial^{\alpha}_{\beta}f)
\nonumber\\
=&\frac{1}{\varepsilon}(\partial^{\alpha}_{\beta}\Gamma(\frac{M-\mu}{\sqrt{\mu}},f)
+\partial^{\alpha}_{\beta}\Gamma(f,\frac{M-\mu}{\sqrt{\mu}}),w^{2|\beta|}\partial^{\alpha}_{\beta}f)
+\frac{1}{\varepsilon}(\partial^{\alpha}_{\beta}\Gamma(\frac{G}{\sqrt{\mu}},\frac{G}{\sqrt{\mu}}),
w^{2|\beta|}\partial^{\alpha}_{\beta}f)
\nonumber\\
&+(\partial^{\alpha}_{\beta}[\frac{P_{0}(v\sqrt{\mu}\cdot\nabla_{x}f)}{\sqrt{\mu}}],w^{2|\beta|}\partial^{\alpha}_{\beta}f)
-(\partial^{\alpha}_{\beta}[\frac{P_{1}(v\cdot\nabla_{x}\overline{G})}{\sqrt{\mu}}],w^{2|\beta|}\partial^{\alpha}_{\beta}f)
-(\partial^{\alpha}_{\beta}(\frac{\partial_{t}\overline{G}}{\sqrt{\mu}}),w^{2|\beta|}\partial^{\alpha}_{\beta}f)
\nonumber\\
&-(\partial^{\alpha}_{\beta}\{\frac{1}{\sqrt{\mu}}P_{1}[v\cdot(\frac{|v-u|^{2}\nabla_{x}\widetilde{\theta}}{2R\theta^{2}}+\frac{(v-u)\cdot\nabla_{x}\widetilde{u}}{R\theta})M]\},w^{2|\beta|}\partial^{\alpha}_{\beta}f).
\end{align}
Here $\delta^{e_{i}}_{\beta}=1$ if $e_{i}\leq\beta$ or $\delta^{e_{i}}_{\beta}=0$ otherwise.

We now compute \eqref{4.67} term by term. The second term on the left hand side of \eqref{4.67}
vanishes by integration by parts. Thanks to $|\beta-e_{i}|=|\beta|-1$ and
$\|w^{\frac{1}{2}}w^{|\beta|}\partial^{\alpha}_{\beta}f\|\leq C\|\partial^{\alpha}_{\beta}f\|_{\sigma,|\beta|}$
for $w=\langle v\rangle^{-1}$, the third term on the left hand side of \eqref{4.67} can be estimated as
\begin{align}
\label{4.78b}
|(C^{\beta-e_{i}}_{\beta}\delta^{e_{i}}_{\beta}\partial^{\alpha+e_{i}}_{\beta-e_{i}}f,w^{2|\beta|}\partial_{\beta}^{\alpha}f)|
&\leq C\|w^{\frac{1}{2}+(|\beta|-1)}\partial^{\alpha+e_{i}}_{\beta-e_{i}}f\|\|w^{|\beta|+\frac{1}{2}}\partial^{\alpha}_{\beta}f\|
\nonumber\\
&= C\|w^{\frac{1}{2}}w^{|\beta-e_{i}|}\partial^{\alpha+e_{i}}_{\beta-e_{i}}f\|\|w^{\frac{1}{2}}w^{|\beta|}\partial^{\alpha}_{\beta}f\|
\nonumber\\
&\leq \eta\frac{1}{\varepsilon}\|\partial^{\alpha}_{\beta}f\|_{\sigma,|\beta|}^{2}+C_{\eta}\varepsilon\|\partial^{\alpha+e_{i}}_{\beta-e_{i}}f\|_{\sigma,|\beta-e_{i}|}^{2}.
\end{align}
In view of \eqref{5.10}, it is easy to see that
\begin{align*}
-\frac{1}{\varepsilon}(\partial_{\beta}^{\alpha}\mathcal{L}f,w^{2|\beta|}\partial_{\beta}^{\alpha}f)
\geq c\frac{1}{\varepsilon}\|\partial^{\alpha}_{\beta}f\|^{2}_{\sigma,|\beta|}-\eta\frac{1}{\varepsilon}\sum_{|\beta_{1}|\leq|\beta|}\|\partial^{\alpha}_{\beta_{1}}f\|_{\sigma,|\beta_{1}|}^{2}-C_{\eta}\frac{1}{\varepsilon}\|\partial^{\alpha}f\|_{\sigma}^{2}.
\end{align*}
For the first and two terms on the right hand side of \eqref{4.67}, we use
 \eqref{5.13} and \eqref{5.20} respectively, to get
\begin{align*}
\frac{1}{\varepsilon}|(\partial^{\alpha}_{\beta}\Gamma(\frac{M-\mu}{\sqrt{\mu}},f)
+\partial^{\alpha}_{\beta}\Gamma(f,\frac{M-\mu}{\sqrt{\mu}}),w^{2|\beta|}\partial^{\alpha}_{\beta}f)|
\leq C\eta\frac{1}{\varepsilon}\|\partial^{\alpha}_{\beta}f\|^{2}_{\sigma,|\beta|}+C_{\eta}(\eta_{0}+\varepsilon^{\frac{1}{2}})\mathcal{D}_N(t),
\end{align*}
and
\begin{align*}
&\frac{1}{\varepsilon}|(\partial^{\alpha}_{\beta}\Gamma(\frac{G}{\sqrt{\mu}},\frac{G}{\sqrt{\mu}}),
w^{2|\beta|}\partial^{\alpha}_{\beta}f)|
\nonumber\\
&\leq  C\eta\frac{1}{\varepsilon}\|\partial^{\alpha}_{\beta}f\|^{2}_{\sigma,|\beta|}
+C_{\eta}(\eta_{0}+\varepsilon^{\frac{1}{2}})\mathcal{D}_N(t)+C_{\eta}(\eta_{0}+\varepsilon^{\frac{1}{2}})\varepsilon^2.
\end{align*}
The term involving $\overline{G}$ in \eqref{4.67} can be estimated, by
Lemma \ref{lem5.4}, \eqref{2.26}, Lemma \ref{lem5.10}, the Cauchy-Schwarz and Sobolev inequalities, \eqref{1.7} and \eqref{3.1}, as 
\begin{align*}
&|(\partial^{\alpha}_{\beta}[\frac{P_{1}(v\cdot\nabla_{x}\overline{G})}{\sqrt{\mu}}],w^{2|\beta|}\partial^{\alpha}_{\beta}f)|
+|(\partial^{\alpha}_{\beta}(\frac{\partial_{t}\overline{G}}{\sqrt{\mu}}),w^{2|\beta|}\partial^{\alpha}_{\beta}f)|
\\
\leq&C(\|\langle v\rangle^{\frac{1}{2}}w^{|\beta|}\partial^{\alpha}_{\beta}[\frac{P_{1}(v\cdot\nabla_{x}\overline{G})}{\sqrt{\mu}}]\|+\|\langle v\rangle^{\frac{1}{2}}w^{|\beta|}\partial^{\alpha}_{\beta}(\frac{\partial_{t}\overline{G}}{\sqrt{\mu}})\|)\|\langle v\rangle^{-\frac{1}{2}}w^{|\beta|}\partial^{\alpha}_{\beta}f\|
\\
\leq& 
C\eta\frac{1}{\varepsilon}\|\partial^{\alpha}_{\beta}f\|^{2}_{\sigma,|\beta|}+C_{\eta}(\eta_{0}+\varepsilon^{\frac{1}{2}})\varepsilon^2.
\end{align*}
For the third term on the right hand side of \eqref{4.67}, we can deduce from 
\eqref{2.5}, \eqref{2.26}, the Cauchy-Schwarz and Sobolev inequalities, \eqref{1.7} and \eqref{3.1} that
\begin{align*}
&|(\partial^{\alpha}_{\beta}[\frac{1}{\sqrt{\mu}}P_{0}(v\sqrt{\mu}\cdot\nabla_{x}f)]
,w^{2|\beta|}\partial^{\alpha}_{\beta}f)|
\\
&=\big|\sum_{j=0}^{4}(\langle v\rangle^{\frac{1}{2}}w^{|\beta|}\partial_{\beta}^{\alpha}
[\langle v\sqrt{\mu}\cdot\nabla_{x}f,\frac{\chi_{j}}{M}\rangle\frac{\chi_{j}}{\sqrt{\mu}}],\langle v\rangle^{-\frac{1}{2}}w^{|\beta|}\partial^{\alpha}_{\beta}f)\big|
\\
&\leq 
C\eta\frac{1}{\varepsilon}\|\partial^{\alpha}_{\beta}f\|^{2}_{\sigma,|\beta|}+C_{\eta}\varepsilon
\|\nabla_x\partial^{\alpha}f\|^{2}_{\sigma}
+C_{\eta}(\eta_{0}+\varepsilon^{\frac{1}{2}})\varepsilon^2,
\end{align*}
where we have used the fact that $|\langle v\rangle^{l}\mu^{-\frac{1}{2}}\partial_{\beta}M|_{2}\leq C$ for any $l\geq0$ and $|\beta|\geq0$
by \eqref{3.6}. Likewise, the last term of \eqref{4.67} is dominated by
\begin{align*}
C\eta\frac{1}{\varepsilon}\|\partial^{\alpha}_{\beta}f\|^{2}_{\sigma,|\beta|}+C_{\eta}\varepsilon
\|(\nabla_{x}\partial^{\alpha}\widetilde{u},\nabla_{x}\partial^{\alpha}\widetilde{\theta})\|^{2}+C_{\eta}(\eta_{0}+\varepsilon^{\frac{1}{2}})\varepsilon^2.
\end{align*}
Hence, for $|\alpha|+|\beta|\leq N$ with $|\beta|\geq1$ and any small $\eta>0$, we can deduce from the above estimates that
\begin{align}
\label{4.68}
\frac{1}{2}\frac{d}{dt}\|\partial_{\beta}^{\alpha}f\|^{2}_{2,|\beta|}+c\frac{1}{\varepsilon}\|\partial^{\alpha}_{\beta}f\|^{2}_{\sigma,|\beta|}
\leq& 
C\eta\frac{1}{\varepsilon}\sum_{|\beta_{1}|\leq|\beta|}\|\partial^{\alpha}_{\beta_{1}}f\|_{\sigma,|\beta_{1}|}^{2}+C_{\eta}\varepsilon\|\partial^{\alpha+e_{i}}_{\beta-e_{i}}f\|_{\sigma,|\beta-e_{i}|}^{2}
\nonumber\\
&
+C_{\eta}\frac{1}{\varepsilon}\|\partial^{\alpha}f\|_{\sigma}^{2}+C_\eta\varepsilon
(\|(\nabla_{x}\partial^{\alpha}\widetilde{u},\nabla_{x}\partial^{\alpha}\widetilde{\theta})\|^{2}+\|\nabla_{x}\partial^{\alpha}f\|_{\sigma}^{2})
\nonumber\\
&+C_{\eta}(\eta_{0}+\varepsilon^{\frac{1}{2}})\mathcal{D}_N(t)+C_{\eta}(\eta_{0}+\varepsilon^{\frac{1}{2}})\varepsilon^2.
\end{align}

We shall use the induction on $|\beta|$  to cancel the first and second terms on the right hand side of \eqref{4.68}. By the suitable linear combination of \eqref{4.68}
for all the cases that  $|\alpha|+|\beta|\leq N$ with $|\beta|\geq1$ and then taking $\eta>0$ and $\varepsilon>0$ small enough, we see that there exist constants $C_{\alpha,\beta}>0$ such that
\begin{align}
\label{4.69}
&\sum_{|\alpha|+|\beta|\leq N,|\beta|\geq1}\{\frac{d}{dt}C_{\alpha,\beta}\|\partial_{\beta}^{\alpha}f\|_{2,|\beta|}^{2}
+c\frac{1}{\varepsilon}\|\partial_{\beta}^{\alpha}f\|_{\sigma,|\beta|}^{2}\}
\nonumber\\
\leq&C\varepsilon\sum_{1\leq|\alpha|\leq N}(\|\partial^{\alpha}f\|_{\sigma}^{2}+\|\partial^{\alpha}(\widetilde{\rho},\widetilde{u},\widetilde{\theta})\|^{2})+C\frac{1}{\varepsilon}\sum_{|\alpha|\leq N-1}\|\partial^{\alpha}f\|_{\sigma}^{2}
\nonumber\\
&+C(\eta_{0}+\varepsilon^{\frac{1}{2}})\mathcal{D}_N(t)+C(\eta_{0}+\varepsilon^{\frac{1}{2}})\varepsilon^2.
\end{align}
Integrating \eqref{4.69} with respect to $t$ and using \eqref{3.5}, we can obtain the desired estimate \eqref{4.66}. This then completes the proof of Lemma \ref{lem.mde}.
\end{proof}

\section{Proof of the main results}\label{sec.3}
In this section, we will prove our main results Theorem \ref{thm1.1} and Theorem \ref{thm1.2}
by the energy estimates derived in the previous section.
First of all, we prove Theorem \ref{thm1.1} on the
compressible Euler limit for the Landau equation.

\subsection{Proof of Theorem \ref{thm1.1}}
By multiplying \eqref{4.1} with a large positive constant $C$ and then adding the resultant inequality to \eqref{4.66}, 
we get by letting $\eta_{0}>0$ and $\varepsilon>0$ be small enough that
\begin{align}
\label{3.7}
\mathcal{E}_N(t)+\frac{1}{2}\int^{t}_{0}\mathcal{D}_N(s)\,ds\leq \frac{1}{2}\varepsilon^{2},
\end{align}
for  $t\in(0,T]$ with $T\in(0,\tau]$.
Here $\mathcal{E}_N(t)$ and  $\mathcal{D}_N(t)$ are defined in \eqref{3.2} and \eqref{3.8}, respectively.

Note that the a priori assumption \eqref{3.1} can be closed since the estimate  \eqref{3.7} is strictly stronger than \eqref{3.1}. Therefore, by the uniform a priori estimates  and the local existence of the solution,
the standard continuity argument, we can immediately derive the existence and uniqueness of smooth solutions
to the Landau equation \eqref{1.1} with initial data \eqref{1.10} as stated in Theorem \ref{thm1.1}.
Moreover, the desired estimate \eqref{3.21A}  holds true in terms of \eqref{3.7}.

To finish the proof of Theorem \ref{thm1.1}, we still need to prove the uniform convergence rate in
$\varepsilon$ as in \eqref{1.11}. Note from \eqref{3.6} that $(\rho,u,\theta)$ and $(\bar{\rho},\bar{u},\bar{\theta})$
are close enough to the state $(1,0,3/2)$, we can deduce from  this, \eqref{3.7} and
\eqref{3.2} that
\begin{align*}
&\|\frac{(M_{[\rho,u,\theta]}-M_{[\bar{\rho},\bar{u},\bar{\theta}]})(t)}{\sqrt{\mu}}\|_{L_{x}^{2}L_{v}^{2}}
+\|\frac{(M_{[\rho,u,\theta]}-M_{[\bar{\rho},\bar{u},\bar{\theta}]})(t)}{\sqrt{\mu}}\|_{L_{x}^{\infty}L_{v}^{2}}
\nonumber\\
&\leq C_\tau(\|(\widetilde{\rho},\widetilde{u},\widetilde{\theta})(t)\|+\|(\widetilde{\rho},\widetilde{u},\widetilde{\theta})(t)\|_{L_{x}^{\infty}})\leq C_\tau\varepsilon,
\end{align*}
for any $t\in[0,\tau]$.
Similarly, it holds that
\begin{equation*}
\sup_{t\in[0,\tau]}(\|f(t)\|_{L^{2}_{x}L^{2}_{v}}+\|f(t)\|_{L^{\infty}_{x}L^{2}_{v}})\leq C_\tau\varepsilon.
\end{equation*}
With Lemma \ref{lem5.4}, it is easy to check that
\begin{align*}
\sup_{t\in[0,\tau]}(
\|\frac{\overline{G}(t)}{\sqrt{\mu}}\|_{L_{x}^{2}L_{v}^{2}}
+\|\frac{\overline{G}(t)}{\sqrt{\mu}}\|_{L_{x}^{\infty}L_{v}^{2}})
\leq C_\tau\eta_{0}\varepsilon.
\end{align*}
Therefore, by these facts and
$F=M+\overline{G}+\sqrt{\mu}f$, we immediately get
\begin{align}\notag
	&\|\frac{(F-M_{[\bar{\rho},\bar{u},\bar{\theta}]})(t)}{\sqrt{\mu}}\|_{L_{x}^{2}L_{v}^{2}}
	+\|\frac{(F-M_{[\bar{\rho},\bar{u},\bar{\theta}]})(t)}{\sqrt{\mu}}\|_{L_{x}^{\infty}L_{v}^{2}}
	\leq C_\tau\varepsilon,
\end{align}
for any $t\in[0,\tau]$. This, combined with the fact $F=F^{\varepsilon}(t,x,v)$, gives \eqref{1.11} and hence ends the proof of Theorem \ref{thm1.1}. \qed

\subsection{Proof of Theorem \ref{thm1.2}}
We are now in a position to prove Theorem \ref{thm1.2}. We first give the following lemma on the existence result of the compressible Euler system \eqref{1.4} and the initial data associated with \eqref{1.13} and $\delta$ in \eqref{1.16}. It can be found in \cite[Lemma 3.1]{Guo-Jang-Jiang-2010}. Readers also refer to \cite{Kato,Majda} and the references cited therein.
\begin{lemma}\label{lem5.9}
	Consider the compressible Euler system \eqref{1.4}
	with initial data
	\begin{equation}
		\label{5.28}
		(\bar{\rho},\bar{u},\bar{\theta})(0,x)=(1+\delta\varrho_0,\delta\varphi_0,\frac{3}{2}+\frac{3}{2}\delta\vartheta_0)(x)
	\end{equation}
	for any given $(\varrho_0,\varphi_0,\vartheta_0)(x)\in H^{k}(\mathbb{R}^{3})$
	with integer $k\geq 3$,
	where $\delta>0$ is a parameter.
	Choose a suitable constant $\delta_{1}>0$ so that 
	for any $\delta\in(0,\delta_{1}]$, the positivity of $1+\delta\varrho_0$ and $\frac{3}{2}+\frac{3}{2}\delta\vartheta_0$ is guaranteed. Then for each $\delta\in(0,\delta_{1}]$, there exists a family of classical solutions $(\bar{\rho}^{\delta},\bar{u}^{\delta},\bar{\theta}^{\delta})(t,x)\in C([0,\tau^{\delta}];H^{k})\cap
	C^{1}([0,\tau^{\delta}];H^{k-1})$
	of the compressible Euler system \eqref{1.4}
	and \eqref{5.28} such that the following things hold true:  $\bar{\rho}^{\delta}(t,x)>0$, $\bar{\theta}^{\delta}(t,x)>0$ and
	\begin{equation}\notag
		\|(\bar{\rho}^{\delta}(t,x)-1,\bar{u}^{\delta}(t,x),\bar{\theta}^{\delta}(t,x)-\frac{3}{2})\|_{C([0,\tau^{\delta}];H^{k})\cap C^{1}([0,\tau^{\delta}];H^{k-1})}\leq C_0.
	\end{equation}
	Moreover, the life-span $\tau^{\delta}$ has the following lower bound
	\begin{equation}\notag
		\tau^{\delta}>\frac{C_1}{\delta}.
	\end{equation}
	Here the positive constants $C_0$ and $C_1$ are independent of $\delta$, depending only on the $H^{k}$-norm of $(\varrho_0,\varphi_0,\vartheta_0)(x)$.
\end{lemma}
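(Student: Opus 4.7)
The plan is to cast the compressible Euler system \eqref{1.4} with scaled initial data \eqref{5.28} as a quasilinear symmetric hyperbolic system for the perturbation $V(t,x):=\bigl(\bar\rho-1,\bar u,\bar\theta-\tfrac{3}{2}\bigr)(t,x)$ around the constant state $(1,0,\tfrac{3}{2})$, and then apply the classical Kato--Majda local existence theory while tracking the $\delta$-dependence of every constant so as to extract the long lifespan $\tau^\delta>C_1/\delta$.

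First, using $R=2/3$ and $\bar e=\bar\theta$, I would rewrite \eqref{1.4} in the quasilinear form
\[
\partial_t V+\sum_{j=1}^{3}A_j(V)\,\partial_{x_j}V=0,
\]
where the $A_j(\cdot)$ are smooth matrix-valued functions on the open region $\{\bar\rho>0,\bar\theta>0\}$. Left-multiplying by a standard positive-definite symmetrizer $A_0(V)$ for Euler renders the system symmetric hyperbolic. The initial datum becomes $V(0,x)=\delta\bigl(\varrho_0,\varphi_0,\tfrac{3}{2}\vartheta_0\bigr)(x)\in H^k(\mathbb{R}^3)$ with $\|V(0)\|_{H^k}\leq C_*\delta$, where $C_*:=C\,\|(\varrho_0,\varphi_0,\vartheta_0)\|_{H^k}$; the Sobolev embedding $H^k\hookrightarrow L^\infty$ (valid for $k\geq 3$) guarantees $\bar\rho(0,\cdot),\bar\theta(0,\cdot)>0$ uniformly once $\delta_1$ is chosen small enough.

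Second, the Kato--Majda theorem \cite{Kato,Majda} for symmetric hyperbolic systems produces, for each $\delta\in(0,\delta_1]$, a classical solution $V\in C([0,T^\delta];H^k)\cap C^1([0,T^\delta];H^{k-1})$ together with the standard blow-up criterion: the solution persists as long as $\|V(t)\|_{H^k}$ stays bounded and $\bar\rho,\bar\theta$ remain strictly positive. To derive $T^\delta>C_1/\delta$ I would perform direct $H^k$-energy estimates on the symmetrized system. Differentiating up to $k$ times in $x$, symmetrizing, and invoking Moser-type commutator and product inequalities yields
\[
\tfrac{1}{2}\tfrac{d}{dt}\|V(t)\|_{H^k}^{2}\leq C\bigl(1+\|\nabla V(t)\|_{L^\infty}\bigr)\|V(t)\|_{H^k}^{2},
\]
with $C$ independent of $\delta$, valid as long as $\|V\|_{L^\infty}$ stays below a fixed small threshold. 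The Sobolev embedding $\|\nabla V\|_{L^\infty}\leq C\|V\|_{H^k}$ then upgrades this to the Riccati-type inequality $\tfrac{d}{dt}\|V\|_{H^k}\leq C\|V\|_{H^k}^{2}$. Starting from $\|V(0)\|_{H^k}\leq C_*\delta$, the explicit Riccati bound yields $\|V(t)\|_{H^k}\leq 2C_*\delta$ for $t\in[0,(2CC_*)^{-1}/\delta]$; a standard continuity/bootstrap argument together with the blow-up criterion then forces $\tau^\delta>C_1/\delta$ with $C_1:=(2CC_*)^{-1}$. The pointwise smallness $\|V(t)\|_{L^\infty}\leq C\|V(t)\|_{H^k}\lesssim\delta$ keeps $\bar\rho,\bar\theta>0$ throughout $[0,C_1/\delta]$, and the asserted uniform $C_0$-bound on the $C([0,\tau^\delta];H^k)\cap C^1([0,\tau^\delta];H^{k-1})$-norm follows by trading one time derivative for spatial derivatives using the equation itself.

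The main obstacle, and the mechanism allowing the $1/\delta$ lifespan, lies in showing that the constant $C$ in the energy inequality is genuinely uniform in $\delta$ over the long interval $[0,C_1/\delta]$. This hinges on the scaling $V=O(\delta)$ together with the structure of the Euler nonlinearity: all Moser/commutator corrections to the symmetric principal part carry at least one extra factor of $V$ or $\nabla V$, producing the single power of $\|V\|_{H^k}$ in front of $\|V\|_{H^k}^2$ that makes the Riccati mechanism succeed. Once this is in hand, the remaining ingredients — Picard iteration, Friedrichs mollification, uniqueness, and continuous dependence — are entirely classical and proceed along the lines of \cite{Kato,Majda}, which is why the statement in the excerpt defers to those references for details.
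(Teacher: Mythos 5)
The paper does not prove this lemma at all: it is quoted from \cite[Lemma 3.1]{Guo-Jang-Jiang-2010}, with \cite{Kato,Majda} cited for the underlying local existence theory, and your symmetrization-plus-Riccati argument is exactly the standard mechanism behind that result, so in substance you are reproducing the deferred proof rather than taking a different route.

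One technical point needs fixing. Your displayed energy inequality
$\tfrac{1}{2}\tfrac{d}{dt}\|V\|_{H^k}^2\leq C\bigl(1+\|\nabla V\|_{L^\infty}\bigr)\|V\|_{H^k}^2$
is too weak for the conclusion: the constant term $C\|V\|_{H^k}^2$ feeds Gronwall an exponential factor $e^{Ct}$, which on a time interval of length $C_1/\delta$ destroys the smallness $\|V\|_{H^k}=O(\delta)$ and only yields an $O(1)$ lifespan. The correct estimate has no such term. After symmetrizing, the constant-coefficient principal part $A_0(0)A_j(0)\partial_{x_j}$ contributes nothing to the $H^k$ energy (it integrates away by parts), and every remaining contribution --- the Moser commutators with $A_j(V)-A_j(0)$, the terms from $\partial_t A_0(V)$ and $\nabla_x A_0(V)$ --- carries at least one factor of $\|V\|_{L^\infty}$ or $\|\nabla V\|_{L^\infty}$, so one gets $\tfrac{d}{dt}\|V\|_{H^k}^2\leq C\|\nabla V\|_{L^\infty}\|V\|_{H^k}^2$ as long as $\|V\|_{L^\infty}$ stays below a fixed threshold. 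Your closing paragraph states precisely this structural fact, so the argument is sound once the displayed inequality is corrected to match it; with that, the Riccati bound $\tfrac{d}{dt}\|V\|_{H^k}\leq C\|V\|_{H^k}^2$, the bootstrap on $[0,(2CC_*\delta)^{-1}]$, the positivity of $\bar\rho^\delta,\bar\theta^\delta$ via Sobolev embedding, and the recovery of the $C^1([0,\tau^\delta];H^{k-1})$ bound from the equation all go through as you describe.
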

In what follows we give a refined estimate of two solutions to compressible Euler and acoustic systems. Let  $(\bar{\rho}^{\delta},\bar{u}^{\delta},\bar{\theta}^{\delta})(t,x)$ be the compressible Euler solutions
as obtained in Lemma \ref{lem5.9} and
$(\varrho,\varphi,\vartheta)(t,x)$ be the solutions of acoustic system \eqref{1.12}-\eqref{1.13}.
Then we define 
\begin{align}\notag
	\varrho^{\delta}_{d}=\frac{1}{\delta^2}(\bar{\rho}^{\delta}-1-\delta\varrho),\quad
	\varphi^{\delta}_{d}=\frac{1}{\delta^2}(\bar{u}^{\delta}-\delta\varphi),\quad
	\vartheta^{\delta}_{d}
	=\frac{2}{3}\frac{1}{\delta^2}(\bar{\theta}^{\delta}-\frac{3}{2}-\frac{3}{2}\delta\vartheta).
\end{align}
Following the same strategy in \cite[Lemma 3.2]{Guo-Jang-Jiang-2010}, 
we know that $(\varrho^{\delta}_{d},\varphi^{\delta}_{d},\vartheta^{\delta}_{d})(t)$ satisfies
\begin{align}
	\label{4.10A}
	\sup_{t\in[0,\tau]}\|(\varrho^{\delta}_{d},\varphi^{\delta}_{d},\vartheta^{\delta}_{d})(t)\|^2_{H^{k}}\leq C,
\end{align}
for $k\geq 3$, where the constant $C>0$
depends only $\tau$ and the $H^{k+1}$-norm of
$(\varrho_{0},\varphi_{0},\vartheta_{0})(x)$.

In terms of  $(\bar{\rho}^{\delta},\bar{u}^{\delta},\bar{\theta}^{\delta})(t,x)$ as obtained in Lemma \ref{lem5.9}, we
denote the local Maxwellian
$$
\overline{M}^{\delta}\equiv M_{[\bar{\rho}^{\delta},\bar{u}^{\delta},\bar{\theta}^{\delta}]}(t,x,v):=\frac{\bar{\rho}^{\delta}(t,x)}{\sqrt{[2\pi R\bar{\theta}^{\delta}(t,x)]^{3}}}\exp\big\{-\frac{|v-\bar{u}^{\delta}(t,x)|^2}{2R\bar{\theta}^{\delta}(t,x)}\big\}.
$$
In view of \eqref{4.10A}, we can choose a sufficiently small constant $\delta_{0}>0$ such that 
$(\bar{\rho}^{\delta},\bar{u}^{\delta},\bar{\theta}^{\delta})(t,x)$ with any $0<\delta\leq \delta_0$ satisfies \eqref{1.7}. 
With these facts, by using the same arguments as in Theorem \ref{thm1.1} we can prove the existence and uniqueness of smooth solutions to the Landau equation \eqref{1.1} under the assumptions in Theorem \ref{thm1.2}. The details are omitted for brevity of presentation. Therefore, similar to Theorem \ref{thm1.1}, there exists a small constant $\varepsilon_{0}>0$
such that for each $\varepsilon\in(0,\varepsilon_{0})$, the following holds:
\begin{align}
	\label{3.12}
	\sup_{t\in[0,\tau]}\|\frac{F^{\varepsilon}(t)-\overline{M}^{\delta}(t)}{\sqrt{\mu}}
	\|_{L_{x}^{2}L_{v}^{2}}+
	\sup_{t\in[0,\tau]}\|\frac{F^{\varepsilon}(t)-\overline{M}^{\delta}(t)}{\sqrt{\mu}}
	\|_{L_{x}^{\infty}L_{v}^{2}}\leq C_{\tau}\varepsilon.
\end{align}
Following the same method used in \cite[Lemma 3.3]{Guo-Jang-Jiang-2010}, it holds that
\begin{align}
	\label{3.13}
	\sup_{t\in[0,\tau]}\|\frac{\overline{M}^{\delta}(t)-\mu-\delta\sqrt{\mu}\mathbf f(t)}{\sqrt{\mu}}
	\|_{L_{x}^{2}L_{v}^{2}}+
	\sup_{t\in[0,\tau]}\|\frac{\overline{M}^{\delta}(t)-\mu-\delta\sqrt{\mu}\mathbf f(t)}{\sqrt{\mu}}\|_{L_{x}^{\infty}L_{v}^{2}}\leq C_{\tau}\delta^2,
\end{align}
where $\mathbf f(t)$ is given in \eqref{1.17}.
Hence, we can deduce from \eqref{1.15},
\eqref{3.12} and \eqref{3.13} that
\begin{align*}
	\sup_{t\in[0,\tau]}\|\mathbf{f}^{\varepsilon}(t)-\mathbf{f}(t)\|_{L_{x}^{\infty}L_{v}^{2}}
	&=\sup_{t\in[0,\tau]}\|\frac{F^{\varepsilon}(t)-\mu-\delta\sqrt{\mu}\mathbf{f}(t)}{\delta\sqrt{\mu}}\|_{L_{x}^{\infty}L_{v}^{2}}
	\nonumber\\
	&\leq \sup_{t\in[0,\tau]}\|\frac{F^{\varepsilon}(t)-\overline{M}^{\delta}(t)}{\delta\sqrt{\mu}}
	\|_{L_{x}^{\infty}L_{v}^{2}}+
	\sup_{t\in[0,\tau]}\|\frac{\overline{M}^{\delta}(t)-\mu-\delta\sqrt{\mu}\mathbf f(t)}{\delta\sqrt{\mu}}\|_{L_{x}^{\infty}L_{v}^{2}}
	\nonumber\\
	&\leq C_{\tau}(\frac{\varepsilon}{\delta}+\delta).
\end{align*}
Similar estimates also hold for $\sup_{t\in[0,\tau]}\|\mathbf{f}^{\varepsilon}(t)-\mathbf{f}(t)\|_{L_{x}^{2}L_{v}^{2}}$, and then the desired estimate \eqref{1.21} holds. We  consequently finish the proof of Theorem \ref{thm1.2}.
\qed

\section{Appendix}\label{seca.5}
In this appendix, we give the details of deriving the estimate \eqref{4.10} for completeness.

\medskip
\noindent{\it Proof of \eqref{4.10}:}
Note that $\theta=\frac{3}{2}\frac{1}{2\pi e}\rho^{\frac{2}{3}}\exp(S)$ due to \eqref{4.3}, then we have
$$
\theta_{\rho}:=\partial_{\rho}\theta=\frac{1}{2\pi e}\rho^{-\frac{1}{3}}\exp(S)=\frac{2}{3}\frac{\theta}{\rho},\quad \theta_{S}=\frac{3}{2}\frac{1}{2\pi e}\rho^{\frac{2}{3}}\exp(S)=\theta.
$$
Using this together with \eqref{4.5} and \eqref{4.6}, direct computations give
\begin{align*}
\eta_{\bar{\rho}}=-\frac{\rho}{\bar{\rho}}\bar{\theta}(S-\bar{S})-\frac{5}{3\bar{\rho}}\bar{\theta}(\rho-\bar{\rho}),\ 
\quad \eta_{\bar{u}}=-\frac{3}{2}\rho(u-\bar{u}),\ 
\eta_{\bar{S}}=-\frac{3}{2}\rho\bar{\theta}(S-\bar{S})-\bar{\theta}(\rho-\bar{\rho}).
\end{align*}
Similarly, it also holds that
\begin{align*}
q_{\bar{\rho}}&=-u\frac{\rho}{\bar{\rho}}\bar{\theta}(S-\bar{S})-\frac{5u}{3\bar{\rho}}\bar{\theta}(\rho-\bar{\rho})-\frac{5}{3}\bar{\theta}(u-\bar{u}),
\quad
q_{j\bar{u}_{j}}=-\frac{3}{2}u_{j}\rho(u_{j}-\bar{u}_{j})-\rho\theta+\bar{\rho}\bar{\theta},
\\
q_{\bar{S}}&=-\frac{3}{2}u\rho\bar{\theta}(S-\bar{S})+\bar{u}\bar{\theta}\bar{\rho}-u\rho\bar{\theta}.
\end{align*}
On the other hand, one gets from \eqref{1.4} that
\begin{equation*}
\begin{cases}
\partial_t\bar{\rho}=-\bar{u}\cdot\nabla_x\bar{\rho}-\bar{\rho}\nabla_x\cdot\bar{u},
\\
\partial_t\bar{u}=-\bar{u}\cdot\nabla_x\bar{u}-\frac{2}{3}\frac{\bar{\theta}}{\bar{\rho}}\nabla_x\bar{\rho}-\frac{2}{3}\nabla_x\bar{\theta},
\\
\partial_t\bar{\theta}=-\bar{u}\cdot\nabla_x\bar{\theta}-\frac{2}{3}\bar{\theta}\nabla_x\cdot\bar{u}.
\end{cases}
\end{equation*}
In view of these facts, we have from straightforward computations that
\begin{align}
\label{5.31}
&\nabla_{[\bar{\rho},\bar{u},\bar{S}]}\eta(t,x)\cdot \partial_t(\bar{\rho},\bar{u},\bar{S})=\eta_{\bar{\rho}}\partial_t\bar{\rho}+\eta_{\bar{u}}\partial_t\bar{u}+\eta_{\bar{S}}\partial_t\bar{S}
\nonumber\\
=&\frac{\bar{\theta}}{\bar{\rho}}(\bar{\rho}-\rho)\partial_t\bar{\rho}-\frac{3}{2}\rho(u-\bar{u})\cdot\partial_t\bar{u}-\{\frac{3}{2}\rho(S-\bar{S})+(\rho-\bar{\rho})\}\partial_t\bar{\theta}
\nonumber\\
=&\frac{5}{3}(\rho-\bar{\rho})\bar{\theta}\nabla_x\cdot\bar{u}	+\frac{\bar{\theta}}{\bar{\rho}}\rho u\cdot\nabla_x\bar{\rho}+\rho u\cdot\nabla_x\bar{\theta}-\bar{u}\bar{\theta}\cdot\nabla_x\bar{\rho}
\nonumber\\
&+\frac{3}{2}\rho(u-\bar{u})\cdot(\bar{u}\cdot\nabla_x\bar{u})-\bar{\rho}\bar{u}\cdot\nabla_x\bar{\theta}
-\frac{3}{2}\rho(\bar{S}-S)(\bar{u}\cdot\nabla_x\bar{\theta}+\frac{2}{3}\bar{\theta}\nabla_x\cdot\bar{u}).
\end{align}
Likewise it also holds that
\begin{align}
\label{5.32}
\sum^{3}_{j=1}&\nabla_{[\bar{\rho},\bar{u},\bar{S}]}q_{j}(t,x)\cdot \partial_{x_j}(\bar{\rho},\bar{u},\bar{S})
=q_{\bar{\rho}}\cdot\nabla_x\bar{\rho}+\sum^{3}_{j=1}q_{j\bar{u}_{j}}\partial_{x_j}\bar{u}_{j}+q_{\bar{S}}\cdot\nabla_x\bar{S}
\nonumber\\
=&\bar{\rho}\bar{\theta}\nabla_x\cdot\bar{u}-\rho\theta\nabla_x\cdot\bar{u}-\frac{\bar{\theta}}{\bar{\rho}}\rho u\cdot\nabla_x\bar{\rho}
-\rho u\cdot\nabla_x\bar{\theta}+\bar{\theta}\bar{u}\cdot\nabla_x\bar{\rho}
\nonumber\\
&-\frac{3}{2}\rho(u-\bar{u})\cdot(u\cdot\nabla_x\bar{u})+\bar{\rho}\bar{u}\cdot\nabla_x\bar{\theta}
+\frac{3}{2}\rho u(\bar{S}-S)\nabla_x\bar{\theta}.
\end{align}
Combining \eqref{5.31} and \eqref{5.32} yields
\begin{align}
\label{5.33}
&\nabla_{[\bar{\rho},\bar{u},\bar{S}]}\eta(t,x)\cdot \partial_{t}(\bar{\rho},\bar{u},\bar{S})
+\sum^{3}_{j=1}\nabla_{[\bar{\rho},\bar{u},\bar{S}]}q_{j}(t,x)\cdot \partial_{x_j}(\bar{\rho},\bar{u},\bar{S})
\nonumber\\
=&\frac{5}{3}\rho\bar{\theta}\nabla_x\cdot\bar{u}-\frac{2}{3}\bar{\rho}\bar{\theta}\nabla_x\cdot\bar{u}-\frac{3}{2}\rho(u-\bar{u})\widetilde{u}\cdot\nabla_x\bar{u}
\nonumber\\
&-\frac{3}{2}\rho(\bar{S}-S)(\bar{u}\cdot\nabla_x\bar{\theta}+\frac{2}{3}\bar{\theta}\nabla_x\cdot\bar{u})+\frac{3}{2}\rho u(\bar{S}-S)\nabla_x\bar{\theta}-\rho\theta\nabla_x\cdot\bar{u}
\nonumber\\
=&-\frac{3}{2}\rho\widetilde{u}\cdot(\widetilde{u}\cdot\nabla_x\bar{u})
-\frac{2}{3}\rho\bar{\theta}\nabla_x\cdot\bar{u}\Psi(\frac{\bar{\rho}}{\rho})
-\rho\bar{\theta}\nabla_x\cdot\bar{u}\Psi(\frac{\theta}{\bar{\theta}})-\frac{3}{2}\rho\nabla_x\bar{\theta}\cdot\widetilde{u}(\frac{2}{3}\ln\frac{\bar{\rho}}{\rho}+\ln\frac{\theta}{\bar{\theta}}),
\end{align}
where we have used $\Psi(s)=s-\ln s-1$ and 
$
\bar{S}-S=-\frac{2}{3}\ln\frac{\bar{\rho}}{\rho}
-\ln\frac{\theta}{\bar{\theta}}.
$
Hence the desired estimate \eqref{4.10}
follows from \eqref{5.33} and then the proof  is completed.
\qed

\medskip

\noindent {\bf Acknowledgment:}\,
The authors would like to thank the anonymous referees for all valuable and helpful comments on the manuscript. The research of Renjun Duan was partially supported by the General Research Fund (Project No.~14301719) from RGC of Hong Kong and a Direct Grant from CUHK. The research of Hongjun Yu was supported by the GDUPS 2017 and the NNSFC Grant 11371151. Dongcheng Yang would like to thank Department of Mathematics, CUHK  for hosting his visit in the period 2020-2022.

\medskip

\noindent{\bf Conflict of Interest:} The authors declare that they have no conflict of interest.


\end{document}